\apptocmd{\thebibliography}{\raggedright}{}{}
\renewcommand*{\backref}[1]{}
\renewcommand*{\backrefalt}[4]{%
    \ifcase #1 (Not cited.)%
    \or        (Cited on page~#2.)%
    \else      (Cited on pages~#2.)%
    \fi}
\theoremstyle{plain}
\newtheorem{theorem}{Theorem}[section]
\newtheorem{maintheorem}{Theorem}
\newtheorem{proposition}[theorem]{Proposition}
\newtheorem{lemma}[theorem]{Lemma}
\newtheorem{corollary}[theorem]{Corollary}
\newtheorem{conjecture}[theorem]{Conjecture}
\newtheorem{problem}[theorem]{Problem}
\theoremstyle{definition}
\newtheorem{definition}[theorem]{Definition}
\theoremstyle{remark}
\newtheorem{nonexample}[theorem]{Non-Example}
\theoremstyle{remark}
\newtheorem{rmk}[theorem]{Remark}
\newenvironment{remark}[1][]{\begin{rmk}[#1]}{\end{rmk}}
\newtheorem{eg}[theorem]{Example}
\newenvironment{example}[1][]{\begin{eg}[#1]}{\end{eg}}
\definecolor{colorblind_blue}{RGB}{0,114,178}
\definecolor{colorblind_orange}{RGB}{213,94,0}
\definecolor{colorblind_green}{RGB}{0,158,115}
\definecolor{colorblind_purple}{RGB}{204,121,167}
\definecolor{colorblind_darkpurple}{RGB}{126,41,84}
\newcommand{\orange}[1]{\ensuremath{{\color{colorblind_orange} #1}}}
\renewcommand{\blue}[1]{\ensuremath{{\color{colorblind_blue} #1}}}
\renewcommand{\green}[1]{\ensuremath{{\color{colorblind_green} #1}}}
\newcommand{\purple}[1]{\ensuremath{{\color{colorblind_purple} #1}}}
\newcommand{\darkpurple}[1]{\ensuremath{{\color{colorblind_darkpurple} #1}}}
\newcommand{\arxiv}[1]{\href{http://arxiv.org/abs/#1}{{\tt arXiv:#1}}}
\newcommand{\Z}{\mathbb Z}
\newcommand{\Q}{\mathbb Q}
\newcommand{\C}{\mathbb C}
\renewcommand{\xmapsto}[1]{\mathrel{\mathop{\mapsto}\limits^{#1}}}
\newcommand\Mod{\ensuremath{\operatorname{Mod}}}
\newcommand{\UQ}{\bar{U}}
\title[The second rational cohomology of handlebody Torelli groups]{Johnson homomorphisms and the second rational cohomology of handlebody Torelli groups}
\author{Annie Holden}
\address{Dept of Mathematics; University of Notre Dame; 255 Hurley Hall; Notre Dame, IN 46556}
\email{aholden2@nd.edu}
\begin{document}

\newpage

\begin{abstract}

We introduce two Torelli subgroups of the handlebody group. The group $\mathcal{HI}_{g,p}^b$ is the subgroup of the handlebody group acting trivially on the first homology of the boundary surface, and $\mathcal{H}_B \mathcal{I}_{g,p}^b$ is the subgroup of the handlebody group acting trivially on the first homology of the handlebody. Using the symplectic representation and the Johnson homomorphisms for the Torelli subgroups of the mapping class group and of $\operatorname{Aut}(F_g)$, we define abelian quotients of these handlebody Torelli groups. In terms of the representation theory of the special linear group, we describe cup products of two classes in the first rational cohomology groups of $\mathcal{HI}_{g,p}^b$ and $\mathcal{H}_B \mathcal{I}_{g,p}^b$ obtained by the rational duals of these abelian quotients. 

\end{abstract}

\maketitle
\thispagestyle{empty}

\section{Introduction}

\subsection{The Torelli subgroup of the mapping class group} Let $\Sigma_{g,p}^b$ be an oriented surface of genus $g$ with $b$ embedded disks and $p$ marked points. In this paper, we assume $p+b \leq 1$. The mapping class group $\Mod_{g,p}^b$ is the group of isotopy classes of orientation-preserving diffeomorphisms of $\Sigma_{g,p}^b$ that fix the embedded disks and marked points pointwise. The mapping class group acts on $H := H_1(\Sigma_{g,p}^b; \Z)$ by automorphisms that fix its symplectic form, and the Torelli group $\mathcal{I}_{g,p}^b$ consists exactly of the elements acting trivially on $H$. Letting $\Psi: \Mod_{g,p}^b \rightarrow \operatorname{Sp}_{2g}(\Z)$ be the symplectic representation, this is encoded in the short exact sequence \begin{equation} \label{torelliSES}
    1 \rightarrow \mathcal{I}_{g,p}^b \rightarrow \Mod_{g,p}^b \xrightarrow{\Psi} \operatorname{Sp}_{2g}(\Z) \rightarrow 1.
\end{equation} In \cite{JohnsonHomom}, Johnson introduces the surjective Johnson homomorphism $\tau$: $$\tau: \mathcal{I}_{g,p}^b \rightarrow \begin{cases} 
     \bigwedge^3 H & (p+b = 1) \\
      (\bigwedge^3 H) / H & (p=b=0)
   \end{cases}.$$ Johnson proves in a series of papers \cite{JohnsonI, JohnsonII, JohnsonIII} that $\tau$ captures the rational abelianization of $\mathcal{I}_{g,p}^b$ when $g \geq 3$. In follow-up work that inspires this paper, Hain \cite{Hain}, Morita \cite{MoritaLinear}, Habegger-Sorger \cite{Habegger}, and Sakasai \cite{Sakasai1} use $\tau$ to study the cup product structure of $H^*(\mathcal{I}_{g,p}^b; \Q)$ in low dimensions. 

\subsection{The Torelli subgroup of $\boldsymbol{\operatorname{Aut}(F_g)}$} \label{subsection:iasubgroupintro} Let $\operatorname{Aut}(F_g)$ be the automorphism group of the free group $F_g$. The Torelli subgroup $IA_g$ is the kernel of the action of $\operatorname{Aut}(F_g)$ on the abelianization of $F_g$, which we denote by $V$. This is encoded in the short exact sequence \begin{equation} \label{iaSES} 1 \rightarrow IA_g \rightarrow \operatorname{Aut}(F_g) \rightarrow \operatorname{GL}_g(\Z) \rightarrow 1. \end{equation} Similarly, there is a Torelli subgroup $OA_g$ of $\operatorname{Out}(F_g) := \operatorname{Aut}(F_g) / \operatorname{Inn}(F_g)$. Each of these Torelli groups has a Johnson homomorphism $$J: \begin{cases} 
     IA_g \rightarrow (\bigwedge^2 V) \otimes V^* \\
     OA_g \rightarrow ((\bigwedge^2 V) \otimes V^*) / V
   \end{cases}.$$ Farb \cite{Farb}, Kawazumi \cite{Kawazumi}, and Cohen-Pakianathan \cite{CohenPakianathan} show independently that $J$ captures the entire abelianization of $IA_g$, and Pettet \cite{Pettet} uses $J$ to study cup products of two elements in $H^1(IA_g; \Q)$ or $H^1(OA_g; \Q)$.

\subsection{Torelli subgroups of the handlebody group} Now fix a handlebody $\mathcal{V}_{g,p}^b$ with $\partial \mathcal{V}_{g,p}^b = \Sigma_{g,p}^b$, and define the handlebody group $\mathcal{H}_{g,p}^b$ to be the subgroup of $\Mod_{g,p}^b$ that extends to $\mathcal{V}_{g,p}^b$. See \cite{Hensel} for a comprehensive background on $\mathcal{H}_{g,p}^b$. In this paper, we study two natural ways to define a Torelli subgroup of $\mathcal{H}_{g,p}^b$. The group $\mathcal{HI}_{g,p}^b$ is the subgroup of the handlebody group acting trivially on the first homology $H$ of the boundary surface, and $\mathcal{H}_B \mathcal{I}_{g,p}^b$ is the subgroup\footnote{In particular, $\mathcal{H}_B \mathcal{I}_{g,p}^b$ contains $\mathcal{HI}_{g,p}^b$ and the ``$B$" emphasizes that $\mathcal{H}_B \mathcal{I}_{g,p}^b$ is the bigger group.} of the handlebody group acting trivially on $V := H_1(\mathcal{V}_{g,p}^b; \Z)$. Recalling that $\Psi$ is the symplectic representation from \eqref{torelliSES}, the following short exact sequence relates our two groups: $$1 \rightarrow \mathcal{HI}_{g,p}^b \rightarrow \mathcal{H}_B \mathcal{I}_{g,p}^b  \rightarrow \Psi(\mathcal{H}_B \mathcal{I}_{g,p}^b) \rightarrow 1.$$ We prove in Proposition \ref{proposition:psihbi} that the image $\Psi(\mathcal{H}_B \mathcal{I}_{g,p}^b)$ is abelian. 

\subsection{Results for $\boldsymbol{\mathcal{HI}_{g,p}^b}$} The Johnson homomorphism $\tau$ for $\mathcal{I}_{g,p}^b$ restricts\footnote{When it is clear from context, we use $\tau$ to denote both the Johnson homomorphism for $\mathcal{I}_{g,p}^b$ and the restriction $\tau|_{\mathcal{HI}_{g,p}^b}$.} to $\mathcal{HI}_{g,p}^b$ and we denote its image by $\UQ$ $(p=b=0)$ or $U$ $(p+b =1)$. To ease notation, we let $p+b=1$. This map $\tau$ is $\mathcal{H}_{g,p}^b$-equivariant, where $\mathcal{H}_{g,p}^b$ acts on $\mathcal{HI}_{g,p}^b$ by conjugation and on $U$ through its action on $H$ via an element of $\Psi(\mathcal{H}_{g,p}^b) \subset \operatorname{Sp}_{2g}(\Z)$. Passing to duals, we can view $U_{\Q}^* := U^* \otimes \Q$ as a subspace of $H^1(\mathcal{HI}_{g,p}^b; \Q)$. 

Now let $\overline{\Psi(\mathcal{H}_{g,p}^b)}$ denote the Zariski closure of $\Psi(\mathcal{H}_{g,p}^b)$. This setup allows us to view $$\tau^*: H^2(U; \Q) \rightarrow H^2(\mathcal{HI}_{g,p}^b; \Q),$$ which is exactly the restriction of the cup product map $$\cup: \bigwedge^2 U_{\Q}^* \rightarrow H^2(\mathcal{HI}_{g,p}^b; \Q),$$ as a map of $\overline{\Psi(\mathcal{H}_{g,p}^b)}$ representations. The group $\overline{\Psi(\mathcal{H}_{g,p}^b)}$ is not semisimple but it naturally contains a subgroup isomorphic to $\operatorname{SL}_g(\Q)$, as we explain in Section \ref{section:setuphi}. By restricting this action to $\operatorname{SL}_g(\Q)$, we can view $\tau^*$ as a map of $\operatorname{SL}_g(\Q)$ representations. The analogous statements are true for $\mathcal{HI}_g$.

To describe $\operatorname{ker}(\tau^*)$, we leverage the fact that $\operatorname{SL}_g(\Q)$ modules are well understood. Briefly, an irreducible $\operatorname{SL}_g(\Q)$ representation $\Phi_{w_1, w_2, \dots, w_{g-1}}$ is uniquely determined by its highest weight $(w_1, w_2, \dots, w_{g-1})$. Any finite dimensional $\operatorname{SL}_g(\Q)$ representation, such as $H^2(U; \Q)$ or $\operatorname{ker}(\tau^*)$, decomposes into a direct sum of these irreducible representations. See Section \ref{section:reptheory} for more details. 

In the following theorem, we give a complete description of cup products of two classes in $H^1(\mathcal{HI}_{g,p}^b; \Q)$ detected by $\tau$.

\begin{table}[h!]
\centering
\begin{tabular}{|c|c|c|c|}
\hline
\multirow{2}{*}{\textbf{Highest Weight}} & \multicolumn{3}{c|}{\textbf{Multiplicity}} \\
\cline{2-4}
 & \textbf{$\operatorname{ker}(\tau^*)$ for $\mathcal{HI}_g$} & \textbf{$\operatorname{ker}(\tau^*)$ for $\mathcal{HI}_{g,1}$} & \textbf{$\operatorname{ker}(\tau^*)$ for $\mathcal{HI}_g^1$} \\
\hline
$(0, \dots, 0)$ & 1 & 1 & 2 \\
$(0, \dots, 0,1,0)$ & 0 & 1 & 1 \\
$(0, 1, 0, \dots, 0)$ & 0 & 1 & 1 \\
$(0, 1, 0, \dots, 0, 1, 0)$ & 1 & 1 & 1 \\
$(0, 2, 0, \dots, 0)$ & 1 & 1 & 1 \\
$(1, 0, \dots, 0, 1)$ & 1 & 2 & 2 \\
$(1, 0, \dots, 0, 1, 1)$ & 1 & 1 & 1 \\
$(1, 1, 0, \dots, 0, 1)$ & 1 & 1 & 1 \\
$(2, 0, \dots, 0)$ & 1 & 1 & 1 \\
$(2, 0, \dots, 0, 2)$ & 1 & 1 & 1 \\
\hline
\end{tabular}
\caption{Decomposition of $\operatorname{ker}(\tau^*)$}
\label{table:theoremB}
\end{table} 

\begin{maintheorem} \label{theorem:kerneltau} For $g \geq 6$, the kernel of $$\tau^*: \begin{cases}
    H^2(\UQ; \Q) \rightarrow H^2(\mathcal{HI}_g; \Q) \\
    H^2(U; \Q) \rightarrow H^2(\mathcal{HI}_{g,p}^b; \Q) & (p+b = 1) 
   \end{cases}$$ decomposes into irreducible $\operatorname{SL}_g(\Q)$ representations as in Table \ref{table:theoremB}. \end{maintheorem}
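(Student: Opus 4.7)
The plan is to sandwich $\ker(\tau^*)$ between a representation-theoretic upper bound coming from the inclusion $\mathcal{HI}_{g,p}^b \hookrightarrow \mathcal{I}_{g,p}^b$ and a matching lower bound coming from explicit relations in $\mathcal{HI}_{g,p}^b$.

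First, I would decompose the ambient space $\bigwedge^2 U_\Q^*$ (resp.\ $\bigwedge^2 \UQ_\Q^{\,*}$) into irreducible $\operatorname{SL}_g(\Q)$ summands. Since $U$ and $\UQ$ have been identified earlier in the paper as specific $\operatorname{SL}_g(\Q)$-submodules of $\bigwedge^3 H_\Q$ (resp.\ its quotient by $H$), this is a Littlewood--Richardson / plethysm computation that produces the finite list of candidate highest weights appearing in Table \ref{table:theoremB}.

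Second, I would establish an upper bound on $\ker(\tau^*)$. Naturality of the cup product for the inclusion $i\colon \mathcal{HI}_{g,p}^b \hookrightarrow \mathcal{I}_{g,p}^b$ gives a commutative square whose top row is the $\mathcal{I}$-level cup product $\bigwedge^2 (\bigwedge^3 H_\Q)^* \to H^2(\mathcal{I}_{g,p}^b; \Q)$ and whose left vertical is the surjection $\bigwedge^2 (\bigwedge^3 H_\Q)^* \twoheadrightarrow \bigwedge^2 U_\Q^*$ dual to $U \hookrightarrow \bigwedge^3 H$. For each irreducible $\operatorname{SL}_g(\Q)$-summand of $\bigwedge^2 U_\Q^*$, I would lift to $\bigwedge^2 (\bigwedge^3 H_\Q)^*$ and invoke the Hain--Morita--Habegger--Sorger--Sakasai determination of $\ker(\cup_{\mathcal{I}})$ together with injectivity statements for $i^*$ to rule out those summands that cannot lie in $\ker(\tau^*)$.

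Third, I would establish the matching lower bound by producing explicit classes in $\ker(\tau^*)$ for each entry of Table \ref{table:theoremB}. These come from two sources: (a) relations pushed forward through $\pi$ from known relations in $H^2(\mathcal{I}_{g,p}^b;\Q)$, and (b) handlebody-specific commuting configurations---pairs of Dehn twists on meridians, bounding pair maps supported on disjoint subsurfaces that jointly extend over $\mathcal{V}_{g,p}^b$, and the bounding twists that generate Johnson-trivial subgroups of $\mathcal{HI}_{g,p}^b$. By equivariance of $\tau^*$ under the $\operatorname{SL}_g(\Q)$-action coming from $\overline{\Psi(\mathcal{H}_{g,p}^b)}$, exhibiting one weight vector in each summand suffices, since Schur's lemma then propagates vanishing across the whole irreducible.

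The main obstacle is step three: the abelian subgroups available in $\mathcal{HI}_{g,p}^b$ are more constrained than those in $\mathcal{I}_{g,p}^b$, because commuting generators must jointly extend over the handlebody, so the combinatorial construction of commuting pairs whose Johnson images pair correctly with each prescribed highest-weight line requires careful choice of meridian systems. A secondary subtlety is tracking how the boundary decoration $(p,b)$ changes which commuting configurations are admissible, which is precisely what produces the differences among the three columns of the table.
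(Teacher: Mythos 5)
Your decomposition step and the pushforward idea in source (a) of your step three are both sound, and they do give the lower bound: naturality of the cup product for $i\colon \mathcal{HI}_{g,p}^b \hookrightarrow \mathcal{I}_{g,p}^b$ shows that any class dying in $H^2(\mathcal{I}_{g,p}^b;\Q)$ also dies after restriction, and indeed the paper confirms a posteriori that $\ker(\tau^*) = \bigwedge^2 \UQ_{\Q}^* \cap (\Gamma_{0,2} \oplus \Q)$ for $\mathcal{HI}_g$. But the core of your proposal breaks at the upper bound in step two. Naturality only gives that one-way comparison; it does \emph{not} imply that a class surviving in $H^2(\mathcal{I}_{g,p}^b;\Q)$ survives under $i^*$. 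You flag this by invoking ``injectivity statements for $i^*$,'' but there is no reason, nor a theorem, asserting $i^*\colon H^2(\mathcal{I}_{g,p}^b;\Q) \to H^2(\mathcal{HI}_{g,p}^b;\Q)$ is injective on the relevant summands, and the paper certainly does not use one. What the paper actually proves is that the image of $\tau_*\colon H_2(\mathcal{HI}_{g,p}^b;\Q) \to \bigwedge^2 U_{\Q}$ (resp. $\bigwedge^2 \UQ_{\Q}$) is large enough, via an independent and substantial computation with abelian cycles of commuting bounding pair annulus twists and disk twists (Section~\ref{section:upperboundtau}). Without such a computation, your argument does not close.

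There is also an inversion in your step three. Abelian cycles of commuting pairs give elements of $H_2(\mathcal{HI}_{g,p}^b;\Q)$, and pushing them through $\tau_*$ exhibits modules inside $\operatorname{im}(\tau_*)$, whose duals are therefore \emph{not} in $\ker(\tau^*)$; commuting configurations are the upper-bound tool, not a source of kernel elements. For the lower bound the paper instead takes the connecting map $b$ from the five-term exact sequence of $1 \to \mathcal{HK}_g \to \mathcal{HI}_g \to \UQ \to 1$, composes with the second Johnson homomorphism $\tau_2^{\Q}$, identifies $\tau_2^{\Q} \circ b$ with Morita's bracket $[\cdot,\cdot]$ restricted to $\bigwedge^2 \UQ_{\Q}$, and evaluates the bracket explicitly to detect modules in $\operatorname{coker}(\tau_*)$ (Section~\ref{section:lowerboundtau}). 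Finally, you give no concrete mechanism for the decorated cases: the paper reduces $\mathcal{HI}_{g,1}$ and $\mathcal{HI}_g^1$ to the closed case via the Hochschild--Serre spectral sequences of $1 \to \pi_1(\Sigma_g) \to \mathcal{HI}_{g,1} \to \mathcal{HI}_g \to 1$ and $1 \to \Z \to \mathcal{HI}_g^1 \to \mathcal{HI}_{g,1} \to 1$, where the latter requires a separate argument (based on lantern relations) that the boundary twist $T_{\partial}$ is torsion in $H_1(\mathcal{HI}_g^1;\Q)$ so that the differentials degenerate appropriately.
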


We also conjecture the following, which will imply that Theorem \ref{theorem:kerneltau} gives a complete description of the cup product map $$\cup: \bigwedge^2 H^1(\mathcal{HI}_{g,p}^b; \Q) \rightarrow H^2(\mathcal{HI}_{g,p}^b; \Q).$$

\begin{conjecture} For $g \gg 0$, we have 
    $H_1(\mathcal{HI}_{g,p}^b; \Q) \cong \begin{cases}
    \UQ_{\Q} & (p=b=0) \\
     U_{\Q} & (p+b = 1) 
   \end{cases}$.
\end{conjecture}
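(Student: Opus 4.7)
The plan is to establish the conjecture by proving the upper bound $\dim_{\Q} H_1(\mathcal{HI}_{g,p}^b;\Q) \leq \dim_{\Q} U_{\Q}$ (respectively $\dim_{\Q} \UQ_{\Q}$ when $p=b=0$); the reverse inequality is automatic because $\tau$ factors through the rational abelianization and has image exactly $U$ or $\UQ$ by definition. Equivalently, one must show that every element of $\ker(\tau|_{\mathcal{HI}_{g,p}^b})$ maps to zero in $H_1(-;\Q)$.

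First I would identify an explicit generating set for $\mathcal{HI}_{g,p}^b$. Johnson's generators of $\mathcal{I}_{g,p}^b$ (bounding pair maps and, in the closed case, separating twists) suggest that the handlebody Torelli group should be generated by exactly those Johnson generators that extend to $\mathcal{V}_{g,p}^b$: bounding pairs $T_a T_b^{-1}$ for which $a \cup b$ cobounds compatibly with the handlebody structure, together with appropriate separating twists along disk-bounding curves. Such a generating set might be extracted from recent work on the handlebody group, or established directly via an action on a highly connected complex of meridian systems, in the spirit of Hatcher--Vogtmann and Putman's methods for related mapping class group subgroups.

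Second, I would verify that the images of this generating set under $\tau$ span $U_{\Q}$ (respectively $\UQ_{\Q}$), which immediately produces the surjection $H_1(\mathcal{HI}_{g,p}^b;\Q) \twoheadrightarrow U_{\Q}$. To obtain the upper bound, I would then either compute relations modulo commutators directly, or combine information from two distinct homomorphisms out of $\mathcal{HI}_{g,p}^b$: the restriction of $\tau$ into $\bigwedge^3 H_{\Q}$ (or its quotient by $H_{\Q}$), and the map into $IA_g$ induced by the action on $\pi_1$ of the handlebody, which upon composition with the $IA_g$ Johnson homomorphism $J$ lands in $(\bigwedge^2 V_{\Q}) \otimes V_{\Q}^*$. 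Both $H_1(\mathcal{I}_{g,p}^b;\Q)$ and $H_1(IA_g;\Q)$ are fully understood as representations, so the image of $H_1(\mathcal{HI}_{g,p}^b;\Q)$ in either target is constrained to a specific $\operatorname{SL}_g(\Q)$-subrepresentation that I would aim to identify with $U_{\Q}$.

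The main obstacle I anticipate is the first step: pinning down a generating set of $\mathcal{HI}_{g,p}^b$ tight enough to read off the rational abelianization. In the closed case in particular, twists along arbitrary separating curves that fail to bound disks still lie in $\mathcal{HI}_{g,p}^b$ and could a priori contribute extra classes. An alternative route that sidesteps an explicit presentation is an inductive spectral sequence argument: prove high connectivity of an appropriate complex of meridian systems whose stabilizers are handlebody Torelli groups of smaller genus, and apply a Quillen-style spectral sequence to reduce the conjecture to a base case that can be computed directly, which is also where the hypothesis $g \gg 0$ would naturally enter.
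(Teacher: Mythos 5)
The statement you are addressing is labeled as a \emph{conjecture} in the paper, not a theorem; the paper does not prove it and does not claim to. There is therefore no ``paper's own proof'' to compare against. What you have written is a research plan rather than a proof, and it correctly identifies both the easy direction (the surjection $H_1(\mathcal{HI}_{g,p}^b;\Q)\twoheadrightarrow U_\Q$ is immediate because $\tau$ is surjective by definition of $U$) and the genuine obstacle (pinning down a usable generating set or presentation, or equivalently a connectivity/stability argument for a suitable complex of meridians). The normal generating set of Omori (Theorem~\ref{theorem:Omori}, already cited in the paper) does produce a single normal generator for $\mathcal{HI}_g^1$, so the surjectivity onto $U$ your second step asks for is not really the issue; the problem is controlling the relations, which a normal generating set alone does not do.

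One concrete flaw in your second approach: on $\mathcal{HI}_{g,p}^b$ the map $J$ (the $IA_g$ Johnson homomorphism precomposed with $\alpha$) gives no information beyond $\tau$. The surjection $\pi_1(\Sigma_{g,p}^b)\twoheadrightarrow\pi_1(\mathcal{V}_{g,p}^b)$ is $\mathcal{H}_{g,p}^b$-equivariant, so the first-order action of an element of $\mathcal{HI}_{g,p}^b$ on $F_g/\gamma_3(F_g)$ is determined by its first-order action on $\pi_1(\Sigma_{g,p}^b)/\gamma_3$; concretely $J$ factors through $\tau$ as the projection of $U_\Q$ onto its $(\bigwedge^2 V_\Q)\otimes V_\Q^*$ summand (compare Section~\ref{section:moduledecomp}). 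Hence combining $\tau$ and $J$ cannot cut down the kernel of $\tau_*$ on $H_1$ any further. Combining $J$ with $\Psi$ \emph{is} genuinely useful, but that is the mechanism for $\mathcal{H}_B\mathcal{I}_{g,p}^b$, not for $\mathcal{HI}_{g,p}^b$, where $\Psi$ is by definition trivial. The high-connectivity/stability route you mention in the last paragraph is the more promising direction and matches how the analogous statements for $\mathcal{I}_{g,p}^b$ and $IA_g$ were eventually proved, but as written your proposal supplies no argument for any of its key steps, so it does not resolve the conjecture.
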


\subsection{Results for $\boldsymbol{\mathcal{H}_B \mathcal{I}_{g,p}^b}$} When $p+b=1$ (resp. $p=b=0$), the handlebody group $\mathcal{H}_{g,p}^b$ acts on $\pi_1(\mathcal{V}_{g,p}^b) = F_g$ by automorphisms (resp. outer automorphisms) and $\mathcal{H}_B \mathcal{I}_{g,p}^b$ is exactly the preimage of $IA_g$ (resp. $OA_g$). From this, we obtain a Johnson homomorphism $J$ on $\mathcal{H}_B \mathcal{I}_{g,p}^b$ as in Section \ref{subsection:iasubgroupintro}. The surjective map $$\Theta: \mathcal{H}_B \mathcal{I}_{g,p}^b \rightarrow \begin{cases}
    \overline{W} := J(\mathcal{H}_B \mathcal{I}_g) \oplus \Psi(\mathcal{H}_B \mathcal{I}_g) \\
   W := J(\mathcal{H}_B \mathcal{I}_{g,p}^b) \oplus \Psi(\mathcal{H}_B \mathcal{I}_{g,p}^b) & (p+b=1) 
   \end{cases}$$ combines the information detected by $J$ and the symplectic representation $\Psi$. To ease notation, we let $p+b=1$. The map $\Theta$ is $\mathcal{H}_{g,p}^b$-equivariant, where $\mathcal{H}_{g,p}^b$ acts on $\mathcal{H}_B \mathcal{I}_{g,p}^b$ by conjugation and on $W$ through its actions on $V$ and $H$ by elements of $\operatorname{GL}_g(\Z)$. Restricting to the action of $\operatorname{SL}_g(\Z)$, we can view the induced map  $$\Theta^*: H^2(W; \Q) \rightarrow H^2(\mathcal{H}_B \mathcal{I}_{g,p}^b; \Q)$$ as a map of $\operatorname{SL}_g(\Q)$ modules. Viewing $W_{\Q}^* := W^* \otimes \Q$ as a subspace of $H^1(\mathcal{H}_B \mathcal{I}_{g,p}^b; \Q)$, the map $\Theta^*$ is exactly the restriction of the cup product map $$\cup: \bigwedge^2 W_{\Q}^* \rightarrow H^2(\mathcal{H}_B \mathcal{I}_{g,p}^b; \Q).$$ The analogous statements hold for $\mathcal{H}_B \mathcal{I}_g$. 

   In the following theorem, we give an almost complete description of cup products of two classes in $H^1(\mathcal{H}_B \mathcal{I}_{g,p}^b; \Q)$ detected by $\Theta$.

\begin{maintheorem} \label{theorem:kerneltheta}
    When $g \geq 3$, the kernel of $$\Theta^*: \begin{cases}
    H^2(\overline{W}; \Q) \rightarrow H^2(\mathcal{H}_B \mathcal{I}_g; \Q) \\
    H^2(W; \Q) \rightarrow H^2(\mathcal{H}_B \mathcal{I}_{g,p}^b; \Q) & (p+b=1) 
   \end{cases}$$ is either: \begin{itemize}
       \item the direct sum of the modules in Table \ref{table:kerneltheta}; or
       \item the direct sum of the modules in Table \ref{table:kerneltheta} plus one copy of the module \[ \begin{cases}
    \Phi_{0,2} & g=3 \\
    \Phi_{0,1,0, \dots, 0,1} & g \geq 4
\end{cases}. \]
   \end{itemize}
    
\end{maintheorem}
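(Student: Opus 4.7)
The plan is to decompose $H^2(W;\Q) = \bigwedge^2 W_\Q^*$ using the direct sum splitting of $W_\Q$, obtaining
\[
H^2(W;\Q) \cong \bigwedge^2 J_\Q^* \,\oplus\, (J_\Q^* \otimes \Psi_\Q^*) \,\oplus\, \bigwedge^2 \Psi_\Q^*,
\]
where I write $J_\Q^*$ for $J(\mathcal{H}_B \mathcal{I}_{g,p}^b)^* \otimes \Q$ and likewise for $\Psi_\Q^*$. I would then analyze each summand separately as an $\operatorname{SL}_g(\Q)$-module, breaking each into irreducibles via the branching rules from Section \ref{section:reptheory} and using the $\operatorname{SL}_g(\Q)$-structure of $J(\mathcal{H}_B \mathcal{I}_{g,p}^b)$ (inherited from Farb--Kawazumi--Cohen-Pakianathan) together with the structure of $\Psi(\mathcal{H}_B \mathcal{I}_{g,p}^b)$ (from Proposition \ref{proposition:psihbi}, which makes the latter abelian).

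Next I would identify which summands lie in $\ker(\Theta^*)$ for structural reasons. For the pure-Johnson piece $\bigwedge^2 J_\Q^*$, I would pull back Pettet's description of the kernel of the cup product on $\bigwedge^2 H^1(IA_g;\Q)$: since the Johnson part of $\Theta$ factors as $\mathcal{H}_B \mathcal{I}_{g,p}^b \to IA_g \to J(IA_g)$, every class in Pettet's kernel remains in $\ker(\Theta^*)$. For the pure-symplectic piece $\bigwedge^2 \Psi_\Q^*$, I would run the Hochschild--Serre spectral sequence of
\[
1 \to \mathcal{HI}_{g,p}^b \to \mathcal{H}_B \mathcal{I}_{g,p}^b \to \Psi(\mathcal{H}_B \mathcal{I}_{g,p}^b) \to 1,
\]
where the abelianity of the quotient makes its $H^2$ exactly $\bigwedge^2 \Psi_\Q^*$, and Theorem \ref{theorem:kerneltau} controls the cohomology of the kernel that could kill or preserve each class. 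The mixed piece $J_\Q^* \otimes \Psi_\Q^*$ combines contributions from both factors.

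To establish non-vanishing of the surviving summands, I would construct detecting classes in $H_2(\mathcal{H}_B \mathcal{I}_{g,p}^b;\Q)$ from commuting pairs of explicit elements of the handlebody Torelli group. Natural candidates include separating twists along curves bounding disks in $\mathcal{V}_{g,p}^b$, bounding pair maps in the handlebody, and the disk-bounding Johnson generators of $IA_g$ realized as elements of $\mathcal{H}_B \mathcal{I}_{g,p}^b$. Evaluating $\Theta$ on commuting pairs of such elements produces bilinear forms on $W$ that I would verify pair nontrivially with each highest-weight vector listed in Table \ref{table:kerneltheta}'s complement, showing injectivity of $\Theta^*$ on the corresponding irreducible summands.

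The main obstacle is the ambiguity already present in the theorem statement: whether $\Phi_{0,2}$ (when $g=3$) or $\Phi_{0,1,0,\dots,0,1}$ (when $g\geq 4$) lies in $\ker(\Theta^*)$. This module sits inside the mixed summand $J_\Q^* \otimes \Psi_\Q^*$, and resolving its status requires either exhibiting a commuting pair in $\mathcal{H}_B \mathcal{I}_{g,p}^b$ whose $J$- and $\Psi$-images align with the precise weight vector, or ruling out any such pair via a refined spectral sequence argument. I expect this to be the most delicate step, because it asks one to combine Johnson-flavored and symplectic-flavored information \emph{simultaneously} in the handlebody setting---information that the tools above can detect separately but may not be able to detect jointly at this specific weight.
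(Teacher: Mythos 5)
Your plan correctly identifies the K\"unneth decomposition of $\bigwedge^2 W_\Q^*$ into the three summands and correctly observes that the pure-Johnson summand $\bigwedge^2 J_\Q^*$ inherits Pettet's kernel by functoriality (since $J = J_{IA_g} \circ \alpha$, anything in $\ker(J^*_{IA_g})$ pulls back to $\ker(\Theta^*)$). The use of abelian cycles to certify non-vanishing on the remaining irreducibles is also the right tool and matches the paper's upper-bound argument (Section~\ref{section:upperboundtheta}). For the pure-$\Psi$ piece, your Hochschild--Serre route would work but is an unnecessary detour: that summand is a single irreducible $\Phi_{0,\dots,0,1,2}^*$, and a single abelian cycle built from two disk twists $\{T_{\mathfrak{b}_g}, T_{\mathfrak{b}_{g-1}}\}$ already shows it survives (Proposition~\ref{proposition:sym2surjective}); there is no kernel contribution there to account for.

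The genuine gap is in the mixed summand $J_\Q^* \otimes \Psi_\Q^*$. Your proposal has no mechanism for establishing that a module there \emph{is} in $\ker(\Theta^*)$, yet Table~\ref{table:kerneltheta} asserts that $\Phi_{1,0,\dots,0}$ (one copy of $\Phi_{1,0}$ when $g=3$) lies in the kernel when $p+b=1$, and this module lives precisely in the mixed summand. Pettet's results say nothing about it, because it involves the $\operatorname{Sym}^2(V_\Q^*)$ factor which has no analogue in $IA_g$. The paper's detection mechanism (Section~\ref{subsection:lowerboundthetatau}) is the one your proposal does not consider: restrict the \emph{first} Johnson homomorphism $\tau$ of $\mathcal{I}_g^1$ to $\ker(\Theta) \subset \mathcal{HI}_g^1$, pass to $\mathcal{H}_B\mathcal{I}_g^1$-coinvariants, and compose with the five-term-sequence connecting map $b: \bigwedge^2 W_\Q \to (\ker(\Theta)/[\mathcal{H}_B\mathcal{I}_g^1,\ker(\Theta)])\otimes\Q$. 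The concrete input is the commutator of a bounding pair annulus twist $T_xT_{x'}^{-1}$ (which has $\Theta$-image purely in the $J$-factor) with a disk twist $T_{\mathfrak{b}_2}$ (which has $\Theta$-image purely in the $\Psi$-factor); one computes $\tau([T_xT_{x'}^{-1}, T_{\mathfrak{b}_2}]) = a_1 \wedge b_1 \wedge b_2 \neq 0$ using the $\mathcal{H}_g^1$-equivariance of $\tau$, and an $\mathfrak{sl}_g$-manipulation produces a highest weight vector of the required weight. This is the ``new input that deviates from the theory for $\mathcal{I}_{g,p}^b$ and $IA_g$'' that the introduction flags, and it is also precisely why the undecorated case $\mathcal{H}_B\mathcal{I}_g$ has one fewer module in its kernel: there $\tau$ happens to detect nothing. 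Your proposal does not distinguish these two cases, and without $\tau$ as a detecting tool, you cannot establish the multiplicity-$1$ entry for $(1,0,\dots,0)$ (or the second copy of $(1,0)$ at $g=3$) in the $p+b=1$ column of Table~\ref{table:kerneltheta}.

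You correctly anticipate that the remaining ambiguity ($\Phi_{0,2}$ at $g=3$, or $\Phi_{0,1,0,\dots,0,1}$ at $g\geq 4$) is the delicate case, and that it sits in the mixed summand. But note that the paper does not resolve that one either --- the theorem statement deliberately leaves it as a dichotomy --- so flagging it as an obstacle does not by itself close the argument. What it does need is the $\tau$-detection above, which is logically independent of that unresolved module.
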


\begin{table}[h!]
\centering
\begin{tabular}{|c|c|>{\centering\arraybackslash}m{2cm}|>{\centering\arraybackslash}m{2cm}|}
\hline
\multirow{2}{*}{\textbf{Genus}} & \multirow{2}{*}{\textbf{Highest Weight}} & \multicolumn{2}{c|}{\textbf{Multiplicity in $\operatorname{ker}(\Theta^*)$}} \\
\cline{3-4}
 &  & \textbf{$p=b=0$} & \textbf{$p+b=1$} \\
\hline
$g=3$ & $(1,0)$ & 0 & 2 \\
$g=3$ & $(2,1)$ & 1 & 1 \\
\hline
$g \geq 4$ & $(0,\dots,0,1,0)$ & 0 & 1 \\
$g \geq 4$ & $(1,0,\dots,0)$ & 0 & 1 \\
$g \geq 4$ & $(1,0,\dots,0,1,1)$ & 1 & 1 \\
\hline
\end{tabular}
\caption{Irreducible $\operatorname{SL}_g(\Q)$-modules contained in $\operatorname{ker}(\Theta^*)$}
\label{table:kerneltheta}
\end{table}

We conjecture that the module $\Phi_{0,1,0, \dots, 0,1}$ (resp. $\Phi_{0,2}$) is in $\operatorname{ker}(\Theta^*)$ when $g \geq 4$ (resp. $g=3$), but we cannot prove it. We also conjecture the following, which will imply that Theorem \ref{theorem:kerneltheta} describes the entire cup product map $$\cup: \bigwedge^2 H^1(\mathcal{H}_B \mathcal{I}_{g,p}^b; \Q) \rightarrow H^2(\mathcal{H}_B \mathcal{I}_{g,p}^b; \Q).$$ \begin{conjecture} For $g \gg 0$, we have 
    $H_1(\mathcal{H}_B \mathcal{I}_{g,p}^b; \Q) \cong \begin{cases}
    \overline{W}_{\Q} & (p=b=0) \\
    W_{\Q} & (p+b = 1) 
   \end{cases}$.
\end{conjecture}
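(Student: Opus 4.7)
The plan is to deduce this conjecture from the preceding one on $H_1(\mathcal{HI}_{g,p}^b;\Q)$ by running a Lyndon--Hochschild--Serre five-term exact sequence on the short exact sequence
\[ 1 \to \mathcal{HI}_{g,p}^b \to \mathcal{H}_B \mathcal{I}_{g,p}^b \to \Psi(\mathcal{H}_B \mathcal{I}_{g,p}^b) \to 1, \]
and then identifying each resulting term with the $J \oplus \Psi$ summands of $W$ (or $\overline{W}$ in the closed case).

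First I would use Proposition \ref{proposition:psihbi} to describe $\Psi(\mathcal{H}_B \mathcal{I}_{g,p}^b)$ precisely as a finitely generated abelian group, so that its rational homology is the exterior algebra on $\Psi(\mathcal{H}_B \mathcal{I}_{g,p}^b)_\Q$. The five-term exact sequence in rational homology then reads
\[ \wedge^2 \Psi(\mathcal{H}_B \mathcal{I}_{g,p}^b)_\Q \xrightarrow{d} H_1(\mathcal{HI}_{g,p}^b;\Q)_{\Psi(\mathcal{H}_B \mathcal{I}_{g,p}^b)} \to H_1(\mathcal{H}_B \mathcal{I}_{g,p}^b;\Q) \to \Psi(\mathcal{H}_B \mathcal{I}_{g,p}^b)_\Q \to 0, \]
and granting the preceding conjecture the middle term becomes $(U_\Q)_{\Psi(\mathcal{H}_B \mathcal{I}_{g,p}^b)}$ (or its $\UQ$ analogue). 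Next I would match these coinvariants with the Johnson summand: both $\tau|_{\mathcal{HI}}$ and $J|_{\mathcal{HI}}$ are $\mathcal{H}_{g,p}^b$-equivariant abelian quotients of $\mathcal{HI}_{g,p}^b$, so under the preceding conjecture there is a canonical $\operatorname{SL}_g(\Q)$-equivariant surjection $\pi : U_\Q \twoheadrightarrow J(\mathcal{HI}_{g,p}^b)_\Q$ factoring $J|_{\mathcal{HI}}$ through $\tau|_{\mathcal{HI}}$; I would then identify the kernel of $\pi$ with the span of $(\sigma - \operatorname{id})U_\Q$ for $\sigma$ ranging over an explicit generating set of $\Psi(\mathcal{H}_B \mathcal{I}_{g,p}^b)$, using the quotient $H \twoheadrightarrow V$ to see that what gets killed is exactly the $H$-directions that collapse in $V$. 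Separately I would check that $J(\mathcal{HI}_{g,p}^b)_\Q = J(\mathcal{H}_B \mathcal{I}_{g,p}^b)_\Q$, since any $J$-class arising from an element outside $\mathcal{HI}_{g,p}^b$ is already recorded by $\Psi$. Finally, I would kill the differential $d$ by exhibiting commuting lifts to $\mathcal{H}_B \mathcal{I}_{g,p}^b$ of a basis of $\Psi(\mathcal{H}_B \mathcal{I}_{g,p}^b)_\Q$ via standard handlebody constructions (for instance, disjointly supported Dehn twists or meridional exchanges). Combined with the automatic surjection $\Theta_* : H_1(\mathcal{H}_B \mathcal{I}_{g,p}^b;\Q) \twoheadrightarrow W_\Q$ coming from surjectivity of $\Theta$, the resulting short exact sequence $0 \to J(\mathcal{H}_B \mathcal{I}_{g,p}^b)_\Q \to H_1(\mathcal{H}_B \mathcal{I}_{g,p}^b;\Q) \to \Psi(\mathcal{H}_B \mathcal{I}_{g,p}^b)_\Q \to 0$ forces $H_1 \cong W_\Q$ by a dimension count.

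The principal obstacle is that this argument is conditional on the preceding conjecture, whose proof would be the handlebody analogue of Johnson's deep theorem that $\tau$ captures the rational abelianization of $\mathcal{I}_{g,p}^b$; resolving it will likely require genuinely geometric input, such as an explicit generating set for $\mathcal{HI}_{g,p}^b$ together with a cut-system or curve-complex argument in the spirit of Johnson and Putman. Even granting that, both the coinvariant identification $(U_\Q)_{\Psi(\mathcal{H}_B \mathcal{I}_{g,p}^b)} \cong J(\mathcal{H}_B \mathcal{I}_{g,p}^b)_\Q$ and the vanishing of the differential $d$ are nontrivial: they respectively require a description of $\Psi(\mathcal{H}_B \mathcal{I}_{g,p}^b)$ sharp enough to compute its action on $U$, and the construction of enough commuting pairs of handlebody-Torelli elements whose $\Psi$-images span $\wedge^2 \Psi(\mathcal{H}_B \mathcal{I}_{g,p}^b)_\Q$.
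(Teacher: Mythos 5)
The statement you are asked about is stated in the paper as a \emph{conjecture}: the paper offers no proof of it, and indeed no proof of the companion conjecture $H_1(\mathcal{HI}_{g,p}^b;\Q)\cong U_\Q$ on which your entire argument rests. So while your write-up is honest about being conditional, it is not a proof of the statement; the essential mathematical content --- a handlebody analogue of the Johnson/Farb--Kawazumi--Cohen--Pakianathan computation of the rational abelianization --- is exactly what remains open, and nothing in your outline reduces that difficulty. Everything after the first sentence is bookkeeping that would only become a proof once that input is supplied.

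That said, the conditional reduction itself is essentially sound, and in one place you are working harder than necessary. Granting $H_1(\mathcal{HI}_{g,p}^b;\Q)\cong U_\Q$, the coinvariants $(U_\Q)_{\Psi(\mathcal{H}_B\mathcal{I}_{g,p}^b)}$ are computed directly from Proposition \ref{proposition:psihbi}: each $\sigma-\mathrm{id}$ strictly decreases the number of $a$-factors of a basis wedge, so the span of $(\sigma-\mathrm{id})U_\Q$ is exactly $[V_\Q\otimes\bigwedge^2 V_\Q^*]\oplus\bigwedge^3 V_\Q^*$ and the coinvariants are $(\bigwedge^2 V_\Q)\otimes V_\Q^*\cong J(\mathcal{H}_B\mathcal{I}_{g,p}^b)_\Q$ (using that $J|_{\mathcal{HI}_{g,p}^b}$ is already surjective, as recorded in Section \ref{subsection:JsetupforHBI}). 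You do not need to construct commuting lifts to kill the differential $d:H_2(\Psi(\mathcal{H}_B\mathcal{I}_{g,p}^b);\Q)\to (U_\Q)_{\Psi}$: the five-term sequence gives $\dim H_1(\mathcal{H}_B\mathcal{I}_{g,p}^b;\Q)=\dim\operatorname{coker}(d)+\dim\Psi_\Q\le\dim W_\Q$, while the surjectivity of $\Theta$ gives the reverse inequality, so $d=0$ and the isomorphism follow together from the sandwich. The genuine gap is therefore not in your spectral-sequence mechanics but in the unproved hypothesis; as a referee I would classify this as a plausible conditional argument for an open conjecture, not a proof.
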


\subsection{Proof outline for Theorem \ref{theorem:kerneltheta}} \label{subsection:theoremBoutine} The same outline applies to all $p+b \leq 1$, so to ease notation we let $p+b=1$. In determining the kernel of $$\Theta^*: \bigwedge^2 W^*_{\Q} \rightarrow H^2(\mathcal{H}_B \mathcal{I}_{g,p}^b; \Q),$$ our first step is to decompose $\bigwedge^2 W^*_{\Q}$ as a direct sum of irreducible $\operatorname{SL}_g(\Q)$ representations. The kernel $\operatorname{ker}(\Theta^*)$ is an $\operatorname{SL}_g(\Q)$ subrepresentation of $\bigwedge^2 W_{\Q}^*$. Given a nonzero vector $v$ belonging to some irreducible subrepresentation $\Phi_{w_1, \dots, w_{g-1}}$ of $\bigwedge^2 W_{\Q}^*$, if $v \in \operatorname{ker}(\Theta^*)$ then all of $\Phi_{w_1, \dots, w_{g-1}}$ must lie in $\operatorname{ker}(\Theta^*)$. With this setup, we reduce our problem to determining whether or not each module in the decomposition of $\bigwedge^2 W_{\Q}^*$ belongs to $\operatorname{ker}(\Theta^*)$. 

We have $H_2(W; \Q) \cong \bigwedge^2 W_{\Q}$, where the $\operatorname{SL}_g(\Q)$ decomposition of $\bigwedge^2 W_{\Q}$ consists of the duals of the modules appearing in $\bigwedge^2 W_{\Q}^*$. Our strategy is to understand the induced map on homology $$\Theta_*: H_2(\mathcal{H}_B \mathcal{I}_{g,p}^b; \Q) \rightarrow \bigwedge^2 W_{\Q}.$$ If a module is in $\operatorname{coker}(\Theta_*)$, then its dual is in $\operatorname{ker}(\Theta^*)$. If a module is in $\operatorname{im}(\Theta_*)$, then its dual is not in $\operatorname{ker}(\Theta^*)$.

In Section \ref{section:lowerboundtheta}, we find the dual of each module in Table \ref{table:kerneltheta} in $\operatorname{coker}(\Theta_*)$. We do this by considering the five term exact sequence of the extension $$1 \rightarrow \operatorname{ker}(\Theta) \rightarrow \mathcal{H}_B \mathcal{I}_{g,p}^b \rightarrow W \rightarrow 1,$$ which contains the map $\Theta_*$. We use two quotients of $\operatorname{ker}(\Theta)$ to help us detect this cokernel. One quotient is the second Johnson homomorphism $J_2$ for $\operatorname{Aut}(F_g)$, following the work of Pettet \cite{Pettet} who solved the analogous problem for $IA_g$ and $OA_g$. The other quotient is the first Johnson homomorphism $\tau$ for $\mathcal{I}_{g,p}^b$, which is a new input that deviates from the theory for $\mathcal{I}_{g,p}^b$ and $IA_g$.

In Section \ref{section:upperboundtheta}, we conclude the proof by showing that all of the modules in the decomposition of $\bigwedge^2 W_{\Q}$ except for the duals of those mentioned in Theorem \ref{theorem:kerneltheta} lie in $\operatorname{im}(\Theta_*)$. We do this using the method of abelian cycles, following the work of Hain \cite{Hain}, Sakasai \cite{Sakasai1, Sakasai2}, Pettet \cite{Pettet}, and Brendle-Farb \cite{Brendle}. 

\subsection{Proof outline for Theorem \ref{theorem:kerneltau}} For the groups $\mathcal{HI}_{g,p}^b$, we consider the three cases $p=b=0$ and $p=1$ and $b=1$ separately, building on previous work as we go. We start with $\mathcal{HI}_g$, and our strategy for this case follows that in Section \ref{subsection:theoremBoutine}. To determine the kernel of $$\tau^*: \bigwedge^2 \UQ^*_{\Q} \rightarrow H^2(\mathcal{HI}_g; \Q),$$ we first decompose $\bigwedge^2 \UQ^*_{\Q}$ into a direct sum of irreducible $\operatorname{SL}_g(\Q)$ representations. We then determine whether or not each module in this decomposition belongs to $\operatorname{ker}(\tau^*)$. A module is in $\operatorname{ker}(\tau^*)$ if its dual is in the cokernel of $$\tau_*: H_2(\mathcal{HI}_g; \Q) \rightarrow \bigwedge^2 \UQ_{\Q},$$ and a module is not in $\operatorname{ker}(\tau^*)$ if its dual is in the image $\operatorname{im}(\tau_*)$.

Let $\mathcal{K}_g$ be the kernel of the Johnson homomorphism on $\mathcal{I}_g$ and $\mathcal{HK}_g$ be the intersection $\mathcal{H}_g \cap \mathcal{K}_g$. In Section \ref{section:lowerboundtau}, we consider the five term exact sequence of the extension $$1 \rightarrow \mathcal{HK}_g \rightarrow \mathcal{HI}_g \rightarrow \UQ \rightarrow 1,$$ which contains the map $\tau_*$. We relate this exact sequence to the bracket map $$[\cdot, \cdot]: \bigwedge^2 \tau(\mathcal{I}_g) \rightarrow \tau_2(\mathcal{K}_g)$$ introduced by Morita in \cite{MoritaBracket}. Tensoring with $\Q$ and restricting to $\bigwedge^2 \UQ_{\Q}$, we show that $$[\cdot, \cdot]: \bigwedge^2 \UQ_{\Q} \rightarrow \tau_2^{\Q}(\mathcal{HK}_g)$$ detects all of $\operatorname{coker}(\tau_*)$. This closely (but not exactly) resembles the case for $\mathcal{I}_g$, where an extra input given by the Casson invariant is needed. See \cite{Hain}.

In Section \ref{section:upperboundtau}, we conclude the proof for $\mathcal{HI}_g$ by using abelian cycles to show that the remaining modules in the decomposition of $\bigwedge^2 \UQ_{\Q}$ are in $\operatorname{im}(\tau_*)$, and so their duals survive in $H^2(\mathcal{HI}_g; \Q)$.

In Sections \ref{section:taupuncture} and \ref{section:tauboundary}, we extend our results to the decorated cases of $\mathcal{HI}_{g,1}$ and $\mathcal{HI}_g^1$ by studying the Hochschild-Serre spectral sequences of the extensions $$1 \rightarrow \pi_1 (\Sigma_g) \rightarrow \mathcal{HI}_{g,1} \rightarrow \mathcal{HI}_g \rightarrow 1$$ and $$1 \rightarrow \Z \rightarrow \mathcal{HI}_g^1 \rightarrow \mathcal{HI}_{g,1} \rightarrow 1.$$ These results closely resemble those of the analogous problems for $\mathcal{I}_{g,1}$ and $\mathcal{I}_g^1$. See \cite{MoritaLinear} and \cite{Habegger}.

In all three cases, we find that $\operatorname{ker}(\tau^*)$ is exactly the dual of the rational image of the second Johnson homomorphism $\tau_2^{\Q}(\mathcal{HK}_{g,p}^b) := \tau_2(\mathcal{HK}_{g,p}^b) \otimes \Q$. Faes determines the image $\tau_2(\mathcal{HK}_g^1)$ in \cite{Faes} and describes it as the kernel of a trace map. In Theorem \ref{theorem:decomptau2}, we offer a new perspective and decompose $\tau_2^{\Q}(\mathcal{HK}_{g,p}^b)$ as a direct sum of $\operatorname{SL}_g(\Q)$ modules for all $p+b \leq 1$.

\subsection{Torsion} None of our results in this paper address torsion in the group cohomology of these handlebody Torelli groups. In \cite{Brendle}, Brendle-Farb investigates cup products of two elements in $H^1(\mathcal{I}_{g,p}^b; \Z_2)$ or $H^1(\mathcal{K}_{g,p}^b; \Z_2)$ detected by the Birman-Craggs-Johnson homomorphism $\sigma$. Work in progress by the author shows that the BCJ map detects a very similar structure in $H^*(\mathcal{HI}_{g,p}^b; \Z_2)$ and $H^*(\mathcal{HK}_{g,p}^b; \Z_2)$. \begin{conjecture} For $g \gg 0$, we have 
    $H_1(\mathcal{HI}_{g,p}^b; \Z) \cong \tau(\mathcal{HI}_{g,p}^b) \oplus \sigma(\mathcal{HK}_{g,p}^b)$.
\end{conjecture}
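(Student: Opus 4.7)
The plan is to prove this conjecture by combining the (conjectured) rational computation of $H_1(\mathcal{HI}_{g,p}^b; \Q)$ with the Birman-Craggs-Johnson map $\sigma$, which the author's work in progress identifies as detecting torsion in a very similar manner to Johnson's theorem for $\mathcal{I}_{g,p}^b$. Since $\tau$ lands in a torsion-free abelian group while $\sigma$ lands in an $\F_2$-vector space, the combined map
\[ (\tau,\sigma): \mathcal{HI}_{g,p}^b \longrightarrow \tau(\mathcal{HI}_{g,p}^b) \oplus \sigma(\mathcal{HK}_{g,p}^b) \]
factors through the abelianization and supplies a natural candidate splitting; the goal is to show it is an isomorphism on $H_1$.

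First, I would assume the rational conjecture, which pins down the torsion-free rank of $H_1(\mathcal{HI}_{g,p}^b; \Z)$ as equal to the rank of $\tau(\mathcal{HI}_{g,p}^b)$. This reduces the problem to identifying the torsion subgroup as $\sigma(\mathcal{HK}_{g,p}^b)$. Following Johnson's blueprint for $H_1(\mathcal{I}_g; \Z)$, I would then analyze the five-term exact sequence of
\[ 1 \to \mathcal{HK}_{g,p}^b \to \mathcal{HI}_{g,p}^b \to \tau(\mathcal{HI}_{g,p}^b) \to 1. \]
The $\tau(\mathcal{HI}_{g,p}^b)$-coinvariants of $H_1(\mathcal{HK}_{g,p}^b; \Z)$ surject onto the torsion subgroup of $H_1(\mathcal{HI}_{g,p}^b; \Z)$, so the task is to show that $\sigma$ identifies these coinvariants with $\sigma(\mathcal{HK}_{g,p}^b)$ and that the resulting sequence splits (with $\tau$ providing a splitting of the quotient).

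Concretely, I would identify a generating set for $\mathcal{HI}_{g,p}^b$ using a handlebody analog of Johnson's generators (bounding pair maps whose bounding curves both bound disks in the handlebody, together with handlebody separating twists that respect the meridian structure), compute $\tau$ and $\sigma$ explicitly on these generators, and compare against the targets. Surjectivity of $(\tau,\sigma)$ onto $\tau(\mathcal{HI}_{g,p}^b) \oplus \sigma(\mathcal{HK}_{g,p}^b)$ should follow from the definitions once one verifies that these generators already realize all of $\sigma(\mathcal{HK}_{g,p}^b)$, which can be checked using the explicit quadratic form description of the BCJ target.

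The hard part will be establishing sufficient relations to show that every element killed by both $\tau$ and $\sigma$ lies in the commutator subgroup of $\mathcal{HI}_{g,p}^b$. In Johnson's proof for $\mathcal{I}_g$, this step required his entire program across \cite{JohnsonI, JohnsonII, JohnsonIII} and is by far the most delicate; for the handlebody case, additional obstructions appear because not every Torelli relation descends to $\mathcal{HI}_{g,p}^b$ (standard bounding pair or lantern-type relations need not live in the handlebody group), so one likely needs new stable finite generation or presentation results for $\mathcal{HI}_{g,p}^b$, or an inductive argument that bootstraps from Johnson's result via the inclusion $\mathcal{HI}_{g,p}^b \hookrightarrow \mathcal{I}_{g,p}^b$. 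The hypothesis $g \gg 0$ suggests that a stability-based approach is the most promising route.
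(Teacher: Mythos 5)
This statement is a \emph{conjecture} in the paper, not a theorem: the author explicitly notes that ``None of our results in this paper address torsion,'' cites work in progress on the BCJ homomorphism $\sigma$, and states the displayed isomorphism as a conjecture without proof. So there is nothing in the paper for your proposal to be compared against, and your task as posed cannot be completed as a proof verification.

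That said, what you have written is not a proof either; it is a strategy outline, and you concede as much at the end. The plan rests on two inputs that are themselves only conjectural in the paper: (i) the rational conjecture $H_1(\mathcal{HI}_{g,p}^b; \Q) \cong U_{\Q}$ (resp.\ $\UQ_{\Q}$), which pins down the free rank, and (ii) the unpublished ``work in progress'' claim that $\sigma$ detects the analogous torsion structure for handlebody Torelli groups. With both of those granted, the genuine gap remaining in your argument is exactly the one you flag: showing that every element of $\mathcal{HI}_{g,p}^b$ killed by both $\tau$ and $\sigma$ lies in the commutator subgroup. This is the content of Johnson's whole program for $\mathcal{I}_g$, and you give no mechanism for it in the handlebody setting. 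Your suggestion to ``bootstrap from Johnson's result via $\mathcal{HI}_{g,p}^b \hookrightarrow \mathcal{I}_{g,p}^b$'' does not obviously work: the inclusion induces a map on abelianizations, but an element of $\mathcal{HI}_{g,p}^b$ that becomes a commutator in $\mathcal{I}_{g,p}^b$ need not be a commutator in $\mathcal{HI}_{g,p}^b$, and conversely relations available in $\mathcal{I}_{g,p}^b$ (lantern relations, bounding-pair relations) need not have representatives in the handlebody group, as you note. You would also need to rule out odd torsion in $H_1(\mathcal{HI}_{g,p}^b; \Z)$; nothing in the proposal, and nothing in the cited literature for the handlebody case, addresses this. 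In short: the strategic skeleton is the natural one, but all the load-bearing steps are either assumed, deferred to unpublished work, or acknowledged as open, so this is a research plan rather than a proof.
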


\subsection{Acknowledgments} I would like to thank my advisor Andy Putman for introducing me to this problem and for his continued help and guidance. I also thank Josh Lehman for helpful discussions about spectral sequences.

\section{Preliminaries}

\subsection{(Handlebody) mapping class groups} \label{subsection:mod} The mapping class groups $\Mod_{g,p}^b$ with $p+b \leq 1$ are related by the following exact sequences: \begin{align*}
    1 \rightarrow \Z \rightarrow \Mod_g^1 \rightarrow \Mod_{g,1} \rightarrow 1, \\ 1 \rightarrow \pi_1(\Sigma_g) \rightarrow \Mod_{g,1} \rightarrow \Mod_g \rightarrow 1, \\ 1 \rightarrow \pi_1(U \Sigma_g) \rightarrow \Mod_g^1 \rightarrow \Mod_g \rightarrow 1.
\end{align*} The Dehn twist $T_{\partial}$ around the embedded disk of $\Sigma_g^1$ generates the group $\Z$. The map $\pi_1(U \Sigma_g) \rightarrow \Mod_g^1$ (resp. $\pi_1(\Sigma_g) \rightarrow \Mod_{g,1}$) is induced by ``pushing the embedded disk (resp. the marked point) around curves." See \cite[Chapter 4]{Primer}. \\

A genus $g$ handlebody $\mathcal{V}_g$ is the oriented 3-manifold with $\partial \mathcal{V}_g = \Sigma_g$ obtained from a 3-ball by attaching $g$ one-handles. We denote by $\mathcal{V}_{g,p}^b$ a handlebody with $p$ marked points and $b$ embedded disks in its boundary $\partial \mathcal{V}_{g,p}^b = \Sigma_{g,p}^b$. The handlebody group $\mathcal{H}_{g,p}^b$ is the subgroup of $\Mod_{g,p}^b$ that extends to $\mathcal{V}_{g,p}^b$.

\subsection{Conventions} \label{subsection:conventions} We first define conventions for the surface with one embedded disk $\Sigma_g^1$ and then extend them to the marked and undecorated cases. \\

Place a basepoint on the boundary of the embedded disk in $\Sigma_g^1$. Viewing $\Sigma_g^1$ as the surface minus the embedded disk, $\pi_1(\Sigma_g^1)$ is the free group $F_{2g}$ on generators $\alpha_1, \alpha_2, \dots, \alpha_g, \beta_1, \beta_2, \dots, \beta_g$. The abelianization $H := H_1(\Sigma_g^1; \Z)$ is the free abelian group $\Z \langle a_1, a_2, \dots, a_g, b_1, b_2, \dots, b_g \rangle$, where $a_i$ (resp. $b_i$) is the homology class of $\alpha_i$ (resp. $\beta_i$). We include an image with curves $\mathfrak{a}_i$ and $\mathfrak{b}_{i}$ whose homology classes are $a_i$ and $b_i$, respectively. Throughout this paper, we fix a handlebody $\mathcal{V}_g^1$ such that $\partial \mathcal{V}_g^1 = \Sigma_g^1$ and the curves $\beta_i$ and $\mathfrak{b}_i$ bound disks in $\mathcal{V}_g^1$. \\ \centerline{\includegraphics[scale=.5]{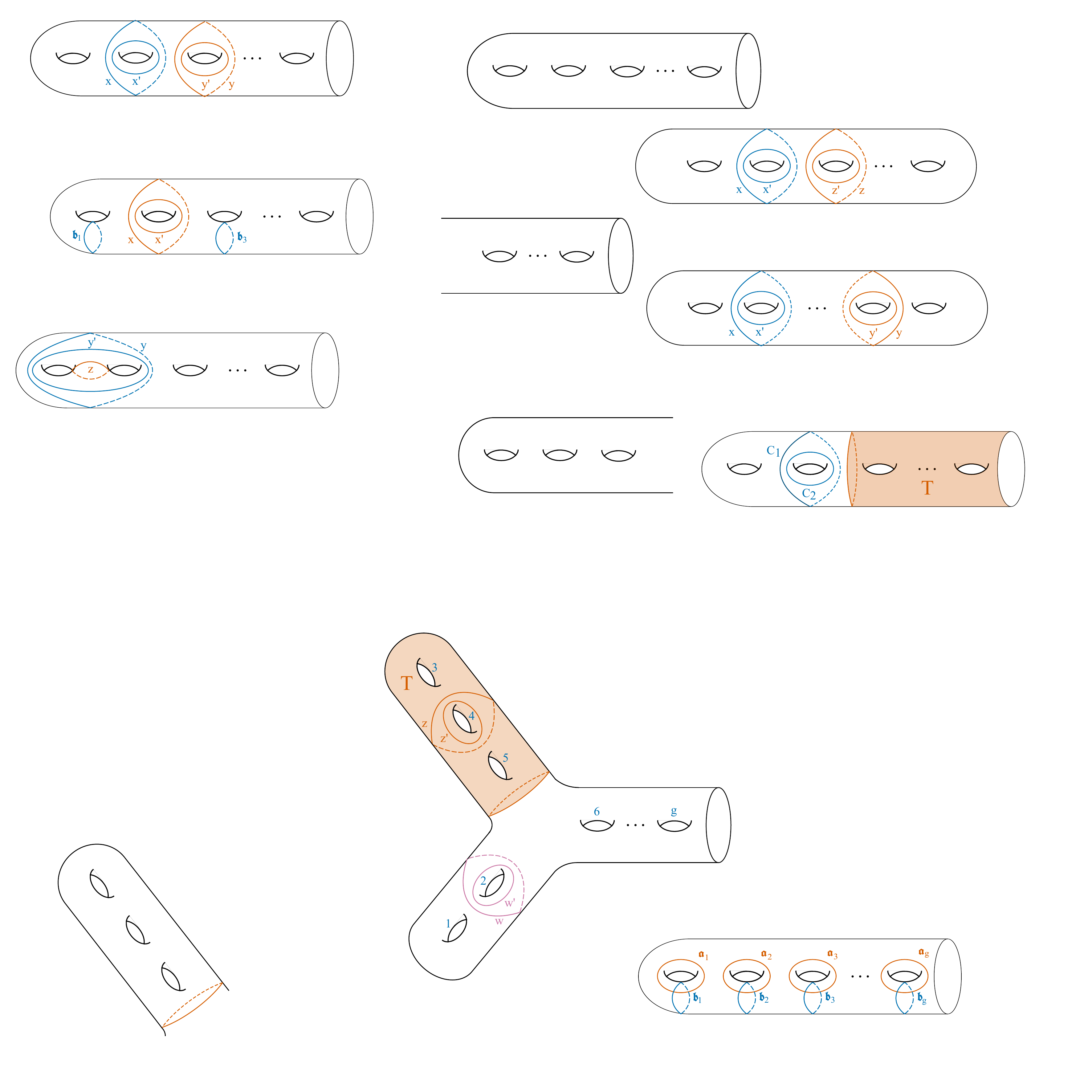}} Then $V := H_1(\mathcal{V}_g^1; \Z) = \Z \langle a_1, \dots, a_g \rangle$ and $\pi_1(\mathcal{V}_g^1) = \langle \alpha_1, \dots, \alpha_g | \rangle \cong F_g$. The curves $\beta_i$ normally generate the kernel of the surjection $\pi_1(\Sigma_g^1) \rightarrow \pi_1(\mathcal{V}_g^1)$ and the handlebody group $\mathcal{H}_g^1$ is exactly the subgroup of $\Mod_g^1$ preserving this kernel. 

In the following subsections, we further discuss the actions of $\Mod_g^1$ on these fundamental groups and homology groups.

\subsection{Action on surface homology.} \label{subsection:actiononhomology} Poincaré duality gives us an isomorphism $$H \cong H^* = H^1(\Sigma_g^1; \Z),$$ where $a_i$ (resp. $b_i$) corresponds to $-b_i^*$ (resp. $a_i^*$). We use the same symbol $H$ to denote these canonically isomorphic groups. The abelian group $H$ is equipped with a symplectic intersection form $\omega$, and the action of $\Mod_g^1$ preserves this form. Recalling that we denote $H_1(\mathcal{V}_g^1; \Z) = \Z \langle a_1, \dots, a_g \rangle$ by $V$, we get the decomposition $$H = V \oplus V^*.$$ Everything behaves the same when $b=0$: we have the isomorphisms $$H \cong H_1(\Sigma_{g,1}; \Z) \cong H_1(\Sigma_g; \Z),$$ and we have the following commutative diagram with exact rows. \[\begin{tikzcd}[ampersand replacement=\&]
	1 \& {\mathcal{I}_g^1} \& {\Mod_g^1} \& {\operatorname{Sp}_{2g}(\mathbb{Z})} \& 1 \\
	1 \& {\mathcal{I}_{g,1}} \& {\Mod_{g,1}} \& {\operatorname{Sp}_{2g}(\mathbb{Z})} \& 1 \\
	1 \& {\mathcal{I}_g} \& {\Mod_g} \& {\operatorname{Sp}_{2g}(\mathbb{Z})} \& 1
	\arrow[from=1-1, to=1-2]
	\arrow[from=1-2, to=1-3]
	\arrow[from=1-2, to=2-2]
	\arrow["\Psi", from=1-3, to=1-4]
	\arrow[from=1-3, to=2-3]
	\arrow[from=1-4, to=1-5]
	\arrow[equals, from=1-4, to=2-4]
	\arrow[from=2-1, to=2-2]
	\arrow[from=2-2, to=2-3]
	\arrow[from=2-2, to=3-2]
	\arrow["\Psi", from=2-3, to=2-4]
	\arrow[from=2-3, to=3-3]
	\arrow[from=2-4, to=2-5]
	\arrow[equals, from=2-4, to=3-4]
	\arrow[from=3-1, to=3-2]
	\arrow[from=3-2, to=3-3]
	\arrow["\Psi", from=3-3, to=3-4]
	\arrow[from=3-4, to=3-5]
\end{tikzcd}\] The map $\Mod_g^1 \rightarrow \Mod_{g,1}$ is given by forgetting all but the middle point of the embedded disk, and the map $\Mod_{g,1} \rightarrow \Mod_g$ is given by forgetting the marked point. \\

The handlebody group $\mathcal{H}_{g,p}^b$ acts on $H$, and Hirose proves in \cite{Hirose} that its image in $\operatorname{Sp}_{2g}(\Z)$ is $$\Psi(\mathcal{H}_{g,p}^b) \cong \left\{  \begin{bmatrix} A & 0 \\ C & (A^t)^{-1} \end{bmatrix} \; | \; A \in  \operatorname{GL}_{g}(\Z), \; C = C^t \right\}.$$

\noindent Hirose chose the convention that the $\mathfrak{a}_i$ curves bound disks in the fixed handlebody, so he called this image $urSp(2g)$, where {\textit{ur}} stands for ``upper right." We instead have the $\mathfrak{b}_i$ curves bound disks in our handlebody, so the nontrivial entries of our matrices lie in the lower left.

\subsection{(Outer) automorphisms of $\boldsymbol{F_g}$.} \label{subsection:autfn} Let $\operatorname{Aut}(F_g)$ be the automorphism group of the free group $F_g$ and let $IA_g$ be its Torelli subgroup as in Section \ref{subsection:iasubgroupintro}. 

In \cite{Magnus}, Magnus determines a finite set of generators for $IA_g$ and we set notation for them here. The group $IA_g$ is finitely generated by the maps \begin{itemize}
    \item $C_{ij}: x_j \mapsto x_i x_j x_i^{-1}$
    \item $M_{ijk}: x_k \mapsto x_k[x_i,x_j]$
\end{itemize} where we assume $1 \leq i,j,k \leq g$ are distinct, $i < j$, and all unmentioned generators of $F_g$ are fixed.

Similarly, let $\operatorname{Out}(F_g) := \operatorname{Aut}(F_g) / \operatorname{Inn}(F_g)$ be the outer automorphism group of $F_g$. The Torelli subgroup $OA_g$ lies in the short exact sequence $$ 1 \rightarrow OA_g \rightarrow \operatorname{Out}(F_g) \rightarrow \operatorname{GL}_g(\Z) \rightarrow 1.$$ The groups $IA_g$ and $OA_g$ give free group analogues of the Torelli subgroups of the based and unbased mapping class groups, respectively. 

\subsection{Action on fundamental groups.} \label{subsection:actiononpi1} The mapping class group $\Mod_{g,p}^b$ acts on $\pi_1(\Sigma_{g,p}^b)$ via automorphisms (if $p+b=1$) or outer automorphisms (if $p=b=0$). These actions are well understood and appear in the literature in many places (see, e.g., \cite[Section 2]{MoritaLinear}). When $p+b=1$, the fundamental group $\pi_1(\Sigma_{g,p}^b)$ is a free group and the Torelli group acts via elements of $IA_{2g}$. \\

Equally important in this paper is the action of $\mathcal{H}_{g,p}^b$ on $\pi_1(\mathcal{V}_{g,p}^b) \cong F_g$. See \cite[Section 6]{Hensel}. Define the twist group $\mathcal{T}_{g,p}^b$ to be the subgroup of $\mathcal{H}_{g,p}^b$ generated by Dehn twists about meridians (curves that bound disks in $\mathcal{V}_{g,p}^b$). Letting $p+b=1$, the handlebody group $\mathcal{H}_{g,p}^b$ surjects onto $\operatorname{Aut}(F_g)$ (see \cite{Griffiths}) and, by \cite{Luft}, we have the short exact sequence $$1 \rightarrow \mathcal{T}_{g,p}^b \rightarrow \mathcal{H}_{g,p}^b \xrightarrow{\alpha} \operatorname{Aut}(F_g) \rightarrow 1.$$ By definition, $\mathcal{H}_B \mathcal{I}_{g,p}^b$ is the preimage of $IA_g$ under $\alpha$. In fact, $\mathcal{HI}_{g,p}^b$ also surjects onto $IA_g$. See \cite[Section 2]{Om}. For the unbased case $\mathcal{H}_g$, analogous results hold regarding the map into $\operatorname{Out}(F_g)$.

\section{The Johnson homomorphisms}

In this section, we define the Johnson homomorphisms on subgroups of $\operatorname{Aut}(F_g)$ and $\operatorname{Out}(F_g)$ and $\Mod_{g,p}^b$. 

\subsection{The first Johnson homomorphism for $\boldsymbol{IA_g}$.} \label{subsection:firstjohnsonIA} 

There exists a surjective homomorphism $$J: IA_g \rightarrow (\bigwedge^2 H_1(F_g; \Z)) \otimes H_1(F_g; \Z)^*.$$ Recalling that we denote $H_1(\mathcal{V}_{g,p}^b; \Z)$ by $V$ and viewing $IA_g$ as a subgroup of $\operatorname{Aut}(\pi_1(\mathcal{V}_{g,p}^b))$, this becomes $$J: IA_g \rightarrow (\bigwedge^2 V) \otimes V^*.$$ It evaluates on generators as:\footnote{The negative signs appear in our definition but not Pettet's \cite{Pettet} because the Poincaré duality isomorphism introduces them.} \begin{itemize}
    \item $J(C_{ij}) = -(a_i \wedge a_j) \otimes b_j$
    \item $J(M_{ijk}) = -(a_i \wedge a_j) \otimes b_k.$
\end{itemize} 

The Johnson homomorphism for $OA_g$ surjects onto the quotient of $(\bigwedge^2 V) \otimes V^*$: $$J: OA_g \rightarrow ((\bigwedge^2 V) \otimes V^*) / V.$$ 

The Johnson homomorphism for $\mathcal{H}_B \mathcal{I}_{g,p}^b$ (which we also denote $J$) is the composition of the map $\alpha$ into $\operatorname{Aut}(F_g)$ or $\operatorname{Out}(F_g)$ defined in Section \ref{subsection:actiononpi1} with $J$.

\subsection{Higher Johnson homomorphisms for subgroups of $\boldsymbol{IA_g}$ and $\boldsymbol{OA_g}$.} \label{subsection:higherIA} Define $$IA_g(k) := \operatorname{ker}(IA_g \rightarrow \operatorname{Aut}(F_g / \gamma_{k+1}(F_g))),$$ where $\gamma_{k+1}(F_g)$ is the $(k+1)^{\text{st}}$ term in the lower central series of $F_g$. Under these indexing conventions, $IA_g(1) = IA_g$ and $IA_g(2) = [IA_g, IA_g]$. The $k^{\text{th}}$ Johnson homomorphism $J_k$ is an abelian quotient map defined on $IA_g(k)$ which detects the action on $F_g / \gamma_{k+2}(F_g)$. See \cite{Satoh} for this construction. In this paper, we only need the second Johnson homomorphism: $$J_2: IA_g(2) \rightarrow (((\bigwedge^2 V) \otimes V) / (\bigwedge^3 V)) \otimes V^*.$$ See \cite[Section 2.3]{Pettet} for this construction. There are analogues of the maps $J_k$ for $OA_g(k)$.

This gives us higher Johnson homomorphisms $J_k$ on the preimages of $IA_g(k)$ (resp. $OA_g(k)$) in $\mathcal{H}_B \mathcal{I}_{g,p}^b$ when $p+b =1$ (resp. $p=b=0$). 

\subsection{The first Johnson homomorphism for $\boldsymbol{\mathcal{I}_{g,p}^b}$.} \label{subsection:firstjohnsonTorelli}

When $p+b=1$, the Torelli group $\mathcal{I}_{g,p}^b$ acts on $\pi_1(\Sigma_{g,p}^b)$ via elements of $IA_{2g}$, so we can apply $J$ defined in Section \ref{subsection:firstjohnsonIA} to an element of $\mathcal{I}_{g,p}^b$. This process outputs an element of $(\bigwedge^2 H) \otimes H^*$. The image is a submodule of $(\bigwedge^2 H) \otimes H^*$ isomorphic to $\bigwedge^3 H$, so we write $$\tau: \mathcal{I}_{g,p}^b \rightarrow \bigwedge^3 H.$$

To understand $\tau$, it suffices to know how it evaluates on generators. The Torelli group $\mathcal{I}_{g,p}^b$ is generated by bounding pair maps, i.e., products $T_x T_y^{-1}$ for which $x$ and $y$ are disjoint nonseparating simple closed curves on $\Sigma_{g,p}^b$ such that $x \cup y$ separates the surface. Given a bounding pair map $T_x T_y^{-1}$, let $S_{x,y} \cong \Sigma_k^2$ be the subsurface bounded by $x \cup y$ on the side \textit{not} containing the basepoint or embedded disk. In this case, we say $T_x T_y^{-1}$ is a genus-$k$ bounding pair map. Choose a maximal symplectic subspace of $H_1(S_{x,y}; \Z)$ with basis $x_i, y_i$ for $i = 1, \dots, k$. Then $H_1(S_{x,y}; \Z)$ is a free abelian group generated by $x_i, y_i$ $(i = 1, \dots, k)$ and $c = [x]$, choosing orientation so that $S_{x,y}$ is on the left of $c$. Then the Johnson homomorphism evaluates as $$\tau(T_x T_y^{-1}) = (\sum_{i=1}^k x_i \wedge y_i) \wedge c,$$ which does not depend on our choice of symplectic basis. \\

The construction for $\mathcal{I}_g$ goes as follows. Let $\omega = \sum_{i=1}^g a_i \wedge b_i$ be the algebraic intersection form. Then $H$ is a subgroup of $\bigwedge^3 H$ by the $\operatorname{Sp}_{2g}(\Z)$-equivariant injection $$H \hookrightarrow \bigwedge^3 H \quad (x \mapsto x \wedge \omega).$$ The Johnson homomorphism is defined to be the map that makes the following diagram commute:

\[\begin{tikzcd}
	1 & {\pi_1(U\Sigma_g)} & {\mathcal{I}_g^1} & {\mathcal{I}_g} & 1 \\
	0 & H & {\bigwedge^3 H} & {(\bigwedge^3 H) / H} & 0
	\arrow[from=1-1, to=1-2]
	\arrow[from=1-2, to=1-3]
	\arrow["\tau", from=1-2, to=2-2]
	\arrow[from=1-3, to=1-4]
	\arrow["\tau", from=1-3, to=2-3]
	\arrow[from=1-4, to=1-5]
	\arrow["\tau", dashed, from=1-4, to=2-4]
	\arrow[from=2-1, to=2-2]
	\arrow[from=2-2, to=2-3]
	\arrow[from=2-3, to=2-4]
	\arrow[from=2-4, to=2-5]
\end{tikzcd}\] 

The Johnson homomorphism on $\mathcal{HI}_{g,p}^b$, which we also denote $\tau$, is simply the restriction $\tau|_{\mathcal{HI}_{g,p}^b}$. A bounding pair map $T_x T_y^{-1}$ is in $\mathcal{HI}_{g,p}^b$ if $x \sqcup y$ bounds an annulus in $\mathcal{V}_{g,p}^b$, so the discussion above allows us to evaluate $\tau$ on bounding pair annulus twists in $\mathcal{HI}_{g,p}^b$.  

\subsection{Higher Johnson homomorphisms for subgroups of $\boldsymbol{\mathcal{I}_{g,p}^b}$.} \label{subsection:higherMod} Following Section \ref{subsection:higherIA}, we define the Johnson filtration $$\mathcal{I}_g^1(k) := \operatorname{ker}(\Mod_g^1 \rightarrow \operatorname{Aut}(\pi_1(\Sigma_g^1) / \gamma_{k+1} (\pi_1(\Sigma_g^1)))$$ for $k \geq 1$. Under these indexing conventions, $\mathcal{I}_g^1(1) = \mathcal{I}_g^1$. The $k^{th}$ Johnson homomorphism $\tau_k$ is defined on $\mathcal{I}_g^1(k)$. These higher Johnson homomorphisms were first defined in \cite{MoritaHigher} and there is a Lie algebra structure on their images $$\bigoplus_k \operatorname{im}(\tau_k).$$ The bracket map $[ \cdot, \cdot ]$, introduced in \cite{MoritaBracket}, is defined as follows. For $v \in \operatorname{im}(\tau_n)$ and $w \in \operatorname{im}(\tau_m)$, there exist preimages $f \in \mathcal{I}_g^1(n)$ and $h \in \mathcal{I}_g^1(m)$. We define $$[v,w] := \tau_{n+m}(fhf^{-1}h^{-1}).$$ From this construction, note that if $v \in \tau_n(\mathcal{H}_g^1 \cap \mathcal{I}_g^1(n))$ and $w \in \tau_m(\mathcal{H}_g^1 \cap \mathcal{I}_g^1(m))$, then $[v,w] \in \tau_{n+m}(\mathcal{H}_g^1 \cap \mathcal{I}_g^1(n+m))$. Namely, it makes sense to consider this Lie algebra structure when studying the intersection of $\mathcal{H}_g^1$ with groups $\mathcal{I}_g^1(k)$ in the Johnson filtration. In Section \ref{section:upperboundtau}, we show how to explicitly calculate the bracket map on generators of $\bigwedge^2 \operatorname{im}(\tau)$. 

The second Johnson homomorphism $\tau_2$ is defined on the Johnson kernel $\mathcal{K}_g^1 := \mathcal{I}_g^1(2)$. This restricts to $\mathcal{HK}_g^1$. See \cite{Faes} and Section \ref{section:decomptau2}. 

The groups $\mathcal{I}_{g,1}(k)$ and $\mathcal{I}_g(k)$ are defined analogously.

\section{Representation theory} \label{section:reptheory}

In this paper, we work with finite dimensional representations of $\operatorname{SL}_g(\Q)$ and $\operatorname{Sp}_{2g}(\Q)$. These are in natural bijection with the representations of their Lie algebras $\mathfrak{sl}_g(\Q)$ and $\mathfrak{sp}_{2g}(\Q)$, respectively. In particular, any representation constructed from the standard representation (and its dual) via tensor, exterior, or symmetric products has the same weight vectors and highest weight decomposition as the corresponding representation of the Lie algebra. In the following subsections, we briefly review the representation theory of $\mathfrak{sl}_g(\Q)$ and $\mathfrak{sp}_{2g}(\Q)$. We will use this theory to understand our $\operatorname{SL}_g(\Q)$ and $\operatorname{Sp}_{2g}(\Q)$ representations.

This section should suffice for readers familiar with representation theory to understand our notation, which largely follows Fulton–Harris \cite[Sections 11–17]{FH}. For additional details and worked examples, we refer the reader to the Appendix. In particular, we discuss Lie algebra actions (Appendix~\ref{appendix:liealgebra}), highest weight vectors (Appendices~\ref{appendix:hwv} and~\ref{appendix:constructhwv}), and computing with LiE (Appendix~\ref{appendix:LiE}).

\subsection{$\mathfrak{sl}_g(\Q)$ and $\mathfrak{sp}_{2g}(\Q)$ maps} \label{section:reptheorymaps} For later use, we define the elements $E_{ij}$ of $\mathfrak{sl}_g(\Q)$ and $X_i$ of $\mathfrak{sp}_{2g}(\Q)$ characterized by their action on $H_{\Q}$ as follows. Letting $\delta_{ij}$ be the Kronecker delta function, we have: \begin{align*}
E_{ij}(a_k) &= \delta_{jk} a_i \quad & E_{ij}(b_k) &= - \delta_{ik} b_j \\
X_i(a_k) &= \delta_{ik} b_i \quad & X_i(b_k) &= 0
\end{align*} We also consider the handleswap\footnote{We will consider the action of $\mathcal{H}_{g,p}^b$ on $\operatorname{SL}_g(\Q)$ vector spaces, and this handleswap action comes from the element of $\mathcal{H}_{g,p}^b$ that interchanges the $i^{\text{th}}$ and $j^{\text{th}}$ handles of the handlebody. See \cite{Suzuki}.} action of $\operatorname{SL}_g(\Q)$, which permutes generators $a_i \leftrightarrow a_j$ and $b_i \leftrightarrow b_j$. We do not define a symbol for this action but we freely reindex vectors when considering their $\operatorname{SL}_g(\Q)$-orbits. \\

We also define the $\operatorname{Sp}_{2g}(\Q)$ and $\operatorname{SL}_g(\Q)$ equivariant contraction $$C_k: \bigwedge^k H_{\Q} \rightarrow \bigwedge^{k-2} H_{\Q}$$ given by $$C_k(x_1 \wedge \dots \wedge x_k) = \sum_{1 \leq i < j \leq k} (-1)^{i+j+1} \omega(x_i, x_j) x_1 \wedge \dots \wedge \hat{x_i} \wedge \dots \wedge \hat{x_j} \wedge \dots \wedge x_k$$ where $\hat{x_i}$ means to exclude $x_i$ and $\bigwedge^0 H_{\Q}$ is the trivial representation $\Q$.

\subsection{Irreducible $\mathfrak{sl}_g(\Q)$ modules} Let $\mathfrak{h}$ be the Cartan subalgebra of diagonal matrices in $\mathfrak{sl}_g(\C)$, and let $L_i \in \mathfrak{h}^*$ be the element which sends a matrix to its $i^{\text{th}}$ diagonal entry. Any irreducible representation of $\mathfrak{sl}_g(\C)$ has a unique (up to scaling) highest weight vector $v$. This highest weight vector is killed by $E_{ij} \in \mathfrak{sl}_g(\C)$ for all $i<j$ and is an eigenvector for $\mathfrak{h}$. Moreover, $H \in \mathfrak{h}$ scales $v$ with eigenvalue $$w_1 L_1 + w_2 (L_1 + L_2) + \dots + w_{g-1}(L_1 + L_2 + \dots + L_{g-1})$$ where each $w_i$ is a nonnegative integer. The tuple $(w_1, w_2, \dots, w_{g-1})$ is the \textit{highest weight} of $v$ and this weight determines the irreducible representation up to isomorphism. We denote this irreducible representation by $\Phi_{w_1, w_2, \dots, w_{g-1}}$. The Lie algebra $\mathfrak{sl}_g(\C)$ is semisimple, so every finite dimensional representation of $\mathfrak{sl}_g(\C)$ can be decomposed into a direct sum of irreducible representations. 

All of these representations are defined over $\Q$, so they can be considered as irreducible representations of $\operatorname{SL}_g(\Q)$ and $\mathfrak{sl}_g(\Q)$.

\subsection{Irreducible $\mathfrak{sp}_{2g}(\Q)$ modules} Let $\mathfrak{h}$ be the Cartan subalgebra of matrices diagonal in $\mathfrak{sp}_{2g}(\Q)$. This Cartan subalgebra is spanned by the $2g \times 2g$ matrices $M_i = E_{i,i} - E_{g+i,g+i}$ for $i=1, \dots, g$. The matrix $M_i$ acts on the standard $\mathfrak{sp}_{2g}(\Q)$ representation $H_{\Q}$ by fixing $a_i$, sending $b_i$ to $-b_i$, and killing the other basis vectors. The dual $\mathfrak{h}^*$ has basis $L_j$ where $\langle L_j, M_i \rangle = \delta_{ij}$. An irreducible $\mathfrak{sp}_{2g}(\Q)$ representation $\Gamma_{w_1, w_2, \dots, w_g}$ is determined by a unique (up to scaling) highest weight vector $v$, which is an eigenvector for $\mathfrak{h}$ with unique eigenvalue $$w_1 L_1 + w_2 (L_1 + L_2) + \dots + w_g(L_1 + L_2 + \dots + L_g).$$ Again, each $w_i$ is a nonnegative integer. We adopt the convention that omitted trailing indices are zero, so $\Gamma_{w_1, \dots, w_k} := \Gamma_{w_1, \dots, w_k, 0, \dots, 0}$. These representations are defined over $\Q$, so we consider them as irreducible representations of $\operatorname{Sp}_{2g}(\Q)$ and $\mathfrak{sp}_{2g}(\Q)$. \\

We highlight two important examples of $\operatorname{Sp}_{2g}(\Q)$ representations: \begin{itemize}
    \item $H_{\Q} = H_1(\Sigma_{g,p}^b; \Q)$ is the standard $\operatorname{Sp}_{2g}(\Q)$ representation $\Gamma_1$.
    \item $\bigwedge^3 H_{\Q} \cong \Gamma_{0,0,1} \oplus \Gamma_1$, where $H_{\Q}$ is considered a subgroup of $\bigwedge^3 H_{\Q}$ via the injection $x \mapsto x \wedge \omega$.
\end{itemize}

There is a subgroup of $\operatorname{Sp}_{2g}(\Q)$ isomorphic to $\operatorname{SL}_g(\Q)$. It consists of matrices $$\left\{ \begin{bmatrix} A & 0 \\ 0 & (A^t)^{-1} \end{bmatrix} \; | \; A \in \operatorname{SL}_g(\Q) \right\}$$ which preserve the decomposition $H_{\Q} = \Q \langle a_1, \dots, a_g \rangle \oplus \Q \langle b_1, \dots, b_g \rangle$. Letting $V_{\Q}$ be the standard $\operatorname{SL}_g(\Q)$ representation, $H_{\Q}$ decomposes into $\operatorname{SL}_g(\Q)$ representations as $$H_{\Q} = V_{\Q} \oplus V_{\Q}^*.$$ In general, we can branch any $\operatorname{Sp}_{2g}(\Q)$ representation into a direct sum of $\operatorname{SL}_g(\Q)$ representations in this way.  

\section{Setup for $\mathcal{H}_B \mathcal{I}_{g,p}^b$} \label{section:setuphbi}

In this section, we consider $\mathcal{H}_B \mathcal{I}_{g,p}^b$ which we recall is the subgroup of $\mathcal{H}_{g,p}^b$ that acts trivially on $V := H_1(\mathcal{V}_{g,p}^b; \Z)$. We define an abelian quotient map $\Theta$ which detects the action of $\mathcal{H}_B \mathcal{I}_{g,p}^b$ on both $\pi_1(\mathcal{V}_{g,p}^b)$ and $H_1(\Sigma_{g,p}^b; \Z)$, and we calculate the kernel of the induced map $\Theta^*$ on second cohomology as in Theorem \ref{theorem:kerneltheta}. We first address this problem for $\mathcal{H}_B \mathcal{I}_{g,p}^b$ when $p+b =1$. The construction is exactly the same for $\mathcal{H}_B \mathcal{I}_g^1$ and $\mathcal{H}_B \mathcal{I}_{g,1}$, so we treat both at once in the remainder of this section as well as Sections \ref{section:lowerboundtheta} and \ref{section:upperboundtheta}. To ease notation, we focus our attention on $\mathcal{H}_B \mathcal{I}_g^1$ in these sections. We then extend our results to $\mathcal{H}_B \mathcal{I}_g$ in Section \ref{section:thetaclosed}.

First, we state a result by Omori \cite{Om} which gives us generating sets for our handlebody Torelli groups.

\begin{theorem}[{Omori, \cite[Theorem 1.2 and Corollary 1.4]{Om}}] \label{theorem:Omori} Let $g \geq 3$ and let $T_{C_1} T_{C_2}^{-1}$ be a genus 1 bounding pair map such that $C_1$ and $C_2$ do not bound disks in $\mathcal{V}_g^1$ and the disjoint union $C_1 \sqcup C_2$ bounds an annulus in $\mathcal{V}_g^1$. Then \begin{itemize}
    \item $\mathcal{HI}_g^1$ is normally generated in $\mathcal{H}_g^1$ by $T_{C_1} T_{C_2}^{-1}$, and
    \item $\mathcal{H}_B \mathcal{I}_g^1$ is normally generated in $\mathcal{H}_g^1$ by $T_{C_1} T_{C_2}^{-1}$ and one nonseparating disk twist. 
\end{itemize} \end{theorem}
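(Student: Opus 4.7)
The plan is to treat the two parts separately but use part (1) to bootstrap part (2). For the first statement, I would start from Johnson's theorem that, for $g \geq 3$, the ordinary Torelli group $\mathcal{I}_g^1$ is generated by bounding pair maps $T_x T_y^{-1}$. Intersecting this generating set with $\mathcal{H}_g^1$, one wants to argue that $\mathcal{HI}_g^1$ is generated by those bounding pair maps for which $x \sqcup y$ cobounds an annulus in $\mathcal{V}_g^1$ — precisely the bounding pair maps that extend to the handlebody. (One needs some justification for passing from ``generated by bounding pair maps in the ambient group'' to ``generated by those bounding pair maps that already lie in $\mathcal{HI}_g^1$''; here I would likely use that Johnson's proof actually gives a set of BP maps supported on a genus-$1$ subsurface and run a handlebody-compatible version of his cut-and-paste.)

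The next step is to reduce every such annulus-cobounding bounding pair map to the model $T_{C_1} T_{C_2}^{-1}$ by two moves:
\begin{itemize}
\item \emph{Reduction to genus $1$.} For a genus-$k$ annulus-cobounding bounding pair map, the subsurface cobounded by $x$ and $y$ in $\Sigma_g^1$ meets the handlebody in a compression body; choose a system of disjoint curves in this subsurface that further subdivides it into genus-$1$ pieces in a way that is compatible with the annulus in $\mathcal{V}_g^1$ (so each intermediate pair still cobounds an annulus in $\mathcal{V}_g^1$). The genus-$k$ bounding pair map then becomes a product of genus-$1$ bounding pair maps in $\mathcal{HI}_g^1$, in a manner parallel to the analogous reduction for $\mathcal{I}_g^1$.
\item \emph{Change of coordinates in $\mathcal{H}_g^1$.} All genus-$1$ bounding pair configurations $(C_1, C_2)$ satisfying the stated hypotheses are equivalent under the action of $\mathcal{H}_g^1$: the complement of $C_1 \sqcup C_2$ in $\mathcal{V}_g^1$ is the same up to homeomorphism in all cases (a product $S^1 \times I$ summand plus a smaller handlebody), so one can extend a handlebody-level homeomorphism taking one configuration to the other.
\end{itemize}
Putting these together realizes every generator as a product of $\mathcal{H}_g^1$-conjugates of $T_{C_1} T_{C_2}^{-1}$.

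For the second statement, I would invoke the short exact sequence
\[
1 \to \mathcal{HI}_g^1 \to \mathcal{H}_B \mathcal{I}_g^1 \xrightarrow{\Psi} \Psi(\mathcal{H}_B \mathcal{I}_g^1) \to 1,
\]
together with Proposition~\ref{proposition:psihbi} identifying $\Psi(\mathcal{H}_B \mathcal{I}_g^1)$ with the abelian group of symmetric $g \times g$ integer matrices sitting in the lower-left block. By part (1), normal generation of $\mathcal{H}_B \mathcal{I}_g^1$ reduces to normally generating this abelian image as a module over the conjugation action of $\mathcal{H}_g^1$, which factors through the parabolic $\Psi(\mathcal{H}_g^1)$. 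A nonseparating disk twist about a meridian maps under $\Psi$ to a rank-$1$ elementary symmetric matrix $E_{ii}$, and the $\operatorname{GL}_g(\mathbb Z)$-action on $\mathrm{Sym}^2 V^*$ by $C \mapsto A^{-t} C A^{-1}$ takes this rank-$1$ generator to enough other rank-$1$ symmetric matrices to span all of $\mathrm{Sym}^2 V^*$ over $\mathbb Z$. Lifting this to $\mathcal{H}_g^1$-conjugation (which is possible since $\Psi: \mathcal{H}_g^1 \to \Psi(\mathcal{H}_g^1)$ is surjective by Hirose) completes part (2).

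The main obstacle I expect is the reduction from higher-genus to genus-$1$ bounding pair maps while staying inside $\mathcal{HI}_g^1$: the subdividing curves in the cobounding subsurface must themselves come in pairs cobounding annuli in $\mathcal{V}_g^1$, which is a compatibility condition between the surface-level cut system and the handlebody's disk system. Handling this cleanly likely requires either a compression-body argument or an appeal to an explicit generating set for $\mathcal{H}_g^1$ (e.g.\ Suzuki's) so that one can describe the curves geometrically. The change-of-coordinates and module-generation steps are more routine; they are essentially determined by the fact that the orbit structure of annulus-cobounding genus-$1$ pairs under $\mathcal{H}_g^1$ is a single orbit, and the fact that rank-$1$ symmetric matrices generate $\mathrm{Sym}^2 V^*$ under the natural $\operatorname{GL}_g$-action.
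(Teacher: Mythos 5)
The paper does not prove this theorem; it is imported verbatim from Omori's paper \cite[Theorem 1.2 and Corollary 1.4]{Om}, so there is no in-paper proof to compare against. That said, your proposal contains a genuine gap at the central step of Part (1). You pass from ``Johnson: $\mathcal{I}_g^1$ is generated by bounding pair maps'' to ``$\mathcal{HI}_g^1$ is generated by those bounding pair maps $T_x T_y^{-1}$ for which $x \sqcup y$ cobounds an annulus in $\mathcal{V}_g^1$.'' This step is not a reduction but is essentially the entire content of the theorem: knowing that a group $G$ is generated by a set $S$ never implies that a subgroup $H \leq G$ is generated by $S \cap H$, and there is no reason a priori that an element of $\mathcal{HI}_g^1 = \mathcal{I}_g^1 \cap \mathcal{H}_g^1$, when expressed as a word in Johnson's bounding pair generators, uses only generators that extend over the handlebody. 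Your suggested fix --- a ``handlebody-compatible version of Johnson's cut-and-paste'' --- names the desired mechanism but does not supply it; to carry it out one must rewrite an arbitrary BP-word for $f \in \mathcal{HI}_g^1$ using only $\mathcal{H}_g^1$-preserving relations, and you give no reason such a rewriting exists. The genus-reduction step has a similar issue: it is not clear that a genus-$k$ subsurface cobounded by an annulus-cobounding pair $(x,y)$ admits a subdividing system of curves each of which \emph{also} cobounds an annulus in $\mathcal{V}_g^1$, and that claim is asserted rather than argued.

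Modulo Part (1), the remainder of the outline is sound and lines up with the auxiliary material in this paper: the change-of-coordinates claim that all genus-1 annulus-cobounding pairs form a single $\mathcal{H}_g^1$-orbit, and the derivation of Part (2) from Part (1) via the short exact sequence $1 \to \mathcal{HI}_g^1 \to \mathcal{H}_B\mathcal{I}_g^1 \to \operatorname{Sym}^2(V^*) \to 1$ together with the observation that a disk twist maps to a rank-one symmetric matrix whose $\operatorname{GL}_g(\mathbb Z)$-orbit spans $\operatorname{Sym}^2(V^*)$, are both in good order and mirror Proposition~\ref{proposition:psihbi} and Lemma~\ref{lemma:sym2v}. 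But those pieces sit downstream of the generation claim for $\mathcal{HI}_g^1$, which is precisely what Omori's argument establishes and what your proposal presupposes.
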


\subsection{The symplectic representation.} \label{subsection:hbipsi} Recall from Section \ref{subsection:actiononhomology} that the surjective symplectic representation $\Psi: \operatorname{Mod}_g^1 \rightarrow \operatorname{Sp}_{2g}(\Z)$ measures the action of $\operatorname{Mod}_g^1$ on $H$. Hirose shows in \cite{Hirose} that the restriction of $\Psi$ to the handlebody group has image $$\Psi(\mathcal{H}_g^1) \cong \left\{  \begin{bmatrix} A & 0 \\ C & (A^t)^{-1} \end{bmatrix} \; | \; A \in  \operatorname{GL}_{g}(\Z), \; C = C^t \right\}.$$

We define the map $$ul: \Psi(\mathcal{H}_g^1) \rightarrow \operatorname{GL}_g(\Z)$$ which assigns to each matrix its upper left block of size $g \times g$, so we have $$\begin{bmatrix} A & 0 \\ C & (A^t)^{-1} \end{bmatrix} \xmapsto{ul} A.$$

\noindent We recall that $V := H_1(\mathcal{V}_g^1; \Z)$ is the free abelian group $\Z \langle a_1, \dots, a_g \rangle$, and $V^* = \Z \langle b_1, \dots, b_g \rangle$.

\begin{proposition} \label{proposition:sym2SES} The group $\Psi(\mathcal{H}_g^1)$ fits into a short exact sequence $$1 \rightarrow \operatorname{Sym}^2(V^*) \rightarrow \Psi(\mathcal{H}_g^1) \xrightarrow{ul} \operatorname{GL}_g(\Z) \rightarrow 1,$$ where the induced action of $\operatorname{GL}_g(\Z)$ on $\operatorname{Sym}^2(V^*)$ is the natural one. \end{proposition}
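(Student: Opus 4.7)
The proof will be almost entirely a direct matrix calculation once we have Hirose's explicit description of $\Psi(\mathcal{H}_g^1)$ in hand (stated just before the proposition). The plan is to verify separately that $ul$ is a well-defined surjective homomorphism, then identify its kernel with $\operatorname{Sym}^2(V^*)$ and confirm that the conjugation action recovers the natural one.

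First I would check that $ul$ is a homomorphism. Given two elements $\bigl[\begin{smallmatrix} A & 0 \\ C & (A^t)^{-1} \end{smallmatrix}\bigr]$ and $\bigl[\begin{smallmatrix} A' & 0 \\ C' & (A'^t)^{-1} \end{smallmatrix}\bigr]$ of $\Psi(\mathcal{H}_g^1)$, multiplication gives upper-left block $AA'$, so $ul$ respects the product. Surjectivity is immediate from the splitting $s\colon \operatorname{GL}_g(\Z) \to \Psi(\mathcal{H}_g^1)$ defined by $A \mapsto \bigl[\begin{smallmatrix} A & 0 \\ 0 & (A^t)^{-1} \end{smallmatrix}\bigr]$; this matrix really does lie in $\Psi(\mathcal{H}_g^1)$ (take the lower-left block to be the zero symmetric matrix), and clearly $ul \circ s = \operatorname{id}$.

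Next, identifying the kernel: an element of $\Psi(\mathcal{H}_g^1)$ lies in $\ker(ul)$ iff $A = I$, so the kernel consists of matrices $\bigl[\begin{smallmatrix} I & 0 \\ C & I \end{smallmatrix}\bigr]$ with $C = C^t$. Such matrices multiply by adding their $C$-blocks, so the kernel is abelian and canonically isomorphic to the additive group of symmetric $g \times g$ integer matrices. Choosing the basis $\{b_i\}$ of $V^*$, a symmetric matrix $C = (c_{ij})$ corresponds to the element $\sum_{i \leq j} c_{ij}(b_i \cdot b_j) \in \operatorname{Sym}^2(V^*)$ (with the standard identification of symmetric bilinear forms on $V$ with $\operatorname{Sym}^2(V^*)$). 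This gives the abelian-group isomorphism $\ker(ul) \cong \operatorname{Sym}^2(V^*)$.

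Finally, I would verify that the induced conjugation action of $\operatorname{GL}_g(\Z)$ on $\operatorname{Sym}^2(V^*)$ is the natural one. Conjugating $\bigl[\begin{smallmatrix} I & 0 \\ C & I \end{smallmatrix}\bigr]$ by the section element $s(A) = \bigl[\begin{smallmatrix} A & 0 \\ 0 & (A^t)^{-1} \end{smallmatrix}\bigr]$, a two-line block calculation yields
\[
s(A)\begin{bmatrix} I & 0 \\ C & I \end{bmatrix} s(A)^{-1} \;=\; \begin{bmatrix} I & 0 \\ (A^t)^{-1} C A^{-1} & I \end{bmatrix},
\]
i.e.\ $C \mapsto (A^t)^{-1} C A^{-1}$. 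On the other hand, $A \in \operatorname{GL}_g(\Z)$ acts on $V = \Z\langle a_1,\dots,a_g\rangle$ in the standard way, hence on $V^*$ as $(A^t)^{-1}$, and thus on $\operatorname{Sym}^2(V^*)$ (interpreted as bilinear forms on $V$ via $B(v,w)=v^t C w$) precisely by $C \mapsto (A^t)^{-1} C A^{-1}$. These formulas agree, so the action is the natural one.

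The only thing to be careful about is the sign/transpose conventions making the identification $\ker(ul) \cong \operatorname{Sym}^2(V^*)$ equivariant rather than merely isomorphic as abelian groups, but this is purely bookkeeping, and no representation-theoretic or topological input beyond Hirose's description of $\Psi(\mathcal{H}_g^1)$ is needed.
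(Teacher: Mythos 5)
Your proof is correct and follows essentially the same route as the paper's: identify $\ker(ul)$ with the additive group of symmetric matrices via the $C$-block, identify that with $\operatorname{Sym}^2(V^*)$, and compute the conjugation action to be $C \mapsto (A^t)^{-1}CA^{-1}$. The only minor difference is that you spell out the homomorphism and surjectivity checks (which the paper takes as read from Hirose's description) while flagging but not pinning down the sign in the identification of $\ker(ul)$ with $\operatorname{Sym}^2(V^*)$; the paper handles that by explicitly identifying $a_i^*$ with $-b_i$, a convention fixed earlier from Poincaré duality.
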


\begin{proof}[Proof of Proposition \ref{proposition:sym2SES}]
The kernel of $ul: \Psi(\mathcal{H}_g^1) \rightarrow \operatorname{GL}_g(\Z)$ is the group of matrices $$\left\{ \begin{bmatrix} I & 0 \\ C & I \end{bmatrix} \in  \operatorname{Mat}_{2g}(\Z) \; | \; C = C^t \right\}.$$ This group is isomorphic to $\operatorname{SymMat}_g(\Z)$, the additive group of $g \times g$ symmetric matrices via the map $$\begin{bmatrix} I & 0 \\ C & I \end{bmatrix} \mapsto C.$$ Note that $$\begin{bmatrix} I & 0 \\ C_1 & I \end{bmatrix} \begin{bmatrix} I & 0 \\ C_2 & I \end{bmatrix} = \begin{bmatrix} I & 0 \\ C_1 + C_2 & I \end{bmatrix}.$$ 

View the group $\operatorname{SymMat}_g(\Z)$ as a subgroup of $\operatorname{Hom}(V,V^*)$, which is isomorphic to $V^* \otimes V^*$ via the identification $a_i^* \rightarrow -b_i$ (recall Section \ref{subsection:actiononhomology}). Since these matrices are symmetric, we get the natural $\operatorname{GL}_g(\Z)$-equivariant isomorphism $$\operatorname{SymMat}_g(\Z) \cong \operatorname{Sym}^2(V^*).$$ See \cite[Section 2]{SakasaiL}. \\

We calculate that $$\begin{bmatrix} A & 0 \\ 0 & (A^t)^{-1} \end{bmatrix} \begin{bmatrix} I & 0 \\ C & I \end{bmatrix} \begin{bmatrix} A^{-1} & 0 \\ 0 & A^t \end{bmatrix} = \begin{bmatrix} I & 0 \\ (A^t)^{-1} C A^{-1} & I \end{bmatrix},$$ so $A \in \operatorname{GL}_g(\Z)$ acts on $\operatorname{SymMat}_g(\Z)$ via $A \cdot C = (A^t)^{-1} C A^{-1}$. This is the natural action on $\operatorname{Sym}^2(V^*)$.
\end{proof}

The symplectic representation further restricts to $\mathcal{H}_B \mathcal{I}_g^1$, and we have the following proposition. 

\begin{proposition} \label{proposition:psihbi}
    We have $\Psi(\mathcal{H}_B \mathcal{I}_g^1) = \operatorname{Sym}^2(V^*) \subset \Psi(\mathcal{H}_g^1)$.
\end{proposition}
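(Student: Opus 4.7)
The plan is to identify $\mathcal{H}_B \mathcal{I}_g^1$ with the preimage under $\Psi$ of the kernel of $ul$. Once this is done, the conclusion is immediate from Proposition \ref{proposition:sym2SES} and the fact that $\Psi$ maps surjectively onto $\Psi(\mathcal{H}_g^1)$: for any surjection $\phi$ and any subgroup $K$ of its image, $\phi(\phi^{-1}(K)) = K$, so $\Psi(\mathcal{H}_B \mathcal{I}_g^1) = \ker(ul) = \operatorname{Sym}^2(V^*)$.

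The main step is therefore to verify that $\mathcal{H}_B \mathcal{I}_g^1 = \Psi^{-1}(\ker(ul))$, i.e. that the action of $f \in \mathcal{H}_g^1$ on $V = H_1(\mathcal{V}_g^1; \Z)$ is measured exactly by $ul \circ \Psi$. The inclusion $\Sigma_g^1 \hookrightarrow \mathcal{V}_g^1$ induces a surjection $H \twoheadrightarrow V$ whose kernel is $V^* = \Z\langle b_1, \dots, b_g\rangle$, since each $\beta_i$ bounds a disk in $\mathcal{V}_g^1$ by our conventions in Section~\ref{subsection:conventions}. Any $f \in \mathcal{H}_g^1$ preserves this kernel because $f$ extends across the handlebody, so it descends to an automorphism of $V$. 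Reading off the block decomposition $\Psi(f) = \begin{bmatrix} A & 0 \\ C & (A^t)^{-1} \end{bmatrix}$ supplied by Hirose, one sees that the vanishing of the upper-right block is precisely the statement that $V^*$ is preserved, and that the induced map on $V = H/V^*$ is given by $A = ul(\Psi(f))$. Hence $f$ acts trivially on $V$ if and only if $ul(\Psi(f)) = I$, which by Proposition~\ref{proposition:sym2SES} is equivalent to $\Psi(f) \in \operatorname{Sym}^2(V^*)$.

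Putting these pieces together gives $\mathcal{H}_B \mathcal{I}_g^1 = \Psi^{-1}(\operatorname{Sym}^2(V^*))$, and since $\operatorname{Sym}^2(V^*)$ is contained in the image $\Psi(\mathcal{H}_g^1)$, the surjectivity trick above yields the desired equality $\Psi(\mathcal{H}_B \mathcal{I}_g^1) = \operatorname{Sym}^2(V^*)$. There is no real obstacle here; the one point requiring care is the bookkeeping in the previous paragraph to confirm that, with our convention that the $\beta_i$ (and not the $\alpha_i$) bound disks in $\mathcal{V}_g^1$, the upper-left block of Hirose's matrix really records the action on $V$ rather than on $V^*$. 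This is a routine but important check, and once it is in hand the rest of the proof is essentially formal.
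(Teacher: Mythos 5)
Your proof is correct, and it takes a genuinely different (and more economical) route than the paper's. The paper proves the two inclusions separately: the easy direction $\Psi(\mathcal{H}_B \mathcal{I}_g^1) \subset \operatorname{Sym}^2(V^*)$ by reading off $A = I$ from Hirose's matrix form, and the reverse direction $\operatorname{Sym}^2(V^*) \subset \Psi(\mathcal{H}_B \mathcal{I}_g^1)$ by a concrete construction: it establishes Lemma \ref{lemma:sym2v} (that $\operatorname{Sym}^2(V^*)$ is the $\operatorname{GL}_g(\Z)$-orbit of $b_1 \otimes b_1$) and then explicitly computes that the disk twist $T_{\mathfrak{b}_1}$ lies in $\mathcal{H}_B \mathcal{I}_g^1$ and has $\Psi(T_{\mathfrak{b}_1}) = -b_1 \otimes b_1$. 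You instead observe that because the action of $\mathcal{H}_g^1$ on $V = H/V^*$ is recorded exactly by the $ul$ block, the defining condition for $\mathcal{H}_B \mathcal{I}_g^1$ is \emph{itself} the condition $\Psi(f) \in \ker(ul)$, i.e.\ $\mathcal{H}_B \mathcal{I}_g^1 = (\Psi|_{\mathcal{H}_g^1})^{-1}(\ker(ul))$, and you then invoke the set-theoretic fact $\Psi(\Psi^{-1}(K)) = K$ for a subgroup $K$ of the image of a surjection. This makes Lemma \ref{lemma:sym2v} and the explicit disk-twist computation unnecessary \emph{for this proposition}, which is a genuine simplification. The tradeoff is that the paper's concrete calculation with $T_{\mathfrak{b}_1}$ is not just decoration: it is reused downstream (e.g.\ in Section \ref{subsection:W} to see that the normal closure of a single disk twist maps onto the $\operatorname{Sym}^2(V^*)$ factor of $W$, and in Lemma \ref{lemma:cokernelThetaTau} where one evaluates $\Theta$ on specific commuting elements). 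So your argument is cleaner in isolation, while the paper's version front-loads an explicit generator that pays off later.
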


\noindent In particular, $\Psi(\mathcal{H}_B \mathcal{I}_g^1)$ is abelian and the conjugation action of $\Psi(\mathcal{H}_g^1)$ factors through $\operatorname{GL}_g(\Z)$. Before we prove Proposition \ref{proposition:psihbi}, we introduce a lemma.

\begin{lemma} \label{lemma:sym2v}
The $\operatorname{GL}_g(\Z)$-orbit of $b_1 \otimes b_1$ is $\operatorname{Sym}^2(V^*)$.   
\end{lemma}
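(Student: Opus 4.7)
The plan is to identify $\operatorname{Sym}^2(V^*)$ with the additive group of symmetric integer $g \times g$ matrices as in the proof of Proposition~\ref{proposition:sym2SES}, under which the standard $\mathbb{Z}$-basis of $\operatorname{Sym}^2(V^*)$ consists of $\{b_i \otimes b_i\}_{1 \le i \le g}$ together with $\{b_i \otimes b_j + b_j \otimes b_i\}_{1 \le i < j \le g}$. I then exploit the fact that the natural $\operatorname{GL}_g(\Z)$ action on $V^* \otimes V^*$ is the tensor square of the standard action on $V^*$, so that $A \cdot (b_1 \otimes b_1) = (A b_1) \otimes (A b_1) = v \otimes v$ where $v = Ab_1$. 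Since $\operatorname{GL}_g(\Z)$ acts transitively on primitive vectors in $V^*$ (any primitive $v \in \Z^g$ extends to a $\Z$-basis), the orbit of $b_1 \otimes b_1$ is exactly $\{v \otimes v : v \in V^* \text{ primitive}\}$.

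To finish, I would produce each basis element of $\operatorname{Sym}^2(V^*)$ as a $\Z$-linear combination of elements of this orbit. For each $i$, the permutation swapping $b_1$ and $b_i$ yields $b_i \otimes b_i$. For each pair $i < j$, the primitive vector $b_i + b_j$ yields
\[
(b_i + b_j) \otimes (b_i + b_j) \;=\; b_i \otimes b_i + b_j \otimes b_j + (b_i \otimes b_j + b_j \otimes b_i),
\]
so $b_i \otimes b_j + b_j \otimes b_i$ arises by subtracting the two previously obtained elements. These account for the entire $\Z$-basis, finishing the argument.

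There is no real obstacle here: the proof reduces to the classical transitivity of $\operatorname{GL}_g(\Z)$ on primitive vectors in $\Z^g$ together with the elementary computation above. The one subtlety worth flagging is interpretive rather than mathematical: the literal set-theoretic orbit consists only of rank-one tensors $v \otimes v$, so the lemma should be read as saying that the $\Z$-submodule generated by the orbit equals $\operatorname{Sym}^2(V^*)$. This is the form in which it is needed to conclude in Proposition~\ref{proposition:psihbi} that $\Psi(\mathcal{H}_B \mathcal{I}_g^1)$, being a $\Psi(\mathcal{H}_g^1)$-invariant subgroup of $\operatorname{Sym}^2(V^*)$ containing a nonzero element of the orbit, coincides with $\operatorname{Sym}^2(V^*)$.
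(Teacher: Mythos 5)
Your proof is correct and takes essentially the same route as the paper: use permutations to move $b_1\otimes b_1$ to $b_i\otimes b_i$, then use the transvection $b_i\mapsto b_i+b_j$ (equivalently, consider $(b_i+b_j)\otimes(b_i+b_j)$) and subtract to recover $b_i\otimes b_j+b_j\otimes b_i$. Your framing via transitivity of $\operatorname{GL}_g(\Z)$ on primitive vectors and your explicit remark that the literal orbit is $\{v\otimes v : v\ \text{primitive}\}$ — so that ``orbit'' must be read as the subgroup generated by the orbit — is a minor but genuine clarification; the paper's own final step (declaring $b_i\otimes b_i + b_j\otimes b_j$ to be ``in the orbit'') tacitly makes the same abuse of language, and this reading is indeed what Proposition~\ref{proposition:psihbi} needs, since $\Psi(\mathcal{H}_B\mathcal{I}_g^1)$ is a $\operatorname{GL}_g(\Z)$-invariant subgroup containing $b_1\otimes b_1$.
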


\begin{proof}
    The group $\operatorname{Sym}^2(V^*)$ is generated by elements of the form $b_i \otimes b_i$ and $b_i \otimes b_j + b_j \otimes b_i$ for $i \neq j$. Any $b_i \otimes b_i$ differs from $b_1 \otimes b_1$ by a handleswap action which permutes the basis elements of $V^*$. Now consider the element of $\operatorname{GL}_g(\Z)$ that sends $b_i \rightarrow b_i + b_j$ and fixes all other basis elements of $V^*$. Applying this to $b_i \otimes b_i$, we get $$b_i \otimes b_i + b_i \otimes b_j + b_j \otimes b_i + b_j \otimes b_j.$$ Since $b_i \otimes b_i + b_j \otimes b_j$ is in the $\operatorname{GL}_g(\Z)$-orbit of $b_1 \otimes b_1$, so is $b_i \otimes b_j + b_j \otimes b_i$.
\end{proof}

We are now ready to prove Proposition \ref{proposition:psihbi}.

\begin{proof}[Proof of Proposition \ref{proposition:psihbi}] First we show that $\Psi(\mathcal{H}_B \mathcal{I}_g^1) \subset \left\{ \begin{bmatrix} I & 0 \\ C & I \end{bmatrix} \in  \operatorname{Mat}_{2g}(\Z) \; | \; C = C^t \right\}$, and so by Proposition \ref{proposition:sym2SES} we have $\Psi(\mathcal{H}_B \mathcal{I}_g^1) \subset \operatorname{Sym}^2(V^*)$. Given $f \in \mathcal{H}_B \mathcal{I}_g^1$, consider $\Psi(f) = \begin{bmatrix} A & 0 \\ C & (A^t)^{-1} \end{bmatrix}.$ Since $f$ acts trivially on $V = \Z \langle a_1, \dots, a_g \rangle$, it follows that $A = I$. \\

Viewing $\Psi(\mathcal{H}_B \mathcal{I}_g^1)$ as a subgroup of $\operatorname{Sym}^2(V^*)$, by Lemma \ref{lemma:sym2v} it suffices to show that $b_1 \otimes b_1 \in \Psi(\mathcal{H}_B \mathcal{I}_g^1)$. Let $\mathfrak{b}_1$ be a meridian whose homology class is $b_1$, as in the figure in Section \ref{subsection:conventions}. The disk twist $T_{\mathfrak{b}_1}$ acts on $H$ by sending $a_1 \rightarrow a_1 + b_1$ and fixing all other generators. Recall that we have identified the dual vector $a_i^*$ with $-b_i$. Then, as an element of $\operatorname{Hom}(V,V^*) \cong V^* \otimes V^*$, the element $\Psi(T_{\mathfrak{b}_1})$ is $-b_1 \otimes b_1.$
\end{proof}

We will now pass to the action of $\operatorname{SL}_g(\Z) \subset \operatorname{GL}_g(\Z)$ on $\Psi(\mathcal{H}_B \mathcal{I}_g^1)$. Extending our action of $\operatorname{SL}_g(\Z)$ to an action of $\operatorname{SL}_g(\Q)$ on $\Psi^{\Q}(\mathcal{H}_B \mathcal{I}_g^1) := \Psi(\mathcal{H}_B \mathcal{I}_g^1) \otimes \Q$, we can view $\Psi^{\Q}(\mathcal{H}_B \mathcal{I}_g^1)$ as the irreducible $\operatorname{SL}_g(\Q)$ representation $\operatorname{Sym}^2(V^*_{\Q})$.

\subsection{The Johnson homomorphism.} \label{subsection:JsetupforHBI} In Section \ref{subsection:firstjohnsonIA}, we introduced the surjective map $$J: \mathcal{H}_B \mathcal{I}_g^1 \rightarrow (\bigwedge^2 V) \otimes V^*.$$ The restriction $J|_{\mathcal{HI}_g^1}$ is also surjective \cite[Section 2]{Om}. The handlebody group $\mathcal{H}_g^1$ acts on $\mathcal{H}_B \mathcal{I}_g^1$ by conjugation and on $(\bigwedge^2 V) \otimes V^*$ through its action on $V$. The Johnson homomorphism is $\mathcal{H}_g^1$-equivariant in the sense that, for $h \in \mathcal{H}_g^1$ and $f \in \mathcal{H}_B \mathcal{I}_g^1$, we have $$J(hfh^{-1}) = h_* J(f).$$ The handlebody group acts on $V$ by integral general linear matrices. This comes from the fact that $\operatorname{Aut}(F_g)$ acts on $(\bigwedge^2 V) \otimes V^*$ by elements of $\operatorname{GL}_g(\Z)$, and the handlebody group surjects onto $\operatorname{Aut}(F_g)$. See \cite{Pettet} and the short exact sequence \eqref{iaSES}. We will now restrict this action to $\operatorname{SL}_g(\Z)$ and then extend to an action of $\operatorname{SL}_g(\Q)$ on $(\bigwedge^2 V_{\Q}) \otimes V_{\Q}^*$. This allows us to view $(\bigwedge^2 V_{\Q}) \otimes V_{\Q}^*$ as an $\operatorname{SL}_g(\Q)$ representation.

\subsection{Abelian quotient $\boldsymbol{W}$} \label{subsection:W}

We construct an abelian quotient of $\mathcal{H}_B \mathcal{I}_g^1$ by combining the information detected by $J$ and $\Psi$ into one map. Define $$\Theta: \mathcal{H}_B \mathcal{I}_g^1 \rightarrow ((\bigwedge^2 V) \otimes V^*) \oplus \operatorname{Sym}^2(V^*) =: W$$ by $$\Theta(f) = (J(f), \Psi(f)).$$ The restriction of $\Theta$ to the smaller handlebody group has image $$\Theta(\mathcal{HI}_g^1) = (\bigwedge^2 V) \otimes V^* \oplus \{ 0 \},$$ and we showed in Section \ref{subsection:hbipsi} that the image of the normal closure of $T_{\mathfrak{b}_1}$ in $\mathcal{H}_g^1$ is $\{ 0 \} \oplus \operatorname{Sym}^2(V^*)$. It follows that $\Theta$ is surjective. \\

We will determine the kernel of $$\Theta^*: H^2(W; \Q) \rightarrow H^2(\mathcal{H}_B \mathcal{I}_g^1; \Q).$$ Applying the Künneth formula, $H^2(W; \Q) \cong \bigwedge^2 W_{\Q}^*$ decomposes as: 
$$H^2((\bigwedge^2 V) \otimes V^*; \Q) \oplus [H^1((\bigwedge^2 V) \otimes V^*; \Q) \otimes H^1(\operatorname{Sym}^2(V^*); \Q)] \oplus H^2(\operatorname{Sym}^2(V^*); \Q)$$ 

This gives us \begin{equation} \label{equation:bigwedge2W} \bigwedge^2 W_{\Q}^* \cong \bigwedge^2 ((\bigwedge^2 V_{\Q}) \otimes V_{\Q}^*)^* \oplus [((\bigwedge^2 V_{\Q}) \otimes V_{\Q}^*) \otimes \operatorname{Sym}^2(V_{\Q}^*)]^* \oplus \bigwedge^2 (\operatorname{Sym}^2(V_{\Q}^*))^*. \end{equation} 

Each of these summands further decomposes into a direct sum of irreducible $\operatorname{SL}_g(\Q)$ representations. Importantly, $\operatorname{ker}(\Theta^*)$ is a subrepresentation of $H^2(W; \Q)$, and so is a direct sum of some of these irreducible modules. In the following two sections, we prove the following theorem.

\begin{theorem} \label{theorem:thetaopen}
    When $g \geq 3$ and $p+b=1$, the kernel of $\Theta^*: H^2(W; \Q) \rightarrow H^2(\mathcal{H}_B \mathcal{I}_{g,p}^b; \Q)$ contains the following irreducible $\operatorname{SL}_g(\Q)$ modules:
        \[ \begin{cases} 
      2\Phi_{1,0} \oplus \Phi_{2,1} & g = 3 \\
      \Phi_{0, \dots, 0,1,0} \oplus \Phi_{1,0, \dots, 0} \oplus \Phi_{1,0, \dots, 0,1,1} & g \geq 4 \end{cases}. \] Moreover, $\operatorname{ker}(\Theta^*)$ is either: \begin{itemize}
          \item The direct sum of the modules listed above; or
          \item The direct sum of the modules listed above plus one copy of the module \[ \begin{cases}
    \Phi_{0,2} & g=3 \\
    \Phi_{0,1,0, \dots, 0,1} & g \geq 4
\end{cases}. \]
      \end{itemize}
\end{theorem}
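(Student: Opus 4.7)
My plan is to follow the strategy sketched in Section \ref{subsection:theoremBoutine}: rather than working directly with $\Theta^*$, I will dualize and study the induced homology map
\[
\Theta_*: H_2(\mathcal{H}_B \mathcal{I}_g^1; \Q) \longrightarrow H_2(W; \Q) \cong \bigwedge^2 W_{\Q}.
\]
A module $\Phi_{w_1,\dots,w_{g-1}}$ appears in $\operatorname{ker}(\Theta^*)$ if and only if its dual appears in $\operatorname{coker}(\Theta_*)$. The first step is then purely representation-theoretic: using the decomposition \eqref{equation:bigwedge2W} together with LiE (as described in Appendix \ref{appendix:LiE}), decompose each of the three summands $\bigwedge^2((\bigwedge^2 V_{\Q})\otimes V_{\Q}^*)$, $((\bigwedge^2 V_{\Q})\otimes V_{\Q}^*)\otimes \operatorname{Sym}^2(V_{\Q}^*)$, and $\bigwedge^2 \operatorname{Sym}^2(V_{\Q}^*)$ into irreducible $\operatorname{SL}_g(\Q)$-modules. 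This produces a finite list of candidate irreducibles that could belong to $\operatorname{ker}(\Theta^*)$, and reduces the problem to a case-by-case analysis of each summand. The stable range $g \geq 4$ versus the low-genus case $g = 3$ will need to be separated because exceptional identifications of weights occur in low rank.

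Next, for the \emph{lower bound}—showing that the modules in Table \ref{table:kerneltheta} lie in $\operatorname{ker}(\Theta^*)$—I will use the five-term exact sequence of
\[
1 \longrightarrow \operatorname{ker}(\Theta) \longrightarrow \mathcal{H}_B \mathcal{I}_g^1 \longrightarrow W \longrightarrow 1,
\]
which gives $\operatorname{coker}(\Theta_*) \cong \operatorname{coker}\bigl(H_1(\operatorname{ker}(\Theta);\Q)_W \to H_1(\mathcal{H}_B\mathcal{I}_g^1;\Q)\bigr)$ after truncation, so the classes detected live in $H_1(\operatorname{ker}(\Theta);\Q)_W$. To detect the relevant classes I will use two abelian quotients of $\operatorname{ker}(\Theta)$: the second Johnson homomorphism $J_2$ restricted from $\operatorname{Aut}(F_g)$ (as in Pettet's treatment of $IA_g$), and the first Johnson homomorphism $\tau$ restricted from $\mathcal{I}_g^1$—the latter being the new ingredient specific to the handlebody setting. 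Concretely, for each candidate module I will produce an explicit highest-weight element in $\bigwedge^2 W_{\Q}$ and exhibit a preimage under $J_2$ or $\tau$ whose image is nonzero, witnessing that the highest-weight vector lifts to a nontrivial class in $H_1(\operatorname{ker}(\Theta);\Q)_W$.

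For the \emph{upper bound}—showing that every other irreducible in the decomposition of $\bigwedge^2 W_{\Q}$ lies in $\operatorname{im}(\Theta_*)$—I will use the method of abelian cycles, following Hain, Sakasai, Pettet, and Brendle--Farb. For each remaining irreducible $\Phi$, I will pick a highest-weight vector $v$ in its dual inside $\bigwedge^2 W_{\Q}$, and find two commuting elements $f, h \in \mathcal{H}_B \mathcal{I}_g^1$ whose abelian cycle $A(f,h) \in H_2(\mathcal{H}_B\mathcal{I}_g^1;\Q)$ satisfies $\Theta_*(A(f,h)) = \Theta(f)\wedge\Theta(h)$ and pairs nontrivially with $v$. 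The key is that $\Theta(f)\wedge \Theta(h)$ is computable directly from the formulas for $J$ and $\Psi$ on bounding pair maps and disk twists given by Theorem \ref{theorem:Omori}. Because the irreducibles live in $\operatorname{SL}_g(\Q)$-orbits of their highest-weight vectors, showing nontriviality on a single highest-weight vector eliminates the entire module from $\operatorname{ker}(\Theta^*)$.

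The main obstacle, and the reason for the dichotomy in the statement, is the single module $\Phi_{0,1,0,\dots,0,1}$ (respectively $\Phi_{0,2}$ for $g=3$). I expect the lower-bound argument via $J_2$ and $\tau$ to succeed for every module in Table \ref{table:kerneltheta} but to fall just short of capturing this one, while the abelian-cycle constructions available from commuting pairs of handlebody Torelli elements will likewise fail to produce a nonzero class on the dual highest-weight vector—mirroring exactly the sort of ambiguity Pettet encountered for $IA_g$. Thus the argument will necessarily bifurcate, ruling out all other modules via abelian cycles and confirming the tabulated modules via the five-term sequence, but leaving only this single distinguished irreducible unresolved, which yields precisely the two-valued conclusion.
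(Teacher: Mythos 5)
Your proposal follows the same route as the paper: dualize to $\Theta_*$, decompose $\bigwedge^2 W_{\Q}$ via LiE, detect the lower bound through the five-term exact sequence using $J_2$ and $\tau$, establish the upper bound via abelian cycles, and accept the single unresolved module $\Phi_{0,1,0,\dots,0,1}$ (resp.\ $\Phi_{0,2}$). One technical slip to fix: the five-term exact sequence gives $\operatorname{coker}(\Theta_*) \cong \operatorname{im}(b) = \operatorname{ker}\bigl(H_1(\operatorname{ker}(\Theta);\Q)_W \to H_1(\mathcal{H}_B\mathcal{I}_g^1;\Q)\bigr)$, not the cokernel of that map as you wrote; your downstream conclusion (that the detected classes live in $H_1(\operatorname{ker}(\Theta);\Q)_W$) is nonetheless correct, and the concrete detectors $J_2^{\Q}\circ b$ and $\tau^{\Q}\circ b$ match what the paper uses. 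A second minor point: the evaluation formulas for $J$ and $\Psi$ on bounding pair maps and disk twists come from Sections \ref{subsection:firstjohnsonIA} and \ref{subsection:hbipsi}, not from Theorem \ref{theorem:Omori}, which only supplies a generating set.
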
 

In Section \ref{section:lowerboundtheta}, we will prove containment. In Section \ref{section:upperboundtheta}, we will show that the remaining modules in the decomposition of $H^2(W; \Q)$, except for possibly for $\Phi_{0,1,0, \dots, 0,1}$ (resp. $\Phi_{0,2}$) when $g \geq 4$ (resp. $g=3$), are {\textit{not}} in $\operatorname{ker}(\Theta^*)$.

\section{Lower bound on $\operatorname{ker}(\Theta^*)$ for $\mathcal{H}_B \mathcal{I}_g^1$} \label{section:lowerboundtheta}

In this section, we prove the following proposition.

\begin{proposition} \label{proposition:lowerboundHBI}
    When $g \geq 3$ and $p+b=1$, the kernel of $\Theta^*: H^2(W; \Q) \rightarrow H^2(\mathcal{H}_B \mathcal{I}_{g,p}^b; \Q)$ contains the following direct sum of $\operatorname{SL}_g(\Q)$ representations: \[ \begin{cases} 
      2\Phi_{1,0} \oplus \Phi_{2,1} & g = 3 \\
      \Phi_{0, \dots, 0,1,0} \oplus \Phi_{1,0, \dots, 0} \oplus \Phi_{1,0, \dots, 0,1,1} & g \geq 4
   \end{cases}.
\]
\end{proposition}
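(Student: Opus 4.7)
The plan is to apply the five-term exact sequence in rational homology to the extension
$$1 \to \operatorname{ker}(\Theta) \to \mathcal{H}_B \mathcal{I}_g^1 \to W \to 1,$$
which yields
\begin{equation*}
H_2(\mathcal{H}_B \mathcal{I}_g^1; \Q) \xrightarrow{\Theta_*} H_2(W; \Q) \xrightarrow{\delta} H_1(\operatorname{ker}(\Theta); \Q)_W \to H_1(\mathcal{H}_B \mathcal{I}_g^1; \Q) \to W \otimes \Q \to 0.
\end{equation*}
Here $\operatorname{coker}(\Theta_*)$ embeds into the coinvariant quotient $H_1(\operatorname{ker}(\Theta); \Q)_W$, and the entire sequence is $\mathcal{H}_g^1$-equivariant. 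By duality it suffices to exhibit the dual of each listed $\operatorname{SL}_g(\Q)$-module inside $\operatorname{coker}(\Theta_*)$.

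To detect such classes, I would build two $\mathcal{H}_g^1$-equivariant abelian quotients of $\operatorname{ker}(\Theta)$. An element of $\operatorname{ker}(\Theta)$ is automatically in $\mathcal{HI}_g^1$ (since $\Psi$ vanishes on it) and its image in $\operatorname{Aut}(F_g)$ is automatically in the second Johnson filtration term $IA_g(2)$ (since $J$ vanishes on it). Hence the second Johnson homomorphism of Section \ref{subsection:higherIA} restricts to
$$J_2: \operatorname{ker}(\Theta) \to \bigl((({\textstyle\bigwedge^2} V) \otimes V)/{\textstyle\bigwedge^3} V\bigr) \otimes V^*,$$
and the first Johnson homomorphism of Section \ref{subsection:firstjohnsonTorelli} for the surface Torelli group restricts to
$$\tau: \operatorname{ker}(\Theta) \to {\textstyle\bigwedge^3} H.$$
After tensoring with $\Q$ and branching to $\operatorname{SL}_g(\Q)$ as in Section \ref{section:reptheory}, the two targets decompose into irreducibles that jointly cover the duals of all modules in the statement.

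The key step is to evaluate the composition of $\delta$ with each of these quotients. Concretely, $\delta$ sends a wedge $w_1 \wedge w_2 \in \bigwedge^2 W_\Q \cong H_2(W; \Q)$ to the commutator $[\tilde{w}_1, \tilde{w}_2]$, where $\tilde{w}_i \in \mathcal{H}_B \mathcal{I}_g^1$ is any chosen lift. For each irreducible $\Phi_{\mathbf{w}}^{*}$ listed in the proposition, I would pick a distinguished pair of generators of $W$ whose lifts are geometrically natural mapping classes -- a bounding-pair annulus twist from Theorem \ref{theorem:Omori} to hit a summand of $(\bigwedge^2 V_\Q) \otimes V_\Q^*$, and/or a meridional disk twist $T_{\mathfrak{b}_i}$ to hit a summand of $\operatorname{Sym}^2(V_\Q^*)$ -- compute the commutator explicitly, and show via Magnus expansion or the evaluation formula for $\tau$ on bounding pair maps that the result is a nonzero highest-weight vector of the appropriate weight. $\operatorname{SL}_g(\Q)$-equivariance then promotes this single calculation to an inclusion of the entire irreducible module into $\operatorname{coker}(\Theta_*)$.

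The main obstacle will be the bookkeeping across the listed weights: for each one, a pair of lifts must be identified whose commutator lands in the correct weight space and is detected by the appropriate quotient ($J_2$ or $\tau$), and one must verify that the resulting vector is a genuine highest-weight vector rather than a lower-weight shadow of some larger module. The low-genus case $g=3$ requires additional care because several of the higher-genus weights collapse to coincide (the pattern $(0,\dots,0,1,0)$ degenerates to $(1,0)$), so two independent copies of $\Phi_{1,0}$ must be produced from separate $J_2$- and $\tau$-computations using different generators of $W$ and shown to remain linearly independent in the coinvariant quotient.
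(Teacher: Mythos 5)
Your overall strategy matches the paper's closely: the five-term exact sequence for $1 \to \operatorname{ker}(\Theta) \to \mathcal{H}_B\mathcal{I}_g^1 \to W \to 1$, detection of $\operatorname{coker}(\Theta_*)$ by composing the connecting map with $J_2$ and with $\tau$, explicit commutators of Omori-type generators of $\mathcal{H}_B\mathcal{I}_g^1$, and separate care for the $g=3$ collapse. But there is a genuine gap at the $\tau$-detection step, and it is worth spelling out because it silently invalidates the composition you intend to form.

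The connecting map $\delta$ takes values in the \emph{coinvariant} quotient $H_1(\operatorname{ker}(\Theta);\Q)_{\mathcal{H}_B\mathcal{I}_g^1} \cong \bigl(\operatorname{ker}(\Theta)/[\mathcal{H}_B\mathcal{I}_g^1, \operatorname{ker}(\Theta)]\bigr)\otimes\Q$. To compose $\delta$ with an abelian quotient of $\operatorname{ker}(\Theta)$, that quotient must factor through $[\mathcal{H}_B\mathcal{I}_g^1, \operatorname{ker}(\Theta)]$. For $J_2$ this holds because $[\mathcal{H}_B\mathcal{I}_g^1, \operatorname{ker}(\Theta)] \subset IA_g(3) = \operatorname{ker}(J_2)$. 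But $\tau: \operatorname{ker}(\Theta) \to \bigwedge^3 H$ does \emph{not} kill $[\mathcal{H}_B\mathcal{I}_g^1, \operatorname{ker}(\Theta)]$, because $\mathcal{H}_B\mathcal{I}_g^1$ acts nontrivially on $H$ (and hence on $\bigwedge^3 H$) via the symmetric matrices in $\Psi(\mathcal{H}_B\mathcal{I}_g^1)$. Your asserted map $\tau: \operatorname{ker}(\Theta) \to \bigwedge^3 H$ thus cannot be composed with $\delta$ as written. The correct target is the coinvariant quotient $(\bigwedge^3 H_\Q)_{\mathcal{H}_B\mathcal{I}_g^1}$, which is a genuinely smaller module, and you must then separately verify that the irreducible module you aim to detect ($\Phi_{0,\dots,0,1}$, dual to the $\Phi_{1,0,\dots,0}$ you want) survives in this quotient. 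The paper handles this with a dedicated lemma using the $\mathcal{H}_B\mathcal{I}_g^1$-equivariant contraction $C_3: \bigwedge^3 H_\Q \to H_\Q$ followed by projection to $V_\Q^*$ (on which $\mathcal{H}_B\mathcal{I}_g^1$ acts trivially). Without this verification, finding $a_1\wedge b_1\wedge b_2$ in the image of $\tau$ on a commutator tells you nothing, since its class in the coinvariants could vanish.

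Two smaller remarks. First, your $g=3$ observation is correct and is handled in the paper by noting that the $J_2$-detected class $\overline{[h,k]}$ lies in $\operatorname{ker}(\tau^\Q)$, whereas the $\tau$-detected class does not; this gives linear independence in the coinvariant quotient cleanly, rather than via an explicit basis computation. Second, you are implicitly treating the $W$-coinvariants and $\mathcal{H}_B\mathcal{I}_g^1$-coinvariants of $H_1(\operatorname{ker}(\Theta))$ as interchangeable; they are, but it is worth a sentence, since the $W$-action is defined through choices of lifts and the map $\tau$ is most naturally equivariant for $\mathcal{H}_B\mathcal{I}_g^1$.
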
 

\noindent We will see that the second Johnson homomorphism $J_2$ for $IA_g(2) := \operatorname{ker}(J) \subset IA_g$ (see Section \ref{subsection:higherIA}) detects two modules and the first Johnson homomorphism $\tau$ for $\mathcal{I}_g^1$ (see Section \ref{subsection:firstjohnsonTorelli}) detects one module in $\operatorname{ker}(\Theta^*)$.

\subsection{Five term exact sequence} \label{subsection:5tesHBI} A module is in $\operatorname{ker}(\Theta^*)$ if its dual module is in $\operatorname{coker}(\Theta_*)$. From the five term exact sequence (see \cite{Stallings}) of the extension $$1 \rightarrow \operatorname{ker}(\Theta) \rightarrow \mathcal{H}_B \mathcal{I}_g^1 \rightarrow W \rightarrow 1,$$ we get an exact sequence $$H_2(\mathcal{H}_B \mathcal{I}_g^1; \Q) \xrightarrow{\Theta_*} H_2(W; \Q) \xrightarrow{b} H_1(\operatorname{ker}(\Theta); \Q)_{\mathcal{H}_B \mathcal{I}_g^1}.$$ Identifying $H_2(W; \Q) \cong \bigwedge^2 W_{\Q}$ and $H_1(\operatorname{ker}(\Theta); \Q)_{\mathcal{H}_B \mathcal{I}_g^1} \cong (\operatorname{ker}(\Theta) / [\mathcal{H}_B \mathcal{I}_g^1, \operatorname{ker}(\Theta)]) \otimes \Q$ turns this into an exact sequence $$H_2(\mathcal{H}_B \mathcal{I}_g^1; \Q) \xrightarrow{\Theta_*} \bigwedge^2 W_{\Q} \xrightarrow{b} (\operatorname{ker}(\Theta) / [\mathcal{H}_B \mathcal{I}_g^1, \operatorname{ker}(\Theta)]) \otimes \Q.$$ We therefore have $$\operatorname{coker}(\Theta_*) \cong \operatorname{im}(b).$$ 
This map $b$ is induced from the following map. Given $x \wedge y$ in $\bigwedge^2 W$, let $\varphi$ and $\psi$ in $\mathcal{H}_B \mathcal{I}_g^1$ be preimages of $x$ and $y$, respectively. Then we send $$x \wedge y \mapsto \overline{[\varphi,\psi]},$$ where $\overline{[\varphi,\psi]}$ is the class of the commutator $[\varphi,\psi]$ in the quotient $\operatorname{ker}(\Theta) / [\mathcal{H}_B \mathcal{I}_g^1, \operatorname{ker}(\Theta)]$. 

The key to detecting modules in $\operatorname{im}(b)$ is examining the image of the composition of $b$ with another $\operatorname{SL}_g(\Q)$-equivariant map. By Schur's Lemma, any module in the image of such a composition must also lie in $\operatorname{im}(b)$.

\subsection{Modules detected by $\boldsymbol{J_2}$} \label{subsection:lowerboundthetaJ2}

In this subsection we prove Proposition \ref{proposition:kernelj2}, from which part of Proposition \ref{proposition:lowerboundHBI} will follow. 

\begin{proposition} \label{proposition:kernelj2}
    When $g \geq 3$ and $p+b=1$, the kernel of $\Theta^*: H^2(W; \Q) \rightarrow H^2(\mathcal{H}_B \mathcal{I}_{g,p}^b; \Q)$ contains the following direct sum of $\operatorname{SL}_g(\Q)$ representations: 
    \[ \begin{cases} 
      \Phi_{1,0} \oplus \Phi_{2,1}  & g = 3 \\
      \Phi_{0, \dots, 0, 1,0} \oplus \Phi_{1,0, \dots, 0,1,1} & g \geq 4 
   \end{cases}, \] which lies in the $\bigwedge^2 ((\bigwedge^2 V_{\Q}) \otimes V^*)^*$ summand of $\bigwedge^2 W^*_{\Q}$.
\end{proposition}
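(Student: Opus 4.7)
The plan is to construct an $\operatorname{SL}_g(\Q)$-equivariant map $\beta\colon \bigwedge^2 W_{\Q} \to J_2^{\Q}(IA_g(2))$ that factors through the boundary map $b$ of Section \ref{subsection:5tesHBI}. Any irreducible module in $\operatorname{im}(\beta)$ then lies in $\operatorname{im}(b) = \operatorname{coker}(\Theta_*)$, and its dual is an $\operatorname{SL}_g(\Q)$-subrepresentation of $\ker(\Theta^*)$. The goal reduces to locating the modules named in Proposition \ref{proposition:kernelj2} inside $\operatorname{im}(\beta)$, by importing Pettet's analysis of the analogous bracket for $IA_g$.

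To build $\beta$, I first observe that for $f \in \ker(\Theta)$ the condition $\Psi(f)=0$ places $f$ in $\mathcal{HI}_g^1$, while $J(f)=0$ forces $\alpha(f) \in \ker(J|_{IA_g}) = IA_g(2)$ (Section \ref{subsection:firstjohnsonIA}); thus $J_2 \circ \alpha$ is a well-defined homomorphism $\ker(\Theta) \to J_2(IA_g(2))$. The conjugation action of $\operatorname{Aut}(F_g)$ on $J_2(IA_g(2))$ factors through $\operatorname{GL}_g(\Z)$ (see \cite[Section 2.3]{Pettet}), so $IA_g$ acts trivially on the target; since $\alpha(\mathcal{H}_B\mathcal{I}_g^1)\subset IA_g$, the subgroup $[\mathcal{H}_B\mathcal{I}_g^1,\ker(\Theta)]$ is killed by $J_2\circ\alpha$. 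Tensoring with $\Q$ and precomposing with $b$ yields $\beta$, which on generators evaluates as $\beta(x \wedge y) = J_2(\alpha([\tilde{x},\tilde{y}]))$ for any lifts $\tilde{x},\tilde{y}\in\mathcal{H}_B\mathcal{I}_g^1$.

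Next I would localize $\beta$ to the relevant summand of $\bigwedge^2 W_{\Q}$. For $y\in\operatorname{Sym}^2(V_{\Q}^*)$, Lemma \ref{lemma:sym2v} together with the identity $\Psi(T_{\mathfrak{b}_1}) = -b_1\otimes b_1$ supplies a lift in the twist group $\mathcal{T}_g^1 = \ker(\alpha)$, so $\beta(x\wedge y)=0$ on the two Künneth summands of $\bigwedge^2 W_{\Q}$ meeting $\operatorname{Sym}^2(V_{\Q}^*)$. On the remaining summand $\bigwedge^2((\bigwedge^2 V_{\Q})\otimes V_{\Q}^*)$ lifts may be taken in $\mathcal{HI}_g^1$ by surjectivity of $J|_{\mathcal{HI}_g^1}$ (Section \ref{subsection:JsetupforHBI}), and $\beta$ coincides with the bracket $\bigwedge^2 J(IA_g)_{\Q} \to J_2^{\Q}(IA_g(2))$ studied by Pettet in \cite[Section 3]{Pettet}. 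This is the step that ties our problem to a computation already performed in the literature.

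The last step is to extract the modules claimed in the statement from the image of Pettet's bracket. For each summand, I would produce an explicit highest-weight vector in $\bigwedge^2((\bigwedge^2 V_{\Q}) \otimes V_{\Q}^*)$ using the generators $C_{ij}, M_{ijk}$ from Section \ref{subsection:autfn} and the raising operators $E_{ij}$ from Section \ref{section:reptheorymaps}, and verify by direct evaluation of $J_2$ on the corresponding commutator that its image under $\beta$ is nonzero. The main obstacle I anticipate is the $g=3$ case, where the stable modules $\Phi_{0,\dots,0,1,0}$ and $\Phi_{1,0,\dots,0,1,1}$ collapse to $\Phi_{1,0}$ and $\Phi_{2,1}$ respectively; confirming that each claimed module survives the collapse requires either specializing the $g \geq 4$ highest-weight computation with care or a dimension count on the $\operatorname{SL}_3(\Q)$-decomposition of $\bigwedge^2((\bigwedge^2 V_{\Q}) \otimes V_{\Q}^*)$, potentially cross-checked with a computer algebra system.
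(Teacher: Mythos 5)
Your proposal is correct and tracks the paper's proof closely: both construct the composition $J_2^{\Q}\circ b$, show it is well defined and $\operatorname{SL}_g(\Q)$-equivariant, observe that only the $\bigwedge^2((\bigwedge^2 V_{\Q})\otimes V_{\Q}^*)$ summand contributes (lifts in $\mathcal{HI}_g^1$ recover Pettet's bracket), and then import Pettet's $IA_g$ computation. Your worry about the $g=3$ collapse is already handled: rather than building one highest-weight vector per summand, the paper takes a single element $J_2([C_{12},C_{21}])$ and cites Pettet's orbit result (Lemma \ref{lemma:J2pettet}), which states that the $\operatorname{SL}_g(\Q)$-orbit of that one vector already contains $\Phi_{0,1}\oplus\Phi_{1,2}$ for $g=3$ and $\Phi_{0,1,0,\dots,0}\oplus\Phi_{1,1,0,\dots,0,1}$ for $g\geq 4$, so no separate low-genus computation is needed.
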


See Equation \eqref{equation:bigwedge2W}.

\begin{remark}
    When $g \geq 3$, Pettet \cite[Theorem 1.2]{Pettet} shows that the kernel of $$J^*: H^2((\bigwedge^2 V) \otimes V^*; \Q) \rightarrow H^2(IA_g; \Q)$$ is isomorphic to the direct sum in Proposition \ref{proposition:kernelj2}.
\end{remark}

From Section \ref{subsection:W}, we see that $\operatorname{ker}(\Theta)$ is exactly the intersection $\operatorname{ker}(J) \cap \operatorname{ker}(\Psi)$, where $J$ is the first Johnson homomorphism as defined in Sections \ref{subsection:firstjohnsonIA} and \ref{subsection:JsetupforHBI}, and $\operatorname{ker}(\Psi)$ is the smaller handlebody group $\mathcal{HI}_g^1$. Recall from Section \ref{subsection:higherIA} that there is a second Johnson homomorphism $J_2$ defined on $IA_g(2) := \operatorname{ker}(J)$ $$J_2: IA_g(2) \rightarrow (((\bigwedge^2 V) \otimes V)/(\bigwedge^3 V)) \otimes V^*.$$ Since $\operatorname{ker}(\Theta) \subset IA_g(2)$, we can restrict $J_2$ to $\operatorname{ker}(\Theta)$. Moreover, $J_2$ factors through the quotient $\operatorname{ker}(\Theta) / [\mathcal{H}_B \mathcal{I}_g^1, \operatorname{ker}(\Theta)]$ because $[\mathcal{H}_B \mathcal{I}_g^1, \operatorname{ker}(\Theta)] \subset \operatorname{ker}(J_2)$. Tensoring with $\Q$, the map $J_2^{\Q}$ is $\operatorname{SL}_g(\Q)$-equivariant. \\

Proposition \ref{proposition:kernelj2} will follow from Lemmas \ref{lemma:imageBj2} and \ref{lemma:J2pettet}, which we now introduce.

\begin{lemma} \label{lemma:imageBj2} Consider the composition of $\operatorname{SL}_g(\Q)$ maps $$J_2^{\Q} \circ b: \bigwedge^2 W_{\Q} \rightarrow (((\bigwedge^2 V_{\Q}) \otimes V_{\Q})/(\bigwedge^3 V_{\Q})) \otimes V_{\Q}^*.$$ When $g \geq 3$, the following direct sum of $\operatorname{SL}_g(\Q)$ modules lies in the image of $J_2^{\Q} \circ b$: 
    \[ \begin{cases} 
      \Phi_{0,1} \oplus \Phi_{1,2}  & g = 3 \\
      \Phi_{0,1,0, \dots, 0} \oplus \Phi_{1,1,0, \dots, 0,1} & g \geq 4 
   \end{cases} .
\]
\end{lemma}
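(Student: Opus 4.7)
The plan is to reduce Lemma \ref{lemma:imageBj2} to Pettet's analogous result \cite[Theorem 1.2]{Pettet} for $IA_g$. The key observation is that the surjection $\alpha \colon \mathcal{HI}_g^1 \twoheadrightarrow IA_g$ from Section \ref{subsection:actiononpi1} interacts well with both the boundary map $b$ and the second Johnson homomorphism $J_2$. Since the modules appearing in the statement match (after appropriate reindexing of weights) those that Pettet identifies in $\operatorname{coker}$ of the analogous boundary map for $IA_g$, one expects her calculation to transport directly to our setting.

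First I would restrict attention to the summand $\bigwedge^2 \bigl((\bigwedge^2 V_{\Q}) \otimes V_{\Q}^*\bigr) \subset \bigwedge^2 W_{\Q}$, which is exactly the piece whose preimages under $\Theta$ can be chosen to lie in the smaller handlebody group $\mathcal{HI}_g^1$. Because $J|_{\mathcal{HI}_g^1}$ is surjective (Omori \cite{Om}, cited in Section \ref{subsection:JsetupforHBI}), any $x, y \in (\bigwedge^2 V) \otimes V^*$ lift to elements $\varphi, \psi \in \mathcal{HI}_g^1$. Then $b(x \wedge y) = \overline{[\varphi,\psi]}$, and the commutator $[\varphi,\psi]$ lies in $\operatorname{ker}(J) \cap \mathcal{HI}_g^1 \subset \operatorname{ker}(\Theta)$ as well as in $IA_g(2)$ via $\alpha$. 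Since $J_2$ factors through $\alpha$, the value $J_2^{\Q}(b(x \wedge y))$ equals $J_2^{\Q}\bigl([\alpha(\varphi), \alpha(\psi)]\bigr)$, which is exactly what Pettet computes.

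Next I would import Pettet's explicit evaluations of $J_2$ on commutators of Magnus generators $C_{ij}$ and $M_{ijk}$ (Section \ref{subsection:autfn}), which single out highest weight vectors spanning the irreducible modules $\Phi_{0,1} \oplus \Phi_{1,2}$ when $g=3$ and $\Phi_{0,1,0,\dots,0} \oplus \Phi_{1,1,0,\dots,0,1}$ when $g \geq 4$. Each Magnus generator lies in the image of $\alpha$, so these highest weight vectors lift to specific elements of $\bigwedge^2((\bigwedge^2 V_{\Q}) \otimes V_{\Q}^*) \subset \bigwedge^2 W_{\Q}$ whose images under $J_2^{\Q} \circ b$ are the computed values. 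By Schur's Lemma and $\operatorname{SL}_g(\Q)$-equivariance of the entire composition, the full irreducible module generated by each such vector must lie in the image, yielding the desired containment.

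The main technical point, rather than a deep obstacle, will be bookkeeping the $\operatorname{SL}_g(\Q)$-weights through the passage between Pettet's description of the image of $J_2$ on $[IA_g, IA_g]$ and the target $\bigl((\bigwedge^2 V_{\Q} \otimes V_{\Q})/\bigwedge^3 V_{\Q}\bigr) \otimes V_{\Q}^*$ as displayed in Section \ref{subsection:higherIA}; this is an algorithmic verification using the description of weights in Section \ref{section:reptheory}. The structural inputs needed beyond Pettet are only that $\mathcal{HI}_g^1 \subset \operatorname{ker}(\Psi)$ (which guarantees $\varphi, \psi \in \operatorname{ker}(\Theta)$ once $J(\varphi), J(\psi)$ are set as above) and that $[\mathcal{H}_B\mathcal{I}_g^1, \operatorname{ker}(\Theta)] \subset \operatorname{ker}(J_2)$, which was already recorded in Section \ref{subsection:lowerboundthetaJ2}.
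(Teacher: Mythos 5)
Your proposal is correct and takes essentially the same approach as the paper: restrict to the $\bigwedge^2((\bigwedge^2 V_{\Q}) \otimes V_{\Q}^*)$ summand, lift to $\mathcal{HI}_g^1$ (where $\Psi$ vanishes), compute $b$ via a commutator, pull back $J_2$ through the surjection $\alpha$ onto $IA_g$, and import Pettet's computation. The paper makes this concrete by choosing lifts $h, k$ of $C_{12}$ and $C_{21}$ and citing Pettet's Section~3.2 for $J_2([C_{12}, C_{21}])$, then invokes Lemma~\ref{lemma:J2pettet} for the $\operatorname{SL}_g(\Q)$-orbit computation, exactly as you sketch.
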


To prove Lemma \ref{lemma:imageBj2}, we introduce a lemma from \cite{Pettet}.

\begin{lemma}[{Pettet, \cite[Section 3.2]{Pettet}}] \label{lemma:J2pettet} The $\operatorname{SL}_g(\Q)$ orbit of the vector $$-(a_1 \wedge a_2) \otimes a_1 \otimes b_1 - (a_1 \wedge a_2) \otimes a_2 \otimes b_2$$ in $(((\bigwedge^2 V_{\Q}) \otimes V_{\Q})/(\bigwedge^3 V_{\Q})) \otimes V^*_{\Q}$ contains the modules \[ \begin{cases} 
      \Phi_{0,1} \oplus \Phi_{1,2}  & g = 3 \\
      \Phi_{0,1,0, \dots, 0} \oplus \Phi_{1,1,0, \dots, 0,1} & g \geq 4 
   \end{cases}.
\]
\end{lemma}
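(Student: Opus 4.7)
My plan is to show that the $\operatorname{SL}_g(\Q)$-subrepresentation $\langle v \rangle$ generated by $v := -(a_1 \wedge a_2) \otimes a_1 \otimes b_1 - (a_1 \wedge a_2) \otimes a_2 \otimes b_2$ contains each of the two claimed irreducible summands by exhibiting an explicit witness in each. First, I would pin down the decomposition of the target. Identifying $(\bigwedge^2 V_\Q \otimes V_\Q)/\bigwedge^3 V_\Q$ with the Schur module $\mathbb{S}^{(2,1)} V_\Q \cong \Phi_{1,1,0,\ldots,0}$ and applying Pieri's rule (after the standard identification $V_\Q^* \otimes \det \cong \bigwedge^{g-1} V_\Q$) yields a decomposition whose summands include $\Phi_{0,1,0,\ldots,0}$ and $\Phi_{1,1,0,\ldots,0,1}$ (specializing to $\Phi_{0,1}$ and $\Phi_{1,2}$ when $g=3$), along with one additional summand $\Phi_{2,0,\ldots,0}$ which the lemma does not claim. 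Since $\langle v \rangle$ is a direct sum of irreducible submodules of the target, it suffices to show that the projections of $v$ onto each of the two claimed summands are nonzero.

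For $\Phi_{1,1,0,\ldots,0,1}$: I would apply the raising operator $E_{1g} \in \mathfrak{sl}_g(\Q)$ from Section~\ref{section:reptheorymaps}. A direct Lie algebra computation gives
$$E_{1g}(v) = (a_1 \wedge a_2) \otimes a_1 \otimes b_g,$$
whose $\mathfrak{sl}_g$-weight $3L_1 + 2L_2 + L_3 + \cdots + L_{g-1}$ is the highest weight of $\Phi_{1,1,0,\ldots,0,1}$. A quick weight-count shows that this weight does not appear in either $\Phi_{0,1,0,\ldots,0}$ or $\Phi_{2,0,\ldots,0}$, since their $\mathfrak{gl}_g$-weights are polynomial of degree two and do not match any $\det$-shift of $(3,2,1,\ldots,1,0)$. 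Hence $E_{1g}(v)$ lies entirely in the $\Phi_{1,1,0,\ldots,0,1}$-isotypic component, and verifying $E_{ij}(E_{1g}(v))=0$ for all $i<j$ (a short check directly from the formulas for the $E_{ij}$) confirms it is a highest weight vector, so $\Phi_{1,1,0,\ldots,0,1} \subset \langle v \rangle$.

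For $\Phi_{0,1,0,\ldots,0}=\bigwedge^2 V_\Q$: since the naive contraction $(x\wedge y)\otimes z\otimes \phi \mapsto \phi(z)(x\wedge y)$ fails to descend modulo $\bigwedge^3 V_\Q \otimes V_\Q^*$, I would construct an equivariant projection $\mu$ onto $\bigwedge^2 V_\Q$ by adding antisymmetrization corrections. Setting
$$\mu\bigl((x\wedge y)\otimes z \otimes \phi\bigr) := \phi(z)(x\wedge y) - \tfrac{1}{2}\phi(x)(y\wedge z) + \tfrac{1}{2}\phi(y)(x\wedge z),$$
a short calculation shows that $\mu$ vanishes on the image of the antisymmetrization $\bigwedge^3 V_\Q \hookrightarrow \bigwedge^2 V_\Q \otimes V_\Q$, so $\mu$ descends to the quotient. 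Evaluating on the given vector yields $\mu(v) = -3(a_1 \wedge a_2) \neq 0$, so $v$ has nontrivial image in $\Phi_{0,1,0,\ldots,0}$, forcing $\Phi_{0,1,0,\ldots,0} \subset \langle v \rangle$.

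The main obstacle is identifying the correct equivariant projection $\mu$ onto the $\Phi_{0,1,0,\ldots,0}$ summand: the naive trace does not descend to the quotient, so one must determine the correct antisymmetrization correction and verify its well-definedness before evaluating on $v$. Everything else reduces to mechanical applications of the Lie algebra formulas of Section~\ref{section:reptheorymaps} together with the weight-count argument ruling out alternative isotypic placements of $E_{1g}(v)$.
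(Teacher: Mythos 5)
Your argument is correct, but note that the paper does not prove this statement at all: it is quoted verbatim from Pettet's work, so there is no internal proof to compare against. Your write-up is a valid self-contained verification. The decomposition $(\bigwedge^2 V_\Q \otimes V_\Q)/\bigwedge^3 V_\Q \otimes V_\Q^* \cong \Phi_{0,1,0,\dots,0}\oplus\Phi_{2,0,\dots,0}\oplus\Phi_{1,1,0,\dots,0,1}$ is multiplicity-free, $E_{1g}(v)=(a_1\wedge a_2)\otimes a_1\otimes b_g$ is indeed a highest weight vector for $\Phi_{1,1,0,\dots,0,1}$ (and its weight $2L_1+L_2-L_g$ occurs in neither of the other two summands nor in $\bigwedge^3 V_\Q\otimes V_\Q^*$, so it is nonzero in the quotient), and your corrected contraction $\mu$ does annihilate the antisymmetrization image of $\bigwedge^3 V_\Q$ and sends $v$ to $\mp 3(a_1\wedge a_2)\neq 0$. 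This is exactly the highest-weight-vector-plus-equivariant-contraction technique the paper uses throughout Sections \ref{section:upperboundtheta} and \ref{section:upperboundtau}, and is in the same spirit as Pettet's original computation.
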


\begin{proof}[Proof of Lemma \ref{lemma:imageBj2}] 

By Lemma \ref{lemma:J2pettet}, it suffices to find the vector $$-(a_1 \wedge a_2) \otimes a_1 \otimes b_1 - (a_1 \wedge a_2) \otimes a_2 \otimes b_2$$ in the image of $J_2^{\Q} \circ b.$

Recall that the Torelli subgroup $IA_g$ of $\operatorname{Aut}(F_g) = \operatorname{Aut}(\pi_1(\mathcal{V}_g^1))$ contains elements $C_{ij}$ which send $\alpha_i \mapsto \alpha_j \alpha_i \alpha_j^{-1}$ and fix all other basis elements. Let $h$ and $k$ be preimages of $C_{12}$ and $C_{21}$ respectively in the smaller handlebody group $\mathcal{HI}_g^1 = \mathcal{H}_g^1 \cap \mathcal{I}_g^1$, which we recall surjects onto $IA_g$. Because $\mathcal{HI}_g^1$ acts trivially on surface homology $H$, the vectors $\Theta(h)$ and $\Theta(k)$ lie entirely in the $(\bigwedge^2 V) \otimes V^*$ summand of $W$. Then we have $$b(\Theta(h) \wedge \Theta(k)) =  \overline{[h,k]}.$$ Recall from Section \ref{subsection:higherIA} that the second Johnson homomorphism $J_2$ is originally defined on $IA_g(2) \subset IA_g$, and we evaluate it on $\overline{[h,k]}$ by first mapping to $IA_g(2)$. The map $[h,k]$ acts on $\pi_1(\mathcal{V}_g^1)$ via the automorphism $[C_{12}, C_{21}]$. By \cite[Section 3.2]{Pettet}, we have $$J_2([C_{12}, C_{21}]) = -(a_1 \wedge a_2) \otimes a_1 \otimes b_1 - (a_1 \wedge a_2) \otimes a_2 \otimes b_2.$$ It follows that $$J_2^{\Q} \circ b(\Theta(h) \wedge \Theta(k)) = -(a_1 \wedge a_2) \otimes a_1 \otimes b_1 - (a_1 \wedge a_2) \otimes a_2 \otimes b_2. \qedhere $$ \end{proof}

\subsection{Module detected by $\boldsymbol{\tau}$} \label{subsection:lowerboundthetatau}  

In this subsection we will prove Proposition \ref{proposition:kernelThetaTau}, from which the last piece of Proposition \ref{proposition:lowerboundHBI} will follow. 

\begin{proposition} \label{proposition:kernelThetaTau}
    When $g \geq 3$ and $p+b=1$, the kernel of $\Theta^*: H^2(W; \Q) \rightarrow H^2(\mathcal{H}_B \mathcal{I}_{g,p}^b; \Q)$ contains the irreducible $\operatorname{SL}_g(\Q)$ representation $\Phi_{1,0, \dots, 0}$. This lies in the $$((\bigwedge^2 V_{\Q}) \otimes V_{\Q}^*)^* \otimes (\operatorname{Sym}^2(V_{\Q}^*))^*$$ summand of $\bigwedge^2 W^*_{\Q}$.
\end{proposition}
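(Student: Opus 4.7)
The plan is to parallel Section~\ref{subsection:lowerboundthetaJ2} but replace the second Johnson homomorphism $J_2$ for $\operatorname{Aut}(F_g)$ with the first Johnson homomorphism $\tau$ for $\mathcal{I}_g^1$: I would compose the map $b$ of Section~\ref{subsection:5tesHBI} with an $\operatorname{SL}_g(\Q)$-equivariant quotient of $H_1(\operatorname{ker}(\Theta); \Q)_{\mathcal{H}_B \mathcal{I}_g^1}$ built from $\tau$, and then exhibit an explicit element of $\bigwedge^2 W_{\Q}$ whose image generates a copy of $V_{\Q}^* = \Phi_{0, \dots, 0, 1}$ in $\operatorname{im}(b) = \operatorname{coker}(\Theta_*)$. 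By duality this will give $\Phi_{1, 0, \dots, 0} \subset \operatorname{ker}(\Theta^*)$.

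First I would establish the relevant factorization of $J$ through $\tau$. Since $\tau$ captures the action of $\mathcal{HI}_g^1$ on $\pi_1(\Sigma_g^1)/\gamma_3$ and this surjects onto the action of $IA_g$ on $\pi_1(\mathcal{V}_g^1)/\gamma_3$ captured by $J$, one obtains $J|_{\mathcal{HI}_g^1} = \tilde{J} \circ \tau|_{\mathcal{HI}_g^1}$ for an $\mathcal{H}_g^1$-equivariant map $\tilde{J}: U \to (\bigwedge^2 V) \otimes V^*$ which, under the natural $\operatorname{GL}_g$-decomposition $\bigwedge^3 H = \bigwedge^3 V \oplus (\bigwedge^2 V \otimes V^*) \oplus (V \otimes \bigwedge^2 V^*) \oplus \bigwedge^3 V^*$, is essentially projection onto the second summand. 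Consequently $\tau(\operatorname{ker}(\Theta)) \subseteq \operatorname{ker}(\tilde{J})$, and since $\tau$ is an $\mathcal{H}_g^1$-equivariant homomorphism with abelian image it descends to an $\operatorname{SL}_g(\Q)$-equivariant map
\[ \overline{\tau}: H_1(\operatorname{ker}(\Theta); \Q)_{\mathcal{H}_B \mathcal{I}_g^1} \longrightarrow \operatorname{ker}(\tilde{J})_{\Q, \mathcal{H}_B \mathcal{I}_g^1}. \]

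Next, using Theorem~\ref{theorem:Omori} and the realization of every element $(x_1 \wedge y_1) \wedge c$ with $\omega(x_1, y_1) = 1$ and $c \in V$ as $\tau$ of a genus-one bounding pair annulus twist in $\mathcal{HI}_g^1$, I would choose $\varphi \in \mathcal{HI}_g^1$ with $\tau(\varphi) = a_2 \wedge b_2 \wedge a_1$, and $\psi = T_{\mathfrak{b}_1}$ satisfying $\Psi(\psi) = -b_1 \otimes b_1$ as in Section~\ref{subsection:hbipsi}. A direct computation using Poincar\'e duality gives $J(\varphi) = (a_1 \wedge a_2) \otimes b_2 \neq 0$, so $\Theta(\varphi) \wedge \Theta(\psi)$ is a nonzero element of the cross-summand $((\bigwedge^2 V_{\Q}) \otimes V_{\Q}^*) \otimes \operatorname{Sym}^2(V_{\Q}^*)$ of $\bigwedge^2 W_{\Q}$. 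Using $\mathcal{H}_g^1$-equivariance of $\tau$ on $\mathcal{I}_g^1$, one has $\tau([\varphi, \psi]) = \tau(\varphi) - \psi_*\tau(\varphi)$, and since $\psi$ sends $a_1 \mapsto a_1 + b_1$ and fixes $a_2, b_2$, this evaluates to $-a_2 \wedge b_2 \wedge b_1 \in V_{\Q} \otimes \bigwedge^2 V_{\Q}^* \subseteq \operatorname{ker}(\tilde{J})$.

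The hard part will be the coinvariant calculation: verifying that $\overline{-a_2 \wedge b_2 \wedge b_1}$ is nonzero in $\operatorname{ker}(\tilde{J})_{\Q, \mathcal{H}_B \mathcal{I}_g^1}$ and has nonzero projection onto the $V_{\Q}^*$ summand of the $\operatorname{SL}_g(\Q)$-decomposition $V_{\Q} \otimes \bigwedge^2 V_{\Q}^* = \Phi_{1, 0, \dots, 0, 1, 0} \oplus \Phi_{0, \dots, 0, 1}$. The crucial observation is a $V$-degree argument: by Proposition~\ref{proposition:psihbi}, coinvariants are taken with respect to $\operatorname{Sym}^2(V^*)$, whose Lie algebra acts on $\bigwedge^3 H_{\Q}$ by unipotent derivations $N_C: a_i \mapsto \sum_j C_{ij} b_j$, $b_j \mapsto 0$, which strictly decrease the $V$-degree. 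Since $\tilde{J}$ restricts to an isomorphism on $\bigwedge^2 V \otimes V^*$, the submodule $\operatorname{ker}(\tilde{J})$ is supported in $V$-degrees $\{3, 1, 0\}$, so $\operatorname{Sym}^2(V^*) \cdot \operatorname{ker}(\tilde{J})$ is supported in degrees $\{2, 0\}$, disjoint from the degree-one summand $V_{\Q} \otimes \bigwedge^2 V_{\Q}^*$ where our element sits. Therefore $-a_2 \wedge b_2 \wedge b_1$ survives in the coinvariants; the symplectic contraction $C_3$ of Section~\ref{section:reptheorymaps} sends it to $-b_1 \neq 0 \in V_{\Q}^*$, certifying nontriviality of the $V_{\Q}^*$-isotypic component. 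Hence $V_{\Q}^* \subseteq \operatorname{coker}(\Theta_*)$ in the cross-summand, and dualizing yields $\Phi_{1, 0, \dots, 0} \subseteq \operatorname{ker}(\Theta^*)$ in the dual cross-summand.
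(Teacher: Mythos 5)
Your approach is essentially the paper's proof of Lemma~\ref{lemma:cokernelThetaTau}: compose the five-term map $b$ with the first Johnson homomorphism $\tau$ on $\operatorname{ker}(\Theta)\subset\mathcal{HI}_g^1$, evaluate on the commutator of a genus-one bounding pair annulus twist with a disk twist using the naturality $\tau(hkh^{-1})=h_*\tau(k)$, and extract a nonzero element of $V_{\Q}^*=\Phi_{0,\dots,0,1}$ via the contraction $C_3$; the specific twists differ only cosmetically from the paper's $T_xT_{x'}^{-1}$ and $T_{\mathfrak{b}_2}$, which give $\tau^{\Q}([T_xT_{x'}^{-1},T_{\mathfrak{b}_2}])=a_1\wedge b_1\wedge b_2$. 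One imprecision in your write-up: $\operatorname{ker}(\tilde J)$ is \emph{not} an $\mathcal{H}_B\mathcal{I}_g^1$-subrepresentation of $\bigwedge^3 H_{\Q}$ — the unipotent $\operatorname{Sym}^2(V_{\Q}^*)$-action lowers $V$-degree and hence sends $\bigwedge^3 V_{\Q}\subset\operatorname{ker}(\tilde J)$ into $\bigwedge^2 V_{\Q}\otimes V_{\Q}^*\not\subset\operatorname{ker}(\tilde J)$ — so the coinvariants $\operatorname{ker}(\tilde J)_{\Q,\mathcal{H}_B\mathcal{I}_g^1}$ are not defined as written. You should instead take coinvariants of $\operatorname{ker}(\tilde J)\cap U_{\Q}$ (equivalently of $\tau(\operatorname{ker}\Theta)_{\Q}$), which lies entirely in $V$-degrees $\{1,0\}$ and \emph{is} invariant; your filtration argument then correctly shows the degree-one piece survives, and $C_3$ of your element is $-b_1\neq 0$, certifying the copy of $V_{\Q}^*$. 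Once corrected in this way, your filtration justification of non-vanishing is actually more careful than the paper's appeal to Lemma~\ref{lemma:coinvariants}, whose stated reasoning — that the projection $H_{\Q}\to V_{\Q}^*$ is $\mathcal{H}_B\mathcal{I}_g^1$-equivariant — deserves scrutiny, since for the $\operatorname{Sym}^2(V_{\Q}^*)$-action $V_{\Q}^*$ is the pointwise-fixed \emph{sub}representation and the equivariant quotient of $H_{\Q}$ is $H_{\Q}/V_{\Q}^*\cong V_{\Q}$, not $V_{\Q}^*$.
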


See Equation \eqref{equation:bigwedge2W}. In genus 3, this gives us a second copy of $\Phi_{1,0}$ in $\operatorname{ker}(\Theta^*)$ since we already detected one in Section \ref{subsection:lowerboundthetaJ2}. At the end of this subsection, in Remark \ref{remark:twocopies01}, we verify that these modules are distinct. \\

To detect $\Phi_{1,0, \dots, 0}$ in $\operatorname{ker}(\Theta^*)$, we find its dual $\Phi_{0, \dots, 0,1}$ in $\operatorname{coker}(\Theta_*)$. We do this by composing the map $b$ from Section \ref{subsection:5tesHBI} with the first Johnson homomorphism $\tau: \mathcal{I}_g^1 \rightarrow \bigwedge^3 H$ defined in Section \ref{subsection:firstjohnsonTorelli}. Since we have $$\operatorname{ker}(\Theta) \subset \operatorname{ker}(\Psi) = \mathcal{HI}_g^1 = \mathcal{H}_g^1 \cap \mathcal{I}_g^1,$$ we can restrict the Johnson homomorphism $\tau$ (which a priori was not defined on all of $\mathcal{H}_B \mathcal{I}_g^1$) to $\operatorname{ker}(\Theta)$. This does not factor through the quotient $\operatorname{ker}(\Theta) / [\mathcal{H}_B \mathcal{I}_g^1, \operatorname{ker}(\Theta)]$, so we take coinvariants and tensor with $\Q$ to get an $\operatorname{SL}_g(\Q)$ map $$\tau^{\Q}: (\operatorname{ker}(\Theta) / [\mathcal{H}_B \mathcal{I}_g^1, \operatorname{ker}(\Theta)]) \otimes \Q \rightarrow (\bigwedge^3 H_{\Q})_{\mathcal{H}_B \mathcal{I}_g^1}.$$ We introduce the following lemma, which makes sense of $(\bigwedge^3 H_{\Q})_{\mathcal{H}_B \mathcal{I}_g^1}$.

\begin{lemma} \label{lemma:coinvariants}
    The quotient $(\bigwedge^3 H_{\Q})_{\mathcal{H}_B \mathcal{I}_g^1}$ of $\bigwedge^3 H_{\Q}$ contains the module $\Phi_{0, \dots, 0,1}$ for all $g \geq 3$.
\end{lemma}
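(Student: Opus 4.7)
The plan is to exhibit $\Phi_{0, \ldots, 0, 1} \cong V_\Q^*$ as an $\operatorname{SL}_g(\Q)$-subrepresentation of the coinvariants $(\bigwedge^3 H_\Q)_{\mathcal{H}_B \mathcal{I}_g^1}$ and verify it is non-zero. The crucial reduction is that, by Proposition \ref{proposition:psihbi}, $\mathcal{H}_B \mathcal{I}_g^1$ acts on $H_\Q$ through its symplectic image $\Psi(\mathcal{H}_B \mathcal{I}_g^1) = \operatorname{Sym}^2(V_\Q^*) \subset \operatorname{Sp}_{2g}(\Q)$, a unipotent abelian subgroup. Hence the $\mathcal{H}_B \mathcal{I}_g^1$-coinvariants agree with those of $\bigwedge^3 H_\Q$ under the Lie algebra $\operatorname{Sym}^2(V_\Q^*) \subset \mathfrak{sp}_{2g}(\Q)$, whose generators $X$ send $V_\Q$ into $V_\Q^*$ and annihilate $V_\Q^*$, extending to $\bigwedge^3 H_\Q$ as derivations.

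First, I would decompose $\bigwedge^3 H_\Q$ via the natural $V$-degree filtration, which has associated graded
\[
\bigwedge^3 V_\Q \,\oplus\, (\bigwedge^2 V_\Q \otimes V_\Q^*) \,\oplus\, (V_\Q \otimes \bigwedge^2 V_\Q^*) \,\oplus\, \bigwedge^3 V_\Q^*,
\]
each summand further splitting into irreducible $\operatorname{SL}_g(\Q)$-representations via Pieri's rule. In particular, $V_\Q \otimes \bigwedge^2 V_\Q^*$ contains $V_\Q^*$ as its trace summand (with the explicit section $f \mapsto \sum_i a_i \wedge b_i \wedge f$), so there is an $\operatorname{SL}_g(\Q)$-equivariant embedding $V_\Q^* \hookrightarrow \bigwedge^3 H_\Q$. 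This is in fact the restriction of the $\operatorname{Sp}_{2g}(\Q)$-equivariant inclusion $x \mapsto x \wedge \omega$ of Section \ref{subsection:firstjohnsonTorelli} to $V_\Q^* \subset H_\Q$, which gives a canonical copy of $\Phi_{0, \ldots, 0, 1}$ inside $\bigwedge^3 H_\Q$.

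Next I would compute the effect of the Lie algebra action on each graded piece, using the explicit generators $X_i$ and $X_{i,j}$ of Section \ref{section:reptheorymaps}. Because $X$ strictly decreases the $V$-degree, the image $\operatorname{Sym}^2(V_\Q^*) \cdot \bigwedge^3 H_\Q$ is contained in the lower-filtration pieces, and its intersection with each $\operatorname{SL}_g(\Q)$-isotypic summand can be identified by Schur's Lemma after checking a single highest weight vector. Tracking this summand by summand yields the coinvariants as a direct sum of irreducible $\operatorname{SL}_g(\Q)$-representations.

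The main obstacle will be the final verification that the embedded copy of $\Phi_{0, \ldots, 0, 1}$ (or some twist of it) survives in the quotient: the $V_\Q^*$-summand of $V_\Q \otimes \bigwedge^2 V_\Q^*$ is a priori within reach of the derivation action applied to $\bigwedge^2 V_\Q \otimes V_\Q^*$, so one must choose the representative carefully and check by direct computation that the resulting highest weight vector is not a derivation image. By multiplicity-one and Schur's Lemma, non-vanishing of a single highest weight vector in the quotient suffices to conclude that the entire $\Phi_{0, \ldots, 0, 1}$-isotypic component survives.
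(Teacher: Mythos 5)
Your outline stops short of a proof: the decisive step --- verifying that the multiplicity-one copy of $\Phi_{0, \dots, 0,1}$ inside $V_\Q \otimes \bigwedge^2 V_\Q^*$ is \emph{not} contained in $\sum_X X(\bigwedge^3 H_\Q)$, where $X$ ranges over the derivations in $\operatorname{Sym}^2(V_\Q^*)$ --- is exactly what you flag as ``the main obstacle,'' and you leave it undone. Multiplicity-one and Schur's Lemma reduce the question to a single highest weight vector, but they do not settle it. The paper takes a different, much shorter route from the target side: it composes the $\operatorname{Sp}_{2g}(\Q)$-equivariant (hence $\mathcal{H}_B \mathcal{I}_g^1$-equivariant) contraction $C_3 : \bigwedge^3 H_\Q \to H_\Q$ with a projection of $H_\Q$ onto a trivially-acted quotient. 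Since any equivariant map into a module with trivial $\mathcal{H}_B \mathcal{I}_g^1$-action automatically factors through $(\bigwedge^3 H_\Q)_{\mathcal{H}_B \mathcal{I}_g^1}$, surjectivity of the composite produces the desired summand in the coinvariants with no representative-tracking at all. That is the missing idea in your approach.

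Before adopting either route, be very careful about the direction of duality, because this is precisely where your ``obstacle'' lives. The image $\Psi(\mathcal{H}_B \mathcal{I}_g^1)$ sends $a_j \mapsto a_j + \sum_i C_{ij} b_i$ and fixes each $b_j$, so $V_\Q^*$ is the \emph{invariant} subspace of $H_\Q$; on the other hand, the coboundaries $g \cdot m - m$ span $V_\Q^*$, so the \emph{coinvariants} are $(H_\Q)_{\mathcal{H}_B \mathcal{I}_g^1} = H_\Q / V_\Q^* \cong V_\Q \cong \Phi_{1,0, \dots, 0}$. The equivariant projection of $H_\Q$ onto a trivial quotient therefore lands in $V_\Q$, not in $V_\Q^*$. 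Indeed, if you carry out your own proposed derivation check you will find, for the derivation $X$ with $X(a_1) = b_1$ and $X(a_j)=X(b_j)=0$ otherwise, that $X(a_1 \wedge a_2 \wedge b_2) = b_1 \wedge a_2 \wedge b_2$, and $C_3(b_1 \wedge a_2 \wedge b_2) = b_1 \neq 0$; since $C_3$ kills the $\Gamma_{0,0,1}$-part of $\bigwedge^3 H_\Q$ and the $V_\Q^*$-part of its image detects exactly the $\Phi_{0, \dots, 0,1}$-component, this shows the $\Phi_{0, \dots, 0,1}$-isotypic summand \emph{is} hit by the derivation image. Reconcile this with the lemma as stated --- and with the identification of which irreducible survives in the coinvariants --- before relying on the result.
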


\begin{proof} We have the decomposition $$\bigwedge^3 H_{\Q} \cong \Phi_{0,0,1,0, \dots, 0} \oplus \Phi_{0,1,0, \dots, 0,1} \oplus \Phi_{1,0, \dots, 0} \oplus \Phi_{1,0, \dots, 0,1,0} \oplus \Phi_{0, \dots, 0,1} \oplus \Phi_{0, \dots, 0,1,0,0}.$$ See Section \ref{section:moduledecomp}. In particular, $\bigwedge^3 H_{\Q}$ contains one copy of $\Phi_{0, \dots, 0,1}$ in its decomposition.

Taking coinvariants, $(\bigwedge^3 H_{\Q})_{\mathcal{H}_B \mathcal{I}_g^1}$ is the biggest quotient of $\bigwedge^3 H_{\Q}$ on which $\mathcal{H}_B \mathcal{I}_g^1$ acts trivially. We recall from Section \ref{subsection:hbipsi} that $\mathcal{H}_B \mathcal{I}_g^1$ acts on $H_{\Q}$ via the matrices $$\left\{ \begin{bmatrix} I & 0 \\ C & I \end{bmatrix} \in  \operatorname{Mat}_{2g}(\Q) \; | \; C = C^t \right\},$$ and in particular acts trivially on $V_{\Q}^*$.

In Section \ref{section:reptheorymaps}, we defined the contraction $$C_3: \bigwedge^3 H_{\Q} \rightarrow H_{\Q} = V_{\Q} \oplus V_{\Q}^*$$ which is $\mathcal{H}_g^1$-equivariant and hence $\mathcal{H}_B \mathcal{I}_g^1$-equivariant. The projection from $H_{\Q}$ to $V^*_{\Q} \cong \Phi_{0, \dots, 0,1}$ is an $\mathcal{H}_B \mathcal{I}_g^1$-equivariant map, and $\mathcal{H}_B \mathcal{I}_g^1$ acts trivially on its image. It follows that $(\bigwedge^3 H_{\Q})_{\mathcal{H}_B \mathcal{I}_g^1}$ contains the module $\Phi_{0, \dots, 0,1}$ in its decomposition. \end{proof} 

Recall that a module is in $\operatorname{ker}(\Theta^*)$ if its dual module is in $\operatorname{coker}(\Theta_*) \cong \operatorname{im}(b)$. By Schur's Lemma, Proposition \ref{proposition:kernelThetaTau} will follow from the following lemma. 

\begin{lemma} \label{lemma:cokernelThetaTau} Consider the composition of $\operatorname{SL}_g(\Q)$ maps $$\tau^{\Q} \circ b: \bigwedge^2 W_{\Q} \rightarrow (\bigwedge^3 H_{\Q})_{\mathcal{H}_B \mathcal{I}_g^1}.$$  When $g \geq 3$, the image of $\tau^{\Q} \circ b$ contains the irreducible $\operatorname{SL}_g(\Q)$ representation $\Phi_{0, \dots, 0,1}$. 
\end{lemma}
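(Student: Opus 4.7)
\medskip

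The plan is to exhibit an explicit element of $\bigwedge^2 W_{\Q}$ whose image under $\tau^{\Q} \circ b$ maps nontrivially onto the $V^*_{\Q} \cong \Phi_{0,\dots,0,1}$ summand identified in Lemma~\ref{lemma:coinvariants}, then invoke Schur's lemma. Concretely, this means pairing a class coming from $\mathcal{HI}_g^1$ (living in the $(\bigwedge^2 V)\otimes V^*$ part of $W$) with a disk twist (living in the $\operatorname{Sym}^2(V^*)$ part of $W$), and computing $\tau$ on the resulting commutator.

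For the first factor I would take a genus-$1$ bounding pair annulus twist $\varphi = T_{C_1} T_{C_2}^{-1} \in \mathcal{HI}_g^1$ chosen (this is possible for $g\geq 3$ by Theorem~\ref{theorem:Omori} and the formula in Section~\ref{subsection:firstjohnsonTorelli}) so that
\[
\tau(\varphi) = a_1 \wedge a_2 \wedge b_2;
\]
for the second factor I would take the meridian Dehn twist $\psi = T_{\mathfrak{b}_1}$, so that by Section~\ref{subsection:hbipsi} we have $\Psi(\psi) = -b_1\otimes b_1\in\operatorname{Sym}^2(V^*)$ while $J(\psi)=0$. Then $\Theta(\varphi)\wedge\Theta(\psi)$ lives in the $\bigl((\bigwedge^2 V)\otimes V^*\bigr)\otimes \operatorname{Sym}^2(V^*)$ summand of $\bigwedge^2 W_{\Q}$, and because both summands of $W$ are abelian, the commutator $[\varphi,\psi]$ lies in $\ker(\Theta)\subset \mathcal{HI}_g^1\subset \mathcal{I}_g^1$, so that $\tau([\varphi,\psi])$ is defined.

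The key computation is to apply the $\Mod_g^1$-equivariance of $\tau$: since $\mathcal{I}_g^1$ is normal in $\Mod_g^1$, we may write $[\varphi,\psi] = \varphi\cdot (\psi\varphi^{-1}\psi^{-1})$ as a product of two elements of $\mathcal{I}_g^1$ and obtain
\[
\tau([\varphi,\psi]) \;=\; \tau(\varphi) - \tau(\psi\varphi\psi^{-1}) \;=\; (1-\psi_*)\,\tau(\varphi),
\]
where $\psi_*\in\operatorname{Sp}_{2g}(\Z)$ acts on $H$ by $a_1\mapsto a_1+b_1$ and fixes the other basis vectors. Plugging in $\tau(\varphi) = a_1\wedge a_2 \wedge b_2$ yields $\tau([\varphi,\psi]) = -b_1\wedge a_2\wedge b_2$ in $\bigwedge^3 H_{\Q}$.

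Finally, I would push this class through the $\mathcal{H}_B\mathcal{I}_g^1$-equivariant map $\bigwedge^3 H_{\Q}\xrightarrow{C_3} H_{\Q}\twoheadrightarrow V^*_{\Q}$ used in the proof of Lemma~\ref{lemma:coinvariants}. A direct application of the contraction formula from Section~\ref{section:reptheorymaps} gives $C_3(-b_1\wedge a_2\wedge b_2) = -b_1$, which projects to the nonzero vector $-b_1\in V^*_{\Q}\cong \Phi_{0,\dots,0,1}$. Thus $\tau^{\Q}\circ b\bigl(\Theta(\varphi)\wedge \Theta(\psi)\bigr)$ has nonzero image in the $\Phi_{0,\dots,0,1}$-quotient of $(\bigwedge^3 H_{\Q})_{\mathcal{H}_B\mathcal{I}_g^1}$; by $\operatorname{SL}_g(\Q)$-equivariance and semisimplicity, the image of $\tau^{\Q}\circ b$ must contain the full irreducible $\Phi_{0,\dots,0,1}$, as required. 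The main obstacle is the setup rather than the computation: one must select an explicit $\varphi\in \mathcal{HI}_g^1$ realizing a chosen $\tau$-value of the form $a_1\wedge a_2\wedge b_2$ (guaranteed by Omori's normal generation theorem and the bounding-pair formula), and be careful that the commutator genuinely lies in $\ker(\Theta)$ so that the restricted $\tau$ is available.
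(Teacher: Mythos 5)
Your proposal follows the paper's proof almost exactly: pick a bounding pair annulus twist in $\mathcal{HI}_g^1$ together with a meridian disk twist, note that the commutator lands in $\ker(\Theta)$, compute $\tau$ of the commutator via the identity $\tau([\varphi,\psi]) = (1-\psi_*)\tau(\varphi)$, and then apply $C_3$ and Schur's lemma to locate $\Phi_{0,\dots,0,1}$. The paper makes the specific choices $\tau(T_xT_{x'}^{-1}) = a_1\wedge b_1\wedge a_2$ and $\psi = T_{\mathfrak{b}_2}$, arriving at $a_1\wedge b_1\wedge b_2$, and then applies $-E_{2g}\circ C_3$ to produce the explicit highest weight vector $b_g$; you instead take $\tau(\varphi)=a_1\wedge a_2\wedge b_2$ and $\psi=T_{\mathfrak{b}_1}$, get $-b_1\wedge a_2\wedge b_2$, contract to $-b_1$, and invoke Schur. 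These differences are cosmetic; it is the same argument.

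That said, there is a step that I would flag for caution, both in your write-up and in the paper. You push the commutator's $\tau$-value through ``the $\mathcal{H}_B\mathcal{I}_g^1$-equivariant map $\bigwedge^3 H_\Q\xrightarrow{C_3} H_\Q\twoheadrightarrow V^*_\Q$.'' But the splitting projection $H_\Q\twoheadrightarrow V^*_\Q$ that kills $V_\Q = \langle a_1,\dots,a_g\rangle$ is not $\mathcal{H}_B\mathcal{I}_g^1$-equivariant: an element acting as $\begin{bmatrix} I & 0 \\ C & I\end{bmatrix}$ sends $a_i\mapsto a_i + \sum_j C_{ji}b_j$, so the subspace $V_\Q$ is not preserved, and the projection does not commute with the action. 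Only the \emph{inclusion} $V^*_\Q\hookrightarrow H_\Q$ and the \emph{quotient} $H_\Q\twoheadrightarrow H_\Q/V^*_\Q\cong V_\Q$ are equivariant. In fact one can check directly that the $\Phi_{0,\dots,0,1}$-isotypic piece of $\bigwedge^3 H_\Q$ lies in the augmentation ideal of the $\mathcal{H}_B\mathcal{I}_g^1$-action: its highest weight vector is $v_0 = \sum_{i<g} a_i\wedge b_i\wedge b_g$, and each summand satisfies $a_i\wedge b_i\wedge b_g = X_{E_{ig}+E_{gi}}(a_i\wedge a_g\wedge b_g)$, where $X_C$ is the Lie algebra element of $\operatorname{Sym}^2(V^*)$ with $X_C(a_i)=b_g$, $X_C(a_g)=b_i$. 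Hence $v_0\in N:=\langle gv-v\rangle$ and the class of $v_0$ vanishes in $(\bigwedge^3 H_\Q)_{\mathcal{H}_B\mathcal{I}_g^1}$. This calls into question the conclusion of Lemma~\ref{lemma:coinvariants} (and therefore the present lemma as literally stated), and it affects the paper's argument as much as yours; I would double-check the conventions for the symplectic representation and, if the issue persists, look for a different composed map (perhaps one landing in the quotient $V_\Q$ rather than $V^*_\Q$, or using a different quotient of $\ker(\Theta)$) to detect the module in $\operatorname{coker}(\Theta_*)$.
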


\begin{proof}[Proof of Lemma \ref{lemma:cokernelThetaTau}]

Consider the following bounding pair annulus twist $T_x T_{x'}^{-1}$ and the disk twist $T_{\mathfrak{b}_2}$ in $\mathcal{H}_B \mathcal{I}_g^1$.

\centerline{\includegraphics[scale=.5]{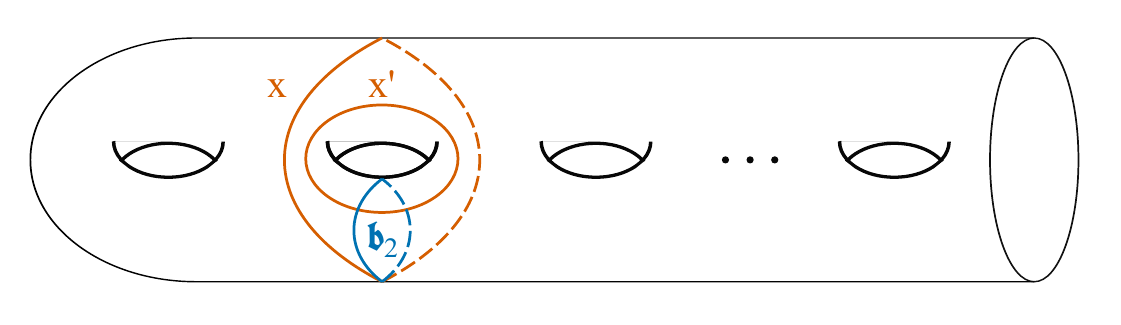}}

\noindent Note that $T_x T_{x'}^{-1}$ acts trivially on surface homology $H$, so $\Theta^{\Q}(T_x T_{x'}^{-1})$ lies entirely in the $(\bigwedge^2 V_{\Q}) \otimes V_{\Q}^*$ summand of $W_{\Q}$. On the other hand, $T_{\mathfrak{b}_2}$ acts trivially on $\pi_1(\mathcal{V}_g^1)$ and so $\Theta^{\Q}(T_{\mathfrak{b}_2})$ lies entirely in the $\operatorname{Sym}^2(V_{\Q}^*)$ summand of $W_{\Q}$. Hence, $$\Theta^{\Q}(T_x T_{x'}^{-1}) \wedge \Theta^{\Q}(T_{\mathfrak{b}_2}) \in ((\bigwedge^2 V_{\Q}) \otimes V_{\Q}^*) \otimes \operatorname{Sym}^2(V_{\Q}^*) \subset \bigwedge^2 W_{\Q}.$$

We want to understand the $\operatorname{SL}_g(\Q)$ module $\operatorname{im}(\tau^{\Q} \circ b)$ as a quotient of $((\bigwedge^2 V_{\Q}) \otimes V_{\Q}^*) \otimes \operatorname{Sym}^2(V_{\Q}^*)$. To do this, we calculate $(\tau^{\Q} \circ b)(\Theta^{\Q}(T_x T_{x'}^{-1}) \wedge \Theta^{\Q}(T_{\mathfrak{b}_2}))$. Then we use the fact that $\operatorname{SL}_g(\Q)$ representations are in natural bijection with $\mathfrak{sl}_g(\Q)$ representations, and we find a highest weight vector for $\Phi_{0, \dots, 0,1}$ in the $\mathfrak{sl}_g(\Q)$-orbit of $(\tau^{\Q} \circ b)(\Theta^{\Q}(T_x T_{x'}^{-1}) \wedge \Theta^{\Q}(T_{\mathfrak{b}_2}))$. \\

First we apply the map $b$ $$b(\Theta^{\Q}(T_x T_{x'}^{-1}) \wedge \Theta^{\Q}(T_{\mathfrak{b}_2})) = \overline{[T_x T_{x'}^{-1},T_{\mathfrak{b}_2}]}.$$ To compute $\tau^{\Q}(\overline{[T_x T_{x'}^{-1},T_{\mathfrak{b}_2}]})$, we use the following naturality property of $\tau^{\Q}$. If $h \in \mathcal{H}_g^1$ and $k \in \operatorname{ker}(\Theta)$, then $$\tau^{\Q}(hkh^{-1}) = h_*\tau^{\Q}(k).$$ See \cite[Equation 6.1]{Primer}. So then we have \begin{align*} & \tau^{\Q}([T_x T_{x'}^{-1},T_{\mathfrak{b}_2}]) \\ &= \tau^{\Q}(T_x T_{x'}^{-1}) + (T_{\mathfrak{b}_2})_*\tau^{\Q}((T_x T_{x'}^{-1})^{-1}) \\ &= \tau^{\Q}(T_x T_{x'}^{-1}) - (T_{\mathfrak{b}_2})_*\tau^{\Q}(T_x T_{x'}^{-1}). \end{align*}

Noting that the curve $x$ represents $a_2$ in homology and recalling from Section \ref{subsection:firstjohnsonIA} how $\tau$ evaluates on bounding pair maps, we calculate $$\tau^{\Q}(T_x T_{x'}^{-1}) = a_1 \wedge b_1 \wedge a_2.$$ Since $T_{\mathfrak{b}_2}$ is a disk twist around the curve $\mathfrak{b}_2$ whose homology class is $b_2$, the map $T_{\mathfrak{b}_2}$ acts on $H$ by sending $$(T_{\mathfrak{b}_2})_*(a_2) = a_2 + b_2$$ and fixing all other homology generators. Thus we have $$(T_{\mathfrak{b}_2})_*(a_1 \wedge b_1 \wedge a_2) = a_1 \wedge b_1 \wedge a_2 + a_1 \wedge b_1 \wedge b_2.$$ It follows then that $\tau^{\Q}([T_x T_{x'}^{-1},T_{\mathfrak{b}_2}]) = a_1 \wedge b_1 \wedge b_2$.

Recall that we defined highest weight vectors and the $\mathfrak{sl}_g(\Q)$ maps $E_{ij}$ and $C_k$ in Section \ref{section:reptheory}, and we provide more details and examples in the Appendix. We calculate that $$-E_{2g} \circ C_3 (a_1 \wedge b_1 \wedge b_2) = -E_{2g}(b_2) = b_g$$ is a highest weight vector for $\Phi_{0, \dots, 0,1}$ for all $g \geq 3$. By Lemma \ref{lemma:coinvariants}, this module is nonzero after taking coinvariants. Since we found a highest weight vector from $\Phi_{0, \dots, 0,1}$ in the image $\operatorname{im}(\tau^{\Q} \circ b)$, all of $\Phi_{0, \dots, 0,1}$ must lie in $\operatorname{im}(\tau^{\Q} \circ b)$.  \end{proof}

\begin{remark} \label{remark:twocopies01}
    When $g=3$, there are two copies of the module $\Phi_{0,1}$ in $\bigwedge^2 W_{\Q}$ because $\Phi_{0,1,0, \dots, 0} \cong \bigwedge^2 V_{\Q}$ and $\Phi_{0, \dots, 0,1} \cong V_{\Q}^*$ both have weight $(0,1)$.
    
    In Section \ref{subsection:lowerboundthetaJ2}, we detected the module $\Phi_{0,1}$ in $\operatorname{coker}(\Theta_*)$ by applying $\operatorname{SL}_3(\Q)$ maps to the vector $J_2^{\Q}(\overline{[h,k]})$. Since $\overline{[h,k]} \in \operatorname{ker}(\tau^{\Q})$, we detected a second copy of $\Phi_{0,1}$ in $\operatorname{coker}(\Theta_*)$ in this section.
\end{remark}

Propositions \ref{proposition:kernelj2} and \ref{proposition:kernelThetaTau} give us Proposition \ref{proposition:lowerboundHBI}.

\section{Upper bound on $\operatorname{ker}(\Theta^*)$ for $\mathcal{H}_B \mathcal{I}_g^1$} \label{section:upperboundtheta}

If a module is in the image of the induced map on homology $$\Theta_*: H_2(\mathcal{H}_B \mathcal{I}_g^1; \Q) \rightarrow H_2(W; \Q),$$ then its dual is not in $\operatorname{ker}(\Theta^*)$. In this section, we finish the proof of Theorem \ref{theorem:thetaopen} by showing that $\operatorname{im}(\Theta_*)$ contains all modules in the decomposition of $H_2(W; \Q)$ except for the duals of: \begin{itemize}
    \item the modules we found in $\operatorname{ker}(\Theta^*)$ in Section \ref{section:lowerboundtheta}; and
    \item the module \[ \begin{cases}
    \Phi_{0,2} & g=3 \\
    \Phi_{0,1,0, \dots, 0,1} & g \geq 4
\end{cases}, \] which we conjecture is in $\operatorname{ker}(\Theta^*)$.
\end{itemize} Following the methods of \cite{Hain}, \cite{Sakasai1}, and \cite{Brendle}, we use abelian cycles to find vectors in $\operatorname{im}(\Theta_*)$.

\subsection{Abelian cycles} \label{subsection:abeliancycles} If $f,h \in \mathcal{H}_B \mathcal{I}_g^1$ commute, there is a map $i: \Z^2 \rightarrow \mathcal{H}_B \mathcal{I}_g^1$ sending the standard generators of $\Z^2$ to $f$ and $h$. This induces a map on homology $$i_*: H_2(\Z^2; \Q) \rightarrow H_2(\mathcal{H}_B \mathcal{I}_g^1; \Q).$$ The image of the fundamental class $1$ of $H_2(\Z^2; \Q) \cong \Q$ is the \textit{abelian cycle} $$\{ f,h \} := i_*(1).$$ The upshot is that commuting elements of $\mathcal{H}_B \mathcal{I}_g^1$ give us (possibly trivial) elements of $H_2(\mathcal{H}_B \mathcal{I}_g^1; \Q)$. To find elements in $\operatorname{im}(\Theta_*)$, we apply $\Theta_*$ to abelian cycles. By \cite[Lemma 2.1]{Sakasai1}, this calculation is $$\Theta_*(\{f,h \}) = \Theta^{\Q}(f) \wedge \Theta^{\Q}(h) \in \bigwedge^2 W_{\Q} \cong H_2(W; \Q).$$
 
\subsection{Module decomposition} We decompose $\bigwedge^2 W_{\Q} \cong H_2(W; \Q)$ as an $\operatorname{SL}_g(\Q)$ representation. We have $$H_2(W; \Q) \cong \bigwedge^2 [((\bigwedge^2 V_{\Q}) \otimes V_{\Q}^*) \oplus \operatorname{Sym}^2(V_{\Q}^*)],$$ which expands as $$\bigwedge^2 ((\bigwedge^2 V_{\Q}) \otimes V_{\Q}^*) \oplus [((\bigwedge^2 V_{\Q}) \otimes V_{\Q}^*) \otimes \operatorname{Sym}^2(V_{\Q}^*)] \oplus \bigwedge^2 \operatorname{Sym}^2(V_{\Q}^*).$$ Each of these three pieces decomposes further into irreducible $\operatorname{SL}_g(\Q)$ representations, and we address each piece separately in Sections \ref{subsection:wedge2imJ}, \ref{subsection:imJtensorimPsi}, and \ref{subsection:wedge2sym2}. 

\subsection{The module $\boldsymbol{\bigwedge^2 ((\bigwedge^2 V_{\Q}) \otimes V_{\Q}^*)}$} \label{subsection:wedge2imJ} In this section, we prove the following proposition.

\begin{proposition} \label{proposition:wedge2J}
    When $g \geq 3$, the modules \[ \begin{cases} 
      \Phi_{1,0} \oplus \Phi_{2,1}  & g = 3 \\
      \Phi_{0, \dots, 0, 1,0} \oplus \Phi_{1,0, \dots, 0,1,1} & g \geq 4
   \end{cases}
\] are the only modules in $\bigwedge^2 ((\bigwedge^2 V_{\Q}) \otimes V_{\Q}^*)^* \subset H^2(W; \Q)$ that lie in $\operatorname{ker}(\Theta^*)$.
\end{proposition}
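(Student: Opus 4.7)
The strategy is to transport Pettet's analogous result for $IA_g$ \cite[Theorem 1.2]{Pettet} — already quoted in the remark after Proposition \ref{proposition:kernelj2} — to $\mathcal{H}_B \mathcal{I}_g^1$ using the abelian cycle method from Section \ref{subsection:abeliancycles}. The key observation is that since $\mathcal{HI}_g^1$ acts trivially on $H$, we have $\Psi(\mathcal{HI}_g^1) = 0$, so the restriction $\Theta|_{\mathcal{HI}_g^1}$ lands entirely in the summand $(\bigwedge^2 V) \otimes V^* \subset W$ and factors as $J \circ \alpha$, where $\alpha: \mathcal{HI}_g^1 \twoheadrightarrow IA_g$ is the surjection from Section \ref{subsection:actiononpi1}. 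Consequently, for any commuting pair $\tilde{f}, \tilde{h} \in \mathcal{HI}_g^1$, the abelian cycle image is
\[
\Theta_*(\{\tilde{f}, \tilde{h}\}) = J(\alpha(\tilde{f})) \wedge J(\alpha(\tilde{h})) \in \bigwedge^2\bigl((\bigwedge^2 V_{\Q}) \otimes V_{\Q}^*\bigr) \subset H_2(W; \Q),
\]
and this vector realizes a concrete element of $\operatorname{im}(\Theta_*)$.

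First, I would decompose $\bigwedge^2((\bigwedge^2 V_{\Q}) \otimes V_{\Q}^*)$ into irreducible $\operatorname{SL}_g(\Q)$-representations (the computation is done in Pettet, and can be cross-checked with LiE as described in the Appendix). Pettet's theorem then says that precisely the two irreducible modules appearing in Proposition \ref{proposition:kernelj2} fail to be hit (in their dual form) by $J_*: H_2(IA_g; \Q) \to H_2((\bigwedge^2 V) \otimes V^*; \Q)$. For every other irreducible summand $\Phi$, Pettet exhibits an explicit commuting pair $f_\Phi, h_\Phi \in IA_g$, constructed from the Magnus generators $C_{ij}$ and $M_{ijk}$, whose abelian cycle image $J(f_\Phi) \wedge J(h_\Phi)$ is a highest weight vector of $\Phi$.

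The bulk of the work is to show that each such Pettet pair admits commuting lifts $\tilde{f}_\Phi, \tilde{h}_\Phi \in \mathcal{HI}_g^1$. The generators $C_{ij}$ and $M_{ijk}$ are realized geometrically by bounding pair maps $T_x T_y^{-1}$ on disjoint nonseparating simple closed curves in $\Sigma_g^1$ that cobound an annulus in $\mathcal{V}_g^1$ (see Sections \ref{subsection:firstjohnsonTorelli} and \ref{subsection:actiononpi1}, and note the compatibility $\tau = J \circ \alpha$ on such elements). Pettet's pairs commute because the Magnus combinatorics involve disjoint index sets; this disjointness can be realized surface-geometrically by supporting the two bounding pair maps in disjoint subsurfaces of $\Sigma_g^1$. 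Mapping classes supported on disjoint subsurfaces commute, so such lifts $\tilde{f}_\Phi, \tilde{h}_\Phi$ exist in $\mathcal{HI}_g^1 \subset \mathcal{H}_B \mathcal{I}_g^1$ and satisfy $\Theta_*(\{\tilde{f}_\Phi, \tilde{h}_\Phi\}) = J(f_\Phi) \wedge J(h_\Phi)$. Hence every $\Phi$ outside Pettet's kernel list sits inside $\operatorname{im}(\Theta_*)$, so its dual $\Phi^*$ is excluded from $\operatorname{ker}(\Theta^*)$, proving the proposition.

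The main obstacle is the case-by-case verification that each of Pettet's commuting pairs admits disjoint geometric support on $\Sigma_g^1$ — in particular that the $g=3$ cases $\Phi_{1,0}$ and $\Phi_{2,1}$, where the handles are few and the combinatorics tight, still allow such a realization. One may need to replace a Magnus generator by an equivalent bounding pair map whose support avoids the other element's curves, using the handleswap action of $\mathcal{H}_g^1$ (Section \ref{section:reptheorymaps}) to rearrange handles without changing the $\operatorname{SL}_g(\Q)$-orbit of the resulting highest weight vector.
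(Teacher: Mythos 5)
Your overall strategy — transport Pettet's result to $\mathcal{H}_B \mathcal{I}_g^1$ via abelian cycles, using the fact that $\Theta|_{\mathcal{HI}_g^1}$ factors through $J \circ \alpha$ and lands in $(\bigwedge^2 V) \otimes V^*$ — is exactly the approach the paper takes, and the basic reduction is correct. However, the proposal has two genuine problems.

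First, the mechanism you propose for commutativity is wrong in a case that matters. You claim ``Pettet's pairs commute because the Magnus combinatorics involve disjoint index sets,'' but the pair used to hit $v_1$ in the paper is $C_{21}$ and $C_{32}C_{31}$ — both involve indices $1,2,3$, so they have \emph{overlapping} index support. They commute in $IA_g$ for a different reason (conjugation by distinct letters), and their lifts commute in $\mathcal{HI}_g^1$ because one can find bounding pair maps with disjoint \emph{surface} supports, which is a stronger and geometrically nonobvious fact. In the $g=3$ case there is only $v_1$ to worry about, and $v_1$ is precisely the case where disjoint indices fail, so your proposed mechanism collapses exactly where it is needed most.

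Second, the entire content of the paper's proof is the explicit construction of the commuting lifts — specific bounding pair annulus twists $T_x T_{x'}^{-1}$, $T_y T_{y'}^{-1}$, $T_w T_{w'}^{-1}$, $T_z T_{z'}^{-1}$ and a map $f$ acting via $[C_{34},C_{35}]$, drawn in $\Sigma_g^1$ with visibly disjoint supports and verified to cobound annuli in $\mathcal{V}_g^1$. Your proposal replaces this with ``such lifts exist,'' which is the statement to be proved, not an argument. There is a real obstruction to hand-waving here: the kernel of $\alpha: \mathcal{HI}_g^1 \to IA_g$ is the twist group, so a pair commuting in $IA_g$ need not lift to a commuting pair in $\mathcal{HI}_g^1$; commutativity must be arranged geometrically. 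Also, your worry about ``realizing $\Phi_{1,0}$ and $\Phi_{2,1}$ when $g=3$'' is misdirected: those two modules \emph{are} the kernel, i.e., the ones you want \emph{not} to realize. The delicate realization problems are for the other summands.
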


To prove Proposition \ref{proposition:wedge2J}, we decompose $\bigwedge^2 ((\bigwedge^2 V_{\Q}) \otimes V_{\Q}^*) \subset H_2(W; \Q)$ into a direct sum of irreducible $\operatorname{SL}_g(\Q)$ modules and find all modules except for \[ \begin{cases} 
      \Phi_{0,1} \oplus \Phi_{1,2}  & g = 3 \\
      \Phi_{0,1,0, \dots, 0} \oplus \Phi_{1,1,0, \dots, 0,1} & g \geq 4 
   \end{cases} \] (the duals of those in the statement of Proposition \ref{proposition:wedge2J}) in the image of the induced map on homology $$\Theta_*: H_2(\mathcal{H}_B \mathcal{I}_g^1; \Q) \rightarrow H_2(W; \Q).$$ When $g=3$, we have the decomposition $$\bigwedge^2 ((\bigwedge^2 V_{\Q}) \otimes V_{\Q}^*) \cong 2 \Phi_{0,1} \oplus 2 \Phi_{1,2}.$$ Tables \ref{table:bigwedge2Jgenus4}, \ref{table:bigwedge2Jgenus5}, \ref{table:bigwedge2Jgenus6}, and \ref{table:bigwedge2Jgenus7} contain decompositions of $\bigwedge^2 ((\bigwedge^2 V_{\Q}) \otimes V_{\Q}^*) \subset H_2(W; \Q)$ for all $g \geq 4$.

\begin{center}
\begin{minipage}{0.45\textwidth}
\centering
\begin{tabular}{|c|c|}
\hline
\textbf{Highest Weight} & \textbf{Multiplicity} \\
\hline
$(0,0,2)$ & 1 \\
$(0,1,0)$ & 3 \\
$(0,3,0)$ & 1 \\
$(1,0,3)$ & 1 \\
$(1,1,1)$ & 2 \\
\hline
\end{tabular}
\captionof{table}{$g = 4$}
\label{table:bigwedge2Jgenus4}
\end{minipage}
\hspace{0.05\textwidth}
\begin{minipage}{0.45\textwidth}
\centering
\begin{tabular}{|c|c|}
\hline
\textbf{Highest Weight} & \textbf{Multiplicity} \\
\hline
$(0,0,1,1)$ & 2 \\
$(0,1,0,0)$ & 3 \\
$(0,2,1,0)$ & 1 \\
$(1,0,1,2)$ & 1 \\
$(1,1,0,1)$ & 2 \\
\hline
\end{tabular}
\captionof{table}{$g = 5$}
\label{table:bigwedge2Jgenus5}
\end{minipage}

\vspace{1em}

\begin{minipage}{0.45\textwidth}
\centering
\begin{tabular}{|c|c|}
\hline
\textbf{Highest Weight} & \textbf{Multiplicity} \\
\hline
$(0,0,0,2,0)$ & 1 \\
$(0,0,1,0,1)$ & 2 \\
$(0,1,0,0,0)$ & 3 \\
$(0,2,0,1,0)$ & 1 \\
$(1,0,1,0,2)$ & 1 \\
$(1,1,0,0,1)$ & 2 \\
\hline
\end{tabular}
\captionof{table}{$g = 6$}
\label{table:bigwedge2Jgenus6}
\end{minipage}
\hspace{0.05\textwidth}
\begin{minipage}{0.45\textwidth}
\centering
\begin{tabular}{|c|c|}
\hline
\textbf{Highest Weight} & \textbf{Multiplicity} \\
\hline
$(0,0,0,1,0,\dots,0,1,0)$ & 1 \\
$(0,0,1,0,\dots,0,1)$ & 2 \\
$(0,1,0,\dots,0)$ & 3 \\
$(0,2,0,\dots,0,1,0)$ & 1 \\
$(1,0,1,0,\dots,0,2)$ & 1 \\
$(1,1,0,\dots,0,1)$ & 2 \\
\hline
\end{tabular}
\captionof{table}{$g \geq 7$}
\label{table:bigwedge2Jgenus7}
\end{minipage}
\end{center}

To introduce a lemma, we define the following vectors in $\bigwedge^2 ((\bigwedge^2 V_{\Q}) \otimes V_{\Q}^*)$:

\begin{itemize}
    \item $v_1 := [(a_1 \wedge a_2) \otimes b_2 + (a_1 \wedge a_g) \otimes b_g] \wedge [(a_2 \wedge a_g) \otimes b_g]$
    \item $v_2 := [(a_1 \wedge a_3) \otimes b_3] \wedge [(a_2 \wedge a_g) \otimes b_g]$
    \item $v_3 := [(a_1 \wedge a_2) \otimes b_g] \wedge [(a_3 \wedge a_{g-1}) \otimes b_{g-1}]$
\end{itemize}

\begin{lemma}[{Pettet, \cite[Section 3.3]{Pettet}}] \label{lemma:pettet} When $g=3$, every module in $\bigwedge^2 ((\bigwedge^2 V_{\Q}) \otimes V_{\Q}^*)$ except for $\Phi_{0,1} \oplus \Phi_{1,2}$ is in the $\operatorname{SL}_g(\Q)$ orbit of $v_1$.

When $g=4$, every module in $\bigwedge^2 ((\bigwedge^2 V_{\Q}) \otimes V_{\Q}^*)$ except for $\Phi_{0,1,0} \oplus \Phi_{1,1,1}$ is in the $\operatorname{SL}_g(\Q)$ orbit of $v_1$ and $v_2$.

When $g \geq 5$, every module in $\bigwedge^2 ((\bigwedge^2 V_{\Q}) \otimes V_{\Q}^*)$ except for $\Phi_{0,1,0, \dots, 0} \oplus \Phi_{1,1,0, \dots, 0,1}$ is in the $\operatorname{SL}_g(\Q)$ orbit of $v_1$ and $v_2$ and $v_3$.
\end{lemma}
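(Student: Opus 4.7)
The plan is to prove the lemma by explicit highest weight vector computation, exploiting the bijection between finite dimensional $\operatorname{SL}_g(\Q)$ representations and $\mathfrak{sl}_g(\Q)$ representations. For each irreducible $\operatorname{SL}_g(\Q)$ subrepresentation $\Phi_{w_1, \dots, w_{g-1}}$ of $\bigwedge^2 ((\bigwedge^2 V_{\Q}) \otimes V_{\Q}^*)$ listed in Tables~\ref{table:bigwedge2Jgenus4}--\ref{table:bigwedge2Jgenus7} (and the analogous $g=3$ decomposition $2\Phi_{0,1} \oplus 2\Phi_{1,2}$) other than those excluded in the statement, I will exhibit a highest weight vector for $\Phi_{w_1, \dots, w_{g-1}}$ that lies in the $\operatorname{SL}_g(\Q)$ orbit of $v_1$, $v_2$, or $v_3$. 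By Schur's lemma, producing a single nonzero highest weight vector of type $(w_1, \dots, w_{g-1})$ in the orbit forces the whole isotypic contribution of that weight from the orbit to contain a copy of $\Phi_{w_1,\dots,w_{g-1}}$, which is all that is required.

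Concretely, I would first note that applying elements of $\operatorname{SL}_g(\Q)$ to a vector includes (i) the handleswap permutations of the indices $1, \dots, g$ of the $a_i$'s and simultaneously $b_i$'s, and (ii) the infinitesimal operations given by the $E_{ij}$ from Section~\ref{section:reptheorymaps}, which act on $V_{\Q}$ and $V_{\Q}^*$ by the Leibniz rule through the tensor and exterior product structure. The plan is then, for each target weight $(w_1, \dots, w_{g-1})$, to write down an explicit candidate highest weight vector $u$ (constructed by symmetrizing/antisymmetrizing monomials in $a_i$'s and $b_i$'s in the style of Appendix~\ref{appendix:constructhwv}) and to check directly that $u$ is killed by every $E_{ij}$ with $i<j$ and that $u$ has the correct $\mathfrak{h}$-eigenvalue. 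Then I would produce $u$ (up to nonzero scalar) as a sum of handleswap-permuted images of a sequence of $E_{ij}$ applied to the appropriate $v_k$, using LiE-aided bookkeeping (Appendix~\ref{appendix:LiE}) to guide which lowering operators to apply.

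The subdivision into cases $g=3$, $g=4$, and $g\geq 5$ is dictated by the fact that the very small genera lack enough distinct indices to realize certain highest weight vectors from $v_1$ alone, so $v_2$ is needed to produce weights that require three distinct $a_i$'s, and $v_3$ is needed for weights of length (and in particular support) that only appears once $g\geq 5$; each additional $v_k$ adds exactly the weights that become newly accessible. The proof is essentially a parallel of Pettet's Lemma in \cite[Section 3.3]{Pettet}, and I would follow her choices of generators, so the main obstacle is not conceptual but bookkeeping: for $g\geq 7$ there are six distinct highest weights to hit, and for several of them one must carefully verify that the candidate highest weight vector is nonzero modulo the Plücker-type relations in the exterior square, rather than having cancellations from the antisymmetry of $\wedge^2((\bigwedge^2 V_{\Q})\otimes V_{\Q}^*)$. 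Organizing the verification by weight, and using the table of multiplicities to know in advance which modules must appear, keeps this manageable; the excluded modules $\Phi_{0,1,0, \dots, 0}$ and $\Phi_{1,1,0, \dots, 0,1}$ (respectively $\Phi_{0,1}$ and $\Phi_{1,2}$ when $g=3$) are the ones that Pettet's analysis shows have no such realization, which I would simply inherit.
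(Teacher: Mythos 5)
The paper itself does not prove Lemma~\ref{lemma:pettet}: it is stated as an attribution to \cite[Section~3.3]{Pettet} and used as a black box, with the discussion immediately after the statement pointing out that the excluded modules are exactly the duals of those found in $\operatorname{ker}(\Theta^*)$ via $J_2$. Your proposal is to reprove the lemma from scratch, which is a different route from the paper, although your methodology (exhibit highest weight vectors in the $\operatorname{SL}_g(\Q)$-span of $v_1,v_2,v_3$, using handleswaps and the $E_{ij}$) is indeed the correct general strategy for this type of computation and is presumably what Pettet does.

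There is, however, a genuine gap in the way you frame the verification. You write that ``producing a single nonzero highest weight vector of type $(w_1,\dots,w_{g-1})$ in the orbit \ldots is all that is required,'' but this is only true when the multiplicity you need to account for is one. Looking at Tables~\ref{table:bigwedge2Jgenus4}--\ref{table:bigwedge2Jgenus7}: for $g = 4$ the weight $(0,1,0)$ occurs with multiplicity $3$ in $\bigwedge^2((\bigwedge^2 V_{\Q})\otimes V_{\Q}^*)$, and the lemma excludes only one copy of $\Phi_{0,1,0}$, so the orbit must contain \emph{two} linearly independent copies of that isotype. The same issue persists for $g\geq 5$, where $(0,1,0,\dots,0)$ has multiplicity $3$ with one copy excluded, and $(1,1,0,\dots,0,1)$ has multiplicity $2$ with one copy excluded. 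For each such weight you must exhibit the correct number of linearly independent highest weight vectors in the orbit and actually verify linear independence --- for instance by applying an $\operatorname{SL}_g(\Q)$-equivariant map that kills one candidate and not the other, in the spirit of how the paper separates copies elsewhere using the projection $\pi$ to $\bigwedge^6 H_{\Q}$ or the contractions $C_k$. A single highest weight vector per weight will not close the argument. Secondary to this, since the lemma is cited rather than proved in the paper, a more economical course for the paper's purposes is simply to defer to Pettet; if you do reprove it, your proof needs the multiplicity bookkeeping made explicit.
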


The modules not included in the orbits in Lemma \ref{lemma:pettet} are exactly the duals of the modules we found in $\operatorname{ker}(\Theta^*)$ in Proposition \ref{proposition:kernelj2}. We are now ready to prove Proposition \ref{proposition:wedge2J}.

\begin{proof}[Proof of Proposition \ref{proposition:wedge2J}]

Since $\operatorname{im}(\tau_*)$ is closed under the action of $\operatorname{SL}_g(\Q)$, by Lemma \ref{lemma:pettet} it suffices to find the vectors \[ \begin{cases}
    v_1 & g=3 \\
    v_1, v_2 & g=4 \\
    v_1, v_2, v_3 & g \geq 5
\end{cases} \] in $\operatorname{im}(\Theta_*)$. We do this by applying the map $\Theta_*$ to abelian cycles. \\

When $g \geq 3$, we show that $v_1 \in \operatorname{im}(\Theta_*)$. Consider the commuting bounding pair annulus twists $T_x T_{x'}^{-1}$ and $T_y T_{y'}^{-1}$. 

\centerline{\includegraphics[scale=.5]{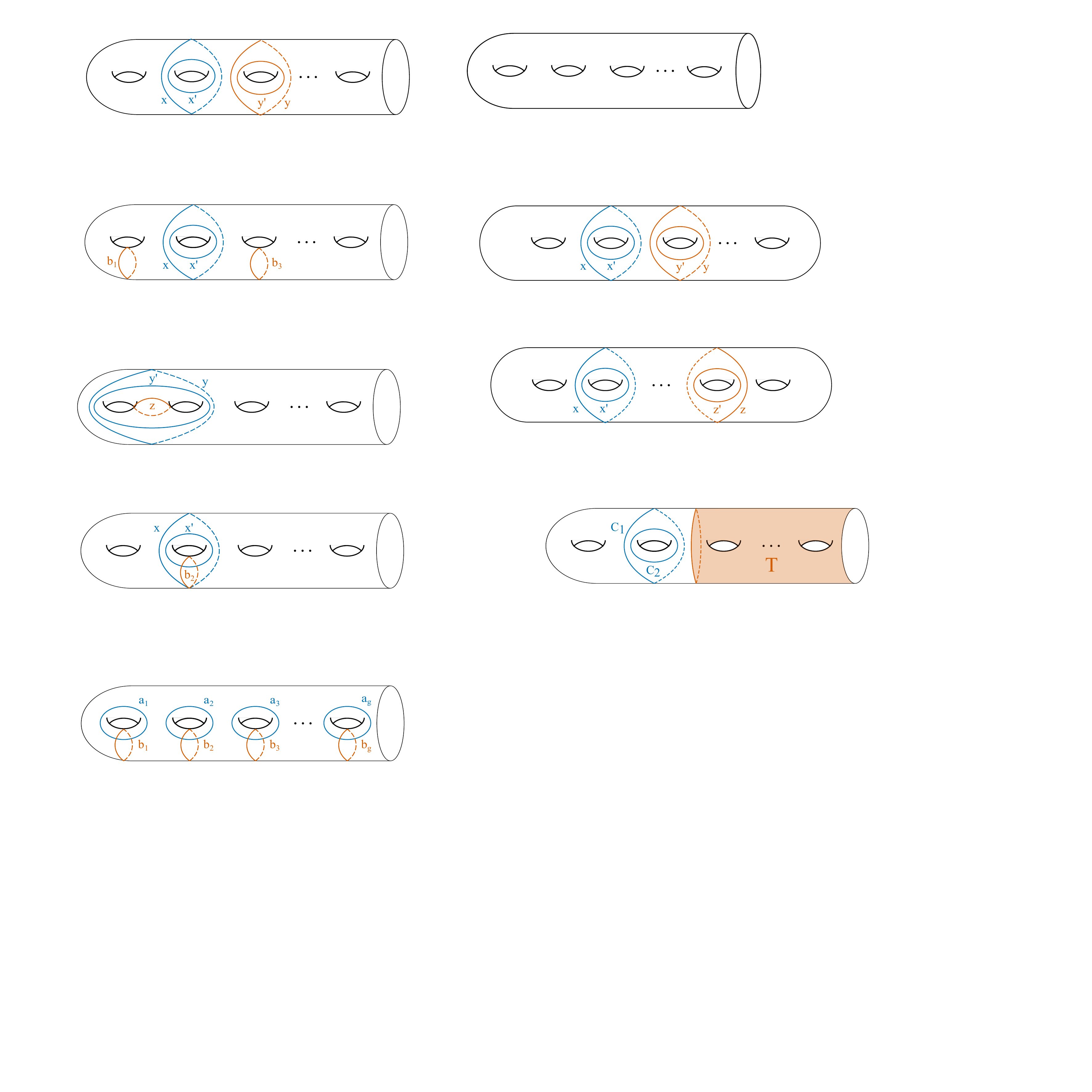}} \noindent 
Since $T_x T_{x'}^{-1}$ and $T_y T_{y'}^{-1}$ are bounding pair maps, they lie in the smaller handlebody Torelli group $\mathcal{HI}_g^1$ and the symplectic representation $\Psi$ kills both of them. It follows that $\Theta$ maps each to the $(\bigwedge^2 V) \otimes V^*$ summand of $W$. The map $T_x T_{x'}^{-1}$ acts on $\pi_1(\mathcal{V}_g^1)$ by conjugating $\alpha_2 \rightarrow \alpha_1 \alpha_2 \alpha_1^{-1}$ and fixing all other generators, acting via the element $C_{21} \in IA_g$. The map $T_y T_{y'}^{-1}$ conjugates $\alpha_1 \rightarrow \alpha_3 \alpha_1 \alpha_3^{-1}$ and $\alpha_2 \rightarrow \alpha_3 \alpha_2 \alpha_3^{-1}$, acting via the element $C_{32} C_{31} \in IA_g$. The Johnson homomorphism $J$ evaluates on these elements as $$J(T_x T_{x'}^{-1}) = -[(a_2 \wedge a_1) \otimes b_1]$$ and $$J(T_y T_{y'}^{-1}) = -[(a_3 \wedge a_2) \otimes b_2 + (a_3 \wedge a_1) \otimes b_1]$$ (recall Section \ref{subsection:firstjohnsonIA}). It follows that, as an element of $\bigwedge^2 ((\bigwedge^2 V_{\Q}) \otimes V_{\Q}^*) \subset H_2(W; \Q)$, the image of the abelian cycle $\{ T_x T_{x'}^{-1}, T_{y} T_{y'}^{-1} \}$ is $$\Theta_*(\{T_x T_{x'}^{-1}, T_y T_{y'}^{-1} \}) = [(a_2 \wedge a_1) \otimes b_1] \wedge [(a_3 \wedge a_2) \otimes b_2 + (a_3 \wedge a_1) \otimes b_1].$$ This differs from $v_1$ by the action of a handleswap in $\mathcal{H}_g^1$, which permutes the indices. Since $\operatorname{im}(\Theta_*)$ is preserved by the action of $\mathcal{H}_g^1$, we have shown that $v_1 \in \operatorname{im}(\Theta_*)$ when $g \geq 3$. \\

When $g \geq 4$, we show that $v_2 \in \operatorname{im}(\Theta_*)$. Consider the commuting bounding pair annulus twists $T_w T_{w'}^{-1}$ and $T_z T_{z'}^{-1}$. 

\centerline{\includegraphics[scale=.35]{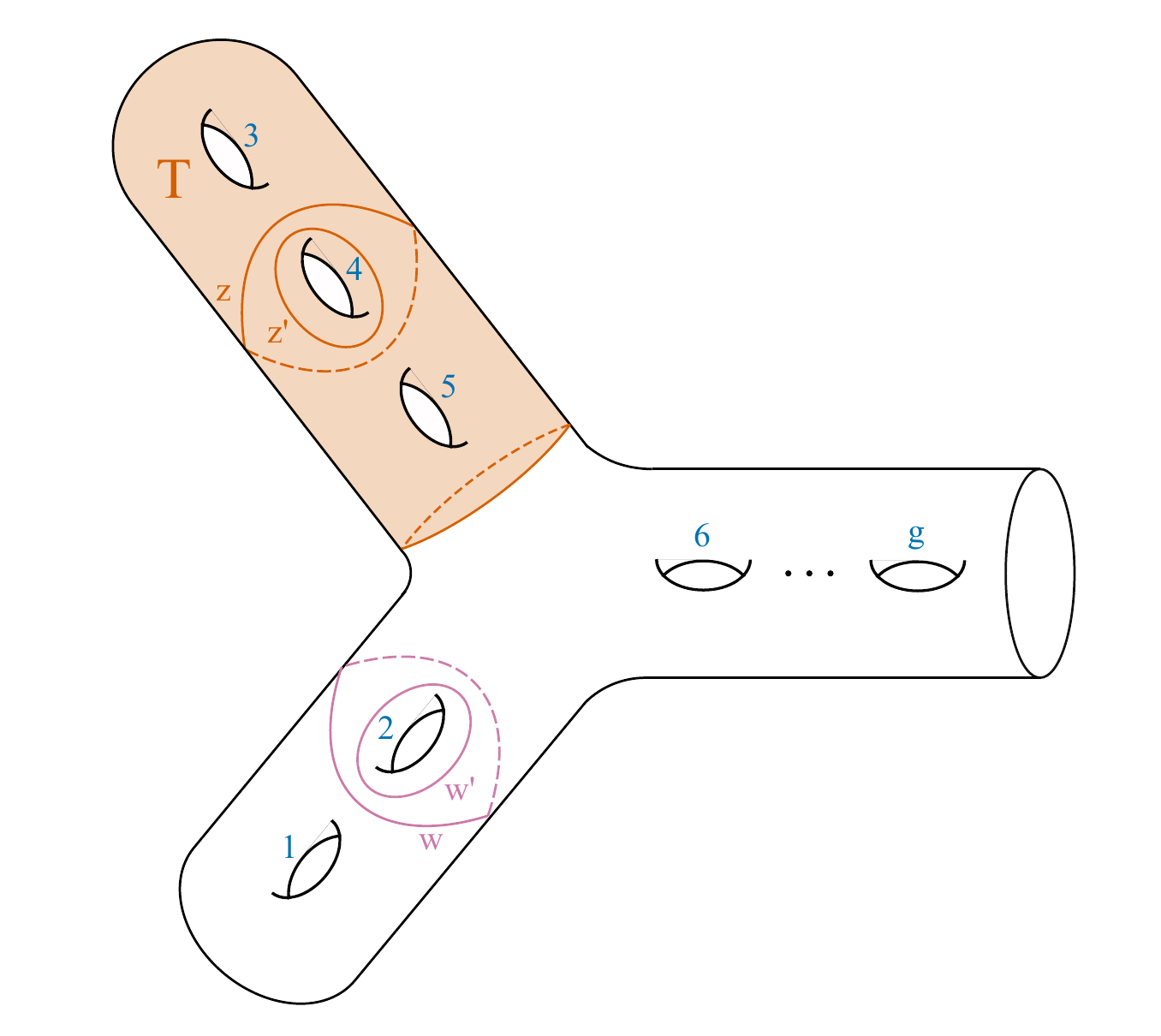}} \noindent These maps act on $\pi_1(\mathcal{V}_g^1)$ via the elements $C_{21}$ and $C_{43}$, respectively. As an element of $\bigwedge^2 ((\bigwedge^2 V_{\Q}) \otimes V_{\Q}^*) \subset H_2(W; \Q)$, the image of the abelian cycle $\{ T_w T_{w'}^{-1}, T_{z} T_{z'}^{-1} \}$ is $$\Theta_*(\{T_w T_{w'}^{-1}, T_z T_{z'}^{-1} \}) = [(a_2 \wedge a_1) \otimes b_1] \wedge [(a_4 \wedge a_3) \otimes b_3],$$ which differs from $v_2$ by the action of a handleswap in $\mathcal{H}_g^1$. \\

When $g \geq 5$, we show that $v_3 \in \operatorname{im}(\Theta_*)$. Following arguments as above, we can find bounding pair maps that act on $\pi_1(\mathcal{V}_g^1)$ via $C_{34}$ and $C_{35}$, and we can arrange for these maps to lie in the submanifold $T \subset \mathcal{V}_g^1$. The commutator $[C_{34}, C_{35}]$ acts on $\pi_1(\mathcal{V}_g^1)$ by sending $\alpha_3 \rightarrow \alpha_3 [\alpha_4, \alpha_5]$ and fixing all other generators. In the notation of Section \ref{subsection:autfn}, we have $[C_{34}, C_{35}] = M_{345}$ and so $$J([C_{34}, C_{35}]) = - (a_3 \wedge a_4) \otimes b_5.$$ Letting $f$ be an element of $\mathcal{HI}_g^1$ that is supported in $T$ (and so commutes with 
$T_w T_{w'}^{-1}$) and acts via $[C_{34}, C_{35}]$, we evaluate $$\Theta_*(\{ T_w T_{w'}^{-1}, f \}) = [(a_2 \wedge a_1) \otimes b_1] \wedge [(a_3 \wedge a_4) \otimes b_5].$$ This differs from $v_3$ by the action of a handleswap in $\mathcal{H}_g^1$. \end{proof}

\subsection{The module $\boldsymbol{((\bigwedge^2 V_{\Q}) \otimes V_{\Q}^*) \otimes \operatorname{Sym}^2(V_{\Q}^*)}$} \label{subsection:imJtensorimPsi} In this section, we prove the following proposition.

\begin{proposition} \label{proposition:imJtensorimPsi} In the $((\bigwedge^2 V_{\Q}) \otimes V_{\Q}^*)^* \otimes (\operatorname{Sym}^2(V_{\Q}^*))^*$ summand of $H^2(W; \Q)$, the modules \[ \begin{cases} 
      \Phi_{0,2} \oplus \Phi_{1,0} & g = 3 \\
      \Phi_{0,1,0, \dots, 0,1} \oplus \Phi_{1,0, \dots, 0} & g \geq 4
   \end{cases} \] are the only irreducible $\operatorname{SL}_g(\Q)$ modules that can be in $\operatorname{ker}(\Theta^*)$ when $g \geq 3$.
\end{proposition}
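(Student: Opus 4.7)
The plan is to show that every irreducible $\operatorname{SL}_g(\Q)$ summand in the decomposition of $((\bigwedge^2 V_\Q) \otimes V_\Q^*) \otimes \operatorname{Sym}^2(V_\Q^*) \subset \bigwedge^2 W_\Q$, other than the duals of the two exceptional modules in the statement, lies in $\operatorname{im}(\Theta_*)$. By Schur's lemma and duality, this rules every such module out of $\operatorname{ker}(\Theta^*)$, leaving only the two exceptional candidates. The first step is to decompose this tensor product into irreducible $\operatorname{SL}_g(\Q)$ modules using the Littlewood--Richardson rule (or LiE as in the appendix), producing tables of highest weights and multiplicities for the cases $g = 3$, $g = 4$, and $g \geq 5$.

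The abelian cycles I would use all have the form $\{f, T_{\mathfrak{b}_j}\}$, where $f \in \mathcal{HI}_g^1$ is a bounding pair annulus twist supported in a subsurface of $\Sigma_g^1$ disjoint from the meridian $\mathfrak{b}_j$. This disjointness ensures that $f$ and $T_{\mathfrak{b}_j}$ commute, and since $\Psi(f) = 0$ and $J(T_{\mathfrak{b}_j}) = 0$, the abelian cycle formula in Section~7.1 gives
\[
\Theta_*(\{f, T_{\mathfrak{b}_j}\}) \;=\; \Theta(f) \wedge \Theta(T_{\mathfrak{b}_j}),
\]
which lies entirely in the tensor product summand and, under the natural identification of that summand with $((\bigwedge^2 V_\Q) \otimes V_\Q^*) \otimes \operatorname{Sym}^2(V_\Q^*)$, equals $J(f) \otimes (-b_j \otimes b_j)$. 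By choosing several bounding pair configurations (with the indices of $J(f)$ equal to or disjoint from $j$) and exploiting the handleswap action of $\mathcal{H}_g^1$ to permute handle indices, I would produce a small explicit family of vectors in $\operatorname{im}(\Theta_*)$.

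The main work, analogous to Pettet's lemma in Section~7.3, is to verify that the $\operatorname{SL}_g(\Q)$-orbit of this family contains a highest weight vector for every non-exceptional irreducible module in the decomposition. For each such module $\Phi_{w_1,\dots,w_{g-1}}$, I would construct its highest weight vector explicitly as a linear combination of elements of the form $[(a_{i_1}\wedge a_{i_2})\otimes b_{i_3}] \otimes [b_j \otimes b_j]$, verify via the raising operators $E_{ij}$ from Section~4.1 that it is killed by them and has the correct weight, and exhibit it as a linear combination of $\operatorname{SL}_g(\Q)$-translates of the abelian cycle images. The $g = 3$ case must be treated separately due to the coincidence $\bigwedge^2 V_\Q \cong V_\Q^*$, which alters the tensor product decomposition and replaces $\Phi_{0,1,0,\dots,0,1}$ with $\Phi_{0,2}$ as the additional exceptional module.

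The chief obstacle is combinatorial: for each non-exceptional irreducible summand, one must produce a highest weight vector as a linear combination of $\operatorname{SL}_g(\Q)$-translates of the chosen abelian cycles, and the list of abelian cycles must be rich enough to cover all of them simultaneously. The two exceptional modules $\Phi_{1,0,\dots,0}$ and $\Phi_{0,1,0,\dots,0,1}$ (respectively $\Phi_{0,2}$ when $g = 3$) remain outside the reach of this method because the constraint that $\Theta(T_{\mathfrak{b}_j})$ is always a pure square of the form $-b_j \otimes b_j$, rather than a general element of $\operatorname{Sym}^2 V_\Q^*$, prevents the abelian cycle images from producing highest weight vectors for those two particular modules; this is precisely why they cannot be ruled out and motivates the conjecture following Theorem~B.
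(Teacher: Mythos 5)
Your proposal takes essentially the same route as the paper: decompose the tensor summand into irreducibles, use abelian cycles of the form $\{\text{bounding pair annulus twist}, T_{\mathfrak{b}_j}\}$ to land vectors $J(f)\otimes(-b_j\otimes b_j)$ in $\operatorname{im}(\Theta_*)$, then use handleswaps and the raising operators $E_{ij}$ to generate the $\operatorname{SL}_g(\Q)$-orbit and build highest weight vectors for each non-exceptional module. The paper packages the reachable vectors into four explicit "Types" (Lemma~\ref{lemma:JtensorPsiTypes}) — Types~1 and~2 coming directly from abelian cycles, Types~3 and~4 from applying $E_{ik}$ — and then assembles highest weight vectors as linear combinations of these, exactly as you envision.

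One substantive correction to your closing paragraph: the heuristic that the abelian cycle images are confined to pure squares in the $\operatorname{Sym}^2(V_\Q^*)$ factor is not the right explanation, and it is not in fact true of the orbit you are using. Applying $E_{ij}$ to a vector $[(\cdot)\otimes b_k]\otimes[b_k\otimes b_k]$ produces cross terms like $[(\cdot)\otimes b_k]\otimes[b_j\otimes b_k + b_k\otimes b_j]$, and the paper's highest weight vector for $\Phi_{0,1,0,\dots,0,1,1}$ is a linear combination of exactly such non-pure-square terms. The proposition only claims that the two listed modules are the only ones that \emph{can} lie in $\operatorname{ker}(\Theta^*)$; it is proved by exhibiting all the others in $\operatorname{im}(\Theta_*)$, not by arguing the two exceptional ones are unreachable. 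You also conflate the status of the two exceptional modules: $\Phi_{1,0,\dots,0}$ is shown to lie in $\operatorname{ker}(\Theta^*)$ by the $\tau$-argument in Proposition~\ref{proposition:kernelThetaTau}, so only $\Phi_{0,1,0,\dots,0,1}$ (resp.\ $\Phi_{0,2}$ when $g=3$) is what the conjecture after Theorem~\ref{theorem:kerneltheta} is about. Neither issue affects the validity of your main strategy, but the rationale in that last paragraph should be dropped or corrected.
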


To prove this, we decompose $((\bigwedge^2 V_{\Q}) \otimes V_{\Q}^*) \otimes \operatorname{Sym}^2(V_{\Q}^*) \subset H_2(W; \Q)$ and find all modules {\textit{except}} for \[ \begin{cases} 
    \Phi_{0,1} \textnormal{{ and }} \Phi_{2,0} & g=3 \\
    \Phi_{0, \dots, 0,1} \textnormal{{ and }} \Phi_{1,0, \dots, 0, 1,0} & g \geq 4 \end{cases} \] (the duals of those in the statement of Proposition \ref{proposition:imJtensorimPsi}) in the image of the induced map $$\Theta_*: H_2(\mathcal{H}_B \mathcal{I}_g^1; \Q) \rightarrow H_2(W; \Q).$$ Tables \ref{table:JtensorPhiGenus3}, \ref{table:JtensorPhiGenus4}, and \ref{table:JtensorPhiGenus5} contain decompositions of $((\bigwedge^2 V_{\Q}) \otimes V_{\Q}^*) \otimes \operatorname{Sym}^2(V_{\Q}^*) \subset H_2(W; \Q)$ for all $g \geq 3$.

\begin{table}[h!]
  \centering
  \begin{minipage}{0.45\textwidth}
    \centering
    \begin{tabular}{|c|c|}
      \hline
      \textbf{Highest Weight} & \textbf{Multiplicity} \\
      \hline
      $(0,1)$ & 1 \\
      $(0,4)$ & 1 \\
      $(1,2)$ & 2 \\
      $(2,0)$ & 1 \\
      \hline
    \end{tabular}
    \caption{$g=3$}
    \label{table:JtensorPhiGenus3}
  \end{minipage}%
  \hspace{0.05\textwidth}
  \begin{minipage}{0.45\textwidth}
    \centering
    \begin{tabular}{|c|c|}
      \hline
      \textbf{Highest Weight} & \textbf{Multiplicity} \\
      \hline
      $(0,0,1)$ & 1 \\
      $(0,1,3)$ & 1 \\
      $(0,2,1)$ & 1 \\
      $(1,0,2)$ & 2 \\
      $(1,1,0)$ & 1 \\
      \hline
    \end{tabular}
    \caption{$g=4$}
    \label{table:JtensorPhiGenus4}
  \end{minipage}
\end{table}

\vspace{0.5em}

\begin{table}[h!]
  \centering
  \begin{tabular}{|c|c|}
    \hline
    \textbf{Highest Weight} & \textbf{Multiplicity} \\
    \hline
    $(0, \dots, 0,1)$ & 1 \\
    $(0,1,0, \dots, 0,3)$ & 1 \\
    $(0,1,0, \dots, 0,1,1)$ & 1 \\
    $(1,0, \dots, 0,2)$ & 2 \\
    $(1,0, \dots, 0,1,0)$ & 1 \\
    \hline
  \end{tabular}
  \caption{$g \geq 5$}
  \label{table:JtensorPhiGenus5}
\end{table}

In the following lemma, we find types of vectors in $\operatorname{im}(\Theta_*)$ using abelian cycles.

\begin{lemma} \label{lemma:JtensorPsiTypes} Let $1 \leq i,j,k \leq g$ be distinct. Vectors of the following types are in $\operatorname{im}(\Theta_*)$: \textnormal{\begin{align*}  
\text{Type 1:} \quad & [(a_i \wedge a_j) \otimes b_i] \otimes [b_i \otimes b_i] \quad (g \geq 2) \\
\text{Type 2:} \quad & [(a_i \wedge a_j) \otimes b_i] \otimes [b_k \otimes b_k] \quad (g \geq 3) \\
\text{Type 3:} \quad & [(a_i \wedge a_j) \otimes b_k] \otimes [b_k \otimes b_k] \quad (g \geq 3) \\
\text{Type 4:} \quad & [(a_i \wedge a_j) \otimes b_k] \otimes [b_{\ell} \otimes b_{\ell}] \quad (g \geq 4).
\end{align*}}

\end{lemma}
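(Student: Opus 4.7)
The plan is to realize each of the four vector types as the image under $\Theta_*$ of an abelian cycle, following the recipe of Section \ref{subsection:abeliancycles}. For each type, I would exhibit commuting elements $f, h \in \mathcal{H}_B \mathcal{I}_g^1$ such that $\Theta(f)$ lies in the $(\bigwedge^2 V) \otimes V^*$ summand of $W$ with the required $J$-value, while $\Theta(h)$ lies in the $\operatorname{Sym}^2(V^*)$ summand with the required $\Psi$-value. Then $\Theta_*(\{f,h\}) = \Theta(f) \wedge \Theta(h)$ automatically lands in the $((\bigwedge^2 V_{\Q}) \otimes V_{\Q}^*) \otimes \operatorname{Sym}^2(V_{\Q}^*)$ summand of $H_2(W;\Q)$ and equals the specified vector up to nonzero scalar (which does not matter since $\operatorname{im}(\Theta_*)$ is closed under scaling and under the $\operatorname{SL}_g(\Q)$-action).

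In every case, I would take $h = T_{\mathfrak{b}_\ell}$ for an appropriate index $\ell \in \{i, k, \ell\}$: this meridian disk twist lies in $\mathcal{H}_B \mathcal{I}_g^1$, acts trivially on $\pi_1(\mathcal{V}_g^1)$, and from the proof of Proposition \ref{proposition:psihbi} satisfies $\Theta(T_{\mathfrak{b}_\ell}) = (0, -b_\ell \otimes b_\ell)$. The element $f$ will be chosen in the smaller group $\mathcal{HI}_g^1$, which is killed by $\Psi$ and surjects onto $IA_g$. So $\Theta(f) = (J(f), 0)$ and it suffices to produce $f$ whose induced automorphism of $\pi_1(\mathcal{V}_g^1) = F_g$ agrees with an element of $IA_g$ having the desired value under the Magnus formulas $J(C_{ij}) = -(a_i \wedge a_j)\otimes b_j$ and $J(M_{ijk}) = -(a_i\wedge a_j) \otimes b_k$ from Section \ref{subsection:firstjohnsonIA}.

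Concretely: for Type 1 I would take $f$ to be a bounding pair annulus twist lifting $C_{ji}$, paired with $h = T_{\mathfrak{b}_i}$; for Type 2, the same $C_{ji}$ but paired with $h = T_{\mathfrak{b}_k}$; for Types 3 and 4, $f$ realizes $M_{ijk}$ (either directly as a bounding pair annulus twist or as the commutator $[C_{ki}, C_{kj}]$ of two bounding-pair lifts, mirroring the construction in Section \ref{subsection:lowerboundthetaJ2}), paired with $h = T_{\mathfrak{b}_k}$ and $h = T_{\mathfrak{b}_\ell}$ respectively. The genus thresholds $g \geq 2, 3, 3, 4$ are precisely what is needed to fit the bounding pair curves and the relevant meridian into $\Sigma_g^1$ so that the supporting subsurfaces can be made disjoint.

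The main obstacle is the \emph{commutativity} requirement: one must realize each of these $IA_g$-elements by a bounding pair (or commutator of bounding pairs) in $\mathcal{HI}_g^1$ whose supporting subsurface on $\Sigma_g^1$ is disjoint from $\mathfrak{b}_\ell$, so that the Dehn twist $T_{\mathfrak{b}_\ell}$ commutes with $f$. This is a topological bookkeeping task rather than a deep issue: the surjection $\mathcal{HI}_g^1 \twoheadrightarrow IA_g$ together with the freedom to isotope the bounding pair curves into any handle not equal to the chosen index allows such choices once the stated genus bounds hold. I would record the four configurations via explicit pictures analogous to those in Section \ref{section:lowerboundtheta}, and then read off the $\Theta$-values using the Magnus formulas to conclude.
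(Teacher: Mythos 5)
Your handling of Types 1 and 2 matches the paper's: a single fixed bounding pair annulus twist realizing $C_{21}$ (with $J$-image $-(a_2 \wedge a_1)\otimes b_1$), paired with a disjoint disk twist $T_{\mathfrak{b}_i}$ for $i\in\{1,3\}$, with the remaining Type 1/2 vectors obtained by handleswaps.

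The Type 3 and Type 4 plan has a genuine gap. Neither of your proposed realizations of $M_{ijk}$ is correct. A genus-one bounding pair annulus twist acts on $\pi_1(\mathcal{V}_g^1)$ via a conjugation $C_{mn}$ and so has Johnson image $-(a_m \wedge a_n)\otimes b_n$; the $b$-index always coincides with one of the $a$-indices, whereas $J(M_{ijk}) = -(a_i\wedge a_j)\otimes b_k$ has all three indices distinct. So no single bounding pair annulus twist realizes $M_{ijk}$. The alternative, $[C_{ki}, C_{kj}]$, is a commutator of elements of $IA_g$ and hence has $J$-image $0$ because $J$ is a homomorphism into an abelian group (with the Magnus convention $C_{ij}\colon x_j\mapsto x_i x_j x_i^{-1}$, the two generators in fact commute outright), so it cannot equal $M_{ijk}$, whose $J$-image is nonzero. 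The construction in Section~\ref{subsection:lowerboundthetaJ2} that you cite produces elements of $\ker(\Theta)$ by design, precisely because the invariant used there is the \emph{second} Johnson homomorphism $J_2$, not $J$. Moreover, even granting some $f\in\mathcal{HI}_g^1$ mapping to $M_{ijk}$ (which exists by surjectivity of $\mathcal{HI}_g^1\twoheadrightarrow IA_g$), your Type 3 construction requires $f$ to commute with $T_{\mathfrak{b}_k}$, while $M_{ijk}$ modifies $\alpha_k$ and thus forces the support of $f$ to interact with the $k$-th handle; this commutativity is a nontrivial claim your ``topological bookkeeping'' remark does not establish.

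The paper sidesteps all of this: having produced Types 1 and 2 by abelian cycles, it notes that $\operatorname{im}(\Theta_*)$ is an $\operatorname{SL}_g(\Q)$-submodule of $H_2(W;\Q)$, hence closed under the $\mathfrak{sl}_g(\Q)$-action, and then applies $-E_{ik}$ to Type 2 vectors to obtain Types 3 and 4 purely algebraically. The point is that $E_{ik}$ sends $b_i\mapsto -b_k$ and kills $b_\ell$ (for $\ell\neq i$) and $a_i, a_j$ (for $k\neq i,j$), so it moves only the $b$-index in the first tensor factor while leaving the $\operatorname{Sym}^2(V^*_{\Q})$ factor untouched, exactly as needed. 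You should replace your Types 3 and 4 step with this Lie-algebraic argument.
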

 
\begin{proof} Consider the bounding pair annulus twist $T_x T_{x'}^{-1}$ and the disk twists $T_{\mathfrak{b}_1}$ and $T_{\mathfrak{b}_3}$. Note that $x$ and $x'$ both represent the homology class $a_2$, and $\mathfrak{b}_1$ and $\mathfrak{b}_3$ represent the homology classes $b_1$ and $b_3$, respectively. 

\centerline{\includegraphics[scale=.5]{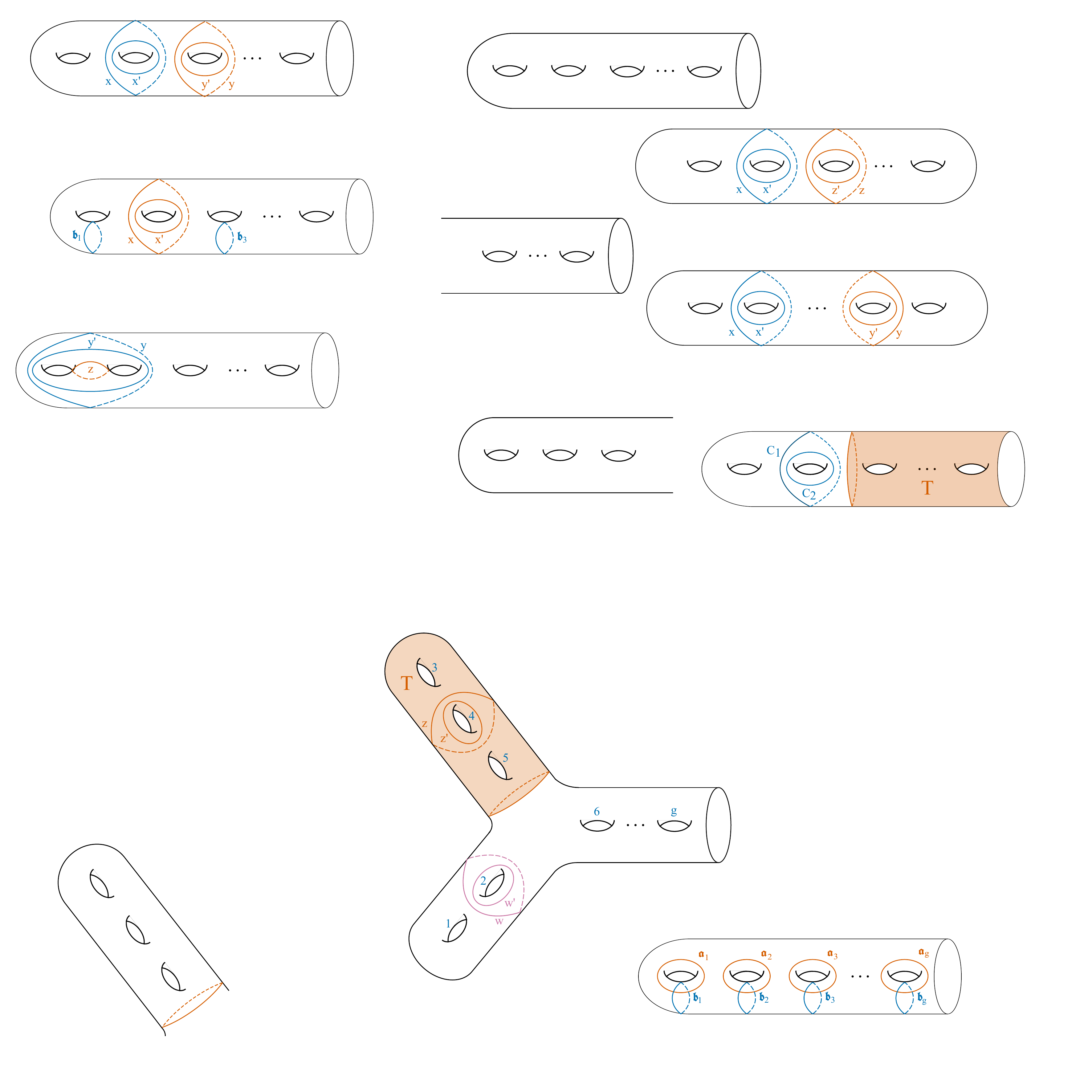}}  When $i \neq 2$, we evaluate: $$\Theta_*(\{ T_x T_{x'}^{-1}, T_{\mathfrak{b}_i} \} ) = [(a_1 \wedge a_2) \otimes b_1] \otimes [b_i \otimes b_i].$$ We obtain all Type 1 and Type 2 vectors by choosing the correct $i \in \{1,3\}$ and then permuting indices by applying handleswaps. Since representations of $\operatorname{SL}_g(\Q)$ are in natural bijection with those of $\mathfrak{sl}_g(\Q)$, we can view $\operatorname{im}(\Theta_*)$ as an $\mathfrak{sl}_g(\Q)$ representation. We apply $\mathfrak{sl}_g(\Q)$ actions to Type 2 vectors to find Type 3 and Type 4 vectors in $\operatorname{im}(\Theta_*)$:

\[
-E_{ik} \underbrace{[(a_i \wedge a_j) \otimes b_i] \otimes [b_k \otimes b_k]}_{\text{Type 2}} = \underbrace{[(a_i \wedge a_j) \otimes b_k] \otimes [b_k \otimes b_k]}_{\text{Type 3}}.  \]

\[
-E_{ik} \underbrace{[(a_i \wedge a_j) \otimes b_i] \otimes [b_{\ell} \otimes b_{\ell}]}_{\text{Type 2}} = \underbrace{[(a_i \wedge a_j) \otimes b_k] \otimes [b_{\ell} \otimes b_{\ell}]}_{\text{Type 4}}.  \]

\end{proof}

Now we are ready to prove Proposition \ref{proposition:imJtensorimPsi}. In this section, we define the $\mathfrak{sl}_g(\Q)$-equivariant map $p: (\bigwedge^2 V_{\Q}) \otimes V_{\Q}^* \rightarrow \bigwedge^3 H_{\Q}$ which sends $a_i \wedge a_j \otimes b_k \mapsto a_i \wedge a_j \wedge b_k$. Recalling that we defined contractions $C_k$ in Section \ref{section:reptheorymaps}, the composition $C_3 \circ p$ sends $$a_i \wedge a_j \otimes b_k \mapsto \omega(a_j, b_k) a_i - \omega(a_i, b_k)a_j.$$ 

\begin{proof}[Proof of Proposition \ref{proposition:imJtensorimPsi}] We find all of the necessary modules in $\operatorname{im}(\Theta_*)$.  \\

$\boxed{(0,1,0, \dots ,0,3)}$: The Type 3 vector $$[(a_1 \wedge a_2) \otimes b_g] \otimes [b_g \otimes b_g]$$ is a highest weight vector for \[ \begin{cases} 
\Phi_{0,4} & g = 3 \\
\Phi_{0,1,0, \dots ,0,3} & g \geq 4 \end{cases} . \]

\noindent By Lemma \ref{lemma:JtensorPsiTypes}, we have $\Phi_{0,4} \subset \operatorname{im}(\Theta_*)$ when $g=3$ and $\Phi_{0,1,0, \dots ,0,3} \subset \operatorname{im}(\Theta_*)$ when $g \geq 4$. \\

$\boxed{(0,1,0, \dots ,0,1,1)}$: When $g \geq 4$, the vector 
$$[(a_1 \wedge a_2) \otimes b_g] \otimes [b_{g-1} \otimes b_g + b_g \otimes b_{g-1}] - 2[(a_1 \wedge a_2) \otimes b_{g-1}] \otimes [b_g \otimes b_g]$$ is a highest weight vector for \[ \begin{cases} 
\Phi_{0,2,1} & g = 4 \\
\Phi_{0,1,0, \dots ,0,1,1} & g \geq 5 \end{cases} . \]

Since $$-2[(a_1 \wedge a_2) \otimes b_{g-1}] \otimes [b_g \otimes b_g]$$ is a Type 4 vector, it is in $\operatorname{im}(\Theta_*)$. It remains to show that $$[(a_1 \wedge a_2) \otimes b_g] \otimes [b_{g-1} \otimes b_g + b_g \otimes b_{g-1}]$$ is in $\operatorname{im}(\Theta_*)$, and we show this by applying $\mathfrak{sl}_g(\Q)$ actions to a Type 3 vector. We calculate: 

\hspace*{-1.3cm}
\begin{minipage}{\textwidth} \begin{align*}
    -E_{g(g-1)}\underbrace{[(a_1 \wedge a_2) \otimes b_g] \otimes [b_g \otimes b_g]}_{\text{Type 3}} = \underbrace{[(a_1 \wedge a_2) \otimes b_{g-1}] \otimes [b_g \otimes b_g]}_{\text{Type 4}} + [(a_1 \wedge a_2) \otimes b_g] \otimes [b_{g-1} \otimes b_g + b_g \otimes b_{g-1}].
\end{align*} \end{minipage} Since the Type 4 vector belongs to $\operatorname{im}(\Theta_*)$, it follows that $[(a_1 \wedge a_2) \otimes b_g] \otimes [b_{g-1} \otimes b_g + b_g \otimes b_{g-1}]$ is in $\operatorname{im}(\Theta_*)$ as well. \\

$\boxed{(1,0, \dots ,0,2)}$: We find two copies of $\Phi_{1,0, \dots ,0,2}$ in $\operatorname{im}(\Theta_*)$ when $g \geq 3$. First, $$ ((C_3 \circ p) \otimes \text{id}) \underbrace{[(a_1 \wedge a_2) \otimes b_2] \otimes [b_g \otimes b_g]}_{\text{Type 2}} = a_1 \otimes b_g \otimes b_g$$ is a highest weight vector for $\Phi_{1,0, \dots ,0,2}$.  \\

The second copy of $\Phi_{1,0, \dots ,0,2}$ has highest weight vector $$\sum_{i=2}^{g-1} ([(a_1 \wedge a_i) \otimes b_g] \otimes [b_i \otimes b_g + b_g \otimes b_i] - 2[(a_1 \wedge a_i)\otimes b_i] \otimes [b_g \otimes b_g]).$$ Note that $(C_3 \circ p) \otimes \text{id}$ kills this vector so we are indeed detecting a second copy of $\Phi_{1,0, \dots ,0,2}$. We rewrite our highest weight vector as $$\sum_{i=2}^{g-1} (x_i + y_i)$$ for $x_i := [(a_1 \wedge a_i) \otimes b_g] \otimes [b_i \otimes b_g + b_g \otimes b_i]$ and $y_i := \underbrace{-2[(a_1 \wedge a_i) \otimes b_i] \otimes [b_g \otimes b_g]}_{\text{Type 2}}$. It remains to show that $x_i \in \operatorname{im}(\Theta_*)$ for $i \in \{2, \dots, g-1 \}$. We calculate 
    \[ -E_{gi} \underbrace{[(a_1 \wedge a_i) \otimes b_g] \otimes [b_g \otimes b_g]}_{\text{Type 3}} \] \begin{align*} = \underbrace{-[(a_1 \wedge a_g) \otimes b_g] \otimes [b_g \otimes b_g]}_{\text{Type 1}} + \underbrace{[(a_1 \wedge a_i) \otimes b_i] \otimes [b_g \otimes b_g]}_{\text{Type 2}} + \underbrace{[(a_1 \wedge a_i) \otimes b_g] \otimes [b_i \otimes b_g + b_g \otimes b_i]}_{x_i}.
\end{align*} 

\end{proof}

\subsection{The module $\boldsymbol{\bigwedge^2 \operatorname{Sym}^2(V_{\Q}^*)}$} \label{subsection:wedge2sym2} In this subsection, we prove the following proposition. 

\begin{proposition} \label{proposition:sym2notinkernel}
    When $g \geq 3$, the irreducible module $\bigwedge^2 \operatorname{Sym}^2(V_{\Q}^*)^* \subset H^2(W; \Q)$ is not in $\operatorname{ker}(\Theta^*)$.
\end{proposition}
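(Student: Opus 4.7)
The plan is to show that the dual module $\bigwedge^2 \operatorname{Sym}^2(V_{\Q}^*) \subset H_2(W;\Q)$ is contained in $\operatorname{im}(\Theta_*)$; by the general principle stated at the start of Section~\ref{section:upperboundtheta}, this implies that $\bigwedge^2 \operatorname{Sym}^2(V_{\Q}^*)^* \subset H^2(W;\Q)$ is not in $\operatorname{ker}(\Theta^*)$. My first step is the representation-theoretic preliminary: $\bigwedge^2 \operatorname{Sym}^2(V_{\Q}^*)$ is \emph{irreducible} as an $\operatorname{SL}_g(\Q)$-module for $g \geq 3$. This follows from the classical plethysm $\bigwedge^2 \operatorname{Sym}^2(U) \cong S_{(3,1)}(U)$ applied to $U = V_{\Q}^*$; alternatively, one checks directly that $(b_g \otimes b_g) \wedge (b_{g-1} \otimes b_g + b_g \otimes b_{g-1})$ is a highest weight vector of weight $(0, \dots, 0, 1, 2)$ under the $\mathfrak{sl}_g(\Q)$-action defined in Section~\ref{section:reptheorymaps}, and a short dimension count identifies the whole summand with $\Phi_{0,\dots,0,1,2}$. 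Consequently, to show $\bigwedge^2 \operatorname{Sym}^2(V_{\Q}^*) \subset \operatorname{im}(\Theta_*)$, it suffices to exhibit \emph{one} nonzero element of this summand lying in $\operatorname{im}(\Theta_*)$.

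For that, I would use a single abelian cycle built from two disjoint meridian disk twists. Let $\mathfrak{b}_{g-1}$ and $\mathfrak{b}_g$ be the meridian curves on $\Sigma_g^1$ fixed in Section~\ref{subsection:conventions}, bounding disjoint disks in $\mathcal{V}_g^1$ and representing $b_{g-1}$ and $b_g$ in $H$. The Dehn twists $T_{\mathfrak{b}_{g-1}}$ and $T_{\mathfrak{b}_g}$ are supported on disjoint annular neighborhoods, so they commute. Both lie in $\mathcal{H}_B\mathcal{I}_g^1$: they act trivially on $V$ since each $T_{\mathfrak{b}_i}$ sends $a_i \mapsto a_i + b_i$ and fixes every other generator of $H$, which is trivial after quotienting out $V^*$. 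Moreover, each disk twist acts trivially on $\pi_1(\mathcal{V}_g^1)$, so $J(T_{\mathfrak{b}_i}) = 0$ and $\Theta^{\Q}(T_{\mathfrak{b}_i})$ lies entirely in the $\operatorname{Sym}^2(V^*_{\Q})$ summand of $W_{\Q}$. The calculation carried out in the proof of Proposition~\ref{proposition:psihbi} gives
\[
\Theta^{\Q}(T_{\mathfrak{b}_i}) = -\, b_i \otimes b_i \;\in\; \operatorname{Sym}^2(V_{\Q}^*).
\]

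Applying the abelian-cycle formula from Section~\ref{subsection:abeliancycles},
\[
\Theta_*\bigl(\{T_{\mathfrak{b}_{g-1}}, T_{\mathfrak{b}_g}\}\bigr) \;=\; \Theta^{\Q}(T_{\mathfrak{b}_{g-1}}) \wedge \Theta^{\Q}(T_{\mathfrak{b}_g}) \;=\; (b_{g-1} \otimes b_{g-1}) \wedge (b_g \otimes b_g),
\]
which lies in $\bigwedge^2 \operatorname{Sym}^2(V_{\Q}^*) \subset \bigwedge^2 W_{\Q} \cong H_2(W;\Q)$ and is nonzero because $b_{g-1}\otimes b_{g-1}$ and $b_g\otimes b_g$ are linearly independent in $\operatorname{Sym}^2(V_{\Q}^*)$. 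Combined with the irreducibility statement from the first paragraph, this forces all of $\bigwedge^2 \operatorname{Sym}^2(V_{\Q}^*)$ into $\operatorname{im}(\Theta_*)$, which is exactly what is needed.

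There is no real obstacle here; the only mild point is the irreducibility of $\bigwedge^2 \operatorname{Sym}^2(V_{\Q}^*)$, but this is either a standard plethysm or a direct highest-weight-vector check analogous to ones used repeatedly earlier in the paper. The rest is just a single abelian cycle computation using the disk-twist formula already established in Section~\ref{subsection:hbipsi}.
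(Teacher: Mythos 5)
Your proof is correct and follows essentially the same route as the paper: the same abelian cycle $\{T_{\mathfrak{b}_{g-1}}, T_{\mathfrak{b}_g}\}$, the same computation $\Theta_*(\{T_{\mathfrak{b}_{g-1}}, T_{\mathfrak{b}_g}\}) = (b_{g-1}\otimes b_{g-1}) \wedge (b_g \otimes b_g)$, and the same appeal to the irreducibility of $\bigwedge^2 \operatorname{Sym}^2(V_{\Q}^*) \cong \Phi_{0,\dots,0,1,2}$. The only cosmetic difference is that the paper applies $E_{(g-1)g}$ to explicitly exhibit a highest weight vector, whereas you invoke irreducibility directly to promote a single nonzero vector to the full summand; both are valid and of comparable length.
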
 See Equation \ref{equation:bigwedge2W}. Proposition \ref{proposition:sym2notinkernel} will follow from Proposition \ref{proposition:sym2surjective}, which we now introduce.

\begin{proposition} \label{proposition:sym2surjective}
    When $g \geq 3$, the image of the map $$\Theta_*: H_2(\mathcal{H}_B \mathcal{I}_g^1; \Q) \rightarrow H_2(W; \Q)$$ contains $\bigwedge^2 \operatorname{Sym}^2(V_{\Q}^*) \subset H_2(W; \Q)$.
\end{proposition}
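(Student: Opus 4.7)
The plan is to realize the entire $\bigwedge^2 \operatorname{Sym}^2(V_{\Q}^*)$ summand as the image of abelian cycles coming from commuting meridian disk twists in the twist subgroup $\mathcal{T}_g^1 \subset \mathcal{H}_g^1$. First, I recall from Section \ref{subsection:actiononpi1} (Luft's theorem) that $\mathcal{T}_g^1 = \operatorname{ker}(\alpha)$, so $\mathcal{T}_g^1 \subset \mathcal{H}_B \mathcal{I}_g^1$ and every $T \in \mathcal{T}_g^1$ has $J(T) = 0$. Therefore $\Theta(T) = (0, \Psi(T))$ lies in the subgroup $\{0\} \oplus \operatorname{Sym}^2(V^*) \subset W$, and for any pair of commuting elements of $\mathcal{T}_g^1$ the abelian cycle image $\Theta_*(\{T, T'\}) = \Theta(T) \wedge \Theta(T')$ lies honestly in the $\bigwedge^2 \operatorname{Sym}^2(V_{\Q}^*)$ summand of the Künneth decomposition of $\bigwedge^2 W_{\Q}$, not merely in its projection there.

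Next, I would produce one explicit nonzero element of $\operatorname{im}(\Theta_*) \cap \bigwedge^2 \operatorname{Sym}^2(V_{\Q}^*)$. The disk twists $T_{\mathfrak{b}_1}$ and $T_{\mathfrak{b}_2}$ are supported on disjoint meridians, so they commute; using $\Psi(T_{\mathfrak{b}_i}) = -b_i \otimes b_i$ from the proof of Proposition \ref{proposition:psihbi} and the abelian-cycle computation of Section \ref{subsection:abeliancycles}, this gives
\[
\Theta_*(\{T_{\mathfrak{b}_1}, T_{\mathfrak{b}_2}\}) = (b_1 \otimes b_1) \wedge (b_2 \otimes b_2),
\]
which is manifestly a nonzero element of $\bigwedge^2 \operatorname{Sym}^2(V_{\Q}^*)$.

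The final step is to upgrade this single vector to the whole summand via the (classical) plethysm identification $\bigwedge^2 \operatorname{Sym}^2 U \cong S_{(3,1)} U$, which shows that $\bigwedge^2 \operatorname{Sym}^2(V_{\Q}^*)$ is an irreducible $\operatorname{SL}_g(\Q)$-representation for $g \geq 2$; in the paper's notation it is $\Phi_{0, \dots, 0, 1, 2}$, with explicit highest weight vector $(b_g \otimes b_g) \wedge (b_{g-1} \otimes b_g + b_g \otimes b_{g-1})$. Since $\operatorname{im}(\Theta_*)$ is $\operatorname{SL}_g(\Z)$-stable by $\mathcal{H}_g^1$-equivariance of $\Theta$, hence $\operatorname{SL}_g(\Q)$-stable by Zariski density, the intersection $\operatorname{im}(\Theta_*) \cap \bigwedge^2 \operatorname{Sym}^2(V_{\Q}^*)$ is a nonzero $\operatorname{SL}_g(\Q)$-subrepresentation of an irreducible representation, hence equals the entire summand.

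The main (minor) obstacle is confirming the plethysm decomposition; this can either be cited from Fulton--Harris or verified with LiE as is done elsewhere in the paper, and it can be double-checked by hand by applying each simple root operator $E_{i,i+1}$ to the explicit highest weight vector above. Importantly, although $\Phi_{0,\dots,0,1,2}$ may well appear in other pieces of the Künneth decomposition (e.g.\ with multiplicity two in $((\bigwedge^2 V_{\Q}) \otimes V_{\Q}^*) \otimes \operatorname{Sym}^2(V_{\Q}^*)$ when $g=3$), no disentangling is needed: the abelian cycle we produced already lives inside the specific Künneth factor $\bigwedge^2 \operatorname{Sym}^2(V_{\Q}^*)$, and the intersection argument is carried out inside that factor.
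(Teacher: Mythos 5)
Your proof is correct and follows essentially the same approach as the paper: both arguments produce a single abelian cycle from a pair of commuting meridian disk twists, observe that $\Theta_*$ of this cycle lands honestly in the $\bigwedge^2 \operatorname{Sym}^2(V_{\Q}^*)$ factor, and then invoke the irreducibility of $\bigwedge^2\operatorname{Sym}^2(V_{\Q}^*) \cong \Phi_{0,\dots,0,1,2}$ to conclude. The paper uses $T_{\mathfrak{b}_{g-1}}, T_{\mathfrak{b}_g}$ and then hits $\Theta_*(\{T_{\mathfrak{b}_g},T_{\mathfrak{b}_{g-1}}\}) = [b_g\otimes b_g]\wedge[b_{g-1}\otimes b_{g-1}]$ with a single Lie algebra operator $E_{(g-1)g}$ to produce the explicit highest weight vector; you instead use $T_{\mathfrak{b}_1},T_{\mathfrak{b}_2}$ and then make the slightly more abstract (but equally valid) observation that a nonzero $\operatorname{SL}_g(\Q)$-invariant subspace of an irreducible representation is the whole thing, bypassing the explicit highest weight computation. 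These differ only by a handleswap and a cosmetic reorganization of the final step, so the two proofs are interchangeable.
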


\begin{proof}[Proof of Proposition \ref{proposition:sym2surjective}] The representation $\bigwedge^2 \operatorname{Sym}^2(V_{\Q}^*) \cong \Phi_{0, \dots, 0,1,2}$ is irreducible, and we will find a highest weight vector with the desired weight in $\operatorname{im}(\Theta_*)$. Let $\mathfrak{b}_{g-1}$ and $\mathfrak{b}_g$ be meridians whose homology class are $b_{g-1}$ and $b_g$, respectively, as in the figure in Section \ref{subsection:conventions}. When $g \geq 3$, we calculate that \begin{align*} E_{(g-1)g} \circ \Theta_*(\{ T_{\mathfrak{b}_g}, T_{\mathfrak{b}_{g-1}} \}) = E_{(g-1)g} [b_g \otimes b_g] \wedge [b_{g-1} \otimes b_{g-1}] = - [b_g \otimes b_g] \wedge [b_g \otimes b_{g-1} + b_{g-1} \otimes b_g] \end{align*} is a highest weight vector for $\Phi_{0, \dots, 0,1,2}$. \end{proof}

Propositions \ref{proposition:lowerboundHBI}, \ref{proposition:wedge2J}, \ref{proposition:imJtensorimPsi}, and \ref{proposition:sym2notinkernel} combine to prove Theorem \ref{theorem:thetaopen}.

\section{Determination of $\operatorname{ker}(\Theta^*)$ for $\mathcal{H}_B \mathcal{I}_g$} \label{section:thetaclosed}

Now we consider the undecorated case $\mathcal{H}_B \mathcal{I}_g$. This group acts on $\pi_1(\mathcal{V}_g) \cong F_g$ via outer automorphisms and acts trivially on $V := H_1(\mathcal{V}_g; \Z)$, giving us the surjective map to the Torelli subgroup of $\operatorname{Out}(F_g)$ $$\alpha: \mathcal{H}_B \mathcal{I}_g \rightarrow OA_g.$$ From this, we get the surjective Johnson homomorphism $$J: \mathcal{H}_B \mathcal{I}_g \rightarrow ((\bigwedge^2 V) \otimes V^*) / V.$$ The groups $\mathcal{H}_B \mathcal{I}_g^1$ and $\mathcal{H}_B \mathcal{I}_g$ act on surface homology $H$ in the same way so our previous calculations with the symplectic representation $\Psi$ do not change. Following our work in Section \ref{subsection:W}, we construct the abelian quotient map $$\Theta: \mathcal{H}_B \mathcal{I}_g \rightarrow (((\bigwedge^2 V) \otimes V^*) / V) \oplus \operatorname{Sym}^2(V^*) =: \overline{W}.$$ We have the following theorem. 

\begin{theorem} \label{theorem:ThetaClosed}
    When $g \geq 3$, the kernel of the induced map $\Theta^*: H^2(\overline{W}; \Q) \rightarrow H^2(\mathcal{H}_B \mathcal{I}_g; \Q)$ is either: 
    \begin{itemize}
        \item the module  $\begin{cases}
    \Phi_{2,1} & g=3 \\
    \Phi_{1,0, \dots, 0,1,1} & g \geq 4
\end{cases};$ or \vspace{2mm}
    \item the direct sum $ \begin{cases}
    \Phi_{0,2} \oplus \Phi_{2,1} & g=3 \\
    \Phi_{0,1,0, \dots, 0,1} \oplus \Phi_{1,0, \dots, 0,1,1} & g \geq 4
\end{cases}. $
    \end{itemize}
\end{theorem}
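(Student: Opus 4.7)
The proof follows the same three-step strategy used for $\mathcal{H}_B \mathcal{I}_g^1$ in Sections \ref{section:setuphbi}, \ref{section:lowerboundtheta}, and \ref{section:upperboundtheta}, adapted to the undecorated group. First, I would decompose $\bigwedge^2 \overline{W}_{\Q}^*$ into irreducible $\operatorname{SL}_g(\Q)$ representations via the Künneth formula, following \eqref{equation:bigwedge2W} with $(\bigwedge^2 V)\otimes V^*$ replaced by $((\bigwedge^2 V)\otimes V^*)/V$. The quotient map $W \twoheadrightarrow \overline{W}$ induces an injection $\bigwedge^2 \overline{W}_{\Q}^* \hookrightarrow \bigwedge^2 W_{\Q}^*$, so the decomposition in the closed case is the decomposition of $\bigwedge^2 W_{\Q}^*$ with the $\operatorname{SL}_g(\Q)$-summands supported on the $V$-part of $W$ removed.

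For the lower bound, I would rerun the five-term exact sequence argument of Section \ref{section:lowerboundtheta} with $\mathcal{H}_B \mathcal{I}_g$ and $\operatorname{ker}(\Theta) \subset \mathcal{H}_B \mathcal{I}_g$ in place of their decorated counterparts, applying the second Johnson homomorphism $J_2$ for $OA_g(2)$ and the first Johnson homomorphism $\tau$ for $\mathcal{I}_g$ to the connecting map $b : \bigwedge^2 \overline{W}_{\Q} \to H_1(\operatorname{ker}(\Theta); \Q)_{\mathcal{H}_B \mathcal{I}_g}$. The preimages appearing in Lemmas \ref{lemma:imageBj2} and \ref{lemma:cokernelThetaTau} (the bounding-pair annulus twists and the disk twist $T_{\mathfrak{b}_2}$) have support disjoint from any embedded disk, so they are well-defined in $\mathcal{H}_B \mathcal{I}_g$ and the commutator calculations transfer verbatim. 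This detects $\Phi_{2,1}$ (for $g=3$) and $\Phi_{1,0, \dots, 0,1,1}$ (for $g \geq 4$) in $\operatorname{ker}(\Theta^*)$. The copies of $\Phi_{1,0,\dots,0}$ detected in the open case come from a $V$-summand that is no longer present in $\overline{W}$, so they do not contribute here.

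For the upper bound, I would reuse the abelian cycle computations of Section \ref{section:upperboundtheta}. Each bounding-pair annulus twist and each disk twist $T_{\mathfrak{b}_i}$ used there represents an element of $\mathcal{H}_B \mathcal{I}_g$, and the resulting abelian cycles map through the commutative square
\[
\begin{tikzcd}[ampersand replacement=\&]
\mathcal{H}_B \mathcal{I}_g^1 \arrow[r, "\Theta"] \arrow[d] \& W \arrow[d, twoheadrightarrow] \\
\mathcal{H}_B \mathcal{I}_g \arrow[r, "\Theta"] \& \overline{W}
\end{tikzcd}
\]
to the natural projections of the open-case images. The highest weight vectors exhibited in Sections \ref{subsection:wedge2imJ}, \ref{subsection:imJtensorimPsi}, and \ref{subsection:wedge2sym2} project to nonzero vectors in $\bigwedge^2 \overline{W}_{\Q}$, so all $\operatorname{SL}_g(\Q)$-modules in the decomposition of $\bigwedge^2 \overline{W}_{\Q}$ other than the duals of those listed in Theorem \ref{theorem:ThetaClosed} lie in $\operatorname{im}(\Theta_*)$.

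The main obstacle, inherited from the open case, is the module $\Phi_{0,2}$ (for $g=3$) or $\Phi_{0,1,0, \dots, 0,1}$ (for $g \geq 4$). Neither the $J_2$-detection of Section \ref{subsection:lowerboundthetaJ2} nor the $\tau$-detection of Section \ref{subsection:lowerboundthetatau} produces a highest weight vector of this type in $\operatorname{coker}(\Theta_*)$, and no natural abelian cycle in $\mathcal{H}_B \mathcal{I}_g$ places its dual into $\operatorname{im}(\Theta_*)$; this is the source of the dichotomy in the statement.
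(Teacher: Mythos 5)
Your proposal follows the paper's three-step strategy and reaches the correct dichotomy, but there is a genuine gap in the lower-bound argument for $g \geq 4$, and the phrase ``the commutator calculations transfer verbatim'' is actively misleading at the crucial point.

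In the open case (Proposition~\ref{proposition:kernelj2}), the $J_2$-detection produces \emph{two} modules in $\operatorname{ker}(\Theta^*)$: $\Phi_{0,\dots,0,1,0} \oplus \Phi_{1,0,\dots,0,1,1}$ for $g \geq 4$. You assert that only $\Phi_{1,0,\dots,0,1,1}$ is detected in the closed case, and you attribute the disappearance of the other open-case modules to the removal of the $V$-summand. That explanation is correct only for the $\tau$-detected module $\Phi_{1,0,\dots,0}$ (which indeed vanishes because $\Phi_{0,\dots,0,1}$ no longer occurs in $\Phi_{0,1,0,\dots,0,1}\otimes\operatorname{Sym}^2(V^*_{\Q})$), and it also happens to work for the $J_2$-detected $\Phi_{1,0}$ when $g=3$ (since $\bigwedge^2\Phi_{0,2}\cong\Phi_{1,2}$ alone). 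But for $g\geq 4$ the module $\Phi_{0,1,0,\dots,0}$ \emph{does} occur with multiplicity one in $\bigwedge^2\bigl(((\bigwedge^2 V_{\Q})\otimes V^*_{\Q})/V_{\Q}\bigr)$, so $\Phi_{0,\dots,0,1,0}$ survives to $\bigwedge^2\overline{W}^*_{\Q}$. If the $J_2$-argument truly transferred verbatim, it would place $\Phi_{0,\dots,0,1,0}$ in $\operatorname{ker}(\Theta^*)$ as well, contradicting the theorem. The actual reason it is not detected is that for the undecorated case one must use $J_2$ for $OA_g(2)$, whose target is a \emph{quotient} of the $IA_g(2)$ target; the orbit analysis of the projected vector $J_2^{\Q}([C_{12},C_{21}])$ in that quotient picks up only $\Phi_{1,1,0,\dots,0,1}$ (this is precisely the content of Pettet's $OA_g$ theorem, in contrast to Lemma~\ref{lemma:J2pettet} for $IA_g$). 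Your proposal invokes $J_2$ for $OA_g(2)$ in passing but then claims a verbatim transfer, which conflates the commutator computation (which does transfer) with the subsequent orbit analysis (which does not).

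Relatedly, your upper-bound step needs more than ``project to nonzero vectors'': you must verify that the single surviving copy of $\Phi_{0,1,0,\dots,0}$ inside $\bigwedge^2\bigl(((\bigwedge^2 V_{\Q})\otimes V^*_{\Q})/V_{\Q}\bigr)$ actually lies in $\operatorname{im}(\Theta_*)$ for the closed case, and Lemma~\ref{lemma:pettet} tells you that the orbit of $v_1,v_2,v_3$ misses one copy of $\Phi_{0,1,0,\dots,0}$, so the argument requires tracking \emph{which} copy survives the quotient. The paper's own sketch also leaves this implicit, but it does not make the inaccurate ``verbatim'' claim.
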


In particular, $\operatorname{ker}(\Theta^*)$ contains the module $\Phi_{1,0, \dots, 0,1,1}$ (resp. $\Phi_{2,1}$) when $g \geq 4$ (resp. $g=3$), and contains at most one additional module. Since the arguments closely follow those used for $\mathcal{H}_B \mathcal{I}_g^1$, we only sketch the proof of Theorem \ref{theorem:ThetaClosed} and assume familiarity with the notation and methods from Sections \ref{section:lowerboundtheta} and \ref{section:upperboundtheta}. 

To show that $\Phi_{1,0, \dots, 0,1,1}$ (resp. $\Phi_{2,1}$) belongs to $\operatorname{ker}(\Theta^*)$ when $g \geq 4$ (resp. $g=3$), we show that its dual module is in $\operatorname{coker}(\Theta_*)$. As we did in Section \ref{subsection:5tesHBI}, we construct a five term exact sequence that contains a map $b$ with $\operatorname{coker}(\Theta_*) \cong \operatorname{im}(b)$. We then detect the desired module in $\operatorname{im}(b)$ by composing $b$ with the second Johnson homomorphism $J_2^{\Q}$ for $OA_g(2)$ as we did in Section \ref{subsection:lowerboundthetaJ2}. In this case, the Johnson homomorphism $\tau$ does not detect any modules in $\operatorname{ker}(\Theta^*)$. 

It follows from our work in Section \ref{section:upperboundtheta} that all remaining modules that survive in the quotient $\bigwedge^2 \overline{W}_{\Q}^*$ of $\bigwedge^2 W_{\Q}^*$ are not in $\operatorname{ker}(\Theta^*)$, except for possibly the module $\Phi_{0,1,0, \dots, 0,1}$ (resp. $\Phi_{0,2}$) when $g \geq 4$ (resp. $g=3$). \\

Theorem \ref{theorem:kerneltheta} follows from Theorem \ref{theorem:thetaopen} and Theorem \ref{theorem:ThetaClosed}.

\section{Setup for $\mathcal{HI}_{g,p}^b$} \label{section:setuphi}

We now study $\mathcal{HI}_{g,p}^b$ which we recall is the intersection $\mathcal{H}_{g,p}^b \cap \mathcal{I}_{g,p}^b$. The Johnson homomorphism $\tau$ captures the rational abelianization of $\mathcal{I}_{g,p}^b$, and we study $\tau|_{\mathcal{HI}_{g,p}^b}$ to get a handle on $H^*(\mathcal{HI}_{g,p}^b; \Q)$. From now on, we use $\tau$ to denote this restriction and $U$ or $\UQ$ to denote its image:
$$\tau: \mathcal{HI}_{g,p}^b \rightarrow \begin{cases}
    \UQ & (p=b=0) \\
    U & (p+b = 1) 
   \end{cases}.$$

\noindent Recall that we fixed a symplectic basis for $H$ in Section \ref{subsection:conventions}. We have the following proposition.
   
\begin{proposition}[{Morita, \cite[Lemma 2.5]{MoritaCasson}}] \label{proposition:U} When $p+b=1$, the image $U := \tau(\mathcal{HI}_{g,p}^b)$ is the submodule of $\bigwedge^3 H$ generated by elements of the following form:  \begin{itemize}
    \item $a_i \wedge b_i \wedge a_j$
    \item $a_i \wedge b_i \wedge b_j$
    \item $a_i \wedge a_j \wedge b_k$
    \item $a_i \wedge b_j \wedge b_k$
    \item $b_i \wedge b_j \wedge b_k$
\end{itemize} \end{proposition}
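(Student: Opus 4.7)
The plan is to prove the proposition by establishing inclusions in both directions. Let $U'$ denote the submodule of $\bigwedge^3 H$ generated by the five listed forms. Under the decomposition
\[
\bigwedge^3 H \;=\; \bigwedge^3 V \,\oplus\, (\bigwedge^2 V \wedge V^*) \,\oplus\, (V \wedge \bigwedge^2 V^*) \,\oplus\, \bigwedge^3 V^*,
\]
a quick check shows that $U'$ is precisely the last three summands, i.e.\ the span of all wedge monomials in $\{a_i, b_j\}$ carrying at least one $b$-factor. The goal is to show $U = U'$.

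For $U \subseteq U'$, it suffices to verify the containment on a generating set: $\mathcal{HI}_{g,p}^b$ is generated by bounding pair annulus twists $T_x T_y^{-1}$, where $x \sqcup y$ bounds a genus-$k$ subsurface $S \subset \Sigma_{g,p}^b$ on one side and an annulus $A \subset \mathcal{V}_{g,p}^b$. The key input is that the closed surface $S \cup A \subset \mathcal{V}_{g,p}^b$ is subject to the half-lives-half-dies principle: the kernel of the inclusion-induced map $H_1(S \cup A; \Z) \to H_1(\mathcal{V}_{g,p}^b; \Z) = V$ is a Lagrangian (with respect to the intersection form on $S \cup A$) of rank $k$. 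Identifying $\ker(H \to V) = V^*$, we may pick a symplectic basis $\{x_i, y_i\}_{i=1}^k$ of $H_1(S; \Z)$ with $y_i \in V^*$ for all $i$. Then the evaluation formula in Section \ref{subsection:firstjohnsonTorelli} gives
\[
\tau(T_x T_y^{-1}) \;=\; \Big(\sum_{i=1}^k x_i \wedge y_i\Big) \wedge c, \qquad c = [x] \in H.
\]
Splitting $x_i = x_i^V + x_i^{V^*}$ and $c = c_V + c_{V^*}$ and expanding, every nonzero monomial carries either a $y_i \in V^*$ or the $V^*$-part of $c$, hence at least one $b$-factor. Thus $\tau(T_x T_y^{-1}) \in U'$.

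For $U' \subseteq U$, I appeal to $\mathcal{H}_{g,p}^b$-equivariance of $\tau$. By Omori's Theorem \ref{theorem:Omori}, $\mathcal{HI}_{g,p}^b$ is normally generated in $\mathcal{H}_{g,p}^b$ by a single genus-1 bounding pair annulus twist $T_{C_1} T_{C_2}^{-1}$; after a handleswap we may arrange $\tau(T_{C_1} T_{C_2}^{-1}) = a_1 \wedge a_2 \wedge b_2$. Hence $U$ is the $\Psi(\mathcal{H}_{g,p}^b)$-submodule of $\bigwedge^3 H$ generated by this vector. Hirose's description (Section \ref{subsection:actiononhomology}) exhibits $\Psi(\mathcal{H}_{g,p}^b)$ as the block matrices with $\operatorname{GL}_g(\Z)$ upper-left block $A$ (sending $a_i \mapsto Aa_i$ and $b_i \mapsto (A^t)^{-1} b_i$) and symmetric lower-left block $C$ (sending $a_i \mapsto a_i + \sum_j c_{ij} b_j$, fixing $b_j$). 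A direct orbit calculation starting from $a_1 \wedge a_2 \wedge b_2$ produces each of the five listed shapes: $A$-transvections and handleswaps supply all $a_i \wedge a_j \wedge b_k$ with $k \in \{i,j\}$ (the $a_i \wedge b_i \wedge a_j$ form) and with $k \notin \{i,j\}$; diagonal $C$-actions $c_{ii} = 1$ applied to these yield $a_i \wedge b_i \wedge b_j$; off-diagonal $C$-actions yield $a_i \wedge b_j \wedge b_k$; and iterating the $C$-action on the latter yields $b_i \wedge b_j \wedge b_k$.

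The main obstacle is the Lagrangian step in the forward inclusion: one must justify that $H_1(S; \Z)$ admits a symplectic half-basis sitting inside $V^* = \ker(H \to V)$. Because $S \cup A$ is generically compressible and need not separate the handlebody, the cleanest way is to argue directly that the kernel of $H_1(S \cup A) \to V$ is isotropic of rank $k$ for any embedded closed genus-$k$ surface in a handlebody, and then lift a half-basis back to $H_1(S)$. Once this is secured, the rest is formal: the forward direction is a bookkeeping calculation inside $\bigwedge^3(V \oplus V^*)$, and the reverse direction reduces to verifying the orbit of $a_1 \wedge a_2 \wedge b_2$ under the explicit $\Psi(\mathcal{H}_{g,p}^b)$-action.
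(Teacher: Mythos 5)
The paper does not prove this proposition; it cites Morita's Lemma~2.5 of \cite{MoritaCasson}, so there is no in-paper argument to compare against. Your reverse inclusion $U' \subseteq U$---Omori normal generation plus $\Psi(\mathcal{H}_{g,p}^b)$-equivariance of $\tau$ and the orbit computation from $a_1\wedge a_2 \wedge b_2$---is sound. The forward inclusion $U \subseteq U'$ is where the real gap sits, and it is more serious than you acknowledge. Your appeal to half-lives-half-dies has three problems. First, $F := S \cup A$ has genus $k+1$, not $k$. Second, half-lives-half-dies concerns $\ker\bigl(H_1(\partial M;\Q) \to H_1(M;\Q)\bigr)$, and $F$ is not a priori the boundary of a submanifold; you must first observe that $H_2(\mathcal{V}_{g,p}^b;\Z/2)=0$ forces every embedded closed surface to separate, so $F = \partial N$, and even then $\ker\bigl(H_1(F;\Q) \to H_1(\mathcal{V}_{g,p}^b;\Q)\bigr)$ merely \emph{contains} a Lagrangian and need not equal one (so your proposed fix---showing that kernel is isotropic of rank $k$---cannot hold as stated). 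Third, passing from a Lagrangian of $H_1(F;\Q)$ to a symplectic half-basis of $H_1(S;\Q)$ sitting in $V^*\otimes\Q$ requires intersecting with the codimension-one subspace $H_1(S;\Q)\hookrightarrow H_1(F;\Q)$, working modulo the radical $\langle c\rangle$, and separating the cases $c\in V^*$ and $c\notin V^*$. All of this can be repaired, but not with the short argument you give.

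A cleaner route to $U \subseteq U'$ avoids the surface topology entirely. Write $\pi := \pi_1(\Sigma_{g,p}^b)$ and view $\tau(f) \in \operatorname{Hom}(H,\bigwedge^2 H)$ as $[\gamma]\mapsto [f_*(\gamma)\gamma^{-1}]$. Since $f\in\mathcal{HI}_{g,p}^b$ extends over the handlebody, $f_*$ preserves $N := \ker\bigl(\pi\to\pi_1(\mathcal{V}_{g,p}^b)\bigr)$. Hence for $\gamma\in N$ (so $[\gamma]\in V^*$), the element $f_*(\gamma)\gamma^{-1}$ lies in $N\cap\gamma_2(\pi)$, whose image in $\gamma_2(\pi)/\gamma_3(\pi)\cong\bigwedge^2 H$ is contained in $\ker(\bigwedge^2 H\to\bigwedge^2 V)=V^*\wedge H$. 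So $\tau(f)$ carries $V^*$ into $V^*\wedge H$. Using the splitting $H = V\oplus V^*$, a short contraction computation shows that a trivector $T\in\bigwedge^3 H$ satisfies $T(V^*)\subseteq V^*\wedge H$ if and only if $T$ has vanishing $\bigwedge^3 V$-component, i.e.\ $T\in U'$. This gives the forward inclusion directly, with no bounding-pair formula or embedded-surface argument needed.
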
 \noindent In particular, there are no ``triple-$a$" terms of the form $a_i \wedge a_j \wedge a_k$. \\

For the undecorated case of $\mathcal{HI}_g$, the image $\bar{U}$ lies inside of $(\bigwedge^3 H) / H$ and the construction follows that from Section \ref{subsection:firstjohnsonTorelli}. The quotient $\UQ$ of $U$ fits into the following commutative diagram with exact rows:

\[\begin{tikzcd}
	1 & {\pi_1(U\Sigma_g)} & {\mathcal{HI}_g^1} & {\mathcal{HI}_g} & 1 \\
	0 & H & U & {\UQ} & 0
	\arrow[from=1-1, to=1-2]
	\arrow[from=1-2, to=1-3]
	\arrow["\tau", from=1-2, to=2-2]
	\arrow[from=1-3, to=1-4]
	\arrow["\tau", from=1-3, to=2-3]
	\arrow[from=1-4, to=1-5]
	\arrow["\tau", dashed, from=1-4, to=2-4]
	\arrow[from=2-1, to=2-2]
	\arrow[from=2-2, to=2-3]
	\arrow[from=2-3, to=2-4]
	\arrow[from=2-4, to=2-5]
\end{tikzcd}\] Therefore, $U \cong \UQ \oplus H$, with the decomposition given explicitly in the following way. Recall that $C_3$ is the contraction defined in Section \ref{section:reptheorymaps}, and define $q: \bigwedge^3 H \rightarrow \bigwedge^3 H$ by $$q(v) = v - \frac{1}{g-1}C_3(v) \wedge \omega.$$ Then the direct sum decomposition of $\UQ \oplus H$ is given by $$v \mapsto (q(v), \frac{1}{g-1} C_3(v) \wedge \omega).$$ See \cite[Section 4]{Sakasai1}. \\

In Theorem \ref{theorem:kerneltau}, we decompose the kernels of $$\tau^*: \begin{cases}
     H^2(\UQ; \Q) \rightarrow H^2(\mathcal{HI}_g; \Q) \\
    H^2(U; \Q) \rightarrow H^2(\mathcal{HI}_{g,p}^b; \Q) & (p+b = 1) 
   \end{cases}$$ as direct sums of $\operatorname{SL}_g(\Q)$ representations. The $\operatorname{SL}_g(\Q)$ action comes from the $\mathcal{H}_{g,p}^b$-equivariance of $\tau$. Recall that $\mathcal{H}_{g,p}^b$ acts on $H$ by the symplectic matrices $$\Psi(\mathcal{H}_{g,p}^b) = \left\{  \begin{bmatrix} A & 0 \\ C & (A^t)^{-1} \end{bmatrix} \; | \; A \in \operatorname{GL}_g(\Z), \; C = C^t \right\}.$$ We then get an action of $\mathcal{H}_{g,p}^b$ on $U \subset \bigwedge^3 H$ (when $p+b=1$) or $\UQ \subset (\bigwedge^3 H) / H$ (when $p=b=0$) through its action on $H$. The handlebody group $\mathcal{H}_{g,p}^b$ also acts on its normal subgroup $\mathcal{HI}_{g,p}^b$ by conjugation, and the Johnson homomorphism $\tau$ is $\mathcal{H}_{g,p}^b$-equivariant in the following sense. For $h \in \mathcal{H}_{g,p}^b$ and $f \in \mathcal{HI}_{g,p}^b$, we have $$\tau(hfh^{-1}) = h_* \tau(f).$$ This action extends to an action of $\overline{\Psi(\mathcal{H}_{g,p}^b)}$, the Zariski closure of $\Psi(\mathcal{H}_{g,p}^b)$, on $U_{\Q}$ and $\UQ_{\Q}$. The algebraic group $\overline{\Psi(\mathcal{H}_{g,p}^b)}$ is not semisimple but does contain the semisimple subgroup $$ \left\{ \begin{bmatrix} A & 0 \\ 0 & (A^t)^{-1} \end{bmatrix} \; | \; A \in \operatorname{SL}_g(\Q) \right\}.$$ By restricting our action to this subgroup, we can view $U_{\Q}$ and $\UQ_{\Q}$ as $\operatorname{SL}_g(\Q)$ representations. In the following sections, we use the representation theory of $\operatorname{SL}_g(\Q)$ to prove Theorem \ref{theorem:kerneltau} as we did for Theorem \ref{theorem:kerneltheta}. We will use the $\operatorname{SL}_g(\Q)$ and $\mathfrak{sl}_g(\Q)$ maps from the previous sections. On top of this, we make use of the entire action of $\overline{\Psi(\mathcal{H}_{g,p}^b)}$. If $\mathfrak{b}_i$ is a curve representing $b_i$ in homology, the Dehn twist $T_{\mathfrak{b}_i}$ acts on $H_{\Q}$ by sending $a_i \rightarrow a_i + b_i$ and fixing all other generators. This gives us the $\mathfrak{sp}_{2g}(\Q)$ action $X_i$ (defined in Section \ref{section:reptheorymaps}) which is not in the subalgebra $\mathfrak{sl}_g(\Q)$. The map $X_i$ sends $a_i \rightarrow b_i$ and kills all other generators. 

\section{Module Decomposition} \label{section:moduledecomp}

Letting $V_{\Q}$ be the standard $\operatorname{SL}_g(\Q)$ representation and $V_{\Q}^*$ be its dual, we have the decomposition $H_{\Q} = V_{\Q} \oplus V_{\Q}^*$. The images $\tau(\mathcal{I}_{g,p}^b)$ decompose as $$\bigwedge^3 H_{\Q} = \bigwedge^3 V_{\Q} \oplus [(\bigwedge^2 V_{\Q}) \otimes V_{\Q}^*] \oplus [V_{\Q} \otimes (\bigwedge^2 V_{\Q}^*)] \oplus \bigwedge^3 V_{\Q}^*$$ and $$(\bigwedge^3 H_{\Q}) / H_{\Q} = \bigwedge^3 V_{\Q} \oplus \Phi_{0,1,0, \dots, 0,1} \oplus \Phi_{1,0, \dots, 0,1,0} \oplus \bigwedge^3 V_{\Q}^*.$$ The $\bigwedge^3 V_{\Q}$ summand contains exactly the ``triple-$a$" terms of the form $a_i \wedge a_j \wedge a_k$. Then, by Proposition \ref{proposition:U}, we decompose $U_{\Q} \subset \bigwedge^3 H_{\Q}$ as $$U_{\Q} = [(\bigwedge^2 V_{\Q}) \otimes V_{\Q}^*] \oplus [V_{\Q} \otimes (\bigwedge^2 V_{\Q}^*)] \oplus \bigwedge^3 V_{\Q}^*.$$ This decomposes into irreducible $\operatorname{SL}_g(\Q)$ modules as $$U_{\Q} = \Phi_{0,1,0, \dots, 0,1} \oplus \Phi_{1,0, \dots, 0} \oplus \Phi_{1,0, \dots, 0,1,0} \oplus \Phi_{0, \dots, 0,1} \oplus \Phi_{0, \dots, 0,1,0,0}.$$ Similarly, $\UQ_{\Q} \subset (\bigwedge^3 H_{\Q}) / H_{\Q}$ decomposes as $$\UQ_{\Q} = \Phi_{0,1,0, \dots, 0,1} \oplus \Phi_{1,0, \dots, 0,1,0} \oplus \Phi_{0, \dots, 0,1,0,0}.$$

Following the ideas in Sections \ref{section:lowerboundtheta} and \ref{section:upperboundtheta}, we will determine the kernel of $$\tau^*: H^2(\UQ; \Q) \rightarrow H^2(\mathcal{HI}_g; \Q)$$ by finding $\operatorname{SL}_g(\Q)$ modules in the cokernel and image of the induced map on homology $$\tau_*: H_2(\mathcal{HI}_g; \Q) \rightarrow H_2(\UQ; \Q).$$ For this reason, we decompose $H_2(\UQ; \Q) \cong \bigwedge^2 \UQ_{\Q}$ and $H_2(U; \Q) \cong \bigwedge^2 U_{\Q}$ in Table \ref{table:decompAlt2U}. This can be done using LiE. See Appendix \ref{appendix:LiE}. To obtain decompositions of $H^2(\UQ; \Q) \cong \bigwedge^2 \UQ_{\Q}^*$ and $H^2(U; \Q) \cong \bigwedge^2 U_{\Q}^*$, take the dual of each irreducible representation in Table \ref{table:decompAlt2U}, where we have $$\Phi_{a,b,c, \dots, x,y,z}^* = \Phi_{z,y,x, \dots, c,b,a}.$$ The decomposition in Table \ref{table:decompAlt2U} stabilizes when $g \geq 6$. 

\begin{table}[htbp]
    \centering
    \begin{tabular}{|l|l|l|}
    \hline
    \multirow{2}{*}{\textbf{Highest Weight}} & \multicolumn{2}{c|}{\textbf{Multiplicity}} \\ 
    \cline{2-3}
    & \rule{0pt}{2.6ex} \textbf{$\bigwedge^2 \UQ_{\Q}$} & \textbf{$\bigwedge^2 U_{\Q}$} \\ \hline
        $(0, \dots ,0)$  & 1 & 2 \\ \hline
        $(0, \dots ,0,2)$ & 1 & 1 \\ \hline
        $(0, \dots ,0,1,0)$ & 2 & 5 \\ \hline
        $(0, \dots ,0,2,0)$ & 1 & 1 \\ \hline
        $(0, \dots ,0,1,0,1)$ & 1 & 2 \\ \hline
        $(0, \dots ,0,1,0,0,0)$  & 1 & 2 \\ \hline
        $(0, \dots ,0,1,0,1,0)$  & 1 & 1 \\ \hline
        $(0, \dots, 0,1,0,0,0,0,0)$ & 1 & 1 \\ \hline
        $(0,0,0,1,0, \dots ,0,1,0)$  & 1 & 1 \\ \hline
        $(0,0,1,0, \dots ,0,1)$ & 1 & 2 \\ \hline
        $(0,0,1,0, \dots ,0,1,1)$  & 1 & 1 \\ \hline
        $(0,0,1,0, \dots ,0,1,0,0)$ & 1 & 1 \\ \hline
        $(0,1,0, \dots ,0)$ & 1 & 3 \\ \hline
        $(0,1,0,\dots,0,2)$  & 1 & 2 \\ \hline
        $(0,1,0, \dots ,0,1,0)$ & 2 & 4 \\ \hline
        $(0,1,0, \dots ,0,2,0)$ & 1 & 1 \\ \hline
        $(0,1,0, \dots ,0,1,0,1)$ & 1 & 1 \\ \hline
        $(0,1,0, \dots ,0,1,0,0,0)$ & 2 & 2 \\ \hline
        $(0,2,0, \dots, 0,1,0)$ & 1 & 1 \\ \hline
        $(1,0, \dots ,0,1)$ & 2 & 5 \\ \hline
        $(1,0, \dots ,0,1,1)$ & 2 & 3 \\ \hline
        $(1,0,\dots ,0,1,0,0)$ & 2 & 4 \\ \hline
        $(1,0, \dots ,0,1,1,0)$ & 1 & 1 \\ \hline
        $(1,0, \dots ,0,1,0,0,1)$ & 1 & 1 \\ \hline
        $(1,0, \dots ,0,1,0,0,0,0)$ & 1 & 1 \\ \hline
        $(1,0,1,0, \dots ,0,2)$ & 1 & 1 \\ \hline
        $(1,1,0, \dots ,0,1)$ & 1 & 2 \\ \hline
        $(1,1,0, \dots ,0,1,1)$ & 1 & 1 \\ \hline
        $(1,1,0, \dots ,0,1,0,0)$ & 1 & 1 \\ \hline
        $(2,0, \dots ,0,2)$ & 1 & 1 \\ \hline
        $(2,0, \dots ,0,1,0)$ & 1 & 2 \\ \hline
        $(2,0, \dots ,0,1,0,1)$ & 1 & 1 \\ \hline
    \end{tabular} \caption{Decomposition of $\bigwedge^2 \UQ_{\Q}$ and $\bigwedge^2 U_{\Q}$} \label{table:decompAlt2U}
\end{table} 

\subsection{Decomposition via branching} \label{subsection:branching} We can organize Table \ref{table:decompAlt2U} by remembering that $\bigwedge^2 \UQ_{\Q}$ is a submodule of $\bigwedge^2 ((\bigwedge^3 H_{\Q}) / H_{\Q})$, which has the structure of an $\operatorname{Sp}_{2g}(\Q)$ module. We have the decomposition into irreducible $\operatorname{Sp}_{2g}(\Q)$ modules $$\bigwedge^2 ((\bigwedge^3 H_{\Q}) / H_{\Q}) = \Gamma_{0,1,0,1} \oplus \Gamma_{0,2} \oplus \Gamma_{0,0,0,0,0,1} \oplus \Gamma_{0,0,0,1} \oplus \Gamma_{0,1} \oplus \mathbb{Q}.$$ See \cite[Lemma 10.2]{Hain} and \cite[Lemma 6.1]{MoritaLinear}. With LiE (see Appendix \ref{appendix:LiE}), we can branch each of these $\operatorname{Sp}_{2g}(\Q)$ modules into a direct sum of $\operatorname{SL}_g(\Q)$ modules. This gives us another way to decompose $\bigwedge^2 \UQ_{\Q}$, which is the submodule of $\bigwedge^2 ((\bigwedge^3 H_{\Q}) / H_{\Q})$ that does not include $$(\bigwedge^2 (\bigwedge^3 V_{\Q})) \oplus [\bigwedge^3 V_{\Q} \otimes (\Phi_{0,1,0, \dots, 0,1} \oplus \Phi_{1,0, \dots, 0,1,0} \oplus \bigwedge^3 V_{\Q}^*)].$$ By branching $\bigwedge^2 ((\bigwedge^3 H_{\Q}) / H_{\Q})$ and excluding the modules listed above, we recover the decomposition in Table \ref{table:decompAlt2U}. \\

This approach is useful for organizing the modules in Table \ref{table:decompAlt2U} for the following reason. Let \[ \pi: \bigwedge^2 ((\bigwedge^3 H_{\Q})/H_{\Q}) \rightarrow \bigwedge^6 H_{\Q} \] be the projection map sending $[a \wedge b \wedge c] \wedge [d \wedge e \wedge f] \mapsto a \wedge b \wedge c \wedge d \wedge e \wedge f$. Recall the contractions \[ C_k: \bigwedge^k H_{\Q} \rightarrow \bigwedge^{k-2} H_{\Q} \] as defined in Section \ref{section:reptheorymaps}. Then it follows from \cite[Theorem 17.5]{FH} that we have the following maps between $\operatorname{Sp}_{2g}(\Q)$ representations: \[\begin{tikzcd}
	{\bigwedge^2 ((\bigwedge^3 H_{\Q}) / H_{\Q}) = \Gamma_{0,1,0,1} \oplus \Gamma_{0,2} \oplus \Gamma_{0,0,0,0,0,1} \oplus \Gamma_{0,0,0,1} \oplus \Gamma_{0,1} \oplus \mathbb{Q}} \\
	{\bigwedge^6 H_{\Q} = \Gamma_{0,0,0,0,0,1} \oplus \Gamma_{0,0,0,1} \oplus \Gamma_{0,1} \oplus \mathbb{Q}} \\
	{\bigwedge^4 H_{\Q} = \Gamma_{0,0,0,1} \oplus \Gamma_{0,1} \oplus \mathbb{Q}} \\
	{\bigwedge^2 H_{\Q} = \Gamma_{0,1} \oplus \mathbb{Q}} \\
	{\mathbb{Q}}
	\arrow["\pi", from=1-1, to=2-1]
	\arrow["C_6", from=2-1, to=3-1]
	\arrow["C_4", from=3-1, to=4-1]
	\arrow["C_2", from=4-1, to=5-1]
\end{tikzcd}\] Additionally, we have the bracket map from Section \ref{subsection:higherMod} $$[\cdot, \cdot]: \bigwedge^2 ((\bigwedge^3 H_{\Q})/H_{\Q}) \rightarrow \Gamma_{0,2}.$$ These maps give us information about the modules in Table \ref{table:decompAlt2U}. By keeping track of which $\operatorname{Sp}_{2g}(\Q)$ representation each $\operatorname{SL}_g(\Q)$ module came from, we know whether it belongs to the kernels of the maps $\pi$ (composed with contractions) and $[\cdot, \cdot]$.
 
\section{Lower bound on $\operatorname{ker}(\tau^*)$ for $\mathcal{HI}_g$} \label{section:lowerboundtau}

We start with the following theorem, which addresses the undecorated case $\mathcal{HI}_g$.

\begin{theorem} \label{theorem:kerneltauclosed}
    When $g \geq 6$, the kernel of $\tau^*: H^2(\UQ; \Q) \rightarrow H^2(\mathcal{HI}_g; \Q)$ decomposes into $\operatorname{SL}_g(\Q)$ representations as \begin{align*}
\Q \oplus \Phi_{0,1,0, \dots ,0,1,0} \oplus \Phi_{0,2,0, \dots ,0} \oplus \Phi_{1,0, \dots ,0,1} \oplus \Phi_{1,0, \dots ,0,1,1} \oplus \Phi_{1,1,0, \dots ,0,1} \oplus \Phi_{2,0, \dots ,0} \oplus \Phi_{2,0, \dots ,0,2}.
\end{align*}
\end{theorem}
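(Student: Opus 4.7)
The plan is to follow the homology-side strategy outlined in Section~\ref{subsection:theoremBoutine}: decompose $H_2(\UQ;\Q) \cong \bigwedge^2 \UQ_\Q$ as already done in Table~\ref{table:decompAlt2U}, and for each irreducible $\operatorname{SL}_g(\Q)$ summand decide whether its dual lies in $\operatorname{coker}(\tau_*)$ (so the summand itself is in $\ker \tau^*$) or in $\operatorname{im}(\tau_*)$. In this section I am providing the lower bound, namely I will exhibit each of the eight modules listed in the theorem inside $\ker \tau^*$, equivalently their duals inside $\operatorname{coker}(\tau_*)$. The starting point is the five-term exact sequence of
$$ 1 \to \mathcal{HK}_g \to \mathcal{HI}_g \to \UQ \to 1, $$
which after tensoring with $\Q$ gives
$$ H_2(\mathcal{HI}_g;\Q) \xrightarrow{\tau_*} \bigwedge^2 \UQ_\Q \xrightarrow{b} \bigl(\mathcal{HK}_g / [\mathcal{HI}_g,\mathcal{HK}_g]\bigr) \otimes \Q, $$
so $\operatorname{coker}(\tau_*) \cong \operatorname{im}(b)$. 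The map $b$ sends $\bar f \wedge \bar h$ to the class of $[\tilde f, \tilde h]$, is $\operatorname{SL}_g(\Q)$-equivariant, and by Schur each irreducible summand of $\bigwedge^2 \UQ_\Q$ either survives completely in $\operatorname{im}(b)$ or is killed.

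To distinguish summands I will post-compose $b$ with the second Johnson homomorphism. Because $[\mathcal{HI}_g,\mathcal{HK}_g] \subset \ker \tau_2$, the rational map $\tau_2^\Q$ descends to the codomain of $b$, and the composition
$$ \tau_2^\Q \circ b \,:\, \bigwedge^2 \UQ_\Q \longrightarrow \tau_2^\Q(\mathcal{HK}_g) $$
is precisely the restriction to $\bigwedge^2 \UQ_\Q \subset \bigwedge^2((\bigwedge^3 H_\Q)/H_\Q)$ of Morita's Lie bracket $[\cdot,\cdot]\colon \bigwedge^2 \tau(\mathcal{I}_g) \to \tau_2(\mathcal{K}_g)$ from Section~\ref{subsection:higherMod}. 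Theorem~\ref{theorem:decomptau2} supplies the $\operatorname{SL}_g(\Q)$-decomposition of $\tau_2^\Q(\mathcal{HK}_g)$, so the detection problem reduces to an explicit computation of the bracket on highest-weight vectors.

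Concretely, for each irreducible $\Phi$ in the statement I will exhibit a highest-weight vector $v \in \Phi^* \subset \bigwedge^2 \UQ_\Q$ built from the generating vectors of $\UQ_\Q \subset \bigwedge^3 H_\Q$ (no triple-$a$ terms, by Proposition~\ref{proposition:U}), evaluate the Morita bracket on $v$ using the standard formula on generators of $\tau(\mathcal{I}_g)$, and verify that the output is a nonzero highest-weight vector in the appropriate summand of $\tau_2^\Q(\mathcal{HK}_g)$. The bookkeeping is greatly simplified by organizing $\bigwedge^2 \UQ_\Q$ according to the $\operatorname{Sp}_{2g}(\Q)$-branching in Section~\ref{subsection:branching}: the six isotypic pieces $\Gamma_{0,1,0,1}, \Gamma_{0,2}, \Gamma_{0,0,0,0,0,1}, \Gamma_{0,0,0,1}, \Gamma_{0,1}, \Q$ of $\bigwedge^2((\bigwedge^3 H_\Q)/H_\Q)$ determine a priori where the bracket can land, and Morita's bracket is known to be surjective onto certain of these pieces (in particular $\Gamma_{0,2}$), which accounts for most of the desired kernel modules at once.

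The main obstacle is the trivial summand $\Q$ at the top of the list. For the full Torelli group $\mathcal{I}_g$ the analogous trivial summand is not detected by the bracket alone, and Hain's argument requires an extra input from the Casson invariant; the introduction asserts that for $\mathcal{HI}_g$ the bracket is enough, and verifying this requires producing two commuting elements of $\mathcal{HI}_g$ whose wedge projects to a generator of the $\operatorname{SL}_g(\Q)$-invariant line in $\bigwedge^2 \UQ_\Q$ and whose commutator survives in the trivial summand of $\tau_2^\Q(\mathcal{HK}_g)$. A secondary difficulty is multiplicity: several rows of Table~\ref{table:decompAlt2U} record multiplicity two, so for modules like $\Phi_{1,0,\dots,0,1}$ or $\Phi_{0,1,0,\dots,0,1,0}$ it is not enough to find \emph{some} copy in $\operatorname{coker}(\tau_*)$ — one must identify the correct copy, namely the one that survives in the $\Gamma_{0,2}$-piece of the branching. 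This is where the $\operatorname{Sp}_{2g}(\Q)$-organization of Section~\ref{subsection:branching}, together with the explicit description of $\tau_2^\Q(\mathcal{HK}_g)$ from Theorem~\ref{theorem:decomptau2}, does the real work.
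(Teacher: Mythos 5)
Your framework for the lower bound --- the five-term exact sequence of $1\to\mathcal{HK}_g\to\mathcal{HI}_g\to\UQ\to 1$, identifying $\tau_2^{\Q}\circ b$ with Morita's bracket restricted to $\bigwedge^2\UQ_{\Q}$, Schur's lemma, and the $\operatorname{Sp}_{2g}(\Q)$-branching from Section~\ref{subsection:branching} --- matches the paper's Section~\ref{section:lowerboundtau}. But the theorem asserts equality, and your plan addresses only the forward containment. The paper's upper bound (Proposition~\ref{proposition:upperboundtau}, proved throughout Section~\ref{section:upperboundtau}) is a separate and lengthy argument showing that every irreducible summand of $\bigwedge^2\UQ_{\Q}$ \emph{not} dual to a module in the list lies in $\operatorname{im}(\tau_*)$, and it relies on a different technique entirely: abelian cycles built from commuting bounding pair annulus twists (Lemma~\ref{lemma:3types}), pushed around by $\mathcal{H}_g$-equivariant maps and the projection $\pi$ to $\bigwedge^6 H_{\Q}$. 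Your proposal never engages with this, and the reverse containment does not follow formally from detecting the eight listed modules in the cokernel --- one must positively exclude the other twenty-odd weights in Table~\ref{table:decompAlt2U}.

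There are also two concrete slips inside the lower-bound plan. The passage about the trivial summand asks for ``two commuting elements of $\mathcal{HI}_g$ \dots whose commutator survives in the trivial summand of $\tau_2^{\Q}(\mathcal{HK}_g)$'' --- commuting elements have trivial commutator, so this is vacuous. You have crossed the two directions: commuting elements (abelian cycles) detect $\operatorname{im}(\tau_*)$, whereas detecting $\operatorname{coker}(\tau_*)$ requires preimages whose commutator is \emph{non}trivial. What the paper actually does for $\Q$ is evaluate $C_2\circ C_4\circ p\circ[\cdot,\cdot]$ on the vector $[a_1\wedge a_2\wedge b_3]\wedge[b_1\wedge b_2\wedge a_3]$ and obtain the nonzero scalar $6$; no input about the Casson invariant or commuting elements is needed, because $\bigwedge^2\UQ_{\Q}$ has only one copy of $\Q$ and the bracket alone reaches it. Separately, leaning on Theorem~\ref{theorem:decomptau2} to supply the decomposition of $\tau_2^{\Q}(\mathcal{HK}_g)$ is circular: the proof of that theorem cites the lower-bound computations of Section~\ref{section:lowerboundtau} to establish $D_{g,p}^b\subseteq\tau_2^{\Q}(\mathcal{HK}_{g,p}^b)$. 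The paper avoids this by exhibiting, for each required weight, a nonzero highest-weight vector in $\operatorname{im}([\cdot,\cdot])$ directly, without presupposing the full decomposition of the codomain.
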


In this section we prove the following proposition, from which half of Theorem \ref{theorem:kerneltauclosed} will follow. Namely, we find a lower bound on $\operatorname{ker}(\tau^*)$ by showing that it contains each of the modules in the statement of Theorem \ref{theorem:kerneltauclosed}.

\begin{proposition} \label{proposition:lowerboundtau}
    When $g \geq 5$, the kernel of $\tau^*: H^2(\UQ; \Q) \rightarrow H^2(\mathcal{HI}_g; \Q)$ contains the following $\operatorname{SL}_g(\Q)$ representations: \begin{align*}
\Q \oplus \Phi_{0,1,0, \dots ,0,1,0} \oplus \Phi_{0,2,0, \dots ,0} \oplus \Phi_{1,0, \dots ,0,1} \oplus \Phi_{1,0, \dots ,0,1,1} \oplus \Phi_{1,1,0, \dots ,0,1} \oplus \Phi_{2,0, \dots ,0} \oplus \Phi_{2,0, \dots ,0,2}.
\end{align*}
\end{proposition}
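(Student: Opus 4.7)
The plan is to apply Stallings' five term exact sequence (see \cite{Stallings}) to the extension
\[
1 \to \mathcal{HK}_g \to \mathcal{HI}_g \xrightarrow{\tau} \UQ \to 1,
\]
which yields
\[
H_2(\mathcal{HI}_g; \Q) \xrightarrow{\tau_*} \bigwedge^2 \UQ_{\Q} \xrightarrow{b} (\mathcal{HK}_g/[\mathcal{HI}_g, \mathcal{HK}_g]) \otimes \Q,
\]
so $\operatorname{coker}(\tau_*) \cong \operatorname{im}(b)$. As in Section~\ref{section:lowerboundtheta}, a summand $\Phi_{w_1,\dots,w_{g-1}}$ lies in $\ker(\tau^*)$ as soon as its dual $\Phi_{w_{g-1},\dots,w_1}$ lies in $\operatorname{im}(b)$; by Schur's lemma it suffices to detect the dual via any $\operatorname{SL}_g(\Q)$-equivariant post-composition of $b$.

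The post-composition I would use is Morita's second Johnson homomorphism. Since $[\mathcal{HI}_g, \mathcal{HK}_g] \subset \mathcal{I}_g(3) \subset \ker(\tau_2)$, the map $\tau_2$ descends to an $\operatorname{SL}_g(\Q)$-equivariant map $\tau_2^{\Q}$ on $(\mathcal{HK}_g/[\mathcal{HI}_g,\mathcal{HK}_g]) \otimes \Q$, and by the very definition of the bracket in Section~\ref{subsection:higherMod} the composition $\tau_2^{\Q} \circ b$ agrees with the restriction of
\[
[\cdot,\cdot]\colon \bigwedge^2 \tau(\mathcal{I}_g) \otimes \Q \to \tau_2^{\Q}(\mathcal{K}_g)
\]
to $\bigwedge^2 \UQ_{\Q}$. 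This bracket is $\operatorname{Sp}_{2g}(\Q)$-equivariant and, by Morita's description recalled in Section~\ref{subsection:branching}, coincides up to nonzero scalar with the projection of the ambient $\bigwedge^2((\bigwedge^3 H_{\Q})/H_{\Q})$ onto its $\Gamma_{0,2}$ summand. Combined with the branching analysis of Section~\ref{subsection:branching} and the $\operatorname{SL}_g(\Q)$-decomposition of $\tau_2^{\Q}(\mathcal{HK}_g)$ established in Theorem~\ref{theorem:decomptau2}, this reduces the problem to identifying which $\operatorname{SL}_g(\Q)$ constituents of $\bigwedge^2 \UQ_{\Q}$ both project nontrivially to $\Gamma_{0,2}$ and survive inside $\tau_2^{\Q}(\mathcal{HK}_g)$.

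I would then verify one at a time, for each of the eight dual highest weights to be shown in $\operatorname{coker}(\tau_*)$, namely $\Q$, $\Phi_{0,1,0,\dots,0,1,0}$, $\Phi_{0,\dots,0,2,0}$, $\Phi_{1,0,\dots,0,1}$, $\Phi_{1,1,0,\dots,0,1}$, $\Phi_{1,0,\dots,0,1,1}$, $\Phi_{0,\dots,0,2}$, and $\Phi_{2,0,\dots,0,2}$, that the desired module is hit. For each, the task is to exhibit a highest weight vector $v = x \wedge y \in \bigwedge^2 \UQ_{\Q}$, assembled from the wedge generators of $\UQ$ listed in Proposition~\ref{proposition:U} (elements of the forms $a_i\wedge b_i \wedge a_j$, $a_i \wedge b_i \wedge b_j$, $a_i \wedge a_j \wedge b_k$, $a_i \wedge b_j \wedge b_k$, or $b_i \wedge b_j \wedge b_k$), and compute $[x,y]$ using the formula $[\tau(f),\tau(h)] = \tau_2(fhf^{-1}h^{-1})$ with representatives given by products of commuting bounding pair annulus twists and disk twists in $\mathcal{H}_g$. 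Nonvanishing of the resulting highest weight vector is then checked against the decomposition of $\tau_2^{\Q}(\mathcal{HK}_g)$ from Theorem~\ref{theorem:decomptau2}, and Schur's lemma promotes each such vector to an entire irreducible summand in $\operatorname{im}(b)$.

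The main obstacle I expect is the trivial summand $\Q$. In the analogous problem for $\mathcal{I}_g$, Morita's bracket fails to detect this summand and an extra input, the Casson invariant, is required (see \cite{Hain}). The reason this problem does not arise here is structural: the $\operatorname{SL}_g(\Q)$-decomposition of $\tau_2^{\Q}(\mathcal{HK}_g)$ contains a trivial summand supported on meridional data that is absent from $\tau_2^{\Q}(\mathcal{K}_g)$, so I anticipate that a suitable pair of commuting bounding pair annulus twists in $\mathcal{HI}_g$ produces a bracket whose image in that trivial summand is nonzero; verifying this explicitly against Theorem~\ref{theorem:decomptau2} is the delicate point. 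A secondary difficulty is that several of the target weights occur with multiplicity in $\bigwedge^2 \UQ_{\Q}$, so some care is needed in selecting the correct isotypic copy; the branching of Section~\ref{subsection:branching}, together with the maps $\pi$, $C_k$, and $[\cdot,\cdot]$ recorded there, provides the necessary bookkeeping to isolate the copy lying in the $\Gamma_{0,2}$ summand.
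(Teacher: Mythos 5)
Your high-level strategy matches the paper's exactly: apply the five-term exact sequence to $1 \to \mathcal{HK}_g \to \mathcal{HI}_g \to \UQ \to 1$, identify $\tau_2^{\Q}\circ b$ with Morita's bracket restricted to $\bigwedge^2 \UQ_{\Q}$, and detect each dual module by exhibiting a highest weight vector in the image. However, there are two concrete problems with the plan as written.

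First, you propose to compute $[x,y]$ as $\tau_2(fhf^{-1}h^{-1})$ for representatives $f,h$ that are ``products of commuting bounding pair annulus twists and disk twists.'' If $f$ and $h$ commute, then $fhf^{-1}h^{-1}=1$ and the bracket vanishes identically; commuting pairs are the right tool for abelian cycles and the \emph{image} of $\tau_*$, not for the cokernel. The same confusion appears in your discussion of the trivial summand, where you anticipate that ``a suitable pair of commuting bounding pair annulus twists'' produces a nonzero bracket---this cannot happen. Moreover, disk twists do not lie in $\mathcal{HI}_g$ at all, so they cannot serve as representatives of elements of $\UQ$. The paper sidesteps both difficulties entirely: once $\tau_2^{\Q}\circ b$ is identified with $[\cdot,\cdot]$, it evaluates the bracket purely algebraically via Morita's explicit formula (the $\omega$-weighted sum of symmetric products recalled in Section~\ref{section:lowerboundtau}) applied to wedges $[a\wedge b\wedge c]\wedge[d\wedge e\wedge f]$ in $\bigwedge^2\UQ_{\Q}$, with no mapping class group representatives needed. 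You should do likewise: pick, for each of the eight target weights, a basis vector of $\bigwedge^2\UQ_{\Q}$ whose bracket visibly contains a highest weight vector, possibly after postcomposing with the $\operatorname{SL}_g$-equivariant maps $p$, $C_4$, $C_2$.

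Second, appealing to Theorem~\ref{theorem:decomptau2} to verify nonvanishing is circular: the decomposition of $\tau_2^{\Q}(\mathcal{HK}_g)$ given there is itself established (for its lower bound) by showing that the bracket image contains the listed modules, which is exactly what this proposition asserts. The circularity is avoidable---you do not need to know $\tau_2^{\Q}(\mathcal{HK}_g)$ in advance; you only need to check directly that the bracket of your chosen vector is nonzero in $\operatorname{Sym}^2(\bigwedge^2 H_{\Q})$ (or after contraction, in some $\bigwedge^{2k}H_{\Q}$) and has the required weight, after which Schur's lemma applied to the map $b$ promotes it to a full irreducible summand of $\operatorname{coker}(\tau_*)$. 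So the logical order must be: compute brackets, conclude the lower bound here, and only then feed it into the proof of Theorem~\ref{theorem:decomptau2}.
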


\begin{proof}[Proof of Proposition \ref{proposition:lowerboundtau}] A module is in $\operatorname{ker}(\tau^*)$ if its dual module is in $\operatorname{coker}(\tau_*)$. We recall that the Johnson kernel $\mathcal{K}_g$ is the kernel of the Johnson homomorphism on the entire Torelli group $\mathcal{I}_g$, and we denote by $\mathcal{HK}_g$ its intersection with the handlebody group. From the five term exact sequence of the extension $$1 \rightarrow \mathcal{HK}_g \rightarrow \mathcal{HI}_g \rightarrow \UQ \rightarrow 1,$$ we get an exact sequence 
$$H_2(\mathcal{HI}_g; \mathbb{Q}) \xrightarrow{\tau_*} H_2(\UQ; \mathbb{Q}) \xrightarrow{b} H_1(\mathcal{HK}_g; \Q)_{\mathcal{HI}_g}.$$ Identifying $H_2(\UQ; \mathbb{Q}) \cong \bigwedge^2 \UQ_{\Q}$ and $H_1(\mathcal{HK}_g; \Q)_{\mathcal{HI}_g} \cong (\mathcal{HK}_g / [\mathcal{HI}_g, \mathcal{HK}_g]) \otimes \Q$ turns this into an exact sequence $$H_2(\mathcal{HI}_g; \mathbb{Q}) \xrightarrow{\tau_*} \bigwedge^2 \UQ_{\Q} \xrightarrow{b} (\mathcal{HK}_g / [\mathcal{HI}_g, \mathcal{HK}_g]) \otimes \Q.$$ We therefore have $$\operatorname{coker}(\tau_*) \cong \operatorname{im}(b).$$ The map $b$ is induced from the following map. Given $x \wedge y \in \bigwedge^2 \UQ$, let $\varphi$ and $\psi$ in $\mathcal{HI}_g$ be preimages of $x$ and $y$, respectively. Then we send $$x \wedge y \mapsto \overline{[\varphi,\psi]},$$ where $\overline{[\varphi,\psi]}$ is the class of the commutator $[\varphi,\psi]$ in the quotient $\mathcal{HK}_g / [\mathcal{HI}_g, \mathcal{HK}_g]$.

To detect modules in $\operatorname{im}(b)$, we examine the image of the composition $\tau_2^{\Q} \circ b$, where $\tau_2^{\Q}$ is the second Johnson homomorphism defined in Section \ref{subsection:higherMod} tensored with $\Q$. In Section \ref{subsection:higherMod}, we defined a bracket map $$[ \cdot, \cdot]: \bigwedge^2 \tau^{\Q}(\mathcal{I}_g) \rightarrow \tau_2^{\Q}(\mathcal{K}_g).$$ Tensoring with $\Q$ and identifying $\bigwedge^2 \tau^{\Q}(\mathcal{I}_g) \cong \bigwedge^2 ((\bigwedge^3 H_{\Q}) / H_{\Q})$ and $\tau_2^{\Q}(\mathcal{K}_g) \cong \operatorname{Sym}^2 (\bigwedge^2 H_{\Q})$ gives us

$$[ \cdot, \cdot]: \bigwedge^2 ((\bigwedge^3 H_{\Q}) / H_{\Q}) \rightarrow \operatorname{Sym}^2( \bigwedge^2 H_{\Q}).$$ Let $(a \wedge b) \leftrightarrow (c \wedge d)$ denote the vector $(a \wedge b) \otimes (c \wedge d) + (c \wedge d) \otimes (a \wedge b) \in \operatorname{Sym}^2(\bigwedge^2 H_{\Q})$. The bracket map $[ \cdot, \cdot]$ is defined on generators of $\bigwedge^2 ((\bigwedge^3 H_{\Q}) / H_{\Q})$ as follows: \begin{multline*} [a \wedge b \wedge c] \wedge [d \wedge e \wedge f] \mapsto \omega(a,d)(b \wedge c) \leftrightarrow (e \wedge f) - \omega(a,e)(b \wedge c)\leftrightarrow (d \wedge f) + \omega(a,f)(b \wedge c) \leftrightarrow (d \wedge e) \\ - \omega(b,d)(a \wedge c) \leftrightarrow (e \wedge f) + \omega(b,e)(a \wedge c) \leftrightarrow (d \wedge f) - \omega(b,f)(a \wedge c) \leftrightarrow (d \wedge e) \\ + \omega(c,d)(a \wedge b) \leftrightarrow (e \wedge f) - \omega(c,e)(a \wedge b) \leftrightarrow (d \wedge f) + \omega(c,f)(a \wedge b) \leftrightarrow (d \wedge e). \end{multline*} See \cite[Equation 4.2]{MoritaLinear}. We will consider the restriction of the bracket map $$[ \cdot, \cdot ]: \bigwedge^2 \UQ_{\Q} \rightarrow \tau_2^{\Q}(\mathcal{HK}_g).$$ It follows from the definition that $[\cdot, \cdot]$ is exactly the composition $\tau_2^{\Q} \circ b$. 

Additionally, in this section, we let $$p: \operatorname{Sym}^2( \bigwedge^2 H_{\Q}) \rightarrow \bigwedge^4 H_{\Q}$$ be the projection $(a \wedge b) \leftrightarrow (c \wedge d) \mapsto a \wedge b \wedge c \wedge d$. \\

We now show that each irreducible module in the statement of Proposition \ref{proposition:lowerboundtau} is in $\operatorname{im}([ \cdot, \cdot ])$ by finding a highest weight vector for each in $\operatorname{im}([ \cdot, \cdot ])$. Since $[\cdot, \cdot]$ is an $\mathcal{H}_g$-equivariant map, Schur's Lemma tells us that the $\operatorname{SL}_g(\Q)$ modules in $\operatorname{im}([\cdot, \cdot])$ are in $\operatorname{im}(b)$, and thus are in $\operatorname{coker}(\tau_*)$. \\

$\boxed{\Q}$: We will show that the trivial module $\Q$ is in the image of $[ \cdot, \cdot]$. First we apply $[\cdot, \cdot]$ to the vector $[a_1 \wedge a_2 \wedge b_3] \wedge [b_1 \wedge b_2 \wedge a_3]$ to get $$(a_2 \wedge b_3) \leftrightarrow (b_2 \wedge a_3) + (a_1 \wedge b_3) \leftrightarrow (b_1 \wedge a_3) - (a_1 \wedge a_2) \leftrightarrow (b_1 \wedge b_2).$$ Next, we apply a composition of $\mathcal{H}_g$-equivariant maps. First we apply the projection $p$ to get $$a_2 \wedge b_2 \wedge a_3 \wedge b_3 + a_1 \wedge b_1 \wedge a_3 \wedge b_3 + a_1 \wedge b_1 \wedge a_2 \wedge b_2.$$ Then we apply the contraction $C_4$ to get $$2 a_1 \wedge b_1 + 2a_2 \wedge b_2 + 2 a_3 \wedge b_3,$$ and contract again with $C_2$ to get $$6.$$ Since we found a highest weight vector\footnote{See Appendix \ref{appendix:hwv} and \ref{appendix:constructhwv} for details on highest weight vectors.} for $\Q$ in the image of the composition $C_2 \circ C_4 \circ p \circ [\cdot, \cdot]$, the entire module $\Q$ must lie in $\operatorname{im}(C_2 \circ C_4 \circ p \circ [\cdot, \cdot])$ because it is irreducible. By Schur's Lemma, there is a copy of $\Q$ in $\operatorname{im}([\cdot, \cdot])$. The trivial representation $\Q$ is isomorphic to its dual, and by taking duals we find a copy of $\Q$ in $\operatorname{ker}(\tau^*)$ when $g \geq 3$. \\

$\boxed{(0, \dots ,0,2)}$: As we did for $\Q$, we find a highest weight vector for $\Phi_{0, \dots ,0,2}$ in $\operatorname{im}([\cdot, \cdot])$. Apply $[\cdot, \cdot]$ to the vector $$\sum_{1 \leq i,j \leq g-1} [b_g \wedge a_i \wedge a_j] \wedge [b_g \wedge b_i \wedge b_j]$$ to get $$2(g-2) \sum_{i=1}^{g-1} (b_g \wedge a_i) \leftrightarrow (b_g \wedge b_i),$$ which is a highest weight vector for $\Phi_{0, \dots ,0,2}$. The dual $\Phi_{2,0, \dots, 0}$ lies in $\operatorname{ker}(\tau^*)$ when $g \geq 3$. \\

$\boxed{(0, \dots ,0,2,0)}$: Apply $[\cdot, \cdot]$ to the vector $$[b_{g-1} \wedge b_g \wedge a_1] \wedge [b_{g-1} \wedge b_g \wedge b_1]$$ to get $$(b_{g-1} \wedge b_g) \leftrightarrow (b_{g-1} \wedge b_g),$$ which is a highest weight vector for $\Phi_{0, \dots ,0,2,0}$. The dual $\Phi_{0,2,0, \dots, 0}$ lies in $\operatorname{ker}(\tau^*)$ when $g \geq 3$. \\

$\boxed{(0,1,0, \dots ,0,1,0)}$: Apply $[\cdot, \cdot]$ to the vector $$[b_{g-1} \wedge b_g \wedge a_3] \wedge [b_3 \wedge a_1 \wedge a_2]$$ to get $$(b_{g-1} \wedge b_g) \leftrightarrow (a_1 \wedge a_2),$$ which is a highest weight vector for $\Phi_{0,1,0, \dots ,0,1,0}$. The dual $\Phi_{0,1,0, \dots ,0,1,0}$ lies in $\operatorname{ker}(\tau^*)$ when $g \geq 5$. \\

$\boxed{(1,0, \dots ,0,1)}$: Apply the composition $C_4 \circ p \circ [ \cdot, \cdot]$ to the vector $$[b_g \wedge a_2 \wedge a_3] \wedge [a_1 \wedge b_2 \wedge b_3]$$ to get $$2 a_1 \wedge b_g,$$ which is a highest weight vector for $\Phi_{1,0, \dots ,0,1}$. The dual $\Phi_{1,0, \dots ,0,1}$ lies in $\operatorname{ker}(\tau^*)$ when $g \geq 5$. \\

$\boxed{(1,0, \dots ,0,1,1)}$: Apply $[\cdot, \cdot]$ to the vector $$[a_1 \wedge b_2 \wedge b_g] \wedge [a_2 \wedge b_{g-1} \wedge b_g]$$ to get $$(a_1 \wedge b_g) \leftrightarrow (b_{g-1} \wedge b_g),$$ which is a highest weight vector for $\Phi_{1,0, \dots ,0,1,1}$. The dual $\Phi_{1,1,0, \dots ,0,1}$ lies in $\operatorname{ker}(\tau^*)$ when $g \geq 4$. \\

$\boxed{(1,1,0, \dots ,0,1)}$: Apply $[\cdot, \cdot]$ to the vector $$[a_1 \wedge a_2 \wedge b_3] \wedge [a_3 \wedge a_1 \wedge b_g]$$ to get $$-(a_1 \wedge a_2) \leftrightarrow (a_1 \wedge b_g),$$ which is a highest weight vector for $\Phi_{1,1,0, \dots ,0,1}$. The dual $\Phi_{1,0, \dots, 0,1,1}$ lies in $\operatorname{ker}(\tau^*)$ when $g \geq 4$. \\

$\boxed{(2,0, \dots ,0,2)}$: Apply $[\cdot, \cdot]$ to the vector $$[a_1 \wedge b_g \wedge a_2] \wedge [a_1 \wedge b_g \wedge b_2]$$ to get $$(a_1 \wedge b_g) \leftrightarrow (a_1 \wedge b_g),$$ which is a highest weight vector for $\Phi_{2,0, \dots ,0,2}$. The dual $\Phi_{2,0, \dots ,0,2}$ lies in $\operatorname{ker}(\tau^*)$ when $g \geq 3$. \\

We have shown that every module in the statement of Proposition \ref{proposition:lowerboundtau} for $\mathcal{HI}_g$ lies in $\operatorname{ker}(\tau^*)$ when $g \geq 5$. \end{proof}

In Section \ref{section:upperboundtau}, we complete the proof of Theorem \ref{theorem:kerneltauclosed} by demonstrating that the remaining modules in the decomposition of $\bigwedge^2 \UQ_{\Q}^*$ do not lie in $\operatorname{ker}(\tau^*)$ when $g \geq 6$.

\subsection{Analogous results for the Torelli group} \label{subsection:hain}

In \cite{Hain}, Hain solves the analogous problem for the Torelli group $\mathcal{I}_g$. 
 
\begin{theorem}[{Hain, \cite{Hain}}] When $g \geq 3$, the kernel of $\tau^*: H^2((\bigwedge^3 H) / H; \Q) \rightarrow H^2(\mathcal{I}_g; \Q)$ is the $\operatorname{Sp}_{2g}(\Q)$ representation $\Gamma_{0,2} \oplus \Q$. \end{theorem}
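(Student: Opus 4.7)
The plan is to adapt to $\mathcal{I}_g$ the same strategy used for $\mathcal{HI}_g$ in Sections \ref{section:lowerboundtau} and \ref{section:upperboundtau}. I would first consider the five term exact sequence of the extension $1 \to \mathcal{K}_g \to \mathcal{I}_g \to (\bigwedge^3 H)/H \to 1$, giving
$$H_2(\mathcal{I}_g; \Q) \xrightarrow{\tau_*} \bigwedge^2((\bigwedge^3 H_\Q)/H_\Q) \xrightarrow{b} (\mathcal{K}_g/[\mathcal{I}_g, \mathcal{K}_g]) \otimes \Q,$$
so $\operatorname{coker}(\tau_*) \cong \operatorname{im}(b)$. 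Since the six irreducible summands of
$$\bigwedge^2((\bigwedge^3 H_\Q)/H_\Q) = \Gamma_{0,1,0,1} \oplus \Gamma_{0,2} \oplus \Gamma_{0,0,0,0,0,1} \oplus \Gamma_{0,0,0,1} \oplus \Gamma_{0,1} \oplus \Q$$
(as recorded in Section \ref{subsection:branching}) are self-dual as $\operatorname{Sp}_{2g}(\Q)$ modules, a summand belongs to $\operatorname{ker}(\tau^*)$ if and only if it belongs to $\operatorname{im}(b)$, and the problem becomes identifying exactly which of these six summands lie in $\operatorname{im}(b)$.

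To capture $\Gamma_{0,2}$, I would compose $b$ with the second Johnson homomorphism to obtain Morita's bracket map $[\cdot,\cdot] = \tau_2^\Q \circ b : \bigwedge^2((\bigwedge^3 H_\Q)/H_\Q) \to \operatorname{Sym}^2(\bigwedge^2 H_\Q)$. Evaluating this bracket on the vector $[b_{g-1} \wedge b_g \wedge a_1] \wedge [b_{g-1} \wedge b_g \wedge b_1]$, as in the $(0,\dots,0,2,0)$ calculation of Proposition \ref{proposition:lowerboundtau}, produces $(b_{g-1} \wedge b_g) \leftrightarrow (b_{g-1} \wedge b_g)$, which is a nonzero weight vector lying in the $\Gamma_{0,2}$ component of $\operatorname{Sym}^2(\bigwedge^2 H_\Q)$. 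By Schur's lemma and $\operatorname{Sp}_{2g}(\Q)$-equivariance of $[\cdot,\cdot]$, the full summand $\Gamma_{0,2}$ lies in $\operatorname{im}(b)$.

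The hard part is the trivial summand $\Q$: the bracket map annihilates $\Q$ because $\tau_2^\Q(\mathcal{K}_g)$ contains no trivial $\operatorname{Sp}_{2g}(\Q)$ subrepresentation, so no analogue of the argument above can detect it. Here I would invoke the Casson invariant $\lambda : \mathcal{K}_g \to \Z$. By Morita's analysis, $\lambda$ is a homomorphism that is $\mathcal{I}_g$-invariant under conjugation, and so it descends to an $\operatorname{Sp}_{2g}(\Q)$-equivariant map giving a composition $\lambda_\Q \circ b : \bigwedge^2((\bigwedge^3 H_\Q)/H_\Q) \to \Q$. It then suffices to exhibit one element of the $\Q$ summand on which this composition is nonzero; Morita's formula for $\lambda$ on commutators of disjoint bounding pair maps provides such an element, via a vector such as $[a_1 \wedge a_2 \wedge b_3] \wedge [b_1 \wedge b_2 \wedge a_3]$, whose membership in the $\Q$ summand is verified by the contraction chain $C_2 \circ C_4 \circ \pi$ of Section \ref{subsection:branching}. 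This 3-manifold invariant is the essentially new ingredient with no counterpart in the handlebody case and the step I expect to be the main obstacle, since it forces one out of the purely algebraic framework provided by the Johnson filtration.

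For the upper bound, I would show that each of the remaining four summands $\Gamma_{0,1,0,1}, \Gamma_{0,0,0,0,0,1}, \Gamma_{0,0,0,1}$, and $\Gamma_{0,1}$ lies in $\operatorname{im}(\tau_*)$ by the method of abelian cycles as in Section \ref{section:upperboundtau}, exhibiting for each a pair of commuting bounding pair maps in $\mathcal{I}_g$ whose abelian cycle $\tau_*(\{T_xT_{x'}^{-1}, T_yT_{y'}^{-1}\}) = \tau(T_xT_{x'}^{-1}) \wedge \tau(T_yT_{y'}^{-1})$ produces a nonzero highest weight vector of the appropriate $\operatorname{Sp}_{2g}(\Q)$ type. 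The contractions $\pi, C_6, C_4, C_2$ of Section \ref{subsection:branching} distinguish the four summands and guide the choice of supporting curves. Schur's lemma then completes the argument, yielding the identification $\operatorname{ker}(\tau^*) = \Gamma_{0,2} \oplus \Q$.
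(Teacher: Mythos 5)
Your proposal follows the same outline that the paper itself gives in Section~\ref{subsection:hain}: the five-term exact sequence yields a boundary map $b$ with $\operatorname{coker}(\tau_*)\cong\operatorname{im}(b)$; composing with $\tau_2^\Q$ gives the bracket map, which is surjective onto $\Gamma_{0,2}$; and Morita's Casson-invariant homomorphism $\lambda^*:\mathcal{K}_g\to\Z$ furnishes a second $\operatorname{Sp}_{2g}(\Q)$-equivariant quotient that detects the trivial summand $\Q$, which the paper explicitly singles out as the step with no handlebody analogue since $\lambda^*(\mathcal{HK}_g)=0$. The paper does not itself carry out the upper bound for $\mathcal{I}_g$ (it simply cites \cite{Hain}); your plan to supply it via abelian cycles in the style of Sections~\ref{subsection:abeliancycles} and~\ref{section:upperboundtau} is the method Sakasai uses in \cite{Sakasai1} for $\mathcal{I}_{g,1}$ and is the natural completion of the sketch, though Hain's original argument in \cite{Hain} is Hodge-theoretic rather than combinatorial.

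Two small points of precision worth fixing. First, the vector $[a_1\wedge a_2\wedge b_3]\wedge[b_1\wedge b_2\wedge a_3]$ is \emph{not} an element of the $\Q$ summand; it has nonzero components in several $\operatorname{Sp}_{2g}(\Q)$-isotypic pieces, and what $C_2\circ C_4\circ\pi\neq 0$ verifies is only that its $\Q$-component is nonzero. The argument you actually want is that $\lambda_\Q\circ b$ is nonzero on this vector; since $\lambda_\Q\circ b$ has trivial target, Schur's lemma forces it to vanish on every non-trivial irreducible, so a nonzero value can only come from the $\Q$-component, giving $\Q\subset\operatorname{im}(b)$. Second, the phrase ``commutators of disjoint bounding pair maps'' cannot be right as written: disjoint bounding pair maps commute, so their commutator is trivial and $\lambda^*$ vanishes on it. For the Casson step you need preimages $\varphi,\psi\in\mathcal{I}_g$ of the two factors whose commutator is a \emph{nontrivial} element of $\mathcal{K}_g$, to which Morita's Casson computations can then be applied.
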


We briefly explain these results and compare them to ours for $\mathcal{HI}_g$. From the five term exact sequence of the extension $$1 \rightarrow \mathcal{K}_g \rightarrow \mathcal{I}_g \rightarrow (\bigwedge^3 H) / H \rightarrow 1,$$ we get a map $$b: \bigwedge^2 ((\bigwedge^3 H_{\Q}) / H_{\Q}) \rightarrow ( \mathcal{K}_g / [\mathcal{I}_g, \mathcal{K}_g] ) \otimes \Q.$$ We have $\operatorname{coker}(\tau_*) \cong \operatorname{im}(b)$, and we can detect elements in $\operatorname{im}(b)$ by composing $b$ with other $\operatorname{Sp}_{2g}(\Q)$ maps. \\

The composition $\tau_2^{\Q} \circ b$ is exactly the bracket map $[ \cdot, \cdot]$. Here, the map $$[\cdot, \cdot]: \bigwedge^2 ((\bigwedge^3 H_{\Q}) / H_{\Q}) \rightarrow \tau_2^{\Q}(( \mathcal{K}_g / [\mathcal{I}_g, \mathcal{K}_g] ) \otimes \Q) \cong \Gamma_{0,2}$$ is surjective. We note that $\Gamma_{0,2}$ is isomorphic as an $\operatorname{Sp}_{2g}(\Q)$ representation to its dual. \\

In \cite[Section 3]{MoritaCasson}, Morita uses the Casson invariant to construct the surjective map $$\lambda^*: \mathcal{K}_g \rightarrow \Z.$$ The trivial module $\Q$ comes from a similar argument with this map. Restricting to $\mathcal{HI}_g$, we get $\lambda^*(\mathcal{HK}_g) = 0$, so the Casson invariant does \textit{not} give us an interesting quotient of $\mathcal{HK}_g / [\mathcal{HI}_g, \mathcal{HK}_g]$. 

\begin{remark} Note that $\bigwedge^2 \UQ_{\Q}$ has one fewer copy of the trivial representation $\Q$ in its decomposition than does $\bigwedge^2 ((\bigwedge^3 H_{\Q})/H_{\Q})$. \end{remark}

\begin{remark}
    The $\operatorname{Sp}_{2g}(\Q)$ representation $\Gamma_{0,2}$ branches into $\operatorname{SL}_g(\Q)$ representations as the direct sum

\hspace*{-1.3cm}
\begin{minipage}{\textwidth} \begin{align*}
\Q \oplus \Phi_{0, \dots ,0,2} \oplus  \Phi_{0, \dots ,0,2,0} \oplus \Phi_{0,1,0, \dots ,0,1,0} \oplus \Phi_{0,2,0, \dots ,0} \oplus \Phi_{1,0, \dots ,0,1} \oplus \Phi_{1,0, \dots ,0,1,1} \oplus \Phi_{1,1,0, \dots ,0,1} \oplus \Phi_{2,0, \dots ,0} \oplus \Phi_{2,0, \dots ,0,2}.
\end{align*} \end{minipage} Neither $\Phi_{0, \dots ,0,2}$ nor $\Phi_{0, \dots ,0,2,0}$ appears in the decomposition of $\bigwedge^2 \UQ_{\Q}^*$. With this background, we can restate Theorem \ref{theorem:kerneltauclosed} as $$\operatorname{ker}(\tau^*) = \bigwedge^2 \UQ_{\Q}^* \cap (\Gamma_{0,2} \oplus \Q).$$
\end{remark}

\section{Upper bound on $\operatorname{ker}(\tau^*)$ for $\mathcal{HI}_g$} \label{section:upperboundtau}

In this section, we prove the following proposition.

\begin{proposition} \label{proposition:upperboundtau}
    When $g \geq 6$, the modules \[ \Q \oplus \Phi_{0,1,0, \dots ,0,1,0} \oplus \Phi_{0,2,0, \dots ,0} \oplus \Phi_{1,0, \dots ,0,1} \oplus \Phi_{1,0, \dots ,0,1,1} \oplus \Phi_{1,1,0, \dots ,0,1} \oplus \Phi_{2,0, \dots ,0} \oplus \Phi_{2,0, \dots ,0,2} \] are the only irreducible $\operatorname{SL}_g(\Q)$ modules in $\bigwedge^2 \UQ_{\Q}^*$ that lie in $\operatorname{ker}(\tau^*)$.
\end{proposition}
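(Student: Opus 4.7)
The plan is to parallel the strategy of Section \ref{section:upperboundtheta}: for each irreducible $\operatorname{SL}_g(\Q)$-summand of $\bigwedge^2 \UQ_{\Q}^*$ not already listed in Proposition \ref{proposition:lowerboundtau}, I will show that its dual survives in $\operatorname{im}(\tau_*) \subset \bigwedge^2 \UQ_{\Q}$. By Schur's Lemma, it suffices to exhibit, for each such module, a highest weight vector of the dual weight lying in $\operatorname{im}(\tau_*)$.

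The input to the construction is the abelian cycle formula of Section \ref{subsection:abeliancycles}: commuting elements $f, h \in \mathcal{HI}_g$ produce the vector $\tau_*(\{f, h\}) = \tau(f) \wedge \tau(h)$ in $\bigwedge^2 \UQ_{\Q}$. The main supply of commuting pairs comes from bounding pair annulus twists $T_x T_{x'}^{-1}$ whose curves are supported in disjoint subsurfaces of $\Sigma_g$ that each bound an annulus in $\mathcal{V}_g$. By Section \ref{subsection:firstjohnsonTorelli}, the Johnson image of such a map has the form $(\sum_i x_i \wedge y_i) \wedge c$, so by varying the bounded subsurfaces and the handles they wrap I can produce Johnson vectors of each of the five basic types listed in Proposition \ref{proposition:U}, then wedge together two disjointly supported ones.

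Once a vector sits in $\operatorname{im}(\tau_*)$, I can translate it by the full action of $\overline{\Psi(\mathcal{H}_g)}$, which properly extends the $\operatorname{SL}_g(\Q)$-action used so far. In particular the meridian disk twists $T_{\mathfrak{b}_i}$ give the $\mathfrak{sp}_{2g}(\Q)$-operators $X_i$ of Section \ref{section:setuphi} and handleswaps permute the handle indices. Combined with the $\mathfrak{sl}_g(\Q)$-operators $E_{ij}$ and the contractions $C_k$, this makes $\operatorname{im}(\tau_*)$ closed under a large family of $\operatorname{Sp}_{2g}(\Q)$-equivariant operations, so in practice it will suffice to produce a single seed vector per $\operatorname{Sp}_{2g}(\Q)$-orbit. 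The branching description of $\bigwedge^2 \UQ_{\Q}$ from Section \ref{subsection:branching} organizes the bookkeeping: each $\operatorname{SL}_g(\Q)$-summand of $\bigwedge^2 \UQ_{\Q}$ sits inside one of the six $\operatorname{Sp}_{2g}(\Q)$-summands $\Gamma_{0,1,0,1} \oplus \Gamma_{0,2} \oplus \Gamma_{0,0,0,0,0,1} \oplus \Gamma_{0,0,0,1} \oplus \Gamma_{0,1} \oplus \Q$, and the summands excluded by Proposition \ref{proposition:lowerboundtau} are precisely those coming from $\Gamma_{0,2} \oplus \Q$. The task thus reduces to hitting every $\operatorname{SL}_g(\Q)$-constituent of the four other $\operatorname{Sp}_{2g}(\Q)$-pieces.

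The main obstacle is two-fold. First, several irreducible modules (for instance those of weight $(0,\dots,0,1,0)$, $(0,1,0,\dots,0,1,0)$, $(1,0,\dots,0,1)$, $(1,0,\dots,0,1,1)$) appear with multiplicity $\geq 2$ in $\bigwedge^2 \UQ_{\Q}$, so I must produce abelian cycles whose images are linearly independent within the isotypic component, not merely nonzero. To separate copies I plan to feed candidate cycles through the operations $\pi$, $C_6$, $C_4$, $C_2$, and $[\cdot, \cdot]$ from Section \ref{subsection:branching} and read off which $\operatorname{Sp}_{2g}(\Q)$-isotypic piece each occupies, using the chain of maps $\bigwedge^2((\bigwedge^3 H_{\Q})/H_{\Q}) \to \bigwedge^6 H_{\Q} \to \bigwedge^4 H_{\Q} \to \bigwedge^2 H_{\Q} \to \Q$ and the bracket to detect $\Gamma_{0,2}$. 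Second, the hypothesis $g \geq 6$ is needed because several weights appearing in Table \ref{table:decompAlt2U} (such as $(0,0,1,0,\dots,0,1,1)$ or $(1,0,1,0,\dots,0,2)$) have nonzero entries both near the beginning and near the end of the weight vector and so require enough disjoint handles to support distinct seed bounding pair maps; for each such weight one checks by direct computation that the chosen seed has the claimed highest-weight character, after which the rest of the $\mathfrak{sl}_g(\Q)$-orbit, together with the Schur-lemma argument, drags the full irreducible summand into $\operatorname{im}(\tau_*)$.
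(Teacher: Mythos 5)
Your proposal is essentially the paper's own strategy, sketched at a high level: use abelian cycles on commuting bounding pair annulus twists to seed $\operatorname{im}(\tau_*)$, translate by the full $\overline{\Psi(\mathcal{H}_g)}$-action (the Lie operators $E_{ij}$, $X_i$, handleswaps, contractions $C_k$), and organize the bookkeeping via the $\operatorname{Sp}_{2g}(\Q)$-branching $\bigwedge^2((\bigwedge^3 H_\Q)/H_\Q) = \Gamma_{0,1,0,1} \oplus \Gamma_{0,2} \oplus \Gamma_{0,0,0,0,0,1} \oplus \Gamma_{0,0,0,1} \oplus \Gamma_{0,1} \oplus \Q$, using $\pi$, $C_6$, $C_4$, $C_2$, and $[\cdot,\cdot]$ to separate repeated isotypes. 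The paper formalizes the ``seed vectors'' as the three types in Lemma~\ref{lemma:3types} (which indeed account for the $g \geq 6$ bound through Type~2 requiring six distinct handle indices), and then carries out the weight-by-weight highest weight computations in Sections~\ref{subsection:000001}--\ref{subsection:0101}.

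One caution on a phrase in your sketch: ``it will suffice to produce a single seed vector per $\operatorname{Sp}_{2g}(\Q)$-orbit'' is too optimistic as stated, since $\operatorname{im}(\tau_*)$ is only stable under $\overline{\Psi(\mathcal{H}_g)}$, a proper (non-semisimple) subgroup of $\operatorname{Sp}_{2g}(\Q)$, so a single seed's $\overline{\Psi(\mathcal{H}_g)}$-orbit need not fill an entire $\operatorname{Sp}_{2g}(\Q)$-isotypic piece. In the paper this shows up concretely: the pieces $\Gamma_{0,0,0,0,0,1}$, $\Gamma_{0,0,0,1}$, $\Gamma_{0,1}$ are handled from the single Type~1 seed via $\pi$ and contractions, but $\Gamma_{0,1,0,1}$ requires all three seed types and substantially more involved linear combinations (including the $3x-2y$ trick to split the multiplicity-two weight $(0,1,0,\dots,0,1,0,0,0)$). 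You correctly anticipated this multiplicity issue and named the right discriminating maps, so the plan is sound, but the heavy lifting is precisely those explicit computations and you should not expect them to collapse to one seed per $\Gamma$.
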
 Theorem \ref{theorem:kerneltauclosed} will follow from Propositions \ref{proposition:lowerboundtau} and \ref{proposition:upperboundtau}. \\

As we discussed in Section \ref{section:upperboundtheta}, if a module is in the image of the induced map on homology $$\tau_*: H_2(\mathcal{HI}_g; \Q) \rightarrow \bigwedge^2 \UQ_{\Q},$$ then its dual module is not in $\operatorname{ker}(\tau^*)$. To prove Proposition \ref{proposition:upperboundtau}, we show that every irreducible $\operatorname{SL}_g(\Q)$ module in $\bigwedge^2 \UQ_{\Q}$ except for the duals of those in Proposition \ref{proposition:upperboundtau} lies in $\operatorname{im}(\tau_*)$. \\ 

We recall that $\bigwedge^2 \UQ_{\Q}$ is a submodule of $\bigwedge^2 ((\bigwedge^3 H_{\Q}) / H_{\Q})$, which has the structure of an $\operatorname{Sp}_{2g}(\Q)$ module. See Section \ref{subsection:branching}. Even though $\bigwedge^2 \UQ_{\Q}$ is not an $\operatorname{Sp}_{2g}(\Q)$ module, we can use the structure of $\bigwedge^2 ((\bigwedge^3 H_{\Q}) / H_{\Q})$ to keep organized the $\operatorname{SL}_g(\Q)$ modules in the decomposition of $\bigwedge^2 \UQ_{\Q}$ (see Table \ref{table:decompAlt2U}). We have the $\operatorname{Sp}_{2g}(\Q)$ decomposition $$\bigwedge^2 ((\bigwedge^3 H_{\Q}) / H_{\Q}) = \Gamma_{0,1,0,1} \oplus \Gamma_{0,2} \oplus \Gamma_{0,0,0,0,0,1} \oplus \Gamma_{0,0,0,1} \oplus \Gamma_{0,1} \oplus \mathbb{Q}$$ and we can restate Proposition \ref{proposition:lowerboundtau} as $$\bigwedge^2 \UQ_{\Q} \cap (\Gamma_{0,2} \oplus \Q) \subset \operatorname{coker}(\tau_*).$$ To prove Proposition \ref{proposition:upperboundtau}, we will show that $$\bigwedge^2 \UQ_{\Q} \cap (\Gamma_{0,1,0,1} \oplus \Gamma_{0,0,0,0,0,1} \oplus \Gamma_{0,0,0,1} \oplus \Gamma_{0,1}) \subset \operatorname{im}(\tau_*).$$ 

We will intersect each irreducible $\operatorname{Sp}_{2g}(\Q)$ representation with $\bigwedge^2 \UQ_{\Q}$ in its own subsection. We then detect the modules in $\operatorname{im}(\tau_*)$ using the method of abelian cycles as defined in Section \ref{subsection:abeliancycles}. In the following lemma, we introduce three vector types that lie in $\operatorname{im}(\tau_*)$. 

\begin{lemma} \label{lemma:3types} Let $1 \leq i,j,k, \ell, m,n \leq g$ be distinct. Additionally, let $x_i \in \{a_i, b_i \}$ and $x_j \in \{ a_j, b_j \}$, etc. Vectors of the following types are in $\operatorname{im}(\tau_*)$: \textnormal{\begin{align*}
    \text{Type 1:} \quad & [a_i \wedge b_i \wedge x_j - \frac{1}{g-1} x_j \wedge \omega] \wedge [a_k \wedge b_k \wedge x_{\ell} - \frac{1}{g-1} x_{\ell} \wedge \omega] \quad (g \geq 4) \\
    \text{Type 2:} \quad & [x_i \wedge x_j \wedge b_k] \wedge [x_{\ell} \wedge x_m \wedge b_n] \quad (g \geq 6) \\
    \text{Type 3:} \quad & [a_i \wedge b_i \wedge x_j - \frac{1}{g-1} x_j \wedge \omega] \wedge [\underbrace{(a_i \wedge b_i + \dots + a_j \wedge b_j)}_{N \text{ terms}} \wedge x_k - \frac{N}{g-1} x_k \wedge \omega] \quad (g \geq 3) 
\end{align*}}
\end{lemma}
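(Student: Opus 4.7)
The plan is to apply the method of abelian cycles from Section~\ref{subsection:abeliancycles}: for commuting $f, h \in \mathcal{HI}_g$, one has
\[
\tau_*(\{f,h\}) \;=\; \tau^{\Q}(f)\wedge\tau^{\Q}(h) \;\in\; \bigwedge^2 \UQ_{\Q},
\]
where $\tau^{\Q}$ is the Johnson homomorphism composed with the $\mathcal{H}_g$-equivariant projection $q\colon \bigwedge^3 H_{\Q}\to \UQ_{\Q}$ given by $q(v)=v - \tfrac{1}{g-1}C_3(v)\wedge\omega$. Two elements of $\mathcal{HI}_g$ supported in disjoint subsurfaces of $\Sigma_g$ automatically commute, so for each vector type I will produce a pair of disjointly supported elements whose Johnson images, after the $q$-correction, assemble into the prescribed wedge product. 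The genus hypotheses $g\ge 4$, $g\ge 6$, $g\ge 3$ will correspond to the total number of distinct handles of $\mathcal{V}_g$ that are required.

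For Type~1, each factor will come from a genus-$1$ bounding pair annulus twist $T_cT_{c'}^{-1}\in\mathcal{HI}_g$ with $[c]=[c']=x_j$ whose genus-$1$ subsurface $S_{c,c'}$ carries the symplectic basis $\{a_i,b_i\}$. The formula in Section~\ref{subsection:firstjohnsonTorelli} gives Johnson image $a_i\wedge b_i\wedge x_j$, and the direct identity $C_3(a_i\wedge b_i\wedge x_j)=x_j$ shows $q$ produces precisely the first Type~1 factor; the second factor is obtained analogously on disjoint handles. Four distinct handle indices $i,j,k,\ell$ are needed, hence $g\ge 4$. For Type~3, I would keep a genus-$1$ twist as the first factor and replace the second factor by a genus-$N$ bounding pair annulus twist whose subsurface carries a symplectic basis $\{a_\nu,b_\nu\}_{\nu\in I}$ over any chosen $N$-element set $I$ of handle indices, with bounding curves of homology class $x_k$; Section~\ref{subsection:firstjohnsonTorelli} then computes its Johnson image as $\bigl(\sum_{\nu\in I}a_\nu\wedge b_\nu\bigr)\wedge x_k$, and $q$ subtracts precisely $\tfrac{N}{g-1}x_k\wedge\omega$. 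By placing the genus-$1$ twist on a handle off the support of the genus-$N$ twist the two commute, and only three handle indices are genuinely forced to differ, whence $g\ge 3$.

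For Type~2, the desired Johnson image of each factor is a triple-letter generator $x_i\wedge x_j\wedge b_k$ of Morita's submodule $U$ from Proposition~\ref{proposition:U}, which lies in $\ker C_3$ for distinct indices and so descends unchanged to $\UQ$. The plan is to produce $f\in\mathcal{HI}_g$ realizing $\tau(f)=x_i\wedge x_j\wedge b_k$ as a product of bounding pair annulus twists supported in a regular neighborhood of the genus-$3$ subhandlebody spanned by the handles indexed by $\{i,j,k\}$, together with a disjointly supported $h$ over $\{\ell,m,n\}$. Disjointness of the six handle indices forces $g\ge 6$ and guarantees that $f$ and $h$ commute.

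The main obstacle is the local realizability claim underpinning Type~2: showing that every triple-letter generator of $U$ is the Johnson image of an element of $\mathcal{HI}_g$ supported in the three relevant handles. I would handle this by exhibiting an explicit short list of bounding pair annulus twists contained in the corresponding genus-$3$ subhandlebody, computing their Johnson images via the symplectic-basis formula of Section~\ref{subsection:firstjohnsonTorelli}, and verifying that the $\Z$-span of those images contains every vector of the form $x_i\wedge x_j\wedge b_k$ on the chosen handles; the element $f$ is then assembled as a suitable product, and the $\mathcal{H}_g$-equivariance of $\tau$ transports the construction across any choice of six distinct indices. The Type~1 and Type~3 constructions, by contrast, reduce to routine verifications with the Johnson formula and the contraction $C_3$ once the disjoint supports are arranged.
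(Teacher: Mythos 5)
Your plan for Type~1 matches the paper's in spirit: the paper also computes a single abelian cycle of two disjointly supported genus-$1$ bounding pair annulus twists and then moves between the various choices of $x_j$, $x_\ell$ via handleswaps and the $\mathfrak{sp}_{2g}$ elements $X_i$ (arising from the disk twists $T_{\mathfrak{b}_i}$), so that part of the proposal is essentially correct, modulo whether one produces each $(x_j,x_\ell)$-combination by a direct geometric construction or by a Lie-algebraic orbit argument.

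For Type~2 you propose a genuinely different route. The paper does not attempt to realize a single triple-letter generator $x_i\wedge x_j\wedge b_k$ as the Johnson image of a locally supported element; instead it obtains one Type~2 vector by applying $-E_{14}\circ E_{g5}$ to the Type~1 abelian cycle image $\tau_*(\{T_xT_{x'}^{-1},T_zT_{z'}^{-1}\})$ and then uses the closure of $\operatorname{im}(\tau_*)$ under handleswaps and $X_i$. Your approach — realize each factor by an element of $\mathcal{HI}_g$ supported in a genus-$3$ subhandlebody and pair two disjointly supported such elements — is in principle available (Morita's Proposition~\ref{proposition:U} together with the compatibility of $\tau$ with inclusions of subsurfaces shows that any $x_i\wedge x_j\wedge b_k$ is realizable locally since it is not triple-$a$), but you correctly flag it as the missing step and do not carry it out, whereas the paper's Lie-algebraic detour avoids this local realizability problem entirely.

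The genuine gap is Type~3. The Type~3 vector has \emph{overlapping} supports: the first factor's symplectic plane $\{a_i,b_i\}$ is one of the $N$ terms $a_i\wedge b_i+\dots+a_j\wedge b_j$ appearing in the second factor. Your construction places the genus-$1$ twist ``on a handle off the support of the genus-$N$ twist,'' so the index set $I$ of the genus-$N$ subsurface is disjoint from $i$, $j$, $k$. That produces a vector of the form
\[
\Bigl[a_i\wedge b_i\wedge x_j-\tfrac{1}{g-1}x_j\wedge\omega\Bigr]\wedge\Bigl[\bigl(\textstyle\sum_{\nu\in I}a_\nu\wedge b_\nu\bigr)\wedge x_k-\tfrac{N}{g-1}x_k\wedge\omega\Bigr],\qquad i,j,k\notin I,
\]
which is not a Type~3 vector as defined; it is closer to a ``genus-$N$ generalization of Type~1.'' The paper instead uses a pair of bounding pair annulus twists whose cobounded subsurfaces are \emph{nested} (the genus-$1$ subsurface sits inside the genus-$N$ one), with the curves $x,x',y,y'$ arranged disjointly on $\Sigma_g$ even though the supports overlap, yielding $\tau_*(\{T_xT_{x'}^{-1},T_yT_{y'}^{-1}\})$ with the sum starting at $a_1\wedge b_1$. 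Your disjoint-support approach also cannot give the bound $g\ge 3$: when the supports are disjoint the number of distinct handle indices grows with $N$, whereas the nested construction keeps it at three. You would need to either exhibit the nested (non-disjoint but commuting) twists directly, or recover Type~3 from some other image vector via Lie algebra actions; the proposal as written produces the wrong vector and asserts an unjustified genus bound.
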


\begin{proof} Consider the following commuting bounding pair annulus twists $T_{x} T_{x'}^{-1}$ and $T_{y} T_{y'}^{-1}$ and $T_{z} T_{z'}^{-1}$. 

{\centerline{\includegraphics[scale=.5]{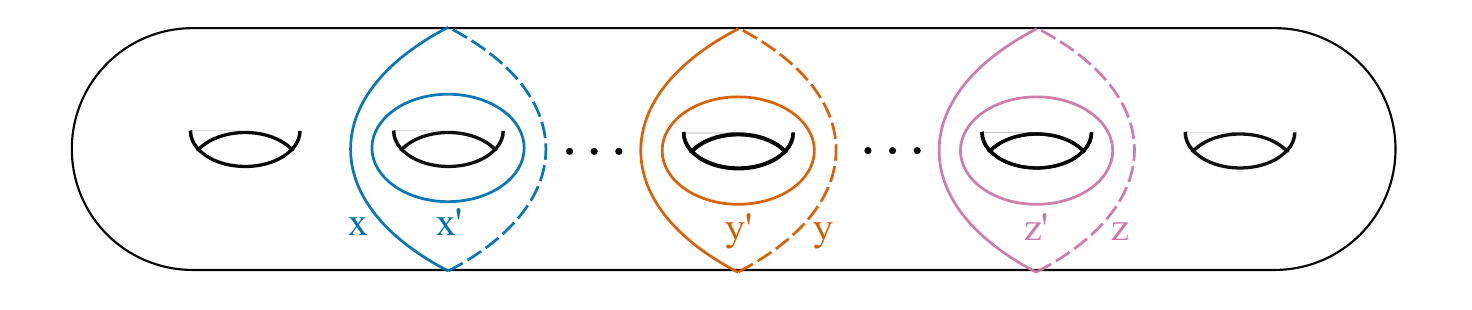}}}

We evaluate $\tau_*$ on the abelian cycle $\{T_{x} T_{x'}^{-1}, T_{z} T_{z'}^{-1} \}$: $$\tau_*(\{T_{x} T_{x'}^{-1}, T_{z} T_{z'}^{-1} \}) = [a_1 \wedge b_1 \wedge a_2 - \frac{1}{g-1} a_2 \wedge \omega] \wedge [a_g \wedge b_g \wedge a_{g-1} - \frac{1}{g-1} a_{g-1} \wedge \omega].$$ We can obtain any Type 1 vector from here via an appropriate composition\footnote{Recall that handleswaps permute indices, and $X_i$ sends $a_i \rightarrow b_i$ and kills all other generators.} of handleswaps and Lie algebra maps $X_i$. Since finite dimensional representations of $\overline{\Psi(\mathcal{H}_g)}$ are in natural bijection with those of its Lie algebra, all Type 1 vectors are in $\operatorname{im}(\tau_*)$. \\

Next, apply a composition of Lie algebra maps to $\tau_*(\{T_{x} T_{x'}^{-1}, T_{z} T_{z'}^{-1} \})$: $$-E_{14} \circ E_{g5} \circ \tau_*(\{T_{x} T_{x'}^{-1}, T_{z} T_{z'}^{-1} \}) = [a_1 \wedge a_2 \wedge b_g] \wedge [a_{g-1} \wedge a_g \wedge b_5].$$ We obtain any Type 2 vector  via an appropriate composition of handleswaps and $X_i$. \\

If the curve $y$ represents the homology class $a_{N+1}$, then we evaluate: $$\tau_*(\{T_{x} T_{x'}^{-1}, T_{y} T_{y'}^{-1} \}) = [a_1 \wedge b_1 \wedge a_2 - \frac{1}{g-1} a_2 \wedge \omega] \wedge [(\sum_{j=1}^N a_j \wedge b_j) \wedge a_{N+1} - \frac{N}{g-1} a_{N+1} \wedge \omega].$$ From here, we obtain any Type 3 vector via an appropriate choice of $N \in \{2, \dots, g-1 \}$ and composition of handleswaps and $X_i$. \end{proof}

We will detect modules in $\operatorname{im}(\tau_*)$ by finding their highest weight vectors in the $\mathcal{H}_g$-orbit of our vector types from Lemma \ref{lemma:3types}. First, we consider the vector $$v := [(g-1) a_1 \wedge b_1 \wedge a_2 - a_2 \wedge \omega] \wedge [(g-1) a_3 \wedge b_3 \wedge a_4 - a_4 \wedge \omega].$$ This vector $v$ is a Type 1 vector, so by Lemma \ref{lemma:3types} it lies in $\operatorname{im}(\tau_*)$. We apply the projection map $\pi: \bigwedge^2 ((\bigwedge^3 H_{\Q}) / H_{\Q}) \rightarrow \bigwedge^6 H_{\Q}$ to get $$\pi(v) = ((g-1)a_1 \wedge b_1 \wedge a_2 - a_2 \wedge \omega) \wedge ((g-1) a_3 \wedge b_3 \wedge a_4 - a_4 \wedge \omega).$$ In the following sections, we will
find highest weight vectors for each irreducible $\operatorname{SL}_g(\Q)$ module in $$\bigwedge^2 \UQ_{\Q} \cap [\Gamma_{0,0,0,0,0,1} \oplus \Gamma_{0,0,0,1} \oplus \Gamma_{0,1}]$$ in the $\mathcal{H}_g$-orbit of $\pi(v)$. Since $\pi$ is an $\mathcal{H}_g$-equivariant map, if we find a highest weight vector of weight $(w_1, w_2, \dots, w_{g-1})$ in the $\mathcal{H}_g$-orbit of $\pi(v)$, then $\Phi_{w_1, w_2, \dots, w_{g-1}} \subset \operatorname{im}(\tau_*)$ by Schur's Lemma. By projecting to $\bigwedge^6 H_{\Q}$ like this, we greatly simplify calculations because it is easier to express highest weight vectors as elements of $\bigwedge^6 H_{\Q}$ than as elements of $\bigwedge^2 \UQ_{\Q}$. 

In Section \ref{subsection:0101}, we will address $\bigwedge^2 \UQ_{\Q} \cap \Gamma_{0,1,0,1}$ which lies in $\operatorname{ker}(\pi)$.


\subsection{The module $\boldsymbol{\Gamma_{0,0,0,0,0,1}}$} \label{subsection:000001} The $\operatorname{Sp}_{2g}(\Q)$ representation $\Gamma_{0,0,0,0,0,1}$ branches into irreducible $\operatorname{SL}_g(\Q)$ representations as: \begin{align*}
\Phi_{0,0,0,0,0,1,0, \dots ,0} \oplus \Phi_{0,0,0,0,1,0, \dots ,0,1} \oplus \Phi_{0,0,0,1,0, \dots ,0,1,0} \oplus \Phi_{0,0,1,0, \dots ,0,1,0,0} \\ \oplus \Phi_{0,1,0, \dots , 0,1,0,0,0} \oplus \Phi_{1,0, \dots ,0,1,0,0,0,0} \oplus \Phi_{0, \dots, 0,1,0,0,0,0,0}.  \end{align*} Neither $\Phi_{0,0,0,0,0,1,0, \dots ,0}$ nor $\Phi_{0,0,0,0,1,0, \dots ,0,1}$ lies in $\bigwedge^2 \UQ_{\Q}$. We find the remaining modules in the $\mathcal{H}_g$-orbit of $\pi(v)$. These results hold for all $g \geq 6$. \\

$\boxed{(0,0,0,1,0, \dots ,0,1,0)}$: We use the fact that finite dimensional representations of $\overline{\Psi(\mathcal{H}_g)}$ are in natural bijection with those of its Lie algebra, and we apply a composition of Lie algebra maps to $$\pi(v) = ((g-1)a_1 \wedge b_1 \wedge a_2 - a_2 \wedge \omega) \wedge ((g-1) a_3 \wedge b_3 \wedge a_4 - a_4 \wedge \omega).$$ We describe these Lie algebra maps in Appendix \ref{appendix:liealgebra}. First we apply $E_{1g}$ to get $$E_{1g} \circ \pi(v) = - ((g-1)a_1 \wedge b_g \wedge a_2) \wedge ((g-1) a_3 \wedge b_3 \wedge a_4 - a_4 \wedge \omega).$$ We then apply $E_{3(g-1)}$ to get $$E_{3(g-1)} \circ E_{1g} \circ \pi (v) = -(g-1)^2 a_1 \wedge a_2 \wedge a_3 \wedge a_4 \wedge b_{g-1} \wedge b_g,$$ which is a highest weight vector for $\Phi_{0,0,0,1,0, \dots ,0,1,0}$. \\

$\boxed{(0,0,1,0, \dots ,0,1,0,0)}$: Building on previous work, we apply a composition of Lie algebra maps to $E_{3(g-1)} \circ E_{1g} \circ \pi (v)$: $$X_{g-2} \circ E_{(g-2)4} \circ E_{3(g-1)} \circ E_{1g} \circ \pi(v) = -(g-1)^2 a_1 \wedge a_2 \wedge a_3 \wedge b_{g-2} \wedge b_{g-1} \wedge b_g$$ is a highest weight vector for $\Phi_{0,0,1,0, \dots ,0,1,0,0}$.  \\

$\boxed{(0,1,0, \dots, 0,1,0,0,0)}$: $$X_{g-3} \circ E_{(g-3)3} \circ X_{g-2} \circ E_{(g-2)4} \circ E_{3(g-1)} \circ E_{1g} \circ \pi(v) = -(g-1)^2 a_1 \wedge a_2 \wedge b_{g-3} \wedge b_{g-2} \wedge b_{g-1} \wedge b_g$$ is a highest weight vector for $\Phi_{0,1,0, \dots, 0,1,0,0,0}$. \\

$\boxed{(1,0, \dots, 0,1,0,0,0,0)}$: $$X_{g-4} \circ E_{(g-4)2} \circ X_{g-3} \circ E_{(g-3)3} \circ X_{g-2} \circ E_{(g-2)4} \circ E_{3(g-1)} \circ E_{1g} \circ \pi(v) = -(g-1)^2 a_1 \wedge b_{g-4} \wedge b_{g-3} \wedge b_{g-2} \wedge b_{g-1} \wedge b_g$$ is a highest weight vector for $\Phi_{1,0, \dots, 0,1,0,0,0,0}$. \\

$\boxed{(0, \dots ,0,1,0,0,0,0,0)}$: 
\begin{align*}
    X_{g-5} \circ E_{(g-5)1} \circ X_{g-4} \circ E_{(g-4)2} \circ X_{g-3} \circ E_{(g-3)3} \circ X_{g-2} \circ E_{(g-2)4} \circ E_{3(g-1)} \circ E_{1g} \circ \pi(v) \\ = -(g-1)^2 b_{g-5} \wedge b_{g-4} \wedge b_{g-3} \wedge b_{g-2} \wedge b_{g-1} \wedge b_g
\end{align*} is a highest weight vector for $\Phi_{0, \dots ,0,1,0,0,0,0,0}$.

\subsection{The module $\boldsymbol{\Gamma_{0,0,0,1}}$} \label{subsection:0001} The $\operatorname{Sp}_{2g}(\Q)$ representation $\Gamma_{0,0,0,1}$ branches into irreducible $\operatorname{SL}_g(\Q)$ representations as $$\Phi_{0,0,0,1,0,\dots,0} \oplus \Phi_{0,0,1,0, \dots ,0,1} \oplus \Phi_{0,1,0, \dots ,0,1,0} \oplus \Phi_{1,0, \dots ,0,1,0,0} \oplus \Phi_{0, \dots ,0,1,0,0,0}.$$ The module $\Phi_{0,0,0,1,0,\dots,0}$ is not in $\bigwedge^2 \UQ_{\Q}$. We find the remaining modules in the $\mathcal{H}_g$-orbit of $$C_6 \circ \pi(v) = 2(g-1)(a_1 \wedge b_1 + a_3 \wedge b_3) \wedge a_2 \wedge a_4 - 2a_2 \wedge a_4 \wedge \omega$$ for all $g \geq 4$. \\

$\boxed{(0,0,1,0, \dots ,0,1)}$: $$E_{34} \circ E_{1g} \circ C_6 \circ \pi(v) = -2(g-1) a_1 \wedge a_2 \wedge a_3 \wedge b_g$$ is a highest weight vector for $\Phi_{0,0,1,0, \dots ,0,1}$. \\

$\boxed{(0,1,0, \dots ,0,1,0)}$: $$X_{g-1} \circ E_{(g-1)3} \circ E_{34} \circ E_{1g} \circ C_6 \circ \pi(v) = -2(g-1) a_1 \wedge a_2 \wedge b_{g-1} \wedge b_g$$ is a highest weight vector for $\Phi_{0,1,0, \dots ,0,1,0}$. \\

$\boxed{(1,0, \dots ,0,1,0,0)}$: $$X_{g-2} \circ E_{(g-2)2} \circ X_{g-1} \circ E_{(g-1)3} \circ E_{34} \circ E_{1g} \circ C_6 \circ \pi(v) = -2(g-1) a_1 \wedge b_{g-2} \wedge b_{g-1} \wedge b_g$$ is a highest weight vector for $\Phi_{1,0, \dots ,0,1,0,0}$. \\

$\boxed{(0, \dots ,0,1,0,0,0)}$: $$X_{g-3} \circ E_{(g-3)1} \circ X_{g-2} \circ E_{(g-2)2} \circ X_{g-1} \circ E_{(g-1)3} \circ E_{34} \circ E_{1g} \circ C_6 \circ \pi(v) = -2(g-1) b_{g-3} \wedge b_{g-2} \wedge b_{g-1} \wedge b_g$$ is a highest weight vector for $\Phi_{0, \dots ,0,1,0,0,0}$. 

\subsection{The module $\boldsymbol{\Gamma_{0,1}}$} \label{subsection:01} The $\operatorname{Sp}_{2g}(\Q)$ representation $\Gamma_{0,1}$ branches into irreducible $\operatorname{SL}_g(\Q)$ representations as $$\Phi_{0,1,0,\dots,0} \oplus \Phi_{1,0,\dots,0,1} \oplus \Phi_{0,\dots,0,1,0}.$$ We find all of these modules in the $\mathcal{H}_g$-orbit of $$C_4 \circ C_6 \circ \pi(v) = 2g a_2 \wedge a_4$$ for all $g \geq 4$. \\

$\boxed{(0,1,0, \dots ,0)}$: $$E_{41} \circ C_4 \circ C_6(v) = -2g a_1 \wedge a_2$$ is a highest weight vector for $\Phi_{0,1,0, \dots ,0}$. \\

$\boxed{(1,0, \dots ,0,1)}$: $$X_g \circ E_{g2} \circ E_{41} \circ C_4 \circ C_6 \circ \pi(v) = -2g a_1 \wedge b_g$$ is a highest weight vector for $\Phi_{1,0, \dots ,0,1}$. \\

$\boxed{(0, \dots ,0,1,0)}$: 

$$X_g \circ X_{g-1} \circ E_{g4} \circ E_{(g-1)2} \circ C_4 \circ C_6 \circ \pi(v) = 2g b_{g-1} \wedge b_g$$ is a highest weight vector for $\Phi_{0, \dots ,0,1,0}$. 

\subsection{The module $\boldsymbol{\Gamma_{0,1,0,1}}$} \label{subsection:0101} The $\operatorname{Sp}_{2g}(\Q)$ representation $\Gamma_{0,1,0,1}$ branches into a direct sum of $\operatorname{SL}_g(\Q)$ modules. In this section, we show that each module in $\Gamma_{0,1,0,1} \cap \bigwedge^2 \UQ_{\Q}$ belongs to $\operatorname{im}(\tau_*)$. First, we use our work from previous sections to determine $\Gamma_{0,1,0,1} \cap \bigwedge^2 \UQ_{\Q}$. We recall two $\operatorname{Sp}_{2g}(\Q)$ maps. We have $$\pi: \bigwedge^2 ((\bigwedge^3 H_{\Q}) / H_{\Q}) \rightarrow \bigwedge^6 H_{\Q}$$ which evaluates on generators as $[a \wedge b \wedge c] \wedge [d \wedge e \wedge f] \mapsto a \wedge b \wedge c \wedge d \wedge e \wedge f$. We also have the bracket map $$[ \cdot, \cdot ]: \bigwedge^2 ((\bigwedge^3 H_{\Q}) / H_{\Q}) \rightarrow \Gamma_{0,2}.$$ We decomposed $\bigwedge^2 \UQ_{\Q}$ in Table \ref{table:decompAlt2U} and determined the images of $[\cdot, \cdot]|_{\bigwedge^2 \UQ_{\Q}}$ and $\pi|_{\bigwedge^2 \UQ_{\Q}}$ in Sections \ref{section:lowerboundtau} and \ref{section:upperboundtau}, respectively. With this information, we determine that the modules in Table \ref{table:0101} are {\textit{not}} in $\Gamma_{0,1,0,1} \cap \bigwedge^2 \UQ_{\Q}$. 

\begin{table}[h!]
\centering
\begin{tabular}{|c|c|c|c|}
\hline
\multirow{2}{*}{\textbf{Highest Weight}} & \multicolumn{3}{c|}{\textbf{Multiplicity}} \\
\cline{2-4}
& \rule{0pt}{2.6ex} $\bigwedge^2 \UQ_{\mathbb{Q}}$ & $(\bigwedge^2 \UQ_{\mathbb{Q}}) \cap \Gamma_{0,2}$ & $(\bigwedge^2 \UQ_{\mathbb{Q}}) \cap (\bigwedge^6 H_{\mathbb{Q}})$ \\
\hline
$(0,0,0,1,0,\dots,0,1,0)$ & 1 & 0 & 1 \\
$(0,0,1,0,\dots,0,1)$ & 1 & 0 & 1 \\
$(0,0,1,0,\dots,0,1,0,0)$ & 1 & 0 & 1 \\
$(0,1,0,\dots,0)$ & 1 & 0 & 1 \\
$(0,1,0,\dots,0,1,0)$ & 2 & 1 & 1 \\
$(0,1,0,1,0,\dots,0)$ & 0 & 0 & 0 \\
$(0,1,1,0,\dots,0,1)$ & 0 & 0 & 0 \\
$(1,0,\dots,0,1)$ & 2 & 1 & 1 \\
$(1,0,0,1,0,\dots,0,1)$ & 0 & 0 & 0 \\
$(1,0,1,0,\dots,0)$ & 0 & 0 & 0 \\
$(1,0,1,0,\dots,0,1,0)$ & 0 & 0 & 0 \\
$(1,1,0,\dots,0,1)$ & 1 & 1 & 0 \\
\hline
\end{tabular}
\caption{The $\operatorname{SL}_g(\Q)$ modules in $\Gamma_{0,1,0,1} - \bigwedge^2 \UQ_{\Q}$}
\label{table:0101}
\end{table}

By finding them in $\operatorname{im}(\tau_*)$, we show that the remaining $\operatorname{SL}_g(\Q)$ modules in the branched decomposition of $\Gamma_{0,1,0,1}$ are in $\Gamma_{0,1,0,1} \cap \bigwedge^2 \UQ_{\Q}$. We get the decomposition \begin{align*} 
    \Gamma_{0,1,0,1} \cap \bigwedge^2 \UQ_{\Q} \cong \Phi_{0, \dots ,0,1,0} \oplus \Phi_{0, \dots ,0,1,0,1} \oplus \Phi_{0, \dots ,0,1,0,1,0} \oplus \Phi_{0,0,1,0, \dots ,0,1,1} \oplus \Phi_{0,1,0, \dots ,0,2} \oplus \Phi_{0,1,0, \dots ,0,2,0} \\
    \oplus \Phi_{0,1,0, \dots ,0,1,0,1} \oplus \Phi_{0,1,0, \dots ,0,1,0,0,0} \oplus \Phi_{0,2,0, \dots ,0,1,0} \oplus \Phi_{1,0, \dots ,0,1,1} \oplus \Phi_{1,0, \dots ,0,1,0,0} \oplus \Phi_{1,0, \dots ,0,1,1,0} \\
    \oplus \Phi_{1,0, \dots ,0,1,0,0,1} \oplus \Phi_{1,0,1,0, \dots,0,2} \oplus \Phi_{1,1,0,\dots,0,1,1} \oplus \Phi_{1,1,0, \dots ,0,1,0,0} \oplus \Phi_{2,0, \dots ,0,1,0} \oplus \Phi_{2,0, \dots ,0,1,0,1}.
\end{align*} Now we show that $\Gamma_{0,1,0,1} \cap \bigwedge^2 \UQ_{\Q} \subset \operatorname{im}(\tau_*)$ when $g \geq 6$. \\

$\boxed{(0, \dots, 0,1,0)}$: Fix distinct $i,j,k \in \{1, \dots, g-2 \}$. We define a vector $x_{ij}$ (which does not depend on our choice of $k$): \begin{align*} X_i \circ X_{g-1} \circ E_{(g-1)g} \circ E_{ik} \underbrace{[a_g \wedge b_g \wedge a_k - \frac{1}{g-1} a_k \wedge \omega] \wedge [a_i \wedge a_j \wedge b_j - \frac{1}{g-1} a_i \wedge \omega]}_{\text{Type 1}}  \\ = \underbrace{[b_{g-1} \wedge b_g \wedge b_i] \wedge [a_i \wedge a_j \wedge b_j - \frac{1}{g-1} a_i \wedge \omega ] + [b_{g-1} \wedge b_g \wedge a_i] \wedge [b_i \wedge a_j \wedge b_j - \frac{1}{g-1} b_i \wedge \omega ]}_{x_{ij}}. \end{align*} Note that $x_{ij} \in \operatorname{im}(\tau_*)$ since it is in the $\mathcal{H}_g$-orbit of a Type 1 vector. \\

Fix $i \in \{1, \dots, g-2 \}$. We define a vector $y_i$: 

\begin{align*}
    X_i \circ E_{i(g-1)} \underbrace{[a_{g-1} \wedge b_{g-1} \wedge b_g - \frac{1}{g-1} b_g \wedge \omega] \wedge [(a_{g-1} \wedge b_{g-1} + a_g \wedge b_g) \wedge a_i - \frac{2}{g-1} a_i \wedge \omega]}_{\text{Type 3}} \\ = y_i := [b_i \wedge b_{g-1} \wedge b_g] \wedge [(a_{g-1} \wedge b_{g-1} + a_g \wedge b_g) \wedge a_i - \frac{2}{g-1} a_i \wedge \omega ] \\ + [a_i \wedge b_{g-1} \wedge b_g] \wedge [(a_{g-1} \wedge b_{g-1} + a_g \wedge b_g) \wedge b_i - \frac{2}{g-1} b_i \wedge \omega].
\end{align*}

Fix distinct $i,j \in \{1, \dots, g-2 \}$. We define a vector $z_i$: 

\begin{align*}
    -E_{jg} \underbrace{[b_{g-1} \wedge a_g \wedge b_g - \frac{1}{g-1} b_{g-1} \wedge \omega] \wedge [b_j \wedge a_i \wedge b_i - \frac{1}{g-1} b_j \wedge \omega]}_{\text{Type 1}} \\ - E_{j(g-1)} \underbrace{[a_{g-1} \wedge b_{g-1} \wedge b_g - \frac{1}{g-1} b_g \wedge \omega] \wedge [b_j \wedge a_i \wedge b_i - \frac{1}{g-1} b_j \wedge \omega]}_{\text{Type 1}} \\ = z_i := [b_{g-1} \wedge a_g \wedge b_g - \frac{1}{g-1} b_{g-1} \wedge \omega] \wedge [b_g \wedge a_i \wedge b_i - \frac{1}{g-1} b_g \wedge \omega] \\ + [a_{g-1} \wedge b_{g-1} \wedge b_g - \frac{1}{g-1} b_g \wedge \omega] \wedge [b_{g-1} \wedge a_i \wedge b_i - \frac{1}{g-1} b_{g-1} \wedge \omega].
\end{align*}

The sum $$\sum_{1 \leq i \neq j \leq g-2} x_{ij} + \sum_{i=1}^{g-2} (y_i + z_i)$$ is a highest weight vector for $\Phi_{0, \dots, 0,1,0}$. \\

$\boxed{(0, \dots ,0,1,0,1)}$: Fix distinct $i,j,k \in \{1, \dots, g-3 \}$. We define a vector $x_i:$

\begin{align*}
    - X_i \circ E_{ik} \circ E_{jg} \underbrace{[b_{g-1} \wedge b_g \wedge a_i] \wedge [b_{g-2} \wedge b_j \wedge a_k]}_{\text{Type 2}} \\ = \underbrace{[b_{g-1} \wedge b_g \wedge a_i] \wedge [b_{g-2} \wedge b_g \wedge b_i] + [b_{g-1} \wedge b_g \wedge b_i] \wedge [b_{g-2} \wedge b_g \wedge a_i]}_{x_i}.
\end{align*}

We define a vector $y$:

\begin{align*}
    X_{g-2} \circ E_{(g-2)g} \underbrace{[a_{g-2} \wedge b_{g-2} \wedge b_{g-1} - \frac{1}{g-1} b_{g-1} \wedge \omega] \wedge [(a_{g-2} \wedge b_{g-2} + a_{g-1} \wedge b_{g-1}) \wedge b_g - \frac{2}{g-1} b_g \wedge \omega]}_{\text{Type 3}} \\ = \underbrace{[b_{g-2} \wedge b_{g-1} \wedge b_g] \wedge [(a_{g-2} \wedge b_{g-2} + a_{g-1} \wedge b_{g-1}) \wedge b_g - \frac{2}{g-1} b_g \wedge \omega]}_{\text{y}}.
\end{align*}

The sum $$(\sum_{i=1}^{g-3} x_i) + y$$ is a highest weight vector for $\Phi_{0, \dots ,0,1,0,1}$. \\

$\boxed{(0,\dots,0,1,0,1,0)}$: $$E_{1(g-1)} \circ E_{2g} \underbrace{[b_{g-2} \wedge b_{g-1} \wedge b_g] \wedge [b_{g-3} \wedge b_1 \wedge b_2]}_{\text{Type 2}} = [b_{g-2} \wedge b_{g-1} \wedge b_g] \wedge [b_{g-3} \wedge b_{g-1} \wedge b_g]$$ is a highest weight vector for $\Phi_{0,\dots,0,1,0,1,0}$. \\

$\boxed{(0,0,1,0,\dots ,0,1,1)}$: \begin{align*} - E_{4g} \underbrace{([a_1 \wedge a_2 \wedge b_g] \wedge [a_3 \wedge b_{g-1} \wedge b_4] - [a_1 \wedge a_3 \wedge b_g] \wedge [a_2 \wedge b_{g-1} \wedge b_4] - [a_1 \wedge b_{g-1} \wedge b_g] \wedge [a_2 \wedge a_3 \wedge b_4])}_{\text{Type 2}} \\ = [a_1 \wedge a_2 \wedge b_g] \wedge [a_3 \wedge b_{g-1} \wedge b_g] + [a_1 \wedge a_3 \wedge b_g] \wedge [a_2 \wedge b_{g-1} \wedge b_g] + [a_1 \wedge b_{g-1} \wedge b_g] \wedge [a_2 \wedge a_3 \wedge b_g] \end{align*} is a highest weight vector for $\Phi_{0,0,1,0,\dots ,0,1,1}$. \\

$\boxed{(0,1,0, \dots ,0,2)}$: Fix $i \in \{3, \dots, g-1 \}$. We define a vector $x_i$:

\begin{align*}
    - X_i \circ E_{i4} \circ E_{5g} \underbrace{[a_1 \wedge a_i \wedge b_g] \wedge [a_2 \wedge a_4 \wedge b_5]}_{\text{Type 2}} \\ = \underbrace{[a_1 \wedge a_i \wedge b_g] \wedge [a_2 \wedge b_i \wedge b_g] + [a_1 \wedge b_i \wedge b_g] \wedge [a_2 \wedge a_i \wedge b_g]}_{x_i}.
\end{align*}

We define a vector $y$: \begin{align*}
    - E_{2g} \underbrace{[a_1 \wedge a_2 \wedge b_2 - \frac{1}{g-1} a_1 \wedge \omega] \wedge [(a_1 \wedge b_1 + a_2 \wedge b_2) \wedge b_g - \frac{2}{g-1} b_g \wedge \omega]}_{\text{Type 3}} \\
    = \underbrace{[a_1 \wedge a_2 \wedge b_g] \wedge [(a_1 \wedge b_1 + a_2 \wedge b_2) \wedge b_g - \frac{2}{g-1} b_g \wedge \omega]}_{y}.
\end{align*}

The sum $$(\sum_{i=3}^{g-1} x_i) + y$$ is a highest weight vector for $\Phi_{0,1,0, \dots ,0,2}$. \\

$\boxed{(0,1,0, \dots ,0,2,0)}$: 

 $$E_{3(g-1)} \circ E_{4g} \underbrace{[a_1 \wedge b_{g-1} \wedge b_g] \wedge [a_2 \wedge b_3 \wedge b_4]}_{\text{Type 2}} = [a_1 \wedge b_{g-1} \wedge b_g] \wedge [a_2 \wedge b_{g-1} \wedge b_g]$$ is a highest weight vector for $\Phi_{0,1,0, \dots ,0,2,0}$. \\

$\boxed{(0,1,0, \dots ,0,1,0,1)}$: 

$$-E_{3g} \underbrace{[a_1 \wedge a_2 \wedge b_g] \wedge [b_{g-2} \wedge b_{g-1} \wedge b_3]}_{\text{Type 2}} = [a_1 \wedge a_2 \wedge b_g] \wedge [b_{g-2} \wedge b_{g-1} \wedge b_g]$$ is a highest weight vector for $\Phi_{0,1,0, \dots ,0,1,0,1}$. \\

$\boxed{(0,1,0, \dots ,0,1,0,0,0)}$: Consider the Type 2 vector
\begin{align*}
x := [a_1 \wedge a_2 \wedge b_{g-3}] \wedge [b_{g-2} \wedge b_{g-1} \wedge b_g] - [a_1 \wedge a_2 \wedge b_{g-2}] \wedge [b_{g-3} \wedge b_{g-1} \wedge b_g] \\ + [a_1 \wedge a_2 \wedge b_{g-1}] \wedge [b_{g-3} \wedge b_{g-2} \wedge b_g] - [a_1 \wedge a_2 \wedge b_g] \wedge [b_{g-3} \wedge b_{g-2} \wedge b_{g-1}].
\end{align*}

\noindent The vector $x$ is a highest weight vector for $\Phi_{0,1,0, \dots ,0,1,0,0,0}$ and $$\pi(x) = 4 a_1 \wedge a_2 \wedge b_{g-3} \wedge b_{g-2} \wedge b_{g-1} \wedge b_g \in \bigwedge^6 H_{\Q}.$$ 

Consider also the Type 2 vector \begin{align*}
    y := [a_1 \wedge b_{g-1} \wedge b_g] \wedge [a_2 \wedge b_{g-3} \wedge b_{g-2}] - [a_1 \wedge b_{g-2} \wedge b_g] \wedge [a_2 \wedge b_{g-3} \wedge b_{g-1}] \\ + [a_1 \wedge b_{g-3} \wedge b_g] \wedge [a_2 \wedge b_{g-2} \wedge b_{g-1}] + [a_1 \wedge b_{g-2} \wedge b_{g-1}] \wedge [a_2 \wedge b_{g-3} \wedge b_g] \\ - [a_1 \wedge b_{g-3} \wedge b_{g-1}] \wedge [a_2 \wedge b_{g-2} \wedge b_g] + [a_1 \wedge b_{g-3} \wedge b_{g-2}] \wedge [a_2 \wedge b_{g-1} \wedge b_g].
\end{align*} The vector $y$ is a highest weight vector for $\Phi_{0,1,0, \dots ,0,1,0,0,0}$ and $$\pi(y) = 6 a_1 \wedge a_2 \wedge b_{g-3} \wedge b_{g-2} \wedge b_{g-1} \wedge b_g \in \bigwedge^6 H_{\Q}.$$ The linear combination $3x - 2y$ satisfies $$\pi(3x - 2y) = 0.$$ Since we detected the first copy of $\Phi_{0,1,0, \dots ,0,1,0,0,0} \subset \operatorname{im}(\tau_*)$ with the projection map $\pi$ from Section \ref{subsection:000001}, the vector $2x - 3y$ is a highest weight vector for a second copy of $\Phi_{0,1,0, \dots ,0,1,0,0,0}$. \\

$\boxed{(0,2,0, \dots,0,1,0)}$: 

$$E_{24} \circ E_{13} \underbrace{[a_1 \wedge a_2 \wedge b_{g-1}] \wedge [a_3 \wedge a_4 \wedge b_g]}_{\text{Type 2}} = [a_1 \wedge a_2 \wedge b_{g-1}] \wedge [a_1 \wedge a_2 \wedge b_g]$$ is a highest weight vector for $\Phi_{0,2,0, \dots,0,1,0}$. \\

$\boxed{(1,0, \dots ,0,1,1)}$: Fix distinct $i,j,k \in \{2, \dots, g-3 \}$. We define a vector $x_i$: 

\hspace*{-1.5cm}
\begin{minipage}{\textwidth}

\begin{align*}
    - X_i \circ E_{ij} \circ E_{kg} \underbrace{[a_i \wedge b_{g-1} \wedge b_g] \wedge [a_1 \wedge a_j \wedge b_k]}_{\text{Type 2}} = \underbrace{[a_i \wedge b_{g-1} \wedge b_g] \wedge [a_1 \wedge b_i \wedge b_g] + [b_i \wedge b_{g-1} \wedge b_g] \wedge [a_1 \wedge a_i \wedge b_g]}_{x_i}.
\end{align*} \end{minipage}

We define a vector $y$: \begin{align*}
    E_{1g} \underbrace{[(a_1 \wedge b_1 + a_{g-1} \wedge b_{g-1}) \wedge b_g - \frac{2}{g-1} b_g \wedge \omega] \wedge [a_1 \wedge b_1 \wedge b_{g-1} - \frac{1}{g-1} b_{g-1} \wedge \omega]}_{\text{Type 3}} \\ = \underbrace{[(a_1 \wedge b_1 + a_{g-1} \wedge b_{g-1}) \wedge b_g - \frac{2}{g-1} b_g \wedge \omega] \wedge [a_1 \wedge b_{g-1} \wedge b_g]}_{y}.
\end{align*} The sum \begin{align*} (\sum_{i=2}^{g-3} x_i) - y  \end{align*} is a highest weight vector for $\Phi_{1,0, \dots ,0,1,1}$. \\ 

$\boxed{(1,0, \dots ,0,1,0,0)}$: Fix distinct $i,j \in \{2, \dots, g-3 \}$. We define a vector $x_i$: 

\hspace*{-1.3cm}
\begin{minipage}{\textwidth} \begin{align*}
    X_i \circ E_{ij} (\underbrace{[a_1 \wedge a_i \wedge b_{g-2}] \wedge [a_j \wedge b_{g-1} \wedge b_g] - [a_1 \wedge a_i \wedge b_{g-1}] \wedge [a_j \wedge b_{g-2} \wedge b_g] + [a_1 \wedge a_i \wedge b_g] \wedge [a_j \wedge b_{g-2} \wedge b_{g-1}]}_{\text{Type 2}})
\end{align*} \end{minipage} equals \begin{align*}
    x_i := [a_1 \wedge a_i \wedge b_{g-2}] \wedge [b_i \wedge b_{g-1} \wedge b_g] + [a_1 \wedge b_i \wedge b_{g-2}] \wedge [a_i \wedge b_{g-1} \wedge b_g] \\
    - [a_1 \wedge a_i \wedge b_{g-1}] \wedge [b_i \wedge b_{g-2} \wedge b_g] - [a_1 \wedge b_i \wedge b_{g-1}] \wedge [a_i \wedge b_{g-2} \wedge b_g] \\ 
    + [a_1 \wedge a_i \wedge b_g] \wedge [b_i \wedge b_{g-2} \wedge b_{g-1}] + [a_1 \wedge b_i \wedge b_g] \wedge [a_i \wedge b_{g-2} \wedge b_{g-1}].
\end{align*}

We define a vector $y$:  \begin{align*}
    X_g \circ E_{g(g-2)} \underbrace{[a_1 \wedge (a_{g-2} \wedge b_{g-2} + a_{g-1} \wedge b_{g-1} + a_g \wedge b_g) - \frac{3}{g-1} a_1 \wedge \omega] \wedge [a_{g-2} \wedge b_{g-2} \wedge b_{g-1} - \frac{1}{g-1} b_{g-1} \wedge \omega]}_{\text{Type 3}} \\ = \underbrace{[a_1 \wedge (a_{g-2} \wedge b_{g-2} + a_{g-1} \wedge b_{g-1} + a_g \wedge b_g) - \frac{3}{g-1} a_1 \wedge \omega] \wedge [b_{g-2} \wedge b_{g-1} \wedge b_g]}_{y}.
\end{align*}

We define a vector $z_{g-2}$: \begin{align*}
    - E_{1(g-1)} \underbrace{[(a_1 \wedge b_1 + a_{g-1} \wedge b_{g-1} + a_g \wedge b_g) \wedge b_{g-2} - \frac{3}{g-1} b_{g-2} \wedge \omega] \wedge [a_1 \wedge b_1 \wedge b_g - \frac{1}{g-1} b_g \wedge \omega]}_{\text{Type 3}} \\ = \underbrace{[(a_1 \wedge b_1 + a_{g-1} \wedge b_{g-1} + a_g \wedge b_g) \wedge b_{g-2} - \frac{3}{g-1} b_{g-2} \wedge \omega] \wedge [a_1 \wedge b_{g-1} \wedge b_g]}_{z_{g-2}}.
\end{align*}

We define a vector $z_{g-1}$: \begin{align*}
    - E_{1(g-2)} \underbrace{[(a_1 \wedge b_1 + a_{g-2} \wedge b_{g-2} + a_g \wedge b_g) \wedge b_{g-1} - \frac{3}{g-1} b_{g-1} \wedge \omega] \wedge [a_1 \wedge b_1 \wedge b_g - \frac{1}{g-1} b_g \wedge \omega]}_{\text{Type 3}} \\ = \underbrace{[(a_1 \wedge b_1 + a_{g-2} \wedge b_{g-2} + a_g \wedge b_g) \wedge b_{g-1} - \frac{3}{g-1} b_{g-1} \wedge \omega] \wedge [a_1 \wedge b_{g-2} \wedge b_g ]}_{z_{g-1}}.
\end{align*}

We define a vector $z_g$: \begin{align*}
    - E_{1(g-2)} \underbrace{[(a_1 \wedge b_1 + a_{g-2} \wedge b_{g-2} + a_{g-1} \wedge b_{g-1}) \wedge b_g - \frac{3}{g-1} b_g \wedge \omega] \wedge [a_1 \wedge b_1 \wedge b_{g-1} - \frac{1}{g-1} b_{g-1} \wedge \omega]}_{\text{Type 3}}  \\ = \underbrace{[(a_1 \wedge b_1 + a_{g-2} \wedge b_{g-2} + a_{g-1} \wedge b_{g-1}) \wedge b_g - \frac{3}{g-1} b_g \wedge \omega] \wedge [a_1 \wedge b_{g-2} \wedge b_{g-1}]}_{z_g}.
\end{align*}

The sum \begin{align*} (\sum_{i=2}^{g-3} x_i ) + y + z_{g-2} - z_{g-1} + z_g  \end{align*} is a highest weight vector for $\Phi_{1,0, \dots ,0,1,0,0}$. \\

$\boxed{(1,0, \dots,0,1,1,0)}$: $$E_{2(g-1)} \circ E_{3g} \underbrace{[a_1 \wedge b_{g-1} \wedge b_g] \wedge [b_{g-2} \wedge b_2 \wedge b_3]}_{\text{Type 2}} = [a_1 \wedge b_{g-1} \wedge b_g] \wedge [b_{g-2} \wedge b_{g-1} \wedge b_g]$$ is a highest weight vector for $\Phi_{1,0, \dots,0,1,1,0}$. \\

$\boxed{(1,0, \dots ,0,1,0,0,1)}$: 

\hspace*{-1.5cm}
\begin{minipage}{\textwidth}
\begin{align*} - E_{2g} (\underbrace{[b_{g-2} \wedge b_{g-1} \wedge b_g] \wedge [a_1 \wedge b_{g-3} \wedge b_2] + [b_{g-3} \wedge b_{g-1} \wedge b_g] \wedge [a_1 \wedge b_{g-2} \wedge b_2] - [b_{g-3} \wedge b_{g-2} \wedge b_g] \wedge [a_1 \wedge b_{g-1} \wedge b_2]}_{\text{Type 2}}) \\ = [b_{g-2} \wedge b_{g-1} \wedge b_g] \wedge [a_1 \wedge b_{g-3} \wedge b_g] - [b_{g-3} \wedge b_{g-1} \wedge b_g] \wedge [a_1 \wedge b_{g-2} \wedge b_g] + [b_{g-3} \wedge b_{g-2} \wedge b_g] \wedge [a_1 \wedge b_{g-1} \wedge b_g]
\end{align*} \end{minipage} \vspace{1mm}

is a highest weight vector for $\Phi_{1,0, \dots ,0,1,0,0,1}$. \\

$\boxed{(1,0,1,0, \dots ,0,2)}$: $$- E_{5g} \circ E_{14} \underbrace{[a_1 \wedge a_2 \wedge b_g] \wedge [a_4 \wedge a_3 \wedge b_5]}_{\text{Type 2}} = [a_1 \wedge a_2 \wedge b_g] \wedge [a_1 \wedge a_3 \wedge b_g]$$ is a highest weight vector for $\Phi_{1,0,1,0, \dots ,0,2}$. \\ 

$\boxed{(1,1,0, \dots ,0,1,1)}$: 

$$- E_{4g} \circ E_{13} \underbrace{[a_1 \wedge a_2 \wedge b_g] \wedge [a_3 \wedge b_{g-1} \wedge b_4]}_{\text{Type 2}} = [a_1 \wedge a_2 \wedge b_g] \wedge [a_1 \wedge b_{g-1} \wedge b_g]$$ is a highest weight vector for $\Phi_{1,1,0, \dots ,0,1,1}$. \\

$\boxed{(1,1,0, \dots ,0,1,0,0)}$: 

\begin{align*}
    E_{13} (\underbrace{[a_1 \wedge a_2 \wedge b_{g-2}] \wedge [a_3 \wedge b_{g-1} \wedge b_g] - [a_1 \wedge a_2 \wedge b_{g-1}] \wedge [a_3 \wedge b_{g-2} \wedge b_g] + [a_1 \wedge a_2 \wedge b_g] \wedge [a_3 \wedge b_{g-2} \wedge b_{g-1}]}_{\text{Type 2}}) \\ = [a_1 \wedge a_2 \wedge b_{g-2}] \wedge [a_1 \wedge b_{g-1} \wedge b_g] - [a_1 \wedge a_2 \wedge b_{g-1}] \wedge [a_1 \wedge b_{g-2} \wedge b_g] + [a_1 \wedge a_2 \wedge b_g] \wedge [a_1 \wedge b_{g-2} \wedge b_{g-1}]
\end{align*} is a highest weight vector for $\Phi_{1,1,0, \dots ,0,1,0,0}$.  \\

$\boxed{(2,0, \dots ,0,1,0)}$: Fix $i \in \{ 2, \dots , g-2 \}$. We define a vector $x_i$:

\begin{align*}
    X_i \circ E_{i4} \circ E_{13} \underbrace{[a_1 \wedge a_i \wedge b_g] \wedge [a_3 \wedge a_4 \wedge b_{g-1}]}_{\text{Type 2}} \\ = \underbrace{[a_1 \wedge a_i \wedge b_g] \wedge [a_1 \wedge b_i \wedge b_{g-1}] + [a_1 \wedge b_i \wedge b_g] \wedge [a_1 \wedge a_i \wedge b_{g-1}]}_{x_i}.
\end{align*}

We define a vector $y$:

\begin{align*}
     E_{1(g-1)} \underbrace{[a_{g-1} \wedge b_{g-1} \wedge b_g - \frac{1}{g-1} b_g \wedge \omega] \wedge [a_1 \wedge (a_{g-1} \wedge b_{g-1} + a_g \wedge b_g) - \frac{2}{g-1} a_1 \wedge \omega]}_{\text{Type 3}} \\ =
    \underbrace{[a_1 \wedge b_{g-1} \wedge b_g] \wedge [a_1 \wedge (a_{g-1} \wedge b_{g-1} + a_g \wedge b_g) - \frac{2}{g-1} a_1 \wedge \omega]}_{y}.
\end{align*}

The sum
 \begin{align*} (\sum_{i=2}^{g-2} x_i) + y  \end{align*} is a highest weight vector for $\Phi_{2,0, \dots ,0,1,0}$. \\

$\boxed{(2,0, \dots ,0,1,0,1)}$: 
$$ - E_{3g} \circ E_{12} \underbrace{[a_1 \wedge b_{g-1} \wedge b_g] \wedge [a_2 \wedge b_{g-2} \wedge b_3]}_{\text{Type 2}} = [a_1 \wedge b_{g-1} \wedge b_g] \wedge [a_1 \wedge b_{g-2} \wedge b_g]$$ is a highest weight vector for $\Phi_{2,0, \dots ,0,1,0,1}$.

\section{Determination of $\operatorname{ker}(\tau^*)$ for $\mathcal{HI}_{g,1}$} \label{section:taupuncture}

We now extend our results to the case of the handlebody Torelli group $\mathcal{HI}_{g,1}$ with one marked point. We recall that the Johnson homomorphism for $\mathcal{I}_{g,1}$ has image $\bigwedge^3 H$ and the restriction to the handlebody group has image $U := \tau(\mathcal{HI}_{g,1})$. See Proposition \ref{proposition:U}. Since $U = \UQ \oplus H$, where $\UQ := \tau(\mathcal{HI}_g)$ as in the previous sections, we have $$H^2(U; \Q) \cong \bigwedge^2 U_{\Q}^* = \bigwedge^2 \UQ_{\Q}^* \oplus [\UQ_{\Q}^* \otimes H_{\Q}] \oplus \bigwedge^2 H_{\Q}.$$ Recall that $H_{\Q}$ is canonically isomorphic to its dual, and we denote both groups by $H_{\Q}$. Each of $\bigwedge^2 \UQ_{\Q}^*$ and $\UQ_{\Q} \otimes H_{\Q}$ and $\bigwedge^2 H_{\Q}$ contains a copy of $(\bigwedge^2 H_{\Q}) / \Q$. For the rest of this section, we distinguish between the Johnson homomorphism on $\mathcal{HI}_g$ $$\tau_g: \mathcal{HI}_g \rightarrow \UQ$$ and on $\mathcal{HI}_{g,1}$ $$\tau_{g,1}: \mathcal{HI}_{g,1} \rightarrow U.$$

\begin{theorem} \label{theorem:kernelpunctured} The kernel of $\tau_{g,1}^*: H^2(U; \Q) \rightarrow H^2(\mathcal{HI}_{g,1}; \Q)$ decomposes into $\operatorname{SL}_g(\Q)$ representations as $$\operatorname{ker}(\tau_g^*) \oplus (\bigwedge^2 H_{\Q}) / \Q.$$ Moreover, $(\bigwedge^2 H_{\Q}) / \Q$ lies diagonally in $\bigwedge^2 \UQ_{\Q}^* \oplus [\UQ_{\Q}^* \otimes H_{\Q}] \oplus \bigwedge^2 H_{\Q}$. \end{theorem}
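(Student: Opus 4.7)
My plan is to compare Hochschild--Serre spectral sequences via the map of extensions
\[
\begin{array}{ccccccccc}
1 & \to & \pi_1(\Sigma_g) & \to & \mathcal{HI}_{g,1} & \to & \mathcal{HI}_g & \to & 1 \\
& & \downarrow & & \downarrow \tau_{g,1} & & \downarrow \tau_g & & \\
0 & \to & H & \to & U & \to & \UQ & \to & 0,
\end{array}
\]
in which the left vertical map is the abelianization $\pi_1(\Sigma_g) \twoheadrightarrow H$ followed by the inclusion $H \hookrightarrow U$, $x \mapsto x \wedge \omega$; commutativity follows because the Johnson image of a push map is $[\gamma] \wedge \omega$ (see Section~\ref{subsection:firstjohnsonTorelli}). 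Since $U = \UQ \oplus H$ is a split abelian extension, the upper spectral sequence degenerates at $E_2$ and yields the Künneth decomposition $H^2(U;\Q) \cong \bigwedge^2 \UQ_{\Q}^* \oplus (\UQ_{\Q}^* \otimes H_{\Q}) \oplus \bigwedge^2 H_{\Q}$ from the statement (using $H^*_{\Q} \cong H_{\Q}$ via $\omega$).

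First I account for the $\bigwedge^2\UQ_{\Q}^*$ summand. The restriction of $\tau_{g,1}^*$ factors as $\bigwedge^2 \UQ_{\Q}^* \xrightarrow{\tau_g^*} H^2(\mathcal{HI}_g;\Q) \xrightarrow{\pi^*} H^2(\mathcal{HI}_{g,1};\Q)$, so Theorem~\ref{theorem:kerneltauclosed} immediately contributes $\operatorname{ker}(\tau_g^*) \subseteq \operatorname{ker}(\tau_{g,1}^*)$. For the third summand, the composite $\bigwedge^2 H_{\Q} \xrightarrow{\tau_{g,1}^*} H^2(\mathcal{HI}_{g,1};\Q) \xrightarrow{i^*} H^2(\pi_1(\Sigma_g);\Q) = \Q$ is the symplectic contraction (induced by the abelianization $\pi_1(\Sigma_g)\to H$), whose kernel is exactly $(\bigwedge^2 H_{\Q})/\Q$ and whose nonzero image shows that the complementary line $\Q \cdot \omega$ is \emph{not} in $\operatorname{ker}(\tau_{g,1}^*)$.

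The subtle point is to exhibit $(\bigwedge^2 H_{\Q})/\Q$ as a \emph{diagonal} subspace of $\operatorname{ker}(\tau_{g,1}^*)$. A class $\xi \in (\bigwedge^2 H_{\Q})/\Q$ viewed naively in the third summand restricts to zero on the fiber, so it lies in $F^1 H^2(\mathcal{HI}_{g,1};\Q)$, but in general it is nonzero there: its image in $E_\infty^{1,1}$ is controlled by the lower-sequence transgression $d_2: H_{\Q} \to H^2(\mathcal{HI}_g;\Q)$, which is nontrivial. To cancel this image one adds a correction from $\UQ_{\Q}^* \otimes H_{\Q}$ at the $E_\infty^{1,1}$ level and then, if necessary, a correction from $\bigwedge^2 \UQ_{\Q}^*$ at the $E_\infty^{2,0}$ level. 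Both corrections are uniquely determined (modulo $\operatorname{ker}(\tau_g^*)$) by the comparison of the two spectral sequences, using that the upper $d_2$ vanishes, and this produces the asserted diagonal embedding $(\bigwedge^2 H_{\Q})/\Q \hookrightarrow \bigwedge^2 \UQ_{\Q}^* \oplus (\UQ_{\Q}^* \otimes H_{\Q}) \oplus \bigwedge^2 H_{\Q}$ lying in $\operatorname{ker}(\tau_{g,1}^*)$. To finish, I would show no further modules die: the part of $\UQ_{\Q}^* \otimes H_{\Q}$ complementary to the diagonal injects into $E_\infty^{1,1}$ because $\tau_g^* \otimes \mathrm{id}$ is injective on this complement and both tensor factors have nonzero images in $H^1(\mathcal{HI}_{g,1};\Q)$, while the line $\Q \cdot \omega \subset \bigwedge^2 H_{\Q}$ is detected by $i^*$. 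The main obstacle will be the cocycle-level construction of the diagonal corrections and the verification that they are nonzero in each summand; this requires an adaptation to the handlebody setting of Morita's explicit computation of the $d_2$-differential, in the spirit of \cite{MoritaLinear} and \cite{Habegger}.
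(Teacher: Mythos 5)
Your overall setup — the Hochschild--Serre spectral sequence for $1 \to \pi_1(\Sigma_g) \to \mathcal{HI}_{g,1} \to \mathcal{HI}_g \to 1$ and the resulting Künneth-type decomposition of $H^2(U;\Q)$ — matches the paper's, and your accounting of the $\bigwedge^2\UQ_{\Q}^*$ summand (which factors through $\tau_g^*$) and of the line $\Q\cdot\omega$ (via restriction to the fiber $\pi_1(\Sigma_g)$) is correct. The difficulty is concentrated exactly where you flag it, in the diagonal embedding of $(\bigwedge^2 H_{\Q})/\Q$, and there your argument has two genuine problems.

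First, your claim that the transgression $d_2: H_{\Q} \to H^2(\mathcal{HI}_g;\Q)$ is \emph{nontrivial} is false. The paper cites \cite[Proposition 3.1]{MoritaBundle} precisely for the degeneration of this spectral sequence at $E_2$, which forces $d_2 = 0$. One can also see this directly from the data you yourself use: the splitting $U_{\Q} = \UQ_{\Q} \oplus H_{\Q}$, together with the surjectivity of $\tau_{g,1}$, gives an injection $H_{\Q}^* \hookrightarrow H^1(\mathcal{HI}_{g,1};\Q)$ whose composite with the edge map to $E_\infty^{0,1} = \ker(d_2) \subset H_{\Q}$ is an isomorphism, so $\ker(d_2) = H_{\Q}$.

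Second — and this is the more serious gap — even granting $d_2 = 0$, the assertion that the corrections in $\UQ_{\Q}^* \otimes H_{\Q}$ and $\bigwedge^2\UQ_{\Q}^*$ are ``uniquely determined by the comparison of the two spectral sequences'' is never justified, and this is the substance of the theorem. Degeneration at $E_2$ gives no control over where the cup product $x \cup y$ lands in $F^1 H^2(\mathcal{HI}_{g,1};\Q)$ for the \emph{specific} lifts $x, y \in H_{\Q}^*$ supplied by $\tau_{g,1}^*$; in particular it does not tell you that the $E_\infty^{1,1}$-component of $x\cup y$ lies in the image of $\UQ_{\Q}^*\otimes H_{\Q}$, which is required for your correction to exist inside $\bigwedge^2 U_{\Q}^*$. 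This is exactly the computational content the paper supplies through Lemma~\ref{lemma:compositionbracketpq}: it evaluates $q \circ p \circ [\cdot,\cdot]$ on explicit vectors drawn from each of the three summands $\bigwedge^2\UQ_{\Q}$, $\UQ_{\Q}\otimes H_{\Q}$, $\bigwedge^2 H_{\Q}$ of $\bigwedge^2 U_{\Q}$, showing each surjects onto $(\bigwedge^2 H_{\Q})/\Q$ inside $\tau_2^{\Q}(\mathcal{HK}_{g,1})$. That both places $(\bigwedge^2 H_{\Q})/\Q$ in $\operatorname{coker}(\tau_{g,1,*})$ (hence its dual in $\operatorname{ker}(\tau_{g,1}^*)$) and proves it projects nontrivially to all three summands, i.e.\ is diagonal. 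Without a substitute for that explicit bracket-map computation, your proposal does not close.
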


Theorem \ref{theorem:kernelpunctured} and its proof closely resemble the case for $\mathcal{I}_{g,1}$.

\begin{theorem}[{Morita, \cite[Theorem 6.5]{MoritaLinear}}] \label{theorem:moritapunctured} The kernel of $\tau_{\mathcal{I}_{g,1}}^*: H^2(\bigwedge^3 H; \Q) \rightarrow H^2(\mathcal{I}_{g,1}; \Q)$ decomposes into $\operatorname{Sp}_{2g}(\Q)$ representations as $$\operatorname{ker}(\tau_{\mathcal{I}_g}^*) \oplus (\bigwedge^2 H_{\Q}) / \Q.$$ Moreover, $(\bigwedge^2 H_{\Q}) / \Q$ lies diagonally in $\bigwedge^2 ((\bigwedge^3 H_{\Q}) / H_{\Q}) \oplus [((\bigwedge^3 H_{\Q}) / H_{\Q}) \otimes H_{\Q}] \oplus \bigwedge^2 H_{\Q}$.
    
\end{theorem}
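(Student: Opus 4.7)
My plan is to follow Morita's strategy for the analogous result about $\mathcal{I}_{g,1}$ (Theorem~\ref{theorem:moritapunctured}), adapted to the handlebody setting. The central tool is the commutative diagram of short exact sequences
\[
\begin{tikzcd}[ampersand replacement=\&]
1 \arrow[r] \& \pi_1(\Sigma_g) \arrow[r] \arrow[d] \& \mathcal{HI}_{g,1} \arrow[r] \arrow[d, "\tau_{g,1}"] \& \mathcal{HI}_g \arrow[r] \arrow[d, "\tau_g"] \& 1 \\
0 \arrow[r] \& H \arrow[r] \& U \arrow[r] \& \UQ \arrow[r] \& 0
\end{tikzcd}
\]
in which the leftmost vertical arrow is the abelianization $\pi_1(\Sigma_g) \to H$, coinciding with $\tau_{g,1}|_{\pi_1(\Sigma_g)}$ under the identification of the Johnson image of $\pi_1(\Sigma_g) \subset \mathcal{HI}_{g,1}$ with the $H$-summand of $U = \UQ \oplus H$. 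Functoriality of the Hochschild–Serre spectral sequence produces a morphism from the (collapsing) spectral sequence of the bottom abelian extension, whose $E_\infty$ page realizes the Künneth decomposition $H^2(U;\Q) = \bigwedge^2 \UQ_{\Q}^* \oplus (\UQ_{\Q}^* \otimes H_{\Q}) \oplus \bigwedge^2 H_{\Q}$, to the spectral sequence of the top extension abutting to $H^*(\mathcal{HI}_{g,1};\Q)$.

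The first substantive step is to show that the pullback $H^2(\mathcal{HI}_g;\Q) \to H^2(\mathcal{HI}_{g,1};\Q)$ is injective. By the 5-term exact sequence of the top row, its kernel is the image of the transgression $d_2\colon H^1(\pi_1(\Sigma_g);\Q)^{\mathcal{HI}_g} = H_{\Q} \to H^2(\mathcal{HI}_g;\Q)$, equivalently the cokernel of the edge map $H^1(\mathcal{HI}_{g,1};\Q) \to H_{\Q}$. Since $U_{\Q}^* \hookrightarrow H^1(\mathcal{HI}_{g,1};\Q)$ and the short exact sequence $0 \to \UQ_{\Q}^* \to U_{\Q}^* \to H_{\Q} \to 0$ splits, the edge map is already surjective on $U_{\Q}^*$, and $d_2 = 0$. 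Consequently the $\bigwedge^2 \UQ_{\Q}^*$ summand of $\bigwedge^2 U_{\Q}^*$ contributes exactly $\ker(\tau_g^*)$ to $\ker(\tau_{g,1}^*)$, accounting for the first factor in the claimed decomposition.

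Next, under the filtration $F^0 \supset F^1 \supset F^2$ of $H^2(\mathcal{HI}_{g,1};\Q)$, the three Künneth summands land respectively in $F^2$, $F^1$, and $F^0$. The edge map $\bigwedge^2 H_{\Q} \to E_\infty^{0,2} \subseteq H^2(\pi_1(\Sigma_g);\Q) = \Q$ is the symplectic pairing $\omega$, whose kernel is precisely the irreducible module $(\bigwedge^2 H_{\Q})/\Q$. I would then identify the desired diagonal by showing that for each $\xi \in (\bigwedge^2 H_{\Q})/\Q$ the class $\tau_{g,1}^*(\xi) \in F^0$ has an $F^1/F^2$-component cancelled by the pullback of a specific element of $\UQ_{\Q}^* \otimes H_{\Q}$ (via the differential $d_2 \colon E_2^{0,1} \to E_2^{2,0}$ combined with cup products), whose residual $F^2$-contribution is in turn cancelled by an element of $\bigwedge^2 \UQ_{\Q}^*$. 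This three-part cancellation---the diagonal embedding---is forced by the spectral sequence structure and mirrors Morita's construction; verifying it explicitly amounts to a cup-product computation in the standard cochain model of $U^*_{\Q}$ inside $H^1(\mathcal{HI}_{g,1};\Q)$.

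The main obstacle will be the upper bound: ruling out any additional kernel in the summands $\UQ_{\Q}^* \otimes H_{\Q}$ and $\bigwedge^2 H_{\Q}$. I would carry this out by an $\operatorname{SL}_g(\Q)$-isotypic dimension count, using the decomposition of $\bigwedge^2 U_{\Q}^*$ from Table~\ref{table:decompAlt2U} together with Theorem~\ref{theorem:kerneltauclosed}. The key check is that every candidate surviving class must pair nontrivially with an abelian cycle in $H_2(\mathcal{HI}_{g,1};\Q)$; the relevant cycles arise from the bounding pair annulus twists already used in Section~\ref{section:upperboundtau}, supplemented by point-push elements of $\pi_1(\Sigma_g) \subset \mathcal{HI}_{g,1}$ which commute with suitable twists and produce the $F^1$-layer cycles needed to detect the $\UQ_{\Q}^* \otimes H_{\Q}$ components. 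Establishing these detections in the right quantity, and ensuring they exhaust the complement of the diagonal, is the most delicate part of the argument.
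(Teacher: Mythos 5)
The statement you were asked to prove is Morita's theorem about the surface Torelli group $\mathcal{I}_{g,1}$, i.e.\ the kernel of $\tau_{\mathcal{I}_{g,1}}^*\colon H^2(\bigwedge^3 H;\Q)\to H^2(\mathcal{I}_{g,1};\Q)$. In the paper this is a cited result (\cite[Theorem 6.5]{MoritaLinear}), recorded only for comparison; the paper gives no proof of it. Your proposal instead sketches a proof of the \emph{handlebody} analogue, Theorem~\ref{theorem:kernelpunctured}, concerning $\tau_{g,1}\colon\mathcal{HI}_{g,1}\to U$ with $U=\UQ\oplus H$. These are different statements about different groups, and neither implies the other, so as written the proposal does not address the statement at all. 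That is the principal gap.

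Even read as an attempt at Theorem~\ref{theorem:kernelpunctured}, the proposal stops short at exactly the two points where work is required. First, you only verify the vanishing of the transgression $d_2\colon E_2^{0,1}\to E_2^{2,0}$; the full degeneration of the Hochschild--Serre spectral sequence at $E_2$ (needed to identify $H^2(\mathcal{HI}_{g,1};\Q)$ with the three-summand decomposition, in particular the differentials out of $E_2^{0,2}$ and $E_2^{1,1}$) is taken from \cite[Proposition 3.1]{MoritaBundle} in the paper and is not established by your argument. Second, and more seriously, the diagonal embedding of $(\bigwedge^2 H_{\Q})/\Q$ and the exclusion of any further kernel are the entire content of the theorem, and your proposal leaves both as plans (``a cup-product computation'', ``the most delicate part'') rather than arguments. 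The paper's route here is concrete and different from what you outline: it works on homology and shows, by evaluating the composition $q\circ p\circ[\cdot,\cdot]$ of Lemma~\ref{lemma:compositionbracketpq} on explicit vectors such as $[a_1\wedge\omega]\wedge[a_2\wedge\omega]$ and $[a_3\wedge\omega]\wedge[a_1\wedge a_2\wedge b_3]$, that each of the summands $\bigwedge^2\UQ_{\Q}$, $\UQ_{\Q}\otimes H_{\Q}$, and $\bigwedge^2 H_{\Q}$ surjects onto $(\bigwedge^2 H_{\Q})/\Q$ in $\operatorname{coker}(\tau_*)$; this simultaneously produces the diagonal copy and, combined with the injectivity of the cup product on $\UQ_{\Q}^*\otimes H_{\Q}$ and the one-dimensionality of $H^2(\Sigma_g;\Q)$, pins down the kernel. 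To complete your version you would need to carry out those detections (or the equivalent abelian-cycle pairings you allude to) explicitly rather than assert that they exist.
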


To show that $(\bigwedge^2 H_{\Q}) / \Q$ is in $\operatorname{ker}(\tau^*_{g,1})$, we follow the method from Section \ref{section:lowerboundtau} and show it is in the image of the bracket map $[\cdot, \cdot]$. In the following subsection, we set ourselves up to do this calculation.

\subsection{The $\operatorname{SL}_g(\Q)$ maps $[\cdot, \cdot]$ and $p$ and $q$} We recall from Section \ref{subsection:higherMod} that there is a bracket map $$[\cdot, \cdot]: \bigwedge^2 \tau^{\Q}(\mathcal{I}_{g,1}) \rightarrow \tau_2^{\Q}(\mathcal{K}_{g,1}).$$ Expressing the images of these Johnson homomorphisms as $\operatorname{SL}_g(\Q)$ vector spaces gives us a map $$[\cdot, \cdot]: \bigwedge^2 (\bigwedge^3 H_{\Q}) \rightarrow \operatorname{Sym}^2(\bigwedge^2 H_{\Q}).$$ Letting $(a \wedge b) \leftrightarrow (c \wedge d)$ denote the vector $(a \wedge b) \otimes (c \wedge d) + (c \wedge d) \otimes (a \wedge b)$, the bracket map $[\cdot, \cdot]$ evaluates on generators of $\bigwedge^2 (\bigwedge^3 H_{\Q})$ as \begin{multline*} [a \wedge b \wedge c] \wedge [d \wedge e \wedge f] \mapsto \omega(a,d)(b \wedge c) \leftrightarrow (e \wedge f) - \omega(a,e)(b \wedge c)\leftrightarrow (d \wedge f) + \omega(a,f)(b \wedge c) \leftrightarrow (d \wedge e) \\ - \omega(b,d)(a \wedge c) \leftrightarrow (e \wedge f) + \omega(b,e)(a \wedge c) \leftrightarrow (d \wedge f) - \omega(b,f)(a \wedge c) \leftrightarrow (d \wedge e) \\ + \omega(c,d)(a \wedge b) \leftrightarrow (e \wedge f) - \omega(c,e)(a \wedge b) \leftrightarrow (d \wedge f) + \omega(c,f)(a \wedge b) \leftrightarrow (d \wedge e). \end{multline*} See \cite[Equation 4.2]{MoritaLinear}. The image $\tau_2^{\Q}(\mathcal{K}_{g,1})$ contains the module $(\bigwedge^2 H_{\Q}) / \Q$.
We will restrict $[\cdot, \cdot]$ to $\bigwedge^2 U_{\Q}$ and see that the image still contains $(\bigwedge^2 H_{\Q}) / \Q$. \\

Let $\gamma_2(\pi) / \gamma_3( \pi) := \gamma_2(\pi_1(\Sigma_{g,1})) / \gamma_3(\pi_1(\Sigma_{g,1}))$ be the second term in the graded quotient of the lower central series\footnote{The group $\bigoplus_{n \geq 1} \gamma_n(\pi) / \gamma_{n+1}( \pi)$ is naturally isomorphic as a Lie algebra to the free Lie algebra of $H$. See \cite{Serre}. We can thus express its elements as linear combinations of $[x,[y,z]]$ for $x,y,z \in H$.} of $\pi_1(\Sigma_{g,1})$. We define the map $$p: \operatorname{Sym}^2(\bigwedge^2 H_{\Q}) \rightarrow H_{\Q} \otimes \gamma_2(\pi) / \gamma_3( \pi)$$ on generators as $$(a \wedge b) \leftrightarrow (c \wedge d) \mapsto a \otimes [b, [c,d]] - b \otimes [a,[c,d]] + c \otimes [d, [a,b]] - d \otimes [c,[a,b]].$$

We also define the map $$q: H_{\Q} \otimes \gamma_2(\pi) / \gamma_3( \pi) \rightarrow \bigwedge^2 H_{\Q}$$ on generators as $$a \otimes [b, [c,d]] \mapsto 2 \omega(a,b) c \wedge d + \omega(a,c) b \wedge d - \omega(a,d) b \wedge c.$$ This sets up the following lemma.

\begin{lemma}[{Morita, \cite[Lemma 6.4]{MoritaLinear}}] \label{lemma:compositionbracketpq} The following composition of $\operatorname{SL}_g(\Q)$ maps detects $\bigwedge^2 H_{\Q}$ in the image of $[\cdot, \cdot]$: $$\bigwedge^2 \bigwedge^3 H_{\Q} \xrightarrow{[\cdot, \cdot]} \operatorname{Sym}^2(\bigwedge^2 H_{\Q}) \xrightarrow{p} H_{\Q} \otimes \gamma_2(\pi) / \gamma_3( \pi) \xrightarrow{q} \bigwedge^2 H_{\Q}.$$
    
\end{lemma}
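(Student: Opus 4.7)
The plan is to prove Lemma~\ref{lemma:compositionbracketpq} by showing that the composition $\Lambda := q \circ p \circ [\cdot,\cdot] : \bigwedge^2 \bigwedge^3 H_{\Q} \to \bigwedge^2 H_{\Q}$ is surjective. Each of the three constituent maps is built only from the symplectic form $\omega$, exterior products, and the free Lie bracket on $H_{\Q}$ (identified with $\gamma_2(\pi)/\gamma_3(\pi)$ as the degree-two piece of the free Lie algebra on $H$), so each is $\operatorname{Sp}_{2g}(\Q)$-equivariant. Consequently $\operatorname{im}(\Lambda)$ is an $\operatorname{Sp}_{2g}(\Q)$-subrepresentation of $\bigwedge^2 H_{\Q}$.

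As an $\operatorname{Sp}_{2g}(\Q)$-module, $\bigwedge^2 H_{\Q} \cong \Gamma_{0,1} \oplus \Q$, where the trivial summand is spanned by $\omega$. By Schur's Lemma it suffices to exhibit one element of $\operatorname{im}(\Lambda)$ with nonzero projection to $\Q$ and another with nonzero projection to $\Gamma_{0,1}$. For the trivial component I would evaluate $\Lambda$ on a test vector such as $v = [a_1 \wedge b_1 \wedge a_2] \wedge [a_3 \wedge b_3 \wedge b_2]$; all but one of the nine $\omega$-pairings produced by $[\cdot,\cdot]$ vanish, and tracking the surviving symmetric tensor through $p$ and $q$ yields a multiple of $a_1 \wedge b_1 + a_3 \wedge b_3$, which pairs nontrivially with $\omega$ under the $\operatorname{Sp}_{2g}(\Q)$-equivariant contraction $\bigwedge^2 H_{\Q} \to \Q$. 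For $\Gamma_{0,1}$ I would use a test vector of the form $[a_1 \wedge a_3 \wedge b_3] \wedge [a_2 \wedge a_3 \wedge b_3]$ (or a close variant), engineered so that its image projects nontrivially to the highest weight vector $a_1 \wedge a_2$ after the contractions in $q$.

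The main obstacle will be the combinatorial bookkeeping: each application of $[\cdot,\cdot]$ produces nine terms, $p$ turns each symmetric factor into four more, and $q$ further spawns three terms, so the naive expansion is unwieldy. The efficient strategy is to reverse-engineer from the desired targets: fix the highest weight vectors of $\Q$ and $\Gamma_{0,1}$, then pick preimages whose symplectic pairings kill almost every expanded term. Once these two explicit computations succeed, Schur's Lemma gives surjectivity of $\Lambda$ onto $\bigwedge^2 H_{\Q}$, yielding the lemma. Since Lemma~\ref{lemma:compositionbracketpq} is recorded as Morita's Lemma~6.4 in \cite{MoritaLinear}, I would also cross-check my computations against that reference.
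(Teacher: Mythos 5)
The paper does not prove this lemma; it simply cites it as Morita's Lemma~6.4 in \cite{MoritaLinear}. So there is no paper-internal argument to compare against—your proposal is a re-derivation of a result the author treats as a black box.

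That said, your strategy is sound and your computations check out. The three maps are all built from $\omega$, exterior products, and the free Lie bracket, so they are indeed $\operatorname{Sp}_{2g}(\Q)$-equivariant (not merely $\operatorname{SL}_g(\Q)$-equivariant, which is all the paper records because of its handlebody focus). Hence $\operatorname{im}(\Lambda)$ is an $\operatorname{Sp}_{2g}(\Q)$-subrepresentation of $\bigwedge^2 H_{\Q}\cong\Gamma_{0,1}\oplus\Q$, and by Schur's Lemma it suffices to exhibit vectors in the image with nonzero projection to each summand. I verified your first test vector: applying $[\cdot,\cdot]$ to $[a_1\wedge b_1\wedge a_2]\wedge[a_3\wedge b_3\wedge b_2]$ leaves only the $\omega(a_2,b_2)$ term $(a_1\wedge b_1)\leftrightarrow(a_3\wedge b_3)$, and pushing this through $p$ and $q$ gives $4(a_1\wedge b_1 + a_3\wedge b_3)$, which has nonzero contraction $C_2 = 8$. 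In fact this single vector already has nonzero projection to both $\Q$ and $\Gamma_{0,1}$ (since it is visibly not a scalar multiple of $\omega$), so you could skip the second test vector entirely; Schur applied to the $\operatorname{Sp}_{2g}(\Q)$-span of one such vector already gives $\operatorname{im}(\Lambda)=\bigwedge^2 H_\Q$. Your second test vector also works: $\Lambda([a_1\wedge a_3\wedge b_3]\wedge[a_2\wedge a_3\wedge b_3]) = 4a_1\wedge a_2$, a highest weight vector for $\Gamma_{0,1}$.

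One small caution: because the codomain of $[\cdot,\cdot]$ is identified with (a quotient of) $\operatorname{Sym}^2(\bigwedge^2 H_\Q)$, which contains more than one $\operatorname{Sp}_{2g}(\Q)$-irreducible, "detects $\bigwedge^2 H_\Q$" in Morita's sense is really the claim that $q\circ p$ restricted to the image $\tau_2^\Q(\mathcal K_{g,1})\cong\Gamma_{0,2}\oplus\Gamma_{0,1}$ is nonzero on the $\Gamma_{0,1}$ summand. Your surjectivity argument establishes this (since $q\circ p$ must kill $\Gamma_{0,2}$ by Schur, so any nonzero $\Gamma_{0,1}$-component of the image can only come from the $\Gamma_{0,1}$ summand), but it is worth stating explicitly that this is the content being verified, since that is precisely how the lemma is invoked in the proof of Theorem~\ref{theorem:kernelpunctured}.
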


In the proof of Theorem \ref{theorem:kernelpunctured}, we will apply the composition $q \circ p \circ [\cdot, \cdot]$ to vectors from each of $\bigwedge^2 \UQ_{\Q}$ and $\UQ_{\Q} \otimes H_{\Q}$ and $\bigwedge^2 H_{\Q}$. To simplify these calculations, we introduce Lemma \ref{lemma:evaluateQP}.

\begin{lemma} \label{lemma:evaluateQP}
    Vectors in $\operatorname{Sym}^2(\bigwedge^2 H_{\Q})$ evaluate in the following ways under the composition $q \circ p$: 
    \textnormal{\begin{align*}  
\text{Type } \pm 1 \; (i=1,2): \quad & \pm (a_1 \wedge a_2) \leftrightarrow (a_i \wedge b_i) \xrightarrow{q \circ p} \pm 6 a_1 \wedge a_2 \\
\text{Type } \pm 2 \; (3 \leq i \leq g): \quad & \pm (a_1 \wedge a_2) \leftrightarrow (a_i \wedge b_i) \xrightarrow{q \circ p} \pm 4 a_1 \wedge a_2 \\
\text{Type } \pm 3 \; (3 \leq i \leq g): \quad & \pm (a_1 \wedge a_i) \leftrightarrow (a_2 \wedge b_i) \xrightarrow{q \circ p} \pm 2 a_1 \wedge a_2 \\
\text{Type } \pm 4 \; (3 \leq i \leq g): \quad & \pm  (a_1 \wedge b_i) \leftrightarrow (a_2 \wedge a_i) \xrightarrow{q \circ p} \mp 2 a_1 \wedge a_2. \\
\end{align*}}
\end{lemma}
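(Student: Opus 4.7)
The plan is to verify each of the four types by direct computation: apply $p$ to express each symmetric tensor as an element of $H_{\Q} \otimes \gamma_2(\pi)/\gamma_3(\pi)$, and then apply $q$ term-by-term, using the values of the symplectic form $\omega(a_i, b_j) = \delta_{ij}$, $\omega(a_i, a_j) = \omega(b_i, b_j) = 0$ to collapse most summands.

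For Type $+1$ with a vector of the form $(a_1 \wedge a_2) \leftrightarrow (a_i \wedge b_i)$ where $i \in \{1,2\}$, the definition of $p$ gives four terms $a_1 \otimes [a_2,[a_i,b_i]] - a_2 \otimes [a_1,[a_i,b_i]] + a_i \otimes [b_i,[a_1,a_2]] - b_i \otimes [a_i,[a_1,a_2]]$. When I apply $q$, most contributions vanish because the relevant $\omega$-values are $0$; the only surviving pieces come from pairings $\omega(a_i,b_i) = 1$ and $\omega(b_i,a_i) = -1$. Summing these surviving terms yields a multiple of $a_1 \wedge a_2$, and I expect the coefficient to come out to $6$ in the two cases $i=1,2$ and to $4$ when $i \geq 3$ because in the latter case the overlap between the indices $\{1,2\}$ and $\{i\}$ is smaller, so fewer terms contribute. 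The sign-flipped Type $-1$ and $-2$ cases follow by linearity.

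For Types $\pm 3$ and $\pm 4$, I would do the same computation starting from $(a_1 \wedge a_i) \leftrightarrow (a_2 \wedge b_i)$ and $(a_1 \wedge b_i) \leftrightarrow (a_2 \wedge a_i)$ respectively. Expanding via $p$ again produces four summands; applying $q$ and using $\omega(a_i,b_i)=1$ leaves only a single nonzero $a_1 \wedge a_2$ contribution in each case, giving $\pm 2 a_1 \wedge a_2$ with the sign dictated by which position the $b_i$ occupies. The sign flip between Types $+4$ and $+3$ reflects the skew-symmetry of $\omega$ in the single nonzero pairing.

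The main obstacle is purely bookkeeping: each vector produces four terms under $p$, each of which produces three terms under $q$, so one must verify twelve contributions per case and track signs coming from both the Lie bracket expansion and the antisymmetry of the wedge. The computation is routine once one notices that, for each type, at most one or two of the twelve terms survive the $\omega$-pairings, so the overall arithmetic reduces to summing a handful of $\pm 1$ coefficients and collecting multiples of $a_1 \wedge a_2$.
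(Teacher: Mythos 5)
Your proposal takes the same direct-computation approach as the paper (which carries out the Type $+1$, $i=1$ case explicitly and leaves the rest as similar), so the strategy is a match and the stated outputs agree. One bookkeeping caveat: your count of surviving terms is off in a couple of places — for Type $1$ with $i=1$ there are four nonzero contributions (the paper's display shows contributions of $1+0+2+3=6$), and for Types $3$ and $4$ there are two surviving $a_1\wedge a_2$ terms each (giving $1+1=2$ and $-1-1=-2$), not a single one — so if you execute the plan be careful to track all terms rather than relying on the ``at most one or two survive'' heuristic.
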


\begin{proof} We show this by direct computation for the Type $+1$ vector when $i=1$, and the other computations follow similarly. We apply the composition $q \circ p$. We use colored text to help keep track of pieces, but the color is not necessary for following the computation.
\begin{align*}
    (a_1 \wedge a_2) \leftrightarrow (a_1 \wedge b_1) \xrightarrow{p} \green{ a_1} \otimes [a_2, [a_1, \green{b_1}]] - a_2 \otimes [a_1, [a_1, b_1]] + \purple{a_1} \otimes [\purple{b_1},[a_1,a_2]] - \blue{b_1} \otimes [\blue{a_1},[\blue{a_1},a_2]] \\ \xrightarrow{q} \green{- \; a_2 \wedge a_1} \purple{\; + \; 2a_1 \wedge a_2}  \blue{\; + \; 2a_1 \wedge a_2 + a_1 \wedge a_2} = 6a_1 \wedge a_2. 
\end{align*} \end{proof}

We are now ready to prove Theorem \ref{theorem:kernelpunctured}.

\begin{proof}[Proof of Theorem \ref{theorem:kernelpunctured}] The Hochschild-Serre spectral sequence of the rational cohomology of the extension $$1 \rightarrow \pi_1 (\Sigma_g) \rightarrow \mathcal{HI}_{g,1} \rightarrow \mathcal{HI}_g \rightarrow 1$$ degenerates at the $E_2$ page \cite[Proposition 3.1]{MoritaBundle} so we have an isomorphism\footnote{Note that $\Sigma_g$ is a $K(\pi_1(\Sigma_g),1)$.} $$H^2(\mathcal{HI}_{g,1}; \Q) \cong  H^2(\mathcal{HI}_{g}; \Q) \oplus [H^1(\mathcal{HI}_{g}; \Q) \otimes H^1(\Sigma_g; \Q)] \oplus H^2(\Sigma_g; \Q).$$

This decomposition respects the cup product structure of $H^*(\mathcal{HI}_{g,1}; \Q)$, so the structure in the $H^2(\mathcal{HI}_{g}; \Q)$ piece will be the same as we found in Theorem \ref{theorem:kerneltauclosed}. Specifically, the quotient $$H_1(\mathcal{HI}_{g,1}; \Q) \xrightarrow{\tau_{g,1}^{\Q}} U_{\Q} \rightarrow U_{\Q} /H_{\Q} = \UQ_{\Q}$$ dualizes to a subspace $\UQ_{\Q}^* \subset H^1(\mathcal{HI}_{g,1}; \Q)$, and the kernel of the restriction of the cup product $$\cup: \bigwedge^2 \UQ_{\Q}^* \rightarrow H^2(\mathcal{HI}_{g,1}; \Q)$$ is exactly $\operatorname{ker}(\tau_g^*)$. \\

Now consider the submodule $\UQ_{\Q}^* \otimes H_{\Q}$ of $H^2(U; \Q)$, which is generated by elements $x \cup y$ for $x \in \UQ_{\Q}^*$ and $y \in H_{\Q} \subset H^1(\mathcal{HI}_{g,1}; \Q)$. This submodule injects into $H^1(\mathcal{HI}_{g}; \Q) \otimes  H^1(\Sigma_g; \Q)$, so $x \cup y \neq 0$ for $x,y \neq 0$. \\

The decomposition respects the cup product structure of $H^*(\Sigma_g; \Q)$: $$\cup: \bigwedge^2 H^1(\Sigma_g; \Q) \rightarrow H^2(\Sigma_g; \Q) \cong \Q.$$
The vector space $\bigwedge^2 H^1(\Sigma_g; \Q) \cong \bigwedge^2 H_{\Q}$ decomposes into $\operatorname{SL}_g(\Q)$ modules as $$\bigwedge^2 H_{\Q} = \Phi_{0,1,0, \dots, 0} \oplus \Phi_{1,0, \dots, 0,1} \oplus \Phi_{0, \dots, 0,1,0} \oplus \Q,$$ so there is a copy of $$(\bigwedge^2 H_{\Q}) / \Q = \Phi_{0,1,0, \dots, 0} \oplus \Phi_{1,0, \dots, 0,1} \oplus \Phi_{0, \dots, 0,1,0}$$ inside of $\operatorname{ker}(\tau_{g,1}^*)$. We show that each of $\bigwedge^2 \UQ_{\Q}$ and $\UQ_{\Q} \otimes H_{\Q}$ and $\bigwedge^2 H_{\Q}$ maps surjectively to $(\bigwedge^2 H_{\Q}) / \Q$. We do this by applying the composition $q \circ p \circ [ \cdot, \cdot]$ from Lemma \ref{lemma:compositionbracketpq}.  Following arguments from Section \ref{section:lowerboundtau}, this suffices to prove Theorem \ref{theorem:kernelpunctured}. 

The vector $a_1 \wedge a_2$ is a highest weight vector for $\Phi_{0,1,0, \dots, 0}$, and it is easy to see that the highest weight vectors for $\Phi_{1,0, \dots, 0,1}$ and $\Phi_{0, \dots, 0,1,0}$ are in its $\mathcal{H}_{g,1}$-orbit.\footnote{See our calculation for $\Gamma_{0,1}$ in Section \ref{subsection:01}.} It suffices to map to $a_1 \wedge a_2$ from each of $\bigwedge^2 \UQ_{\Q}$ and $\UQ_{\Q} \otimes H_{\Q}$ and $\bigwedge^2 H_{\Q}$. 

In the following calculations, we will make use of the results from Lemma \ref{lemma:evaluateQP}. We use colored text to help keep track of pieces, but the color is not necessary for following the computation.

$\boxed{\bigwedge^2 H_{\Q}}:$ We will show that \begin{equation} \label{equation:wedge2H} (q \circ p \circ [ \cdot, \cdot])([a_1 \wedge \omega] \wedge [a_2 \wedge \omega]) = (-4g -4) a_1 \wedge a_2. \end{equation}

First, we expand \begin{align*} [a_1 \wedge \omega] \wedge [a_2 \wedge \omega] = \sum_{3 \leq i \neq j \leq g} [a_1 \wedge a_i \wedge b_i] \wedge [a_2 \wedge a_j \wedge b_j] + {\green{[a_1 \wedge a_2 \wedge b_2] \wedge [a_2 \wedge a_1 \wedge b_1]}} \\ + \sum_{i=3}^g (\purple{[a_1 \wedge a_i \wedge b_i] \wedge [a_2 \wedge a_i \wedge b_i]} + \blue{[a_1 \wedge a_i \wedge b_i] \wedge [a_2 \wedge a_1 \wedge b_1]} + \orange{[a_1 \wedge a_2 \wedge b_2] \wedge [a_2 \wedge a_i \wedge b_i]}) \end{align*} Applying the bracket map $[ \cdot, \cdot]$, we get \begin{align*}
    [a_1 \wedge \omega] \wedge [a_2 \wedge \omega] \xrightarrow{[\cdot, \cdot]} {\green{\underbrace{(a_2 \wedge b_2) \leftrightarrow (a_2 \wedge a_1)}_{{\text{Type } -1}} \underbrace{-(a_1 \wedge a_2) \leftrightarrow (a_1 \wedge b_1)}_{\text{Type } -1}}} \\ + \sum_{i=3}^g [\purple{\underbrace{- (a_1 \wedge b_i) \leftrightarrow (a_2 \wedge a_i)}_{\text{Type } -4} \underbrace{+ (a_1 \wedge a_i) \leftrightarrow (a_2 \wedge b_i)}_{\text{Type } +3}} \blue{\underbrace{+ (a_i \wedge b_i) \leftrightarrow (a_2 \wedge a_1)}_{\text{Type } -2}} \orange{\underbrace{- (a_1 \wedge a_2) \leftrightarrow (a_i \wedge b_i)}_{\text{Type } +2}}].
\end{align*} By Lemma \ref{lemma:evaluateQP}, $(q \circ p \circ [ \cdot, \cdot])([a_1 \wedge \omega] \wedge [a_2 \wedge \omega])$ equals the sum $${\green{- \; 6a_1 \wedge a_2 \; - \; 6a_1 \wedge a_2}} + (g-2)[\purple{2a_1 \wedge a_2 + 2a_1 \wedge a_2} \blue{\; - \; 4a_1 \wedge a_2} \orange{\; - \; 4a_1 \wedge a_2}],$$ which simplifies to $(-4g-4)a_1 \wedge a_2$ as desired. \\

$\boxed{\bigwedge^2 \UQ_{\Q}}:$ We will show that $$(q \circ p \circ [ \cdot, \cdot])([a_1 \wedge a_3 \wedge b_3 - \frac{1}{g-1} a_1 \wedge \omega] \wedge [a_2 \wedge a_4 \wedge b_4 - \frac{1}{g-1} a_2 \wedge \omega]) = \frac{-8}{(g-1)^2} a_1 \wedge a_2.$$

First, we expand \begin{align*} [a_1 \wedge a_3 \wedge b_3 - \frac{1}{g-1} a_1 \wedge \omega] \wedge [a_2 \wedge a_4 \wedge b_4 - \frac{1}{g-1} a_2 \wedge \omega] = [a_1 \wedge a_3 \wedge b_3] \wedge [a_2 \wedge a_4 \wedge b_4]  \\ - \frac{1}{g-1}(\green{[a_1 \wedge a_3 \wedge b_3] \wedge [a_2 \wedge a_1 \wedge b_1]} \purple{+ [a_1 \wedge a_3 \wedge b_3] \wedge [a_2 \wedge a_3 \wedge b_3]} + \sum_{i=4}^g [a_1 \wedge a_3 \wedge b_3] \wedge [a_2 \wedge a_i \wedge b_i]) \\ - \frac{1}{g-1}(\blue{[a_1 \wedge a_2 \wedge b_2] \wedge [a_2 \wedge a_4 \wedge b_4]} \orange{+ [a_1 \wedge a_4 \wedge b_4] \wedge [a_2 \wedge a_4 \wedge b_4]} + \sum_{\substack{i=3 \\ i \ne 4}}^g [a_1 \wedge a_i \wedge b_i] \wedge [a_2 \wedge a_4 \wedge b_4]) \\ + \frac{1}{(g-1)^2} \darkpurple{[a_1 \wedge \omega] \wedge [a_2 \wedge \omega]}.
\end{align*} 

\noindent We already know that $(q \circ p \circ [\cdot, \cdot])(\darkpurple{[a_1 \wedge \omega] \wedge [a_2 \wedge \omega]}) = \darkpurple{(-4g-4) a_1 \wedge a_2}$ (see Equation \ref{equation:wedge2H}), so we will carry this term along until the end of our calculation. Applying the bracket map $[ \cdot, \cdot]$, we get \begin{align*} [a_1 \wedge a_3 \wedge b_3 - \frac{1}{g-1} a_1 \wedge \omega] \wedge [a_2 \wedge a_4 \wedge b_4 - \frac{1}{g-1} a_2 \wedge \omega] \xrightarrow{[\cdot,\cdot]} - \frac{1}{g-1} [\green{\underbrace{(a_3 \wedge b_3) \leftrightarrow (a_2 \wedge a_1)}_{\text{Type } -1}} \\ \purple{\underbrace{ \; - \; (a_1 \wedge b_3) \leftrightarrow (a_2 \wedge a_3)}_{\text{Type } -4} \underbrace{ \; + \; (a_1 \wedge a_3) \leftrightarrow (a_2 \wedge b_3)}_{\text{Type } +3} } \blue{\underbrace{ \; - \; (a_1 \wedge a_2) \leftrightarrow (a_4 \wedge b_4)}_{\text{Type } -1}} \orange{\underbrace{ \; - \; (a_1 \wedge b_4) \leftrightarrow (a_2 \wedge a_4)}_{\text{Type } -4}}  \\ \orange{\underbrace{ \; + \; (a_1 \wedge a_4) \leftrightarrow (a_2 \wedge b_4)}_{\text{Type } +3}}]  + \frac{1}{(g-1)^2} [\cdot, \cdot] (\darkpurple{[a_1 \wedge \omega] \wedge [a_2 \wedge \omega]}).  \end{align*} By Lemma \ref{lemma:evaluateQP}, $$(q \circ p \circ [ \cdot, \cdot])([a_1 \wedge a_3 \wedge b_3 - \frac{1}{g-1} a_1 \wedge \omega] \wedge [a_2 \wedge a_4 \wedge b_4 - \frac{1}{g-1} a_2 \wedge \omega])$$ equals \begin{align*} - \frac{1}{g-1}[\green{ \; - \; 6a_1 \wedge a_2} \purple{ \; + \; 2a_1 \wedge a_2 \; + \; 2a_1 \wedge a_2} \blue{ \; - \; 6a_1 \wedge a_2} \orange{ \; + \; 2a_1 \wedge a_2 \; + \; 2a_1 \wedge a_2}] + \frac{\darkpurple{(-4g-4)}}{(g-1)^2} {\darkpurple{a_1 \wedge a_2}}, \end{align*} which simplifies to $\frac{-8}{(g-1)^2} a_1 \wedge a_2$ as desired. \\

$\boxed{\UQ_{\Q} \otimes H_{\Q}}:$ We will show that $$(q \circ p \circ [ \cdot, \cdot])([a_3 \wedge \omega] \wedge [a_1 \wedge a_2 \wedge b_3]) = 4g a_1 \wedge a_2.$$ First, we expand \begin{align*}
    [a_3 \wedge \omega] \wedge [a_1 \wedge a_2 \wedge b_3] = \green{[a_3 \wedge a_1 \wedge b_1] \wedge [a_1 \wedge a_2 \wedge b_3]} \purple{ \; + \; [a_3 \wedge a_2 \wedge b_2] \wedge [a_1 \wedge a_2 \wedge b_3]} \\ + \sum_{i=4}^g \blue{[a_3 \wedge a_i \wedge b_i] \wedge [a_1 \wedge a_2 \wedge b_3]}.
\end{align*}  Applying the bracket map $[ \cdot, \cdot]$, we get \begin{align*}
    [a_3 \wedge \omega] \wedge [a_1 \wedge a_2 \wedge b_3] \xrightarrow{[\cdot,\cdot]} \green{\underbrace{(a_1 \wedge b_1) \leftrightarrow (a_1 \wedge a_2)}_{\text{Type } +1} \underbrace{\; - \; (a_3 \wedge a_1) \leftrightarrow (a_2 \wedge b_3)}_{\text{Type } +3}} \purple{\underbrace{\; + \; (a_2 \wedge b_2) \leftrightarrow (a_1 \wedge a_2)}_{\text{Type } +1}} \\ \purple{\underbrace{\; + \; (a_3 \wedge a_2) \leftrightarrow (a_1 \wedge b_3)}_{\text{Type } -3}} + \sum_{i=4}^g \blue{\underbrace{(a_i \wedge b_i) \leftrightarrow (a_1 \wedge a_2)}_{\text{Type } +2}}.
\end{align*} By Lemma \ref{lemma:evaluateQP}, $(q \circ p \circ [ \cdot, \cdot])([a_3 \wedge \omega] \wedge [a_1 \wedge a_2 \wedge b_3])$ equals the sum $$\green{6a_1 \wedge a_2 \; + \; 2a_1 \wedge a_2} \purple{ \; + \; 6a_1 \wedge a_2 \; - \; 2 a_1 \wedge a_2} + (g-3) \blue{4 a_1 \wedge a_2},$$ which simplifies to $4g a_1 \wedge a_2$ as desired. \end{proof} 

\section{Determination of $\operatorname{ker}(\tau^*)$ for $\mathcal{HI}_g^1$} \label{section:tauboundary}

Now we extend our results to the case of the handlebody Torelli group $\mathcal{HI}_g^1$ with one embedded disk. In this section, we distinguish between the Johnson homomorphism on $\mathcal{HI}_{g,1}$ $$\tau_{g,1}: \mathcal{HI}_{g,1} \rightarrow U$$ and on $\mathcal{HI}_g^1$ $$\tau_g^1: \mathcal{HI}_g^1 \rightarrow U.$$ We prove the following theorem. 

\begin{theorem} \label{theorem:kernelboundary}
    The kernel of $$(\tau_g^1)^*: H^2(U; \Q) \rightarrow H^2(\mathcal{HI}_g^1; \Q)$$ decomposes into $\operatorname{SL}_g(\Q)$ representations as $$\operatorname{ker}((\tau_g^1)^*) \cong \operatorname{ker}(\tau_{g,1}^*) \oplus \Q.$$
\end{theorem}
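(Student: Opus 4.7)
The plan is to exploit that $T_\partial$ lies in the Johnson kernel so that $\tau_g^1$ factors through $\mathcal{HI}_{g,1}$, and then to compare the induced maps on $H^2$ via the Hochschild--Serre spectral sequence of the central extension $1 \to \Z \to \mathcal{HI}_g^1 \to \mathcal{HI}_{g,1} \to 1$. First, since $T_\partial$ is a Dehn twist about the separating curve $\partial \Sigma_g^1$, it is a separating-curve twist and so lies in $\mathcal{K}_g^1 \cap \mathcal{HI}_g^1 = \mathcal{HK}_g^1$; in particular $\tau_g^1(T_\partial) = 0$. Hence $\tau_g^1$ descends through the surjection $\pi: \mathcal{HI}_g^1 \twoheadrightarrow \mathcal{HI}_{g,1}$ as $\tau_g^1 = \tau_{g,1} \circ \pi$, yielding $(\tau_g^1)^* = \pi^* \circ \tau_{g,1}^*$ and therefore
$$\operatorname{ker}((\tau_g^1)^*) = (\tau_{g,1}^*)^{-1}\bigl(\operatorname{ker}(\pi^*)\bigr).$$

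Next I will analyze $\operatorname{ker}(\pi^*: H^2(\mathcal{HI}_{g,1};\Q) \to H^2(\mathcal{HI}_g^1;\Q))$ using the Hochschild--Serre spectral sequence $E_2^{p,q} = H^p(\mathcal{HI}_{g,1}; H^q(\Z;\Q))$. Because $H^q(\Z;\Q) = 0$ for $q \geq 2$ and $\Z$ is central (so the coefficient action is trivial), the only differential contributing in low degrees is the transgression $d_2: E_2^{0,1} = \Q \to E_2^{2,0} = H^2(\mathcal{HI}_{g,1};\Q)$, whose image is $\Q \cdot e$, where $e$ is the Euler class classifying the extension. The 5-term exact sequence then gives $\operatorname{ker}(\pi^*) = \Q \cdot e$, so
$$\operatorname{ker}((\tau_g^1)^*) = (\tau_{g,1}^*)^{-1}(\Q \cdot e).$$

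The main step, and what I expect to be the principal obstacle, is to show that $e$ is nonzero and lies in $\operatorname{im}(\tau_{g,1}^*)$. Following Morita's approach in the Torelli case $\mathcal{I}_g^1 \to \mathcal{I}_{g,1}$ (see \cite{MoritaLinear}), I would compute $e$ via abelian cycles: for commuting $\phi, \psi \in \mathcal{HI}_{g,1}$ with lifts $\tilde\phi, \tilde\psi \in \mathcal{HI}_g^1$, one has $[\tilde\phi, \tilde\psi] = T_\partial^{k(\phi, \psi)}$, and Morita's cocycle formula expresses $k(\phi, \psi)$ as a universal symplectic pairing on $\bigwedge^3 H$ evaluated on $\tau(\phi) \otimes \tau(\psi)$. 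This pairing, restricted along $U \hookrightarrow \bigwedge^3 H$, defines a class $\xi \in \bigwedge^2 U_\Q^* = H^2(U;\Q)$ with $\tau_{g,1}^*(\xi) = e$; because the handlebody extension is the pullback of the Torelli extension along $\mathcal{HI}_{g,1} \hookrightarrow \mathcal{I}_{g,1}$, the same formula transfers to our setting. Nonvanishing of $e$ can then be verified by producing explicit commuting bounding-pair annulus twists in $\mathcal{HI}_{g,1}$ whose lifts do not commute in $\mathcal{HI}_g^1$---equivalently, by evaluating $\xi$ on a pair of classes in $U$ where the pairing is nonzero.

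Combining these inputs, $(\tau_{g,1}^*)^{-1}(\Q \cdot e)$ sits in a short exact sequence
$$0 \to \operatorname{ker}(\tau_{g,1}^*) \to \operatorname{ker}((\tau_g^1)^*) \to \Q \to 0$$
which splits because $\Q$ is free as a $\Q$-module, giving the desired isomorphism $\operatorname{ker}((\tau_g^1)^*) \cong \operatorname{ker}(\tau_{g,1}^*) \oplus \Q$.
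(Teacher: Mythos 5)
Your strategy is essentially the paper's: compare $\tau_g^1$ and $\tau_{g,1}$ via the Hochschild--Serre spectral sequence of the central extension $1 \to \Z \to \mathcal{HI}_g^1 \to \mathcal{HI}_{g,1} \to 1$. The paper works on the homology side and proves the differentials $d_{02}\colon H_2(\mathcal{HI}_{g,1};\Q)\to\Q$ and $d_{03}\colon H_3(\mathcal{HI}_{g,1};\Q)\to H_1(\mathcal{HI}_{g,1};\Q)$ are surjective; you work on the cohomology side, reducing everything to $\operatorname{ker}(\pi^*) = \operatorname{im}(d_2^{0,1}) = \Q\cdot e$ and avoiding the analogue of the $d_{03}$ lemma entirely. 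That is a minor but genuine simplification: since $(\tau_g^1)^* = \pi^*\circ\tau_{g,1}^*$, you only need the kernel of $\pi^*$ on $H^2$, not its image, so the surjectivity of $d_{03}$ (Lemma~\ref{lemma:d03} in the paper) is not needed for this route. The observation that $e$ automatically lies in $\operatorname{im}(\tau_{g,1}^*)$ --- by pulling back the Torelli Euler class via $\mathcal{HI}_{g,1}\hookrightarrow\mathcal{I}_{g,1}$ and using that Morita/Habegger--Sorger express the Torelli Euler class through $\tau^*$ --- is also correct and is exactly what makes the $\iota$ in the paper's final commutative diagram well-defined.

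The genuine gap is the nonvanishing of $e$, which you flag as ``the principal obstacle'' but do not prove. By the five-term exact sequence, $e\neq 0$ in $H^2(\mathcal{HI}_{g,1};\Q)$ is \emph{equivalent} to $[T_\partial]=0$ in $H_1(\mathcal{HI}_g^1;\Q)$, and this is not automatic: the boundary twist is central and could a priori survive rationally in the abelianization of a small handlebody subgroup. This is precisely the content of the paper's Lemma~\ref{lemma:boundarytwistdies}, which is proved via Putman's two lantern relations expressing $T_\partial$ as a product of bounding pair maps, combined with a conjugacy relation for bounding pair annulus twists inside $\mathcal{HI}_g^1$ (Lemma~\ref{lemma:conjugacy}). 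The conjugacy lemma is the delicate part: one must glue homeomorphisms of sub-handlebodies and then correct the result to land in $\mathcal{HI}_g^1$ using Hirose's description of $\Psi(\mathcal{H}_g^1)$. Your suggested alternative --- find commuting elements of $\mathcal{HI}_{g,1}$ whose lifts to $\mathcal{HI}_g^1$ have commutator a nontrivial power of $T_\partial$ --- is a valid way to detect $e\neq 0$ in principle, but producing such a pair inside the handlebody group (not merely in the Torelli group) and verifying the nonzero power requires genuine work. Until that is done, the proof is incomplete at exactly the place where the paper expends its real effort.
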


This closely resembles results for the Torelli group $\mathcal{I}_g^1$. \begin{theorem}[{Habegger-Sorger, \cite[Theorem 2.2]{Habegger}}] \label{theorem:kernelboundarytorelli}
    The kernel of $$\tau_{\mathcal{I}_g^1}^*: H^2(\bigwedge^3 H; \Q) \rightarrow H^2(\mathcal{I}_g^1; \Q)$$ decomposes into $\operatorname{Sp}_{2g}(\Q)$ representations as $$\operatorname{ker}(\tau_{\mathcal{I}_g^1}^*) \cong \operatorname{ker}(\tau_{\mathcal{I}_{g,1}}^*) \oplus \Q.$$
\end{theorem}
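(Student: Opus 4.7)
The plan is to bootstrap off of Morita's theorem (Theorem~\ref{theorem:moritapunctured}) via the central extension
\[
1 \to \Z\langle T_\partial\rangle \to \mathcal{I}_g^1 \to \mathcal{I}_{g,1} \to 1,
\]
where $T_\partial$ is the Dehn twist about the boundary. First I would verify that $\tau(T_\partial)=0$. The map $T_\partial$ acts on $\pi_1(\Sigma_g^1)$ by conjugation by the boundary word $\gamma=\prod_i[\alpha_i,\beta_i]\in \gamma_2 F_{2g}$, so for any generator $x$ we have $T_\partial(x)x^{-1}=[\gamma,x]\in [\gamma_2 F_{2g},F_{2g}]\subseteq \gamma_3 F_{2g}$, which is trivial in the target of the Johnson homomorphism. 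Consequently $\tau_{\mathcal{I}_g^1}$ factors through $\mathcal{I}_{g,1}$, yielding the commutative square
\[
\begin{array}{ccc}
H^2(\bigwedge^3 H;\Q) & \xrightarrow{\tau_{\mathcal{I}_g^1}^*} & H^2(\mathcal{I}_g^1;\Q) \\
\Big\| & & \mathrm{infl}\uparrow \\
H^2(\bigwedge^3 H;\Q) & \xrightarrow{\tau_{\mathcal{I}_{g,1}}^*} & H^2(\mathcal{I}_{g,1};\Q),
\end{array}
\]
from which the inclusion $\ker(\tau_{\mathcal{I}_{g,1}}^*)\subseteq \ker(\tau_{\mathcal{I}_g^1}^*)$ is immediate.

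Next I would apply the Hochschild--Serre five-term exact sequence for this central extension. Since $H^\bullet(\Z;\Q)$ is concentrated in degrees $0$ and $1$, and the conjugation action of $\mathcal{I}_{g,1}$ on $H^1(\Z;\Q)\cong\Q$ is trivial (because $T_\partial$ is central in $\Mod_g^1$), the transgression $d_2:\Q\to H^2(\mathcal{I}_{g,1};\Q)$ sends $1\mapsto e$ where $e$ is the rational Euler class of the extension, and $\ker(\mathrm{infl})=\Q\cdot e$. Combining this with the factorization above gives
\[
\ker(\tau_{\mathcal{I}_g^1}^*) \;=\; (\tau_{\mathcal{I}_{g,1}}^*)^{-1}(\Q\cdot e).
\]
Since we are working over $\Q$, this preimage is either $\ker(\tau_{\mathcal{I}_{g,1}}^*)$ (if $e\notin\mathrm{im}(\tau_{\mathcal{I}_{g,1}}^*)$) or $\ker(\tau_{\mathcal{I}_{g,1}}^*)\oplus\Q$ (if $e\in\mathrm{im}(\tau_{\mathcal{I}_{g,1}}^*)$).

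The main obstacle is therefore the verification that the Euler class $e$ lies in $\mathrm{im}(\tau_{\mathcal{I}_{g,1}}^*)$. My approach would be to exploit $\operatorname{Sp}_{2g}(\Z)$-equivariance: the central extension is canonically Sp-equivariant, so $\Q\cdot e$ is a trivial subrepresentation of $H^2(\mathcal{I}_{g,1};\Q)$, and by Schur's Lemma it suffices to find a trivial subrepresentation of $\mathrm{im}(\tau_{\mathcal{I}_{g,1}}^*)$ that is not contained in $\ker(\mathrm{infl})^\perp$ in any naive sense. Concretely, I would dualize to homology and pair $e$ with an abelian cycle in $H_2(\mathcal{I}_{g,1};\Q)$ constructed as in Section~\ref{subsection:hain}, using a product of commuting bounding pair maps whose image under $(\tau_{\mathcal{I}_{g,1}})_*$ lies in the trivial $\operatorname{Sp}_{2g}(\Q)$ summand of $\bigwedge^2\bigwedge^3 H_\Q$. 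The computation of $\langle e, \{f,h\}\rangle$ can be carried out by lifting $f,h$ to $\mathcal{I}_g^1$ along a set-theoretic section and reading off the discrepancy $s(f)s(h)s(fh)^{-1}\in \Z\langle T_\partial\rangle$; for an appropriately chosen pair of bounding pair maps whose lifts nontrivially ``twist around the boundary'' in a controlled way, this evaluation is nonzero. This both confirms $e\neq 0$ and exhibits it as the pullback of a specific invariant class in $\bigwedge^2(\bigwedge^3 H)^*$, producing the promised extra $\Q$ summand.
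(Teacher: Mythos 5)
Your skeleton is sound and matches the shape of the paper's argument for the handlebody analogue (Theorem \ref{theorem:kernelboundary}, Section \ref{section:tauboundary}): the central extension $1\to\Z\langle T_\partial\rangle\to\mathcal{I}_g^1\to\mathcal{I}_{g,1}\to1$, the fact that $\tau(T_\partial)=0$ so that $\tau_{\mathcal{I}_g^1}^*=\mathrm{infl}\circ\tau_{\mathcal{I}_{g,1}}^*$, and the two-row spectral sequence identifying $\operatorname{ker}(\mathrm{infl})$ with the line spanned by the Euler class $e$. Up to the reduction $\operatorname{ker}(\tau_{\mathcal{I}_g^1}^*)=(\tau_{\mathcal{I}_{g,1}}^*)^{-1}(\Q\cdot e)$ everything is correct.

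The gap is in the last step, which is the actual content of the theorem: you must show both that $e\neq 0$ and that $e\in\operatorname{im}(\tau_{\mathcal{I}_{g,1}}^*)$, and your sketch establishes neither. Pairing $e$ against abelian cycles by measuring the failure of lifts to commute can at best address $e\neq 0$, and even there no cycle is produced: commuting bounding pair maps with disjoint support lift to commuting elements of $\mathcal{I}_g^1$, so the pairing vanishes on the obvious abelian cycles; to get a nonzero answer one needs something like the lantern relations the paper uses in Lemma \ref{lemma:boundarytwistdies}, or the restriction of the extension to the point-pushing subgroup $\pi_1(\Sigma_g)\subset\mathcal{I}_{g,1}$, where it is the unit tangent bundle extension with Euler number $2-2g$. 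More seriously, the Schur's Lemma remark does not deliver $e\in\operatorname{im}(\tau^*)$: over $\Q$ one has $e\in\operatorname{im}(\tau_{\mathcal{I}_{g,1}}^*)$ if and only if $e$ annihilates all of $\operatorname{ker}((\tau_{\mathcal{I}_{g,1}})_*)\subset H_2(\mathcal{I}_{g,1};\Q)$, and the trivial isotypic component of $H^2(\mathcal{I}_{g,1};\Q)$ need not be one-dimensional, so exhibiting a single invariant class (or cycle) on which $e$ is nonzero says nothing about membership in the image. What is needed is an explicit $\operatorname{Sp}_{2g}(\Q)$-invariant $\phi\in H^2(\bigwedge^3 H;\Q)$ with $\tau^*\phi=e$, i.e.\ the analogue of the map $\iota$ in the diagram concluding Section \ref{section:tauboundary}; locating $e$ inside the splitting $H^2(\mathcal{I}_{g,1};\Q)\cong H^2(\mathcal{I}_g;\Q)\oplus[H^1(\mathcal{I}_g;\Q)\otimes H^1(\Sigma_g;\Q)]\oplus H^2(\Sigma_g;\Q)$ used in Theorem \ref{theorem:kernelpunctured} is one route, but it is exactly the step your proposal leaves unaddressed.
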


Before we prove Theorem \ref{theorem:kernelboundary}, we introduce a series of lemmas. First, we show that the boundary twist (the Dehn twist about the embedded disk in $\Sigma_g^1$), which we denote $T_{\partial}$, dies in $H_1(\mathcal{HI}_g^1; \Q)$. We then use this fact to understand the differentials in a spectral sequence that sets us up to prove Theorem \ref{theorem:kernelboundary}.

\subsection{Boundary twists} We start with a conjugacy relation in Lemma \ref{lemma:conjugacy}. To state this lemma, we briefly introduce some concepts regarding symplectic spaces. See \cite{JohnsonConjugacy} for more details. In particular, the statement and proof of Lemma \ref{lemma:conjugacy} closely resemble those of \cite[Theorem 1B]{JohnsonConjugacy}.

Consider a bounding pair map $T_{\gamma} T_{\delta}^{-1}$ for which $\gamma \cup \delta$ bounds a subsurface $S \subset \Sigma_g^1$. Then: \begin{itemize}
    \item The subspace $H_1(S; \Z) \subset H$ has a primitive\footnote{While the details are not necessary for our proof, see the erratum in \cite{JohnsonIII} for the definition of primitive (the definition in \cite{JohnsonConjugacy} is incorrect).} bilinear form $\omega_S$ coming from the restriction of the intersection form $\omega$ on $H$.

    \item The radical of $H_1(S; \Z)$ is the set of all $x \in H_1(S; \Z)$ such that $\omega_S(x,y) =0$ for all $y \in H_1(S; \Z)$. The radical of $H_1(S; \Z)$ is generated by $[\gamma]$.
\end{itemize} This map $T_{\gamma} T_{\delta}^{-1}$ gives rise to a \textit{polarized subspace} of $H$: the primitive subspace $H_1(S; \Z)$ paired with the element $[\gamma]$ which generates the one dimensional radical of $H_1(S; \Z)$. \\

We are ready to state the following lemma.

\begin{lemma}[Conjugacy relation] \label{lemma:conjugacy}
    Let $T_{\gamma} T_{\delta}^{-1}$ and $T_{\gamma'} T_{\delta'}^{-1}$ be bounding pair maps about curves which bound disks in $\mathcal{V}_g^1$. Then $T_{\gamma} T_{\delta}^{-1}$ and $T_{\gamma'} T_{\delta'}^{-1}$ are conjugate in $\mathcal{HI}_g^1$ if and only if they have the same polarized subspace in $H$.
\end{lemma}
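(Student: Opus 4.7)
The proof splits into two directions, and I would adapt Johnson's strategy for \cite[Theorem 1B]{JohnsonConjugacy} while carrying out every construction inside the handlebody group $\mathcal{H}_g^1$. The forward direction is essentially formal: if $f \in \mathcal{HI}_g^1$ conjugates $T_\gamma T_\delta^{-1}$ to $T_{\gamma'} T_{\delta'}^{-1}$, then $\{f(\gamma), f(\delta)\} = \{\gamma', \delta'\}$ up to isotopy and $f(S) \simeq S'$. Since $f \in \mathcal{I}_g^1$ acts trivially on $H$, applying $f_*$ to $[\gamma]$ and to a basis of $H_1(S;\mathbb{Z}) \subseteq H$ gives the equality of polarized subspaces.

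For the backward direction, I would fix a model bounding pair $(\gamma_0, \delta_0)$ of meridians realizing the given polarized subspace $(H_1(S;\mathbb{Z}), [\gamma])$. The hypothesis that $\gamma$ bounds a disk in $\mathcal{V}_g^1$ forces $[\gamma]$ to lie in $V^* \subset H$, the kernel of $H \to V$, which is exactly the subgroup spanned by meridian classes, so such a model exists. The plan is to show that each of $(\gamma, \delta)$ and $(\gamma', \delta')$ can be taken to $(\gamma_0, \delta_0)$ by an element of $\mathcal{HI}_g^1$; composing then produces the desired conjugating element. This has two substeps. First, a handlebody change-of-coordinates principle produces $f \in \mathcal{H}_g^1$ with $f(\gamma, \delta) = (\gamma_0, \delta_0)$, via a decorated version of the standard transitivity of $\mathcal{H}_g^1$ on cut systems (extracting what we need from Section \ref{subsection:actiononpi1} and \cite{Hensel}), where the extra decoration is an ordered separating meridian pair together with a specified subsurface. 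Second, because the polarized subspaces agree, $\Psi(f)$ lies in the stabilizer of $(H_1(S;\mathbb{Z}), [\gamma])$ inside $\Psi(\mathcal{H}_g^1)$; the key geometric claim is that this stabilizer is realized by handlebody mapping classes supported away from $\gamma_0 \cup \delta_0$. Any such class commutes with $T_{\gamma_0} T_{\delta_0}^{-1}$, so precomposing $f$ by a representative of $\Psi(f)^{-1}$ of this form adjusts it to land in $\mathcal{HI}_g^1$ without breaking the conjugation.

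The main obstacle will be verifying the realizability of the symplectic stabilizer of $(H_1(S;\mathbb{Z}), [\gamma])$ by handlebody mapping classes supported off $\gamma_0 \cup \delta_0$. Johnson addresses the analogous step in \cite{JohnsonConjugacy} by an explicit generator-by-generator analysis of the stabilizer inside $\operatorname{Sp}_{2g}(\mathbb{Z})$. In the handlebody setting, the block description of $\Psi(\mathcal{H}_g^1)$ from Section \ref{subsection:actiononhomology} (upper-triangular with symmetric lower-left block) restricts the available matrices, but the fact that both $S$ and its complement contain meridian bases --- forced by $[\gamma] \in V^*$ --- should let one split the stabilizer into pieces realized inside the handlebody complements of $\gamma_0 \cup \delta_0$. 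This reduces the step to a block-matrix adaptation of Johnson's generator analysis combined with the known structure of handlebody mapping class groups on the lower-genus pieces bounded by $\gamma_0 \cup \delta_0$.
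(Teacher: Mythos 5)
Your overall architecture matches the paper's: quote Johnson's \cite[Theorem 1B]{JohnsonConjugacy} for the forward direction, and for the backward direction build a map in $\mathcal{H}_g^1$ taking one pair to the other and then correct it into $\mathcal{HI}_g^1$ by composing with handlebody mapping classes that commute with the bounding pair map. The paper makes this correction concrete rather than appealing to realizability of the whole stabilizer: it builds $F\in\mathcal{H}_g^1$ directly from $(\gamma,\delta)$ to $(\gamma',\delta')$ by piecing together handlebody homeomorphisms of the cut pieces $S,\bar S\to S',\bar S'$, then constructs a map $H$ supported on $\operatorname{int}(S)\sqcup\operatorname{int}(\bar S)$ with $H_*|_A=F_*|_A$ and $H_*|_B=F_*|_B$ (using surjectivity of $\mathcal H(S)\to\Psi(\mathcal H(S))$ on each piece), and then kills the residual transvection by a power of the meridian disk twist $T_\gamma$. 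Your detour through a fixed model pair $(\gamma_0,\delta_0)$ costs you an extra application of the same argument but is otherwise equivalent, and your observation that $[\gamma]\in V^*$ guarantees the model exists is exactly the point the paper uses implicitly.

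Two things you should tighten if you carry this out. First, you write ``precomposing $f$ by a representative of $\Psi(f)^{-1}$'' that is supported off $\gamma_0\cup\delta_0$, but precomposition applies the correction before $f$, so it would need to be supported off $\gamma\cup\delta$, not off $\gamma_0\cup\delta_0$; what you want is to \emph{postcompose}, so that the correction acts where $\gamma_0,\delta_0$ actually live. Second, the claim that the full symplectic stabilizer of the polarized subspace inside $\Psi(\mathcal H_g^1)$ is realized by maps supported off $\gamma_0\cup\delta_0$ is stronger than the paper needs and stronger than you need: the paper does not realize the whole stabilizer, only the specific matrix $\Psi(F)$ modulo the radical transvection, and the transvection itself it kills with $T_\gamma^{-r}$. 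You can recover $T_{\gamma_0}$ as a twist about a parallel copy of $\gamma_0$ inside $\operatorname{int}(S)$, so the discrepancy is only about representatives, but phrasing the step as ``realize this particular $F_*|_A$ and $F_*|_B$ piecewise, then correct the transvection'' is cleaner than asserting surjectivity onto the full stabilizer, which would require a separate argument you currently only gesture at.
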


\begin{proof}
    The forward direction follows from the analogous conjugacy relation in $\mathcal{I}_g^1$ (see \cite[Theorem 1B]{JohnsonConjugacy}). Since we have $fT_{\gamma}f^{-1} = T_{f(\gamma)}$ (see \cite[Fact 3.7]{Primer}), it suffices to show that there exists $f \in \mathcal{HI}_g^1$ such that $f(\gamma) = \gamma'$ and $f(\delta) = \delta'$. Cutting along $\gamma \cup \delta$ (resp. $\gamma' \cup \delta'$) splits $\Sigma_g^1$ into subsurfaces $S$ and $\bar{S}$ (resp. $S'$ and $\bar{S}'$). Since they induce the same polarized subspace in $H$, we have the homeomorphisms $S \cong S' \cong \Sigma_{g_1}^2$ and $\bar{S} \cong \bar{S}' \cong \Sigma_{g_2}^3$ with $g_1 + g_2 = g-1$. The pairs $S, S'$ and $\bar{S}, \bar{S}'$ each bound homeomorphic handlebodies (with embedded disks) via homeomorphisms that send $\gamma$ to $\gamma'$ and $\delta$ to $\delta'$. Piece these homeomorphisms together along $\{ \gamma, \delta \}$ to get $F \in \mathcal{H}_g^1$ such that $F(S) = S'$ and $F(\bar{S}) = \bar{S}'$ and $F(\gamma) = \gamma'$ and $F(\delta) = \delta'$. Note that $F$ leaves invariant the homology class $[\gamma]$ and the subgroups $A := H_1(S; \Z)= H_1(S'; \Z)$ and $B := H_1(\bar{S}; \Z) = H_1(\bar{S}'; \Z)$.

    Now we address the fact that $F$ may not lie in $\mathcal{I}_g^1$. The induced map $F_*: H \rightarrow H$ preserves the subgroups $A$ and $B$. The restriction $F_*|_A: A \rightarrow A$ acts via a matrix in $\Psi(\mathcal{H}_{g_1}^2) \subset \operatorname{Sp}_{2g_1}(\Z)$ that fixes $\operatorname{rad}(A) = \langle [\gamma] \rangle$, so we can find\footnote{We denote by $\mathcal{H}(S)$ the mapping class group of the handlebody bounded by $S$ and contained in the original handlebody $\mathcal{V}_g^1$. We have $\mathcal{H}(S) \cong \mathcal{H}_{g_1}^2$.} $h \in \mathcal{H}(S)$ such that $h_* = F_*|_A$. Similarly, we can find $\bar{h} \in \mathcal{H}(\bar{S})$ such that $\bar{h}_* = F_*|_B$. Piecing $h, \bar{h}$ together along $\{ \gamma, \delta \}$, we get a map $H \in \mathcal{H}_g^1$ such that $H_*$ agrees with $F_*$ on $[\gamma]^{\perp}$. In particular, $F \circ H^{-1}$ acts trivially on $[\gamma]^{\perp} := \{ v \in H \; | \; \omega(v,[\gamma]) = 0 \}$ and so, by \cite[Lemma 2]{JohnsonConjugacy}, $\Psi(F \circ H^{-1}) = \Psi(T_{\gamma}^r)$ for some $r \in \Z$. Then $f := F \circ H^{-1} \circ T_{\gamma}^{-r}$ lies in $\mathcal{HI}_g^1$ and takes $\gamma$ to $\gamma'$ and $\delta$ to $\delta'$ as desired.
\end{proof} 

\begin{lemma}[Boundary twists are torsion] \label{lemma:boundarytwistdies}
    Let $g \geq 2$ and let $T_{\partial}$ be the Dehn twist about the embedded disk in $\Sigma_g^1$. Then $[T_{\partial}] = 0$ in $H_1(\mathcal{HI}_g^1; \Q).$
\end{lemma}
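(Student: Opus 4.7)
The plan is to apply the five-term exact sequence in rational group homology to the central extension
\[
1 \longrightarrow \mathbb{Z}\langle T_{\partial}\rangle \longrightarrow \mathcal{HI}_g^1 \longrightarrow \mathcal{HI}_{g,1} \longrightarrow 1,
\]
obtaining
\[
H_2(\mathcal{HI}_{g,1};\mathbb{Q}) \xrightarrow{\;d\;} \mathbb{Q} \xrightarrow{\;\iota\;} H_1(\mathcal{HI}_g^1;\mathbb{Q}) \longrightarrow H_1(\mathcal{HI}_{g,1};\mathbb{Q}) \longrightarrow 0,
\]
in which $\iota(1) = [T_{\partial}]$. Thus it suffices to show that $d$ is nonzero; as its target is one-dimensional, this will force $\iota = 0$ and hence $[T_{\partial}] = 0$ in $H_1(\mathcal{HI}_g^1;\mathbb{Q})$.

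To show $d \neq 0$, I will exhibit two commuting elements $\bar\phi,\bar\psi \in \mathcal{HI}_{g,1}$ whose lifts $\phi,\psi \in \mathcal{HI}_g^1$ satisfy $[\phi,\psi] = T_{\partial}^k$ for some $k \neq 0$; by the standard computation of the transgression on an abelian cycle (cf.\ Section~\ref{subsection:abeliancycles}), this gives $d(\{\bar\phi,\bar\psi\}) = k$. I will take $\phi = T_{\gamma_1}T_{\delta_1}^{-1}$ and $\psi = T_{\gamma_2}T_{\delta_2}^{-1}$ to be genus-one bounding pair maps, with each $\gamma_i \sqcup \delta_i$ bounding an annulus in $\mathcal{V}_g^1$ so that both lie in $\mathcal{HI}_g^1$ by Theorem~\ref{theorem:Omori}. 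The curves will be arranged so that, in $\Sigma_{g,1}$, representatives of $\gamma_1,\delta_1$ can be freely isotoped to be disjoint from $\gamma_2,\delta_2$ (forcing $\bar\phi$ and $\bar\psi$ to commute), while in $\Sigma_g^1$ every such disjointing isotopy is obstructed by a nontrivial algebraic intersection with $\partial$. The resulting defect is the classical Birman obstruction for commuting lifts of elements of $\operatorname{Mod}_{g,1}$ to $\operatorname{Mod}_g^1$, and it takes the form of a nonzero power of $T_{\partial}$.

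The main obstacle is the geometric arrangement: both bounding pairs must simultaneously bound annuli in the \emph{handlebody} (so that they genuinely land in $\mathcal{HI}_g^1$ and not merely $\mathcal{I}_g^1$) while linking with $\partial$ to produce a nonzero Birman defect. For $g \geq 2$ this can be realized inside a genus-two subhandlebody of $\mathcal{V}_g^1$ containing $\partial$, with the two bounding pairs chosen so that their annuli cross one another along a single arc transverse to $\partial$. Once such a configuration is in hand, the conjugacy relation of Lemma~\ref{lemma:conjugacy} provides the flexibility to verify that the two bounding pairs have the required polarized subspaces and that the resulting commutator is, up to sign, exactly $T_{\partial}$.
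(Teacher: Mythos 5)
Your high-level strategy---deducing $[T_\partial]=0$ from surjectivity of the transgression $d\colon H_2(\mathcal{HI}_{g,1};\Q)\to\Q$ in the five-term exact sequence of the central extension $1\to\Z\to\mathcal{HI}_g^1\to\mathcal{HI}_{g,1}\to 1$---is logically sound, and it is the converse direction from the paper: the paper first proves $[T_\partial]=0$ directly by exhibiting $T_\partial$ as a product of bounding pair maps in two of Putman's lantern relations and then applying the conjugacy Lemma~\ref{lemma:conjugacy} to conclude $2[T_\partial]=0$, and only afterward uses this to show $d_{02}$ is surjective (Lemma~\ref{lemma:d02}). So your outline is not circular with respect to the paper.

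However, the mechanism you propose for showing $d\neq 0$ does not work. You want bounding pair maps $\phi=T_{\gamma_1}T_{\delta_1}^{-1}$ and $\psi=T_{\gamma_2}T_{\delta_2}^{-1}$ whose underlying multicurves are disjointable in $\Sigma_{g,1}$ but not in $\Sigma_g^1$, so that $[\phi,\psi]=T_\partial^k$ with $k\neq 0$. But disjointability is governed by geometric intersection number, and for essential simple closed curves the geometric intersection number computed in $\Sigma_g^1$ equals that in $\Sigma_{g,1}$: any disjointing isotopy of curves in $\Sigma_{g,1}$ can, by general position, be perturbed off a collar of the boundary and hence takes place already in $\Sigma_g^1$. (The extra $\Z\langle T_\partial\rangle$ records the failure to lift \emph{ambient} isotopies so as to fix the boundary pointwise; it does not change which pairs of isotopy classes of curves admit disjoint representatives.) Consequently, if $\gamma_1\cup\delta_1$ and $\gamma_2\cup\delta_2$ can be made disjoint, then $\phi$ and $\psi$ are supported on disjoint subsurfaces of $\Sigma_g^1$ and commute there, so $[\phi,\psi]=1$ and your abelian cycle pairs trivially with the Euler class. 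The ``Birman obstruction'' you invoke is a phenomenon of the point-pushing sequence $1\to\pi_1(\Sigma_g)\to\operatorname{Mod}_{g,1}\to\operatorname{Mod}_g\to 1$, where a marked point can genuinely sit between two curves; it has no analogue for the framing extension $1\to\Z\to\operatorname{Mod}_g^1\to\operatorname{Mod}_{g,1}\to 1$. To salvage your route you would have to produce commuting elements of $\mathcal{HI}_{g,1}$ that are \emph{not} disjointly-supported multitwists, which is exactly what is hard to do by hand and what the paper's lantern-relation argument neatly sidesteps. Your appeal to Lemma~\ref{lemma:conjugacy} at the end is also a non sequitur: that lemma classifies conjugacy of bounding pair maps, but gives no tool for evaluating a commutator as a power of $T_\partial$.
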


\begin{proof}
    In \cite[Section 6]{Putman}, Putman constructs two lantern relations: $$T_{\partial} = (T_{x_1} T_{x_2}^{-1}) (T_{y_1} T_{y_2}^{-1}) (T_{z_1} T_{z_2}^{-1})$$ and $$T_{\partial} = (T_{x_1'} T_{x_2'}^{-1}) (T_{y_1'} T_{y_2'}^{-1}) (T_{z_1'} T_{z_2'}^{-1}).$$ These are products of bounding pair maps around meridians so they lie in $\mathcal{HI}_g^1$. See \cite[Figure 11]{Putman}.  With this fact, Putman's proof \cite[Lemma 6.2]{Putman} applies completely to our situation. The details go as follows. We choose orientations so that, in $H$, we have $[x_1]= -[x_2]$ and $[x_1'] = -[x_2']$ and $[x_1] = [x_2']$. By Lemma \ref{lemma:conjugacy}, $T_{x_1} T_{x_2}^{-1}$ and $T_{x_2'} T_{x_1'}^{-1}$ are conjugate in $\mathcal{HI}_g^1$ (notice that we are using $T_{x_2'} T_{x_1'}^{-1}$ here instead of $T_{x_1'} T_{x_2'}^{-1}$). Passing to $H_1(\mathcal{HI}_g^1; \Q)$, we have $$[T_{x_1} T_{x_2}^{-1}] = - [T_{x_1'} T_{x_2'}^{-1}].$$ Similarly, we have $[T_{y_1} T_{y_2}^{-1}] = - [T_{y_1'} T_{y_2'}^{-1}]$ and $[T_{z_1} T_{z_2}^{-1}] = - [T_{z_1'} T_{z_2'}^{-1}].$ Combining these facts with Putman's lantern relations, we calculate: 
    \begin{align*}
     2 [T_{\partial}] = ([T_{x_1} T_{x_2}^{-1}] + [T_{y_1} T_{y_2}^{-1}] + [T_{z_1} T_{z_2}^{-1}]) \\ + ([T_{x_1'} T_{x_2'}^{-1}] + [T_{y_1'} T_{y_2'}^{-1}] + [T_{z_1'} T_{z_2'}^{-1}]) = 0. \end{align*}
\end{proof}

\subsection{Spectral sequence} Since the Hochschild-Serre spectral sequence of the extension $$1 \rightarrow \Z \rightarrow \mathcal{HI}_g^1 \rightarrow \mathcal{HI}_{g,1} \rightarrow 1$$ has two rows, it degenerates at the $E_3$ page and has $E_{2}^{0,2} = E_{\infty}^{0,2} = 0$. To understand $$H_2(\mathcal{HI}_g^1; \Q) = E_{\infty}^{1,1} \oplus E_{\infty}^{2,0},$$ it suffices to understand the differentials $$d_{02}: H_2(\mathcal{HI}_{g,1}; \Q) \rightarrow \Q$$ and $$d_{03}: H_3(\mathcal{HI}_{g,1}; \Q) \rightarrow H_1(\mathcal{HI}_{g,1}; \Q).$$ We have $E_{\infty}^{2,0} = \operatorname{ker}(d_{02})$ and $E_{\infty}^{1,1} = \operatorname{coker}(d_{03})$. We show in the following lemmas that $d_{02}$ and $d_{03}$ are surjective, and so $H_2(\mathcal{HI}_g^1; \Q) \cong H_2(\mathcal{HI}_{g,1}; \Q) / \Q$. 

\begin{lemma} \label{lemma:d02} When $g \geq 2$, the differential $d_{02}$ is surjective. \end{lemma}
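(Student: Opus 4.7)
The plan is to identify $d_{02}$ with the transgression in the five term exact sequence (in rational homology) of the central extension
\[
1 \to \Z \to \mathcal{HI}_g^1 \to \mathcal{HI}_{g,1} \to 1.
\]
Here the kernel is generated by $T_{\partial}$, which is central in $\mathcal{H}_g^1$ (every representing diffeomorphism fixes the embedded disk pointwise), so the conjugation action of $\mathcal{HI}_{g,1}$ on $H_1(\Z;\Q)$ is trivial and the coinvariant term degenerates to $\Q$. The five term sequence therefore produces
\[
H_2(\mathcal{HI}_{g,1}; \Q) \xrightarrow{d_{02}} \Q \xrightarrow{\iota} H_1(\mathcal{HI}_g^1; \Q) \to H_1(\mathcal{HI}_{g,1}; \Q) \to 0,
\]
where $\iota$ is induced by the inclusion $\Z \hookrightarrow \mathcal{HI}_g^1$ and sends the generator $1 \in \Q$ to the homology class $[T_{\partial}]$.

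By exactness at the $\Q$ term, the image of $d_{02}$ coincides with the kernel of $\iota$, so $d_{02}$ is surjective as soon as $\iota = 0$, i.e.\ as soon as $[T_{\partial}] = 0$ in $H_1(\mathcal{HI}_g^1; \Q)$. But this vanishing is exactly Lemma \ref{lemma:boundarytwistdies}, proved just above using Putman's two lantern relations expressing $T_{\partial}$ as a product of bounding pair maps around meridians, combined with the conjugacy relation of Lemma \ref{lemma:conjugacy}. Chaining these inputs through the five term sequence yields surjectivity of $d_{02}$ immediately.

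The main obstacle has already been absorbed into Lemma \ref{lemma:boundarytwistdies}: the substantive geometric content is the existence of two different lantern relations for $T_{\partial}$ inside $\mathcal{HI}_g^1$ together with the fact that bounding pair maps with matching polarized subspaces are conjugate inside $\mathcal{HI}_g^1$. With those two facts in hand, the step carried out in this lemma is essentially formal: it is just the standard observation that, for a central extension, the rational transgression $H_2(Q;\Q) \to H_1(N;\Q)_Q$ is surjective precisely when the inclusion $N \hookrightarrow G$ vanishes on rational $H_1$.
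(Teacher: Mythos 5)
Your proof is correct and takes essentially the same route as the paper: both identify $d_{02}$ as the map appearing in the five-term exact sequence of the central extension $1 \to \Z \to \mathcal{HI}_g^1 \to \mathcal{HI}_{g,1} \to 1$ and reduce surjectivity to the vanishing of $[T_\partial]$ in $H_1(\mathcal{HI}_g^1;\Q)$, which is exactly Lemma \ref{lemma:boundarytwistdies}. The only cosmetic difference is that you write the full five-term sequence while the paper passes directly to the truncated sequence $0 \to \operatorname{coker}(d_{02}) \to H_1(\mathcal{HI}_g^1;\Q) \to H_1(\mathcal{HI}_{g,1};\Q) \to 0$.
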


\begin{proof} In the short exact sequence (see \cite[Section 7.6]{Brown}) $$0 \rightarrow \operatorname{coker}(d_{02}) \rightarrow H_1(\mathcal{HI}_g^1; \Q) \rightarrow H_1(\mathcal{HI}_{g,1}; \Q) \rightarrow 0,$$ the map $\Q \twoheadrightarrow \operatorname{coker}(d_{02}) \hookrightarrow H_1(\mathcal{HI}_g^1; \Q)$ is given by $1 \mapsto [T_{\partial}]$. By Lemma \ref{lemma:boundarytwistdies}, $\operatorname{coker}(d_{02}) = 0$. \end{proof}

\begin{lemma} \label{lemma:d03} When $g \geq 4$, the differential $d_{03}$ is surjective. \end{lemma}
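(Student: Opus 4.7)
My plan is to exploit the Gysin-type long exact sequence arising from the two-row Hochschild--Serre spectral sequence of the central extension $1 \to \Z \to \mathcal{HI}_g^1 \to \mathcal{HI}_{g,1} \to 1$, namely
\[
H_3(\mathcal{HI}_{g,1};\Q) \xrightarrow{d_{03}} H_1(\mathcal{HI}_{g,1};\Q) \xrightarrow{\iota} H_2(\mathcal{HI}_g^1;\Q) \to H_2(\mathcal{HI}_{g,1};\Q) \xrightarrow{d_{02}} \Q \to 0.
\]
The connecting map $\iota$ has the explicit form $\iota([f]) = \{\tilde f, T_\partial\}$, where $\tilde f \in \mathcal{HI}_g^1$ is any lift of $f \in \mathcal{HI}_{g,1}$ and $\{\cdot,\cdot\}$ denotes the abelian $2$-cycle formed by two commuting elements. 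Surjectivity of $d_{03}$ is therefore equivalent to the vanishing of $\iota$, which I will establish geometrically on a generating set.

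Since $\iota$ is $\mathcal{H}_{g,1}$-equivariant and, by Theorem~\ref{theorem:Omori}, the group $\mathcal{HI}_{g,1}$ is normally generated by one genus-$1$ bounding pair annulus twist $f = T_{C_1} T_{C_2}^{-1}$, it is enough to verify $\iota([f]) = 0$ for this single generator. I would choose $f$ so that its support $S$ lies on the side of $C_1 \cup C_2$ \emph{not} meeting the disk $\partial \Sigma_g^1$, and let $\Sigma'' \subset \Sigma_g^1$ denote the complementary side, a surface of genus $g-2$ with three boundary components, one of which is $\partial \Sigma_g^1$. The hypothesis $g \geq 4$ is exactly what ensures $\operatorname{genus}(\Sigma'') \geq 2$.

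The key step is then to apply the same Putman-lantern argument used to prove Lemma~\ref{lemma:boundarytwistdies}, but carried out entirely inside $\Sigma''$, to obtain $[T_\partial] = 0$ in $H_1(\mathcal{HI}(\Sigma'');\Q)$ and hence a relation $T_\partial^N = \prod_{k=1}^m [a_k, b_k]$ with $a_k, b_k \in \mathcal{HI}(\Sigma'')$ for some $N \geq 1$. Because the supports of $\tilde f$ and of the $a_k, b_k$ lie in the disjoint subsurfaces $S$ and $\Sigma''$, and because $\tilde f$ has infinite order, the subgroup $H := \langle \tilde f, a_1, b_1, \dots, a_m, b_m\rangle \subset \mathcal{HI}_g^1$ is an internal direct product $\langle \tilde f\rangle \times \langle a_k, b_k\rangle$. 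The K\"unneth formula gives
\[
H_2(H;\Q) \;\cong\; H_2\bigl(\langle a_k, b_k\rangle;\Q\bigr) \oplus \bigl(H_1(\langle \tilde f\rangle;\Q) \otimes H_1(\langle a_k, b_k\rangle;\Q)\bigr),
\]
and under this decomposition $\{\tilde f, T_\partial^N\}$ is the cross-term class $[\tilde f] \otimes [T_\partial^N]$, whose right factor vanishes because $T_\partial^N$ is a product of commutators in $\langle a_k, b_k\rangle$. By bilinearity of abelian cycles in the second entry, $N \cdot \{\tilde f, T_\partial\} = \{\tilde f, T_\partial^N\} = 0$ in $H_2(\mathcal{HI}_g^1;\Q)$, so $\iota([f]) = 0$ rationally.

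The main technical obstacle will be transplanting Putman's lantern configurations from $\Sigma_g^1$ to the subsurface $\Sigma''$ in a way that keeps all the constituent bounding pair maps inside $\mathcal{HI}(\Sigma'')$: the two lantern configurations of~\cite{Putman} must be realized using curves that lie in $\Sigma''$, that still bound disks in the ambient handlebody $\mathcal{V}_g^1$, and that admit the conjugacy pairings of Lemma~\ref{lemma:conjugacy} inside $\mathcal{HI}(\Sigma'')$. For $g - 2 \geq 2$ there should be ample room inside $\Sigma''$ to do this, but it is this geometric check that makes the $g \geq 4$ hypothesis bite.
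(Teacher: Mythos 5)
Your proposal is correct and is essentially the paper's own proof, reformulated: the paper shows the class $[T_{C_1}T_{C_2}^{-1}] \in H_1(\mathcal{HI}_{g,1};\Q)$ dies in $\operatorname{coker}(d_{03})$ by factoring the map $i\colon \Z^2 = \langle T_\partial, T_{C_1}T_{C_2}^{-1}\rangle \to \mathcal{HI}_g^1$ through $\mathcal{HI}(T)\times\Z$ for a genus-$(g-2)$ complementary subsurface $T$ and invoking Lemma~\ref{lemma:boundarytwistdies} to kill the K\"unneth cross term, which is exactly your vanishing of $\iota([f])=\{\tilde f, T_\partial\}$ via $T_\partial^N = \prod_k [a_k,b_k]$ in $\mathcal{HI}(\Sigma'')$. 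The ``technical obstacle'' you flag of running Putman's lantern argument inside the genus-$(g-2)$ subsurface with three boundary components is present, and equally unaddressed, in the paper's proof, which simply cites Lemma~\ref{lemma:boundarytwistdies} for $\mathcal{HI}(T)$.
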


\begin{proof} These differentials are $\mathcal{H}_{g,1}$-equivariant maps so it is enough to show that $d_{03}$ hits $[T_{C_1} T_{C_2}^{-1}]$, the homology class of bounding pair annulus twist as in Theorem \ref{theorem:Omori}. Equivalently, we show that $[T_{C_1} T_{C_2}^{-1}] \in H_1(\mathcal{HI}_{g,1}; \Q)$ vanishes in $H_2(\mathcal{HI}_g^1; \Q)$ in the following short exact sequence: \[\begin{tikzcd}[ampersand replacement=\&]
	0 \& {\operatorname{coker}(d_{03})} \& {H_2(\mathcal{HI}_g^1; \mathbb{Q})} \& {\operatorname{ker}(d_{02})} \& 0
	\arrow[from=1-1, to=1-2]
	\arrow[from=1-2, to=1-3]
	\arrow[from=1-3, to=1-4]
	\arrow[from=1-4, to=1-5]
\end{tikzcd}. \] We choose $C_1 \cup C_2$ to bound a genus 1 surface on the side without the embedded disk, and to bound a subsurface $T$ of genus at least $2$ on the other side.

\centerline{\includegraphics[scale=.5]{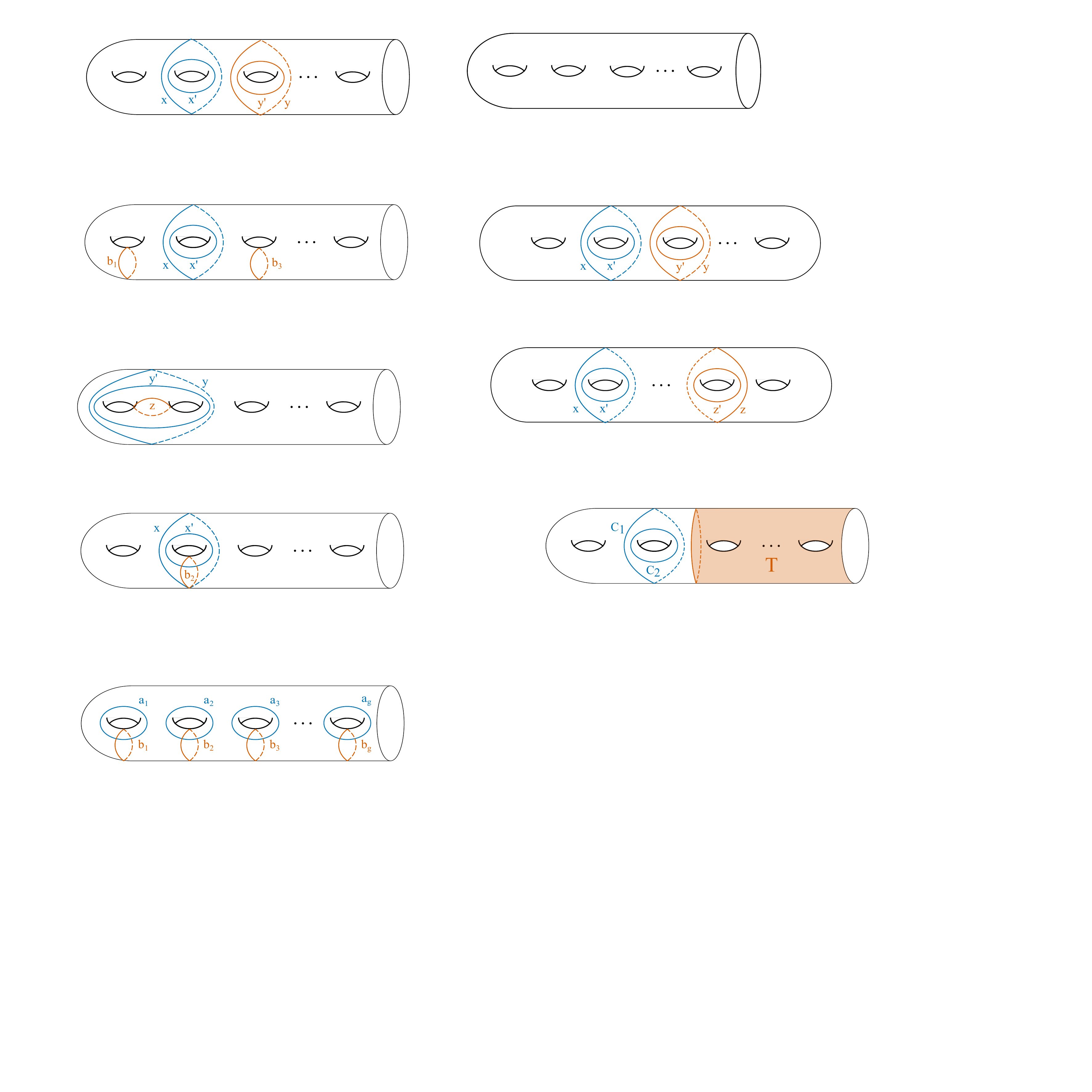}}
Consider the following commutative diagram of short exact sequences:

\[\begin{tikzcd}
	0 & {\mathbb{Z}} & {\langle T_{\partial}, T_{C_1} T_{C_2}^{-1} \rangle} & {\overline{\langle T_{C_1} T_{C_2}^{-1} \rangle}} & 0 \\
	1 & {\mathbb{Z}} & {\mathcal{HI}_g^1} & {\mathcal{HI}_{g,1}} & 1
	\arrow[from=1-1, to=1-2]
	\arrow[from=1-2, to=1-3]
	\arrow[from=1-2, to=2-2, equal]
	\arrow[from=1-3, to=1-4]
	\arrow["i", hook, from=1-3, to=2-3]
	\arrow[from=1-4, to=1-5]
	\arrow[hook, from=1-4, to=2-4]
	\arrow[from=2-1, to=2-2]
	\arrow[from=2-2, to=2-3]
	\arrow[from=2-3, to=2-4]
	\arrow[from=2-4, to=2-5] \end{tikzcd}\] The Hochschild-Serre spectral sequence $\{ E_r^{'p,q} \}$ of the top row degenerates at the $E_2'$ page. We have the isomorphism $\langle T_{\partial}, T_{C_1} T_{C_2}^{-1} \rangle \cong \Z^2$, and the inclusions in the commutative diagram induce a map of spectral sequences sending $$E_{\infty}^{'1,1} = H_1(\overline{\langle T_{C_1} T_{C_2}^{-1} \rangle}; \Q) \subset H_2(\Z^2; \Q)$$ to $$E_{\infty}^{1,1} = \operatorname{coker}(d_{03}) \subset H_2(\mathcal{HI}_g^1; \Q).$$ It suffices to show that the map $i_*: H_2(\Z^2; \Q) \rightarrow H_2(\mathcal{HI}_g^1; \Q)$ is the zero map. The map $i: \Z^2 \rightarrow \mathcal{HI}_g^1$ factors through $$\Z^2 \hookrightarrow \mathcal{HI}(T) \times \Z \rightarrow \mathcal{HI}_g^1$$ and $T_{\partial}$ dies in $H_1(\mathcal{HI}(T); \Q)$ by Lemma \ref{lemma:boundarytwistdies}. \end{proof} 

We are now ready to prove Theorem \ref{theorem:kernelboundary}.

\begin{proof}[Proof of Theorem~\ref{theorem:kernelboundary}]
    Consider the following commutative diagram: \[\begin{tikzcd}[ampersand replacement=\&]
	0 \& {\mathbb{Q}} \& {H^2(\mathcal{HI}_{g,1}; \mathbb{Q})} \& {H^2(\mathcal{HI}_g^1; \mathbb{Q})} \& 0 \\
	\&\& {\bigwedge^2 H^1(\mathcal{HI}_{g,1}; \mathbb{Q})} \& {\bigwedge^2 H^1(\mathcal{HI}_g^1; \mathbb{Q})} \\
	\&\& {\bigwedge^2 U_{\mathbb{Q}}^*} \& {\bigwedge^2 U_{\mathbb{Q}}^*}
	\arrow[from=1-1, to=1-2]
	\arrow[from=1-2, to=1-3]
	\arrow[from=1-3, to=1-4]
        \arrow[from=1-4, to=1-5]
	\arrow["\cup"', from=2-3, to=1-3]
	\arrow[equals, from=2-3, to=2-4]
	\arrow["\cup"', from=2-4, to=1-4]
	\arrow[hook, from=3-3, to=2-3]
	\arrow["{{\text{id}_{\bigwedge^2 U_{\mathbb{Q}}^*}}}", from=3-3, to=3-4]
	\arrow[hook, from=3-4, to=2-4]
        \arrow["\iota"', hook, bend right=55, from=1-2, to=3-3]
        \arrow["\tau_{g,1}^*", bend left=75, from=3-3, to=1-3]
        \arrow["(\tau_g^1)^*"', bend right=75, from=3-4, to=1-4]
\end{tikzcd}\] Combining results from Lemma \ref{lemma:d02} and Lemma \ref{lemma:d03}, the top row is exact. It follows that \begin{align*}
    (\tau_g^1)^* \circ \text{id}_{\bigwedge^2 U_{\mathbb{Q}}^*} \circ \iota(\Q) = 0. 
\end{align*} 
\end{proof}

Theorem \ref{theorem:kerneltau} follows from Theorem \ref{theorem:kerneltauclosed}, Theorem \ref{theorem:kernelpunctured}, and Theorem \ref{theorem:kernelboundary}.

\section{Decomposition of $\tau_2^{\Q}(\mathcal{HK}_{g,p}^b)$} \label{section:decomptau2}

In this section, we prove the following theorem.

\begin{theorem} \label{theorem:decomptau2}
    When $g \geq 5$, the image $\tau_2^{\Q}(\mathcal{HK}_{g,p}^b)$ decomposes into $\operatorname{SL}_g(\Q)$ representations as in Table \ref{table:decomptau2}. \end{theorem}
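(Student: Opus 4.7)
The plan is to deduce Theorem~\ref{theorem:decomptau2} directly from Theorem~\ref{theorem:kerneltau} via an $\operatorname{SL}_g(\Q)$-equivariant duality argument. For each of $(p,b)\in\{(0,0),(1,0),(0,1)\}$, the work in Sections~\ref{section:lowerboundtau}--\ref{section:tauboundary} has already established the core identification between the cokernel of $\tau_*$ and the image of the second Johnson homomorphism. Specifically, the bracket map $[\cdot,\cdot] = \tau_2^{\Q}\circ b$ factors through $\tau_2^{\Q}(\mathcal{HK}_{g,p}^b)$, and the combined lower and upper bounds proved there show that both $\operatorname{im}([\cdot,\cdot])$ and $\tau_2^{\Q}(\mathcal{HK}_{g,p}^b)$ are pinned down to the same $\operatorname{SL}_g(\Q)$-module, so that the exact sequence collapses to an isomorphism $\operatorname{coker}(\tau_*) \cong \tau_2^{\Q}(\mathcal{HK}_{g,p}^b)$. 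Dualizing then yields $\operatorname{ker}(\tau^*) \cong \tau_2^{\Q}(\mathcal{HK}_{g,p}^b)^*$ as $\operatorname{SL}_g(\Q)$-modules.

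Granted this identification, the proof reduces to a bookkeeping step: I would read off Table~\ref{table:theoremB} from Theorem~\ref{theorem:kerneltau} and apply the standard duality $\Phi_{w_1,w_2,\ldots,w_{g-1}}^* \cong \Phi_{w_{g-1},\ldots,w_2,w_1}$ for irreducible $\operatorname{SL}_g(\Q)$-modules (as stated in Section~\ref{section:moduledecomp}). Each entry of Table~\ref{table:theoremB} then contributes exactly one entry of Table~\ref{table:decomptau2} with the weight string reversed and the multiplicity preserved, across each of the three column cases for $\mathcal{HK}_g$, $\mathcal{HK}_{g,1}$, and $\mathcal{HK}_g^1$. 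As a sanity check, since $\tau_2^{\Q}(\mathcal{HK}_{g,p}^b)$ is cut out from the ambient $\operatorname{Sp}_{2g}(\Q)$-module $\operatorname{Sym}^2(\bigwedge^2 H_{\Q})$ by $\mathcal{H}_{g,p}^b$-invariant conditions, the branching machinery from Section~\ref{subsection:branching} should predict exactly the reversed-weight content that dualizing Table~\ref{table:theoremB} produces.

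The step I expect to require the most care is rigorously establishing the equality $\operatorname{im}([\cdot,\cdot]) = \tau_2^{\Q}(\mathcal{HK}_{g,p}^b)$, since only the containment $\operatorname{im}([\cdot,\cdot]) \subseteq \tau_2^{\Q}(\mathcal{HK}_{g,p}^b)$ is immediate from the factorization $[\cdot,\cdot] = \tau_2^{\Q} \circ b$. For the reverse containment I would combine two inputs: first, Faes's description in \cite{Faes} of $\tau_2(\mathcal{HK}_g^1)$ as the kernel of an explicit trace map on $\operatorname{Sym}^2(\bigwedge^2 H_{\Q})$, which supplies an a priori upper bound on the $\operatorname{SL}_g(\Q)$-module $\tau_2^{\Q}(\mathcal{HK}_{g,p}^b)$; and second, the explicit highest weight vector computations from Section~\ref{section:lowerboundtau}, which realize each irreducible summand of the predicted decomposition already inside $\operatorname{im}([\cdot,\cdot])$, forcing the two modules to coincide. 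To pass from $\mathcal{HK}_g^1$ to the decorated cases $\mathcal{HK}_{g,1}$ and the undecorated $\mathcal{HK}_g$, I would transport the information along the Hochschild--Serre spectral sequence arguments used in Sections~\ref{section:taupuncture} and~\ref{section:tauboundary}, tracking how the $\bigwedge^2 H_{\Q}$ summand and the trivial $\Q$ summand enter or exit under the relevant quotients.
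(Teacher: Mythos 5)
Your proposal identifies both key tools that the paper actually uses: the explicit bracket-map computations from Section~\ref{section:lowerboundtau} for the lower bound $D_{g,p}^b \subset \operatorname{im}([\cdot,\cdot]) \subset \tau_2^{\Q}(\mathcal{HK}_{g,p}^b)$, and Faes's description $\tau_2(\mathcal{HK}_g^1) = \tau_2(\mathcal{K}_g^1) \cap \ker(\operatorname{Tr}^A)$ for the upper bound, which is used to rule out the two extra modules $\Phi_{2,0,\dots,0}$ and $\Phi_{0,2,0,\dots,0}$ appearing in $\tau_2^{\Q}(\mathcal{K}_{g,p}^b) \cong D_{g,p}^b \oplus \Phi_{2,0,\dots,0} \oplus \Phi_{0,2,0,\dots,0}$. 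So in substance your second and third paragraphs reproduce the paper's argument.

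However, your opening framing is misleading and, if taken literally, circular. You assert that the ``lower and upper bounds proved in Sections~\ref{section:lowerboundtau}--\ref{section:tauboundary}'' already pin down $\operatorname{im}([\cdot,\cdot])$ and $\tau_2^{\Q}(\mathcal{HK}_{g,p}^b)$ to the same module, so that one could dualize Theorem~\ref{theorem:kerneltau} and simply reverse weight strings. But the upper bound in Section~\ref{section:upperboundtau} constrains $\operatorname{ker}(\tau^*)$ by producing abelian cycles in $\operatorname{im}(\tau_*)$; it says nothing directly about $\tau_2^{\Q}(\mathcal{HK}_{g,p}^b)$ or about what lies \emph{outside} $\operatorname{im}([\cdot,\cdot])$. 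To establish the isomorphism $\operatorname{coker}(\tau_*) \cong \tau_2^{\Q}(\mathcal{HK}_{g,p}^b)$, you already need to know $\tau_2^{\Q}(\mathcal{HK}_{g,p}^b) = D_{g,p}^b$, which \emph{is} Theorem~\ref{theorem:decomptau2}. The paper therefore does not dualize Theorem~\ref{theorem:kerneltau}; it proves Theorem~\ref{theorem:decomptau2} directly by the sandwich argument you give in your final paragraph, and then the duality between Tables~\ref{table:theoremB} and~\ref{table:decomptau2} is an observation, not a proof mechanism. Two further details you should make explicit: for the undecorated and punctured cases one must first compute $\tau_2^{\Q}(\mathcal{K}_{g,p}^b)$ as $\Gamma_{0,2}$, $\Gamma_{0,2}\oplus\Gamma_{0,1}$, and $\Gamma_{0,2}\oplus\Gamma_{0,1}\oplus\Q$ related by the evident projections (not via Hochschild--Serre); and the extra copy of $\Q$ in the $b=1$ column comes specifically from $\tau_2^{\Q}(T_\partial)$, which is shown to lie in $\operatorname{im}([\cdot,\cdot])$ because $2T_\partial \in [\mathcal{HI}_g^1,\mathcal{HI}_g^1]$ by Lemma~\ref{lemma:boundarytwistdies}.
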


\begin{table}[h!]
\centering
\begin{tabular}{|c|c|c|c|}
\hline
\multirow{2}{*}{\textbf{Highest Weight}} & \multicolumn{3}{c|}{\textbf{Multiplicity}} \\
\cline{2-4}
& \rule{0pt}{2.6ex} \textbf{$\tau_2^{\Q}(\mathcal{HK}_g)$} & \textbf{$\tau_2^{\Q}(\mathcal{HK}_{g,1})$} & \textbf{$\tau_2^{\Q}(\mathcal{HK}_g^1)$} \\
\hline

$(0, \dots, 0)$ & 1 & 1 & 2 \\
$(0, \dots, 0, 2)$ & 1 & 1 & 1 \\
$(0, \dots, 0, 1, 0)$ & 0 & 1 & 1 \\
$(0, \dots, 0, 2, 0)$ & 1 & 1 & 1 \\
$(0, 1, 0, \dots, 0)$ & 0 & 1 & 1 \\
$(0, 1, 0, \dots, 0, 1, 0)$ & 1 & 1 & 1 \\
$(1, 0, \dots, 0, 1)$ & 1 & 2 & 2 \\
$(1, 0, \dots, 0, 1, 1)$ & 1 & 1 & 1 \\
$(1, 1, 0, \dots, 0, 1)$ & 1 & 1 & 1 \\
$(2, 0, \dots, 0, 2)$ & 1 & 1 & 1 \\

\hline
\end{tabular}
\caption{Decomposition of $\tau_2^{\Q}(\mathcal{HK}_{g,p}^b)$}
\label{table:decomptau2}
\end{table}

\begin{proof} Let $D_{g,p}^b$ be the decomposition given in Table \ref{table:decomptau2}. We show $\tau_2^{\Q}(\mathcal{HK}_{g,p}^b) = D_{g,p}^b$.

We know $\tau_2^{\Q}(\mathcal{HK}_{g,p}^b) \subset \tau_2^{\Q}(\mathcal{K}_{g,p}^b)$. For all $p+b \leq 1$, we decompose $\tau_2^{\Q}(\mathcal{K}_{g,p}^b)$ into irreducible $\operatorname{Sp}_{2g}(\Q)$ representations. These decompositions are related by the projection maps in the following diagram: \[\begin{tikzcd}
	{\tau_2^{\Q}(\mathcal{K}_g^1)} & {\Gamma_{0,2} \oplus \Gamma_{0,1} \oplus \mathbb{Q}} \\
	{\tau_2^{\Q}(\mathcal{K}_{g,1})} & {\Gamma_{0,2} \oplus \Gamma_{0,1}} \\
	{\tau_2^{\Q}(\mathcal{K}_g)} & {\Gamma_{0,2}}
	\arrow[equals, from=1-1, to=1-2]
	\arrow[from=1-1, to=2-1]
	\arrow["{\text{proj}}", from=1-2, to=2-2]
	\arrow[equals,from=2-1, to=2-2]
	\arrow[from=2-1, to=3-1]
	\arrow["{\text{proj}}", from=2-2, to=3-2]
	\arrow[equals, from=3-1, to=3-2]
\end{tikzcd}\] See \cite[Section 2.4]{Sakasai2}. The images $\tau_2^{\Q}(\mathcal{K}_{g,p}^b)$ branch into the following direct sums of $\operatorname{SL}_g(\Q)$ representations: $$\tau_2^{\Q}(\mathcal{K}_{g,p}^b) \cong D_{g,p}^b \oplus \Phi_{2,0, \dots, 0} \oplus \Phi_{0,2,0, \dots, 0}.$$ We show in Sections \ref{section:lowerboundtau} and \ref{section:taupuncture} that $D_{g,p}^b$ is exactly the image of the bracket map $$[ \cdot, \cdot]: \begin{cases}
    \bigwedge^2 \UQ_{\Q} \rightarrow \tau_2^{\Q}(\mathcal{HK}_g) \\
    \bigwedge^2 U_{\Q} \rightarrow \tau_2^{\Q}(\mathcal{HK}_{g,1}) 
   \end{cases}$$ when $b=0$ and $g \geq 5$, giving us a lower bound on $\tau_2^{\Q}(\mathcal{HK}_{g,p}^b)$. When $b=1$, the vector $\tau_2^{\Q}(T_{\partial})$ is a highest weight vector for the trivial representation $\Q$ (see \cite[Theorem A.3]{Faes}). Further, $2T_{\partial} \in [\mathcal{HI}_g^1, \mathcal{HI}_g^1]$ by Lemma \ref{lemma:boundarytwistdies} so $\tau_2(T_{\partial}) \in \operatorname{im}([\cdot,\cdot])$. It remains to show that the modules $\Phi_{2,0, \dots, 0}$ and $\Phi_{0,2,0, \dots, 0}$ are not in $\tau_2^{\Q}(\mathcal{HK}_{g,p}^b)$. In \cite[Theorem 5.1]{Faes}, Faes defines an $\mathcal{H}_g^1$-equivariant map $\operatorname{Tr}^A$ and proves that, for $g \geq 4$, we have $\tau_2(\mathcal{HK}_g^1) = \tau_2(\mathcal{K}_g^1) \cap \operatorname{ker}(\operatorname{Tr}^A)$. It is easy to check that the highest weight vectors for $\Phi_{2,0, \dots, 0}$ and $\Phi_{0,2,0, \dots, 0}$ are not in $\operatorname{ker}(\operatorname{Tr}^A)$, and our result follows.
\end{proof} 

\begin{corollary}
    When $g \geq 5$, the degree 2 part of $\bigoplus_k \tau_k^{\Q}(\mathcal{HK}_{g,p}^b)$ is generated as a Lie algebra by its degree 1 part.
\end{corollary}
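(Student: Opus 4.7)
The plan is to observe that the statement is an almost immediate corollary of Theorem \ref{theorem:decomptau2} together with the bracket computations that went into its proof. The degree one piece of the Lie algebra $\bigoplus_k \tau_k^{\Q}(\mathcal{H}_{g,p}^b \cap \mathcal{I}_{g,p}^b(k))$ is $\tau_1^{\Q}(\mathcal{HI}_{g,p}^b)$, which equals $U_{\Q}$ when $p+b=1$ and $\overline{U}_{\Q}$ when $p=b=0$. The Lie bracket restricted to degree $1\wedge 1$ is, by the defining formula from Section \ref{subsection:higherMod}, exactly the bracket map
\[
[\cdot,\cdot]\colon \bigwedge^2 U_{\Q} \longrightarrow \tau_2^{\Q}(\mathcal{HK}_{g,p}^b)
\]
(and its analogue with $\overline{U}_{\Q}$). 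So the corollary is equivalent to showing that this map is surjective.

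First I would handle the cases $b=0$. In Sections \ref{section:lowerboundtau} and \ref{section:taupuncture} we already exhibited explicit highest-weight vectors in the image of $[\cdot,\cdot]$ representing every irreducible $\operatorname{SL}_g(\Q)$-summand of $D_{g,p}^b$ from Table \ref{table:decomptau2}; combined with Theorem \ref{theorem:decomptau2}'s identification $\tau_2^{\Q}(\mathcal{HK}_{g,p}^b) = D_{g,p}^b$ for $g\ge 5$, this yields surjectivity without further work.

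Next I would treat $b=1$. The decomposition of $\tau_2^{\Q}(\mathcal{HK}_g^1)$ differs from that of $\tau_2^{\Q}(\mathcal{HK}_{g,1})$ only by one extra copy of the trivial module $\Q$, which (as recorded in the proof of Theorem \ref{theorem:decomptau2}) is spanned by $\tau_2^{\Q}(T_{\partial})$. By Lemma \ref{lemma:boundarytwistdies}, $T_{\partial}$ is torsion in $H_1(\mathcal{HI}_g^1;\Q)$, so $2T_{\partial}\in[\mathcal{HI}_g^1,\mathcal{HI}_g^1]$, and hence $\tau_2^{\Q}(T_{\partial})$ lies in $\operatorname{im}([\cdot,\cdot])$. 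The previous paragraph already places all of $D_{g,1}^0$ in the image, so combining these observations gives surjectivity here too.

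Since the argument is a pure assembly of facts already proved, there is essentially no hard step. The only point that deserves a sentence of care is the identification of the abstract Lie bracket on $\bigoplus_k \operatorname{im}(\tau_k^{\Q})$ restricted to $\bigwedge^2 \tau_1^{\Q}(\mathcal{HI}_{g,p}^b)$ with the concrete commutator-based bracket map used throughout Sections \ref{section:lowerboundtau}, \ref{section:taupuncture}, and \ref{section:decomptau2}; this identification is forced by the definition $[v,w]:=\tau_{n+m}(fhf^{-1}h^{-1})$ from Section \ref{subsection:higherMod}.
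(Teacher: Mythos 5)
Your proposal is correct and follows exactly the route the paper intends: the corollary is stated as an immediate consequence of Theorem~\ref{theorem:decomptau2}, whose proof already establishes that the bracket map $[\cdot,\cdot]$ from $\bigwedge^2 \tau_1^{\Q}$ surjects onto $\tau_2^{\Q}(\mathcal{HK}_{g,p}^b)$ for $g\ge 5$ (Sections~\ref{section:lowerboundtau} and \ref{section:taupuncture} for $b=0$, plus $\tau_2^{\Q}(T_\partial)\in\operatorname{im}([\cdot,\cdot])$ via Lemma~\ref{lemma:boundarytwistdies} for $b=1$). Your side remark identifying the abstract Lie bracket restricted to degree $1\wedge 1$ with the commutator-based $[\cdot,\cdot]$ of Section~\ref{subsection:higherMod} is exactly the right point to flag, and is immediate from the definition.
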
 

This decomposition is the setup needed to tackle the following problem. \begin{problem}
    Calculate the kernel of $$\tau_2^*:  H^2(\tau_2(\mathcal{HK}_{g,p}^b); \Q) \rightarrow H^2(\mathcal{HK}_{g,p}^b; \Q)$$ following Sakasai \cite{Sakasai2} for $\mathcal{K}_{g,p}^b$.
\end{problem}

\appendix

\section{Lie algebra actions} \label{appendix:liealgebra}

In this section, we give a more detailed overview of the actions of the Lie algebra $\mathfrak{sl}_g(\Q)$. Let $E_{ij} \in \mathfrak{sl}_g(\Q)$ be the matrix with a 1 in the $i^{th}$ row and $j^{th}$ column, and zeros everywhere else. Letting $\delta_{ij}$ be the Kronecker delta function, $E_{ij}$ acts on the standard $\mathfrak{sl}_g(\Q)$ representation $V_{\Q} = \Q \langle a_1, \dots, a_g \rangle$ by $$E_{ij}(a_k) = \delta_{jk} a_i$$ and on its dual $V^*_{\Q} = \Q \langle b_1, \dots, b_g \rangle$ by $$E_{ij}(b_k) = - \delta_{ik} b_j.$$ Table \ref{table:E12actionsonH} includes some examples of the action of $E_{12}$ on elements of $H_{\Q} = V_{\Q} \oplus V_{\Q}^*$.

\begin{table}[h!]
\centering
\begin{tabular}{|c|c|}
\hline
$\vec{v}$ & $E_{12}(\vec{v})$ \\
\hline
$a_1$ & $0$ \\
$a_2$ & $a_1$ \\
$a_3$ & $0$ \\
$b_1$ & $-b_2$ \\
$b_2$ & $0$ \\
$b_3$ & $0$ \\
\hline
\end{tabular}
\caption{Action of $E_{ij} \in \mathfrak{sl}_g(\Q)$ on vectors in $H_{\Q}$}
\label{table:E12actionsonH}
\end{table}

\begin{remark}
    $E_{ij}$ acts on $V_{\Q} \oplus V_{\Q}^*$ in the same way that the element $$\begin{bmatrix} E_{ij} & 0 \\ 0 & -E_{ij}^t \end{bmatrix} \in \mathfrak{sp}_{2g}(\Q)$$ acts on the standard $\mathfrak{sp}_{2g}(\Q)$ representation $H_{\Q}$.
\end{remark}

The matrix $E_{ij}$ acts on a vector in $H_{\Q}^{\otimes n}$ in the following way: $$E_{ij}(\vec{v_1} \otimes \vec{v_2} \otimes \dots \otimes \vec{v_n}) = E_{ij}(\vec{v_1}) \otimes \vec{v_2} \otimes \dots \otimes \vec{v_n} \; + \;  \vec{v_1} \otimes E_{ij}(\vec{v_2}) \otimes \dots \otimes \vec{v_n} \; + \dots + \; \vec{v_1} \otimes \vec{v_2} \otimes \dots \otimes E_{ij}(\vec{v_n}).$$ See \cite[Chapter 4.3.2]{Hall}. We include examples of the action of $E_{12}$ on various vectors in Table \ref{table:E12actions}. 

\begin{table}[h!]
\centering
\begin{tabular}{|c|c|}
\hline
$\vec{v}$ & $E_{12}(\vec{v})$ \\
\hline
$a_3 \otimes a_4 \otimes a_5$ & 0 \\
$a_2 \otimes a_3 \otimes a_4$ & $a_1 \otimes a_3 \otimes a_4$ \\
$a_1 \otimes a_2 \otimes a_3$ & $a_1 \otimes a_1 \otimes a_3$ \\
$b_1 \otimes b_3 \otimes b_4$ & $-b_2 \otimes b_3 \otimes b_4$ \\
$a_2 \otimes a_3 \otimes b_1$ & $a_1 \otimes a_3 \otimes b_1 - a_2 \otimes a_3 \otimes b_2$ \\
\hline
\end{tabular}
\caption{Action of $E_{12} \in \mathfrak{sl}_g(\Q)$ on vectors in $H_{\Q}^{\otimes 3}$}
\label{table:E12actions}
\end{table}

Since $\bigwedge^n H_{\Q}$ and $\operatorname{Sym}^n(H_{\Q})$ inject into $H_{\Q}^{\otimes n}$ via the maps $$x_1 \wedge \dots \wedge x_n \mapsto \sum_{\sigma \in S_n} \text{sgn}(\sigma) x_{\sigma(1)} \otimes \dots \otimes x_{\sigma(n)}$$ and $$x_1 \leftrightarrow \dots \leftrightarrow x_n \mapsto \sum_{\sigma \in S_n} x_{\sigma(1)} \otimes \dots \otimes x_{\sigma(n)},$$ we see that $E_{ij}$ acts on both $\bigwedge^n H_{\Q}$ and $\operatorname{Sym}^n(H_{\Q})$ via analogous product rules.

We could replace the tensor products $\otimes$ in Table \ref{table:E12actions} with the wedge product $\wedge$ or the symmetric product $\leftrightarrow$, and the action of $E_{12}$ is the same. Note that $E_{12}$ kills $a_1 \wedge a_2 \wedge a_3$ since $a_1 \wedge a_1 \wedge a_3 = 0$. This pattern is important in Sections \ref{appendix:hwv} and \ref{appendix:constructhwv}, in which we define highest weight vectors which are killed by {\it{all}} $E_{ij}$ with $i < j$. 

\section{Highest weight vectors} \label{appendix:hwv}

The group $\mathfrak{sl}_g(\Q)$ consists exactly of the $g \times g$ matrices with entries in $\Q$ and trace zero. We denote by $\mathfrak{h}$ the Cartan subalgebra of $\mathfrak{sl}_g(\Q)$, which is the set of matrices diagonal in $\mathfrak{sl}_g(\Q)$. Matrices $M \in \mathfrak{h}$ look like $$M =
\begin{bmatrix}
t_1 & 0 & \cdots & 0 & 0 \\
0 & t_2 & \cdots & 0 & 0 \\
\vdots & \vdots & \ddots & \vdots & \vdots \\
0 & 0 & \cdots & t_{g-1} & 0 \\
0 & 0 & \cdots & 0 & -t_1 - \dots - t_{g-1}
\end{bmatrix}. $$

\begin{definition}[Weight vector]
    A {\textit{weight vector}} $\vec{v}$ is a vector that is an eigenvector for $\mathfrak{h}$, i.e., any $M \in \mathfrak{h}$ scales $\vec{v}$. When $M$ scales $\vec{v}$ by $$w_1 t_1 + w_2(t_1 + t_2) + \dots + w_{g-1}(t_1 + \dots + t_{g-1}),$$ we say $\vec{v}$ has {\textit{weight}} $(w_1, w_2, \dots, w_{g-1})$. 
\end{definition} 

\begin{definition}[Highest weight vector]
    A {\textit{highest weight vector}} $\vec{v}$ is a weight vector that is killed by all $E_{ij}$ such that $i < j$.
\end{definition}

Any irreducible $\mathfrak{sl}_g(\Q)$ representation has a unique highest weight vector up to scaling. This highest weight $(w_1, \dots, w_{g-1})$ is always a $(g-1)$-tuple of nonnegative integers, and it determines the isomorphism class of the irreducible $\operatorname{SL}_g(\Q)$ representation. We denote this representation by $\Phi_{w_1, \dots, w_{g-1}}$. \\

We provide some examples of irreducible $\mathfrak{sl}_g(\Q)$ representations and their (highest) weight vectors.

\begin{example} Consider the vectors $a_k$ in the standard $\mathfrak{sl}_g(\Q)$ representation $V_{\Q} = \Q \langle a_1, \dots, a_g \rangle$. Then $$M \cdot a_k = t_k a_k,$$ so $a_k$ is a weight vector with weight $(0, \dots, 0, 1, 0, \dots, 0)$, where the $1$ is in the $k^{th}$ spot. Up to scaling, $a_1$ is the unique vector killed by all $E_{ij}$ with $i < j$, so $a_1$ is a highest weight vector for $V_{\Q} = \Phi_{1,0, \dots, 0}$.
\end{example}

\begin{nonexample}
    Consider the vector $a_1 + a_2 \in V_{\Q}$. We calculate $$M \cdot (a_1 + a_2) = t_1a_1 + t_2a_2,$$ which is not a scale of $a_1 + a_2$ for $t_1 \neq t_2$. Therefore, $a_1 + a_2$ is {\textit{not}} a weight vector.
\end{nonexample}

For our remaining examples, we will consider only highest weight vectors, and one can check that these vectors are all killed by $E_{ij}$ when $i < j$.

\begin{example} 
    The vector $a_1 \wedge a_2$ is a highest weight vector for $\bigwedge^2 V_{\Q}$ with weight $(0,1,0, \dots, 0)$ since $$M \cdot (a_1 \wedge a_2) = (t_1 a_1) \wedge a_2 + a_1 \wedge (t_2 a_2) = (t_1 + t_2) a_1 \wedge a_2.$$
\end{example}

In general, the vector $a_1 \wedge a_2 \wedge \dots \wedge a_n$ is a highest weight vector for $\bigwedge^n V_{\Q} = \Phi_{0, \dots, 0, 1, 0, \dots, 0}$, where the 1 is in the $n^{th}$ place.

\begin{example}
    The vector $b_g$ is a highest weight vector for the dual representation $V^*_{\Q}$ with weight $(0, \dots, 0,1)$. Our matrix $M \in \mathfrak{h}$ acts on $V_{\Q}^*$ via multiplication by its negative transpose: $$M \cdot b_g = -M^{t} b_g.$$ Since $$-M^{t} =
\begin{bmatrix}
-t_1 & 0 & \cdots & 0 & 0 \\
0 & -t_2 & \cdots & 0 & 0 \\
\vdots & \vdots & \ddots & \vdots & \vdots \\
0 & 0 & \cdots & -t_{g-1} & 0 \\
0 & 0 & \cdots & 0 & t_1 + \dots + t_{g-1}
\end{bmatrix}, $$ we have $$M \cdot b_g = (t_1 + \dots + t_{g-1})b_g$$ as desired.
\end{example}

\begin{example}
    The vector $b_{g-1} \wedge b_g$ is a highest weight vector for $\bigwedge^2 V^*_{\Q}$ with weight $(0, \dots, 0,1,0)$ since $$M \cdot b_{g-1} \wedge b_g = (-t_{g-1}b_{g-1}) \wedge b_g + b_{g-1} \wedge (t_1 + \dots + t_{g-1}) b_g = (t_1 + \dots + t_{g-2}) b_{g-1} \wedge b_g.$$
\end{example}

In general, the vector $b_{g-n+1} \wedge b_{g-n+2} \wedge \dots \wedge b_g$ is a highest weight vector for $\bigwedge^n V^*_{\Q} = \Phi_{0, \dots, 0, 1, 0, \dots, 0}$, where the 1 is in the $(g-n+1)^{st}$ place.

\begin{example}
   The vector $a_1 \otimes a_1$ is a highest weight vector for $\operatorname{Sym}^2(V_{\Q})$ with weight $(2,0, \dots,0)$ since $$M \cdot a_1 \otimes a_1 = (t_1 a_1) \otimes a_1 + a_1 \otimes (t_1 a_1) = 2t_1 a_1 \otimes a_1.$$
\end{example}

In general, $\underbrace{a_1 \otimes \dots \otimes a_1}_{n \text{ times}}$ is a highest weight vector for $\operatorname{Sym}^n(V_{\Q}) = \Phi_{n, \dots, 0}$. Similarly, $\underbrace{b_g \otimes \dots \otimes b_g}_{n \text{ times}}$ is a highest weight vector for $\operatorname{Sym}^n(V_{\Q}^*) = \Phi_{0, \dots, 0,n}$. \\

\begin{example} The trivial representation $\Q$ has weight $(0, \dots, 0)$ and
    the vector $a_k \otimes b_k$ is a weight vector for $\Q \subset V_{\Q} \otimes V_{\Q}^*$, since $$M \cdot a_k \otimes b_k = (t_k a_k) \otimes b_k + a_k \otimes (-t_k b_k) = 0.$$ Note that this is {\it{not}} a highest weight vector, since for fixed $1 \leq i < j \leq g$ we have \begin{itemize}
        \item $E_{ij}(a_i \otimes b_i) = -a_i \otimes b_j$
        \item $E_{ij}(a_j \otimes b_j) = a_i \otimes b_j$
        \item $E_{ij}(a_k \otimes b_k) = 0 \quad (k \neq i,j)$.
    \end{itemize}
    
It follows that the vector $$\sum_{k=1}^g a_k \otimes b_k$$ is a highest weight vector for $\Q \subset V_{\Q} \otimes V^*_{\Q}$. Similarly, the algebraic intersection form $$\omega := \sum_{k=1}^g a_k \wedge b_k$$ is a highest weight vector for $\Q$ expressed as a subrepresentation of $\bigwedge^2 H_{\Q} = \bigwedge^2 (V_{\Q} \oplus V^*_{\Q})$. 
\end{example}

\section{Constructing highest weight vectors} \label{appendix:constructhwv} Given a weight $(w_1, w_2, \dots, w_{g-1})$ and no other context, it is not hard to create highest weight vectors with this weight. The vector $$a_1^{\otimes w_1} \otimes (a_1 \wedge a_2)^{\otimes w_2} \otimes \dots \otimes (a_1 \wedge \dots \wedge a_{g-1})^{\otimes w_{g-1}}$$ works, as do $$a_1^{\otimes w_1} \otimes (a_1 \wedge a_2)^{\otimes w_2} \otimes \dots \otimes (a_1 \wedge \dots \wedge a_{g-1})^{\otimes w_{g-1}} \otimes (\sum_{i=1}^g a_i \wedge b_i)$$ and 
$$a_1^{\otimes w_1} \otimes (a_1 \wedge a_2)^{\otimes w_2} \otimes \dots \otimes (a_1 \wedge \dots \wedge a_{g-3})^{\otimes w_{g-3}} \otimes (b_{g-1} \wedge b_g)^{\otimes w_{g-2}} \otimes b_g^{\otimes w_{g-1}}.$$ In this paper, however, we must construct highest weight vectors that fit within the context of our problem. 

\subsection{The $\boldsymbol{\mathfrak{sl}_g(\Q)}$ representation $\boldsymbol{((\bigwedge^2 V_{\Q}) \otimes V_{\Q}^*) \otimes \operatorname{Sym}^2(V_{\Q}^*)}$} \label{appendix:thetaHWVs} In Section \ref{subsection:imJtensorimPsi}, we constructed highest weight vectors of given weights {\textit{expressed as elements of}} $((\bigwedge^2 V_{\Q}) \otimes V_{\Q}^*) \otimes \operatorname{Sym}^2(V_{\Q}^*)$. In the following examples, we describe this process. We use the fact that $((\bigwedge^2 V_{\Q}) \otimes V_{\Q}^*) \otimes \operatorname{Sym}^2(V_{\Q}^*)$ is generated by elements of the form $$[(a_i \wedge a_j) \otimes b_k] \otimes [b_{\ell} \otimes b_{\ell}]$$ and $$[(a_i \wedge a_j) \otimes b_k] \otimes [b_{\ell} \otimes b_m + b_m \otimes b_{\ell}],$$ where the indices are not necessarily distinct.

\begin{example}[Easy guess]
    We want a highest weight vector of weight $(0,1,0, \dots, 0,3)$ expressed as an element of $((\bigwedge^2 V_{\Q}) \otimes V_{\Q}^*) \otimes \operatorname{Sym}^2(V_{\Q}^*)$. Only one basis element of $((\bigwedge^2 V_{\Q}) \otimes V_{\Q}^*) \otimes \operatorname{Sym}^2(V_{\Q}^*)$ has the desired weight, and that is $$[(a_1 \wedge a_2) \otimes b_g] \otimes [b_g \otimes b_g].$$ We can check that all $E_{ij}$  with $i < j$ kill this vector, so we are done.
\end{example}

\begin{example}[Solve system of equations] \label{example:systemequations}
    We want a highest weight vector of weight $(0,1,0, \dots, 0,1,1)$ expressed as an element of $((\bigwedge^2 V_{\Q}) \otimes V_{\Q}^*) \otimes \operatorname{Sym}^2(V_{\Q}^*)$. To have the desired weight, our vector must be of the form $$v_{\lambda_1, \lambda_2} := \lambda_1 [(a_1 \wedge a_2) \otimes b_{g-1}] \otimes [b_g \otimes b_g] + \lambda_2 [(a_1 \wedge a_2) \otimes b_g] \otimes [b_{g-1} \otimes b_g + b_g \otimes b_{g-1}].$$ for some $\lambda_1, \lambda_2 \in \Q$. Because there is exactly one copy of $\Phi_{0,1,0, \dots, 0,1,1}$ in the decomposition of $((\bigwedge^2 V_{\Q}) \otimes V_{\Q}^*) \otimes \operatorname{Sym}^2(V_{\Q}^*)$, there is a unique choice (up to scaling) of $\lambda_1$ and $\lambda_2$ such that $E_{ij}(v_{\lambda_1, \lambda_2}) = 0$ for all $i < j$. To solve for $\lambda_1$ and $\lambda_2$, we apply all such $E_{ij}$ and solve the resulting system of equations.

    If we have $E_{ij} \neq E_{(g-1)g}$ such that $i < j$, then $$E_{ij}(v_{\lambda_1, \lambda_2}) = 0$$ as desired. Then we calculate $$E_{(g-1)g}(v_{\lambda_1, \lambda_2}) = (- \lambda_1 - 2 \lambda_2) [(a_1 \wedge a_2) \otimes b_g] \otimes [b_g \otimes b_g]$$ so we choose $\lambda_1 = -2$ and $\lambda_2 = 1$.

\end{example}

\subsection{A basis for $\mathbf{\bigwedge^2 \UQ_{\Q}}$} \label{appendix:wedge2Ubasis} In Section \ref{subsection:0101}, we constructed highest weight vectors expressed as elements of $\bigwedge^2 \UQ_{\Q}$. To do this, we followed the same process as in Example \ref{example:systemequations} and so we needed a basis for $\bigwedge^2 \UQ_{\Q}$. \\

It follows from Section \ref{section:setuphi} that $\UQ_{\Q}$ is the submodule of $(\bigwedge^3 H_{\Q}) / H_{\Q}$ with the following basis\footnote{If we added the ``triple-$a$" generators $a_i \wedge a_j \wedge a_k$ to this list, we would have a basis for $(\bigwedge^3 H_{\Q}) / H_{\Q}$.}: \begin{itemize}
    \item $a_i \wedge a_j \wedge b_k$ \quad  $(i < j; \; k \neq i,j)$ 
    \item $a_i \wedge b_j \wedge b_k$ \quad  $(i \neq j,k; \; j < k)$ 
    \item $b_i \wedge b_j \wedge b_k$ \quad $(i < j < k)$
    \item $a_i \wedge a_j \wedge b_j - \frac{1}{g-1} a_i \wedge \omega$ \quad $(i \neq j)$
    \item $b_i \wedge a_j \wedge b_j - \frac{1}{g-1} b_i \wedge \omega$ \quad $(i \neq j)$
\end{itemize}

\noindent A basis for $\bigwedge^2 \UQ_{\Q}$ consists of all wedge products of basis elements of $\UQ_{\Q}$.

\section{LiE commands} \label{appendix:LiE}

We include the commands we used in LiE \cite{LiE} to decompose our representations. We choose $g=9$, which is the smallest genus for which LiE's outputs are unambiguous.\footnote{For example, when $g=8$, the modules  $\Phi_{1,0,0,1,0, \dots, 0,1}$ and $\Phi_{1,0, \dots, 0,1,0,0,1}$ both have highest weight $(1,0,0,1,0,0,1)$ so LiE does not differentiate them. When $g=9$, these modules have weights $(1,0,0,1,0,0,0,1)$ and $(1,0,0,0,1,0,0,1)$, respectively.}

\subsection{Tensor products} To decompose the $\operatorname{SL}_g(\Q)$ representation $\Phi_{1,0, \dots ,0,1,0} \otimes \Phi_{0,1,0, \dots ,0,1}$: 

\medskip

{\begin{verbatim}
tensor([1,0,0,0,0,0,1,0],[0,1,0,0,0,0,0,1],(A8))
\end{verbatim}}

\medskip

\noindent This is a step in computing the decomposition in Table \ref{table:decompAlt2U}.

\subsection{Wedge products} To decompose the $\operatorname{SL}_g(\Q)$ representation $\bigwedge^2 (\bigwedge^3 V_{\Q})$: 

\medskip

{\begin{verbatim}
alt_tensor(2,[0,0,1,0,0,0,0,0],(A8))
\end{verbatim}}

\medskip

\noindent This is a step in computing the decomposition in Table \ref{table:decompAlt2U}.

\subsection{Branching} To decompose the $\operatorname{Sp}_{2g}(\Q)$ representation $\Gamma_{0,1,0,1}$ as a direct sum of irreducible $\operatorname{SL}_g(\Q)$ modules, as in Section \ref{subsection:0101}:

\medskip

{\begin{verbatim}
branch([0,1,0,1,0,0,0,0,0],A8,[[1,0,0,0,0,0,0,0],[0,1,0,0,0,0,0,0], 
[0,0,1,0,0,0,0,0],[0,0,0,1,0,0,0,0],[0,0,0,0,1,0,0,0],[0,0,0,0,0,1,0,0], 
[0,0,0,0,0,0,1,0],[0,0,0,0,0,0,0,1],[0,0,0,0,0,0,0,0]],(C9))
\end{verbatim}}

\end{document}